\providecommand{\bysame}{\leavevmode\hbox to3em{\hrulefill}\thinspace}
\def\bA{\mathbf A}
\def\C{\mathbb C}   
\def\bD{\mathbf D}
\def\Q{\mathbb{Q}}
\def\sR{\mathscr{R}}
\def\e{\mathbf{e}}
\def\Z{\mathbb{Z}}
\def\Qbar{\overline{\Q}}
\def\O{\mathcal O}
\def\Bbar{\overline{B}}
\def\unif{\varpi}
\def\rig{\mathrm{rig}}
\def\id{\mathrm{id}}
\def\rig{\mathrm{rig}}
\def\sm{\mathrm{sm}}
\def\cont{\mathrm{cont}}
\def\ord{\mathrm{ord}}
\def\new{\mathrm{new}}
\def\Sym{\mathrm{Sym}}
\def\Ext{\mathrm{Ext}}
\def\Lie{\mathrm{Lie} \, }
\def\Sen{\mathrm{Sen}}
\def\Iw{\mathrm{Iw}}
\def\Gal{\mathop{\mathrm{Gal}}\nolimits}
\def\Hom{\mathop{\mathrm{Hom}}\nolimits}
\def\Fit{\mathop{\mathrm{Fitt}}\nolimits}
\def\Tr{\mathop{\mathrm{Tr}}\nolimits}
\def\Ind{\mathop{\mathrm{Ind}}\nolimits}
\def\Con{\mathop{\mathcal C}\nolimits}
\def\rhobar{\overline{\rho}}
\def\hw{\mathrm{hw}}
\def\crys{\mathrm{crys}}
\def\dif{\mathrm{dif}}
\def\dR{\mathrm{dR}}
\def\ilim#1{\displaystyle \lim_{\longrightarrow \atop #1}}
\def\plim#1{\displaystyle \lim_{\longleftarrow \atop #1}}
\def\iso{\buildrel \sim \over \longrightarrow}
\def\GLQp{\GL_2(\Q_p)}
\def\Nbar{\overline{\mathrm{N}}}
\def\triv{\mathds{1}}
\def\ev{\mathrm{ev}}
\def\res{\mathop{\mathrm{res}}\nolimits}
\def\cores{\mathop{\mathrm{cor}}\nolimits}
\newfont{\cyrrlarge}{wncyr10}
\renewcommand{\O}{\mathcal O}
\newcommand{\Qinf}{\Q_\infty}
\newcommand{\Kinf}{K_\infty}
\newcommand{\Linf}{L_\infty}
\newcommand{\Zp}{\Z_p}
\newcommand{\Qp}{\Q_p}
\newcommand{\Qpbar}{\overline{\Q}_p}
\newcommand{\Qmpn}{\Q(\mu_{p^n})}
\renewcommand{\H}{\mathbb H}
\newcommand{\G}{\mathcal G}
\newcommand{\Ln}{\Lambda_n}
\newcommand{\Kn}{K_n}
\newcommand{\Knp}{K_{n,p}}
\newcommand{\Kmp}{K_{m,p}}
\newcommand{\hatKinfp}{\widehat{K}_{\infty,p}}
\newcommand{\Kinfp}{K_{\infty,p}}
\newcommand{\p}{\mathfrak p}
\newcommand{\Kinfw}{K_{\infty,w}}
\newcommand{\Knv}{K_{n,v}}
\newcommand{\Knvn}{K_{n,v_n}}
\renewcommand{\tt}{\tilde{\theta}}
\newcommand{\maps}{\rightarrow}
\newcommand{\lra}{\longrightarrow}
\newcommand{\MTnj}{\theta^{\an}_{n,j}(f)}
\newcommand{\MTnv}[1]{\theta^{\an}_{n,#1}(f)}
\newcommand{\MTnvv}[2]{\theta^{\an}_{n,#1}(#2)}
\newcommand{\MTvj}[1]{\theta^{\an}_{#1,j}(f)}
\newcommand{\tnj}{\theta_{n,j}}
\newcommand{\algn}{\theta^{\alg}_n(f)}
\newcommand{\algnj}{\theta^{\alg}_{n,j}(f)}
\newcommand{\algvj}[1]{\theta^{\alg}_{#1,j}(f)}
\newcommand{\Adualj}{A^*(-j)}
\newcommand{\Tdualj}{T^*(-j)}
\newcommand{\Tj}{T(1+j)}
\newcommand{\Xj}{X_j}
\newcommand{\Zj}{Z_j}
\newcommand{\Hj}{H_j}
\newcommand{\psmallmat}[4]{{\left( \begin{smallmatrix} #1 & #2 \\ #3 & #4 \end{smallmatrix} \right)}}
\DeclareMathOperator{\cond}{cond}
\DeclareMathOperator{\im}{image}
\DeclareMathOperator{\corank}{corank}
\DeclareMathOperator{\alg}{alg}
\DeclareMathOperator{\an}{an}
\DeclareMathOperator{\chr}{char}
\DeclareMathOperator{\tor}{tor}
\DeclareMathOperator{\Aut}{Aut}
\DeclareMathOperator{\SL}{SL}
\DeclareMathOperator{\Kato}{K}
\DeclareMathOperator{\GL}{GL}
\DeclareMathOperator{\Ann}{Ann}
\DeclareMathOperator{\Ltor}{\Lambda{\text-tor}}
\DeclareMathOperator{\cork}{\corank}
\DeclareMathOperator{\LP}{LP}
\theoremstyle{plain}
\newtheorem{theorem}[subsubsection]{Theorem}
\newtheorem*{theoremA}{Theorem A}
\newtheorem*{corB}{Corollary B}
\newtheorem{lemma}[subsubsection]{Lemma}
\newtheorem{cor}[subsubsection]{Corollary}
\newtheorem{conj}[subsubsection]{Conjecture}
\newtheorem{prop}[subsubsection]{Proposition}
\newtheorem{thm}[subsubsection]{Theorem}
\newcommand{\ve}{\varepsilon}
\theoremstyle{definition}
\newtheorem{remark}[subsubsection]{Remark}
\newcommand{\wt}{\widetilde}
\newcommand{\D}[1]{#1^\vee}
\begin{document}

\title
{Explicit reciprocity laws and
Iwasawa theory for modular forms}
\author{Matthew Emerton \and Robert Pollack \and Tom Weston}

\maketitle

\begin{abstract}
We prove that the Mazur-Tate elements of an eigenform $f$ sit inside the Fitting ideals of the corresponding dual Selmer groups along the cyclotomic $\Zp$-extension (up to scaling by a single constant).  Our method begins with a construction of local cohomology classes built via the $p$-adic local Langlands correspondence.  From these classes, we build {\it algebraic} analogues of the Mazur-Tate elements which we directly verify sit in the appropriate Fitting ideals.  Using Kato’s Euler system and explicit reciprocity laws, we prove that these algebraic elements divide the corresponding Mazur-Tate elements, implying our theorem.  
\end{abstract}

\section{Introduction}

Associated to a newform $f \in S_k(\Gamma_1(N),\psi,\Qpbar)$ is its collection of  Mazur-Tate elements $\MTnj \in \O[\G_n]$ with $n \geq 0$, $\O/\Z_p$ finite, and $\G_n \cong \Gal(\Q(\mu_{p^n})/\Q)$.  Here the superscript `an' is meant to remind us that these elements are analytically defined and are connected to $L$-values via\footnote{See Remarks~\ref{rmk:nonstandard} and~\ref{rmk:duality}
if you were expecting $\chi$ to be replaced by $\chi^{-1}$.}
$$
\chi(\MTnj) = \chi(-1) \cdot j! \cdot p^{nj} \cdot \tau(\chi^{-1}) \cdot \frac{L(f,\chi,j+1)}{(-2\pi i)^j \Omega_f^\pm}
$$
where $\chi$ is a primitive Dirichlet character of conductor $p^n$, $0 \leq j \leq k-2$, and $\Omega_f^\pm$ are the cohomological periods attached to $f$ as in \cite[Definition 2.1]{PW}.  

When $f$ corresponds to an elliptic curve~$E$, Mazur and Tate in \cite{MT} conjectured that $\MTnj$ 
belongs to the Fitting ideal of the corresponding (dual) Selmer group.  
In fact, they formulated their conjecture more generally, looking at Selmer groups over any abelian extension of $\Q$.  However, as the methods of this paper are rooted in Iwasawa theory, we restrict ourselves to the case of $p$-power cyclotomic fields and  aim to prove an analogous conjecture for modular forms of arbitrary weight.

To state our main theorem, we first set some notation.  Let $\rho_f$ denote the (cohomological) $p$-adic Galois representation attached to $f$ and let $V_f$ denote the underlying vector space.
Let $V_f^*$ denote the linear dual of $V_f$ and let $T_f^*$ denote a Galois stable lattice of $V_f^*$.  Set $A_f^* = V_f^*/T_f^*$ and let $H^1_f(\Q(\mu_{p^n}),A_f^*)$ denote the Block-Kato Selmer group attached to $A_f^*$ over $\Q(\mu_{p^n})$.

The following hypotheses will be needed for our main result:
\begin{equation*}
\label{irred}
\tag{Irred}
\rho_f |_{G_{\Qp}} \text{~is~irreducible;}
\end{equation*}
\begin{equation*}
\tag{No~pole~at~$s=j+1$}
\label{nopole}
\text{the~local~Euler~factor~of~} f \text{~at~} p  \text{~does~not~have~a~pole~at~} s=j+1;
\end{equation*}
\begin{align*}
\tag{Euler}
\label{Euler}
&\text{there~is~a~basis~of~} T_f^* \text{~such~that~if~we~identify~} \Aut(T_f^*) \text{~with} \GL_2(\O), \\
&\text{then~} \rho_f(\Gal(\Qbar/\Q(\mu_{p^\infty}))) \text{~contains~} 
\SL_2(\O).
\end{align*}

\subsection{Main results}
\begin{theoremA}
\label{thm:A}
If \eqref{irred}, \eqref{nopole}, and \eqref{Euler} hold, 
then there exists a non-zero constant $C \in \O$ (independent of $n$ and $j$) such for all $n \geq 0$
$$
C \cdot \MTnj \in \Fit_{\Ln}(H^1_f(\Q(\mu_{p^n}),A_f^*(-j))^\vee)
$$
where $\Ln := \O[\G_n]$ and $^\vee$ denotes Pontryagin dual.
\end{theoremA}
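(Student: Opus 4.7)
The plan follows the four-step strategy sketched in the abstract. \emph{First,} using \eqref{irred}, I would apply Colmez's $p$-adic local Langlands correspondence to the local representation $\rho_f|_{G_{\Qp}}$ to obtain an admissible unitary Banach representation of $\GLQp$. From this representation I would extract a compatible system of local cohomology classes $c_n \in H^1(\Qpmpn, T_f^*(1-r))$, compatible under corestriction as $n$ varies; the key point is that the $(\varphi,\Gamma)$-module associated with the Banach representation carries canonical elements (e.g.\ via Kirillov-type models or Colmez-type pairings) that can be promoted to cohomological objects.

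\emph{Second,} I would define the algebraic Mazur-Tate element $\algnj \in \Ln$ as the image of $c_n$ under an $\Ln$-equivariant Coleman-type map, and then verify directly that $\algnj$ lies in $\Fit_{\Ln}(H^1_g(\Q(\mu_{p^n}),A_f^*(1-r))^\vee)$. This step should proceed via Poitou--Tate global duality: the dual Selmer group fits into an exact sequence in which the local cohomology at $p$ (modulo the Bloch-Kato subspace) appears as the controlling quotient, and $c_n$ can be normalized so that its image in this quotient supplies rows in a presentation matrix of the dual Selmer group. The element $\algnj$ then appears as one of the relevant minors of that matrix. Hypothesis \eqref{Euler} ensures that auxiliary cohomology groups (the global $H^0$'s and the terms away from $p$) either vanish or carry no contribution, so the Fitting-ideal computation is not contaminated.

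\emph{Third,} I would compare $\algnj$ with $\MTnj$ using Kato's Euler system. Kato's zeta elements $z_n \in H^1(\Q(\mu_{p^n}), T_f^*(1-r))$ have localizations at $p$ which, via Kato's explicit reciprocity law together with the interpolation formula in the introduction (using \eqref{nopole} to rule out exceptional zeros), are identified with the analytic element $\MTnj$. Because $\algnj$ is built from the same local-at-$p$ input, matching the two normalizations should yield a divisibility $\algnj \mid C \cdot \MTnj$ in $\Ln$ for some nonzero $C \in \sO$ independent of $n$ and $r$. Combined with Step~2, this gives the desired inclusion $C \cdot \MTnj \in \Fit_{\Ln}(H^1_g(\Q(\mu_{p^n}),A_f^*(1-r))^\vee)$.

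The main obstacle will be \emph{Step 3}: the explicit reciprocity law relating the local classes produced by the $p$-adic local Langlands correspondence to those coming from Kato's Euler system is delicate, as it bridges two genuinely different constructions (Kirillov/Banach models on one side, Beilinson elements in $K_2$ of modular curves on the other). Proving that the two local constructions produce proportional classes, and pinning down the universal constant $C$, should be the technical core of the argument; hypotheses \eqref{irred} and \eqref{Euler} enter here both to validate the $p$-adic local Langlands input and to supply the standard Euler-system divisibilities in Iwasawa cohomology.
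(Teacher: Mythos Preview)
Your overall four-step outline matches the paper's strategy, but two of the steps are misdescribed in ways that obscure what actually makes the argument work.

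First, a direction issue: the local classes $c_{n,j}$ should live in $H^1_e(\Qpmpn, T_f(r))$, the Tate-dual side, not in $H^1(\Qpmpn, T_f^*(1-r))$. They are constructed as functionals on $H^1_{\Iw}(V_f^*(1-r))$ that factor through level $n$ and annihilate $H^1_g$; Tate local duality then places them in $H^1_e$ of the dual representation.

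More importantly, your Step~3 misidentifies the divisibility mechanism. There is no second family of local classes arising from Kato's Euler system to be compared with the $c_{n,j}$. Rather, the \emph{same} local classes $c_{n,r-1}$ are paired, via Tate local duality at~$p$, against two different \emph{global} objects. The algebraic element is
\[
\algnj \;=\; \Bigl(\textstyle\sum_{\sigma}\langle c_{n,r-1}^{\sigma},\,w_n\rangle_n\,\sigma^{-1}\Bigr)\cdot \chr_{\Lambda}\bigl((X_r)_{\Ltor}\bigr),
\]
where $(w_n)$ is a $\Lambda$-generator of the free rank-one module $\H^1(T_f^*(1-r))$, while the analytic element is recovered (up to the constant~$C$) as
\[
C\cdot\MTnj \;=\; \textstyle\sum_{\sigma}\langle c_{n,r-1}^{\sigma},\,z_{\Kato,n}\rangle_n\,\sigma^{-1}.
\]
The explicit reciprocity law (Colmez's, already available) is used only to evaluate the second pairing in terms of $\exp^*(z_{\Kato,n})$, which Kato's interpolation theorem then matches with $L$-values. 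Writing $z_{\Kato}=\alpha\cdot w$ with $\alpha\in\Lambda$, the divisibility $\algnj\mid C\cdot\MTnj$ reduces to showing that $\chr_{\Lambda}(X_r)_{\Ltor}$ divides~$\alpha$. This is precisely Kato's one-sided main conjecture ($\chr\H^2_P$ divides $\chr(\H^1/z_{\Kato})=\alpha$) combined with a Nekov\'a\v{r}-duality identification $\chr_{\Lambda}(X_r)_{\Ltor}=\chr\H^2_P(T_f^*(1-r))^{\iota}$. So the technical core is not a new reciprocity law bridging two local constructions, but the invocation of Kato's divisibility and the matching of characteristic ideals; the factor $\chr_{\Lambda}(X_r)_{\Ltor}$ in the definition of $\algnj$ is there exactly for this purpose, and your sketch omits it.

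Your Step~2 is also somewhat off. The Fitting-ideal containment is not obtained by exhibiting $\algnj$ as a minor of a presentation matrix via Poitou--Tate directly. Instead one uses a control-theorem exact sequence comparing $H^1_g(\Kn,\Adualj)$ to the $\Gamma_n$-invariants of the infinite-level Selmer group (whose dual $X_r$ has $\Lambda$-rank one by Kato), and the map $\psi_n$ is the composite $H^1_e(\Knp,T_f(r))\to (X_r)_{\Gamma_n}\to (Z_r)_{\Gamma_n}\to\Ln$, the last arrow coming from a choice of isomorphism $R_{\Lambda}(Z_r)\cong\Lambda$. The inclusion $\psi_n(c)\cdot\chr_{\Lambda}(X_r)_{\Ltor}\in\Fit_{\Ln}$ then follows from elementary Fitting-ideal lemmas about quotients $A_{\Gamma_n}/I_n$ with $A$ of finite index in~$\Lambda$, together with Greenberg's result that $(X_r)_{\Ltor}$ has no nonzero finite submodules. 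Hypothesis \eqref{Euler} plays no role here; it enters only through Kato's divisibility in Step~3.
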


Theorem A should be viewed as a ``finite-level" (weak) main conjecture for $f$.  When $f$ is ordinary at $p$, one can deduce this theorem directly from the main conjecture under relatively mild hypotheses simply by using standard control theorems (see for instance \cite[Theorem 1.14]{Kim-Kurihara}).  
We note though that \eqref{irred} forces $f$ to be non-ordinary at $p$.  In particular, the standard control theorems no longer hold and there seems to be no direct connection between the main conjecture and Theorem A in this case.

Regarding the other hypotheses, \eqref{nopole} is needed to compute the inverse of $1-\varphi$ in the context of $(\varphi,\Gamma)$-modules.  We do not know what happens when this hypothesis is removed.  See Lemma \ref{lemma:nopole} for a complete classification of newforms which do not satisfy \eqref{nopole}. The hypothesis \eqref{Euler} is needed to invoke Kato's Euler system.

We will discuss the constant $C$ appearing in the statement of Theorem A later in the introduction,  but for now let us just say that it arises because of our inability to compare various normalizations of the periods attached to $f$.

We note that our theorem holds without any restriction on the power of $p$ that occurs in the level of our form:\ we have no crystalline hypotheses nor a finite slope hypothesis.  
Further,
Theorem A implies the so-called weak vanishing conjecture  (as in \cite[Proposition 3]{MT}).

\begin{corB}
Under the assumptions of Theorem A, the order of vanishing of $\MTnj$ at a character $\chi$ of $\G_n$ is greater than or equal to the dimension of the $\chi$-part of 
$H^1_f(\Q(\mu_{p^n}),A_f^*(-j))$.
\end{corB}

\subsection{Method of proof}
Our method of proof of Theorem A follows two steps.  First we define the notion of an {\it algebraic} $\theta$-element, $\algnj \in \Ln$, which is meant to be the algebraic counterpart of the corresponding Mazur-Tate element.  The construction of such elements dates back to Perrin-Riou's work \cite{PR90} (see also \cite{Pollack-JNT,PR-theta}).  As these elements are defined purely algebraically, we are able to check directly that $\algnj$ belongs to the Fitting ideal appearing in Theorem A. The second step then is to prove that $\algnj$ divides $C \cdot \MTnj$ in $\Ln$.  Unsurprisingly, our proof of this fact comes via Kato's Euler system.

Our construction of algebraic $\theta$-elements relies upon certain local cohomology classes $c_{n,j} \in H^1_f(\Qp(\mu_{p^n}),T_f(1+j)) / ({\rm torsion})$ which for $n \geq 1$ satisfy the three-term relation
$$
\cores^{n+1}_n(c_{n+1,j}) = a_p(f) c_{n,j} - \psi(p) p^{k-1} \res^n_{n-1}(c_{n-1,j}).
$$
With these classes in hand, we can directly define $\algn$.  Namely
$$
\algnj = \sum_{\sigma \in \G_n} \left\langle c_{n,j}^\sigma , w_n \right\rangle_n \sigma^{-1} \cdot \chr_{\Lambda} \H^2_P(T_f^*(-j))^\iota.
$$
Here  $(w_n)_n$ is a generator of $\H^1(T_f^*(-j))$ which is free of rank 1 over the Iwasawa algebra $\Lambda :=  \O[[\Gal(\Q(\mu_{p^\infty})/\Q)]]$ and $\langle \cdot, \cdot \rangle_n$ is the Tate local duality pairing at $p$.  (See section \ref{sec:euler} for the remaining notation.)

The relation between $\algnj$ and $\MTnj$ comes via Kato's Euler system as we prove that 
$$
C \cdot \MTnj =  \sum_{\sigma \in \G_n} \left\langle c_{n,j}^\sigma , z_{\Kato,n} \right\rangle_n \sigma^{-1} 
$$
where $(z_{\Kato,n})_n \in \H^1(T_f^*(-j))$ is Kato's Euler system and $C$ is some non-zero constant (independent of $n$ and $j$).  The divisibility of $C \cdot \MTnj$ by $\algnj$ then follows immediately from Kato's proof of the main conjecture without $p$-adic $L$-functions \cite[Theorem 12.5]{Kato}.

We note that in section \ref{sec:selmer}, when we introduce algebraic $\theta$-elements, we give a different definition from the one above.  This alternative definition is more easily connected with Fitting ideals, while the above definition more easily relates to $\MTnj$ via Kato's Euler system.  In section \ref{sec:compare}, we verify that these two definitions are the same.

\subsection{Local methods}
We are now left to describe our construction of the local classes $c_{n,j} \in H^1_f(\Qp(\mu_{p^n}),T_f(1+j))$.  (Here, for simplicity, we are assuming this cohomology group is $p$-torsion free.  For the general case, see section \ref{sec:normal}.)  We note that these local classes have appeared in literature in several places before.  For example, Kobayashi in \cite{Kobayashi} made use of such local points to define plus/minus Selmer groups of elliptic curves at supersingular primes.  His construction proceeded via the formal group attached to the elliptic curve.  As we are working with arbitrary weight modular forms, this geometric object is unavailable to us.  We instead give a purely local construction of these classes via the $p$-adic local Langlands correspondence.

Namely, let $V$ denote an irreducible de Rham representations of $G_{\Qp}$ defined over a finite extension $L/\Qp$; for instance, we could take $V = V_f |_{G_{\Qp}}$.  We will produce elements $c_{n,j} \in H^1_f(\Qp(\mu_{p^n}),T(1+j))$ for $T$ a $G_{\Q_p}$-stable lattice in $V$.  For the remainder of the introduction, we will assume that $j=0$ to ease the exposition and simply write $c_n$ for the class $c_{n,j}$.  To construct these classes, we first produce, for each $n \geq 0$, a functional on $H^1_{\Iw}(V^*) :=
\bigl( \varprojlim_n H^1(\Qp(\mu_{p^n}),T^*)\bigr) \otimes \Qp$ which:
\begin{enumerate}
\renewcommand{\theenumi}{\alph{enumi}}
\item factors through $H^1(\Qp(\mu_{p^n}),T^*)\otimes{\Q_p}$,
\item takes integral values on $H^1(\Qp(\mu_{p^n}),T^*)$, and
\item kills $H^1_f(\Qp(\mu_{p^n}),T^*)$.  
\end{enumerate}
By Tate local duality, such a functional corresponds to a class $c_{n} \in H^1_f(\Qp(\mu_{p^n}),T(1))$.

To produce this functional, we  identify $H^1_{\Iw}(V^*)$ with $\psi$-invariants of the $(\varphi,\Gamma)$-module $\bD(V^*)$.  The $p$-adic local Langlands correspondence gives rise to a Banach space representation $\pi(V)$ of $\GL_2(\Qp)$, and moreover, an embedding
$$
\bD(V^*)^{\psi=1} \hookrightarrow \pi(V)^*
$$
where the superscript $*$ on the right denotes the topological dual.  

Thus, to define  functionals on $\bD(V^*)^{\psi=1}$, it suffices to simply give natural elements of $\pi(V)$.    Such natural elements arise from the locally algebraic vectors in $\pi(V)$.  Namely, there is an embedding
$$
\pi_{\sm}(V) \otimes (\Sym^{k-2}(L^2))^* \hookrightarrow \pi(V)
$$
where $\pi_{\sm}(V)$ is the smooth $\GL_2(\Qp)$-representation associated to $V$ via the classical local Langlands correspondence.  Let $v_{\new}$ denote a newvector of $\pi_{\sm}(V)$ and let $v_{\hw}$ denote a highest weight vector of $(\Sym^{k-2}(L^2))^*$.  For $n\geq 0$, set
$$
d_n := \left(\begin{smallmatrix} 1 & 1 \\ 0 & 1 \end{smallmatrix}\right) \left(\begin{smallmatrix} p^n & 0 \\ 0 & 1 \end{smallmatrix}\right) (v_{\new} \otimes v_{\hw}) \in \pi(V)
$$
which, as described above, gives rise to a functional on $H^1_{\Iw}(V^*)$.  

Moreover, we have the following explicit reciprocity law, proven (essentially) by Colmez \cite[Prop.~VI.3.4]{Colm2}:\ if $z' \in H^1_{\Iw}(V^*) \cong \bD(V^*)^{\psi=1}$, we have
$$
\{ d_n , z' \} = \langle \alpha_n , \exp^*(z'_n) \rangle.
$$
Here the pairing $\{ \cdot, \cdot \}$ is a pairing of $(\varphi,\Gamma)$-modules, $\langle \cdot, \cdot \rangle$ is the pairing from Tate local duality in Galois cohomology, $z'_n$ is the projection of $z'$ to $H^1(\Qp(\mu_{p^n}),V^*)$, and $\alpha_n$ is some element of  $\bD_{\dR,n}(V(1))$ which is described precisely in Corollary \ref{cor:specialized explicit}.
For now, let us just mention that $\alpha_n$ arises by viewing $(1-\varphi)^{-1}(d_n)$ in a generalized Kirillov model of $\pi(V)$, and then evaluating at $p^{-n}$.  

From this explicit formula for $\{d_n,z'\}$ together with the surjectivity of $H^1_{\Iw}(V^*) \to H^1(\Qp(\mu_{p^n}),V^*)$, we can see that the pairing factors through $H^1(\Qp(\mu_{p^n}),V^*)$, as the dependence of the right hand of the formula on $z'$ is only through $z'_n$.  Moreover, the formula vanishes for $z'_n \in H^1_g(\Qp(\mu_{p^n}),V^*)$ as this subspace is the kernel of $\exp^*$. 
In particular, there are unique classes $c_n \in H^1_e(\Qp(\mu_{p^n}),V(1))$ such that
$$
\{ d_n , z' \} = \langle c_n , z'_n \rangle_n;
$$
here the pairing on the right is the pairing on Galois cohomology arising from Tate local duality.
Lastly, under \eqref{nopole}, \cite[Theorem 4.1(ii)]{BK} implies $ H^1_e(\Qp(\mu_{p^n}),V(1)) = H^1_f(\Qp(\mu_{p^n}),V(1))$.

The fact that the $c_n$ satisfy a three-term relation follows from the Hecke properties of $v_{\new}$.
Further we mention that the above reciprocity law is the key to our comparison of 
$
\sum_{\sigma \in \G_n} \left\langle c_n^\sigma, z_{\Kato,n} \right\rangle_n \sigma^{-1}
$
with $\MTnv{1}$, as one recovers $L$-values from $z_{\Kato,n}$ via the dual exponential map. 

\subsection{Normalizations}
The constant $C$ which appears in the statement of Theorem A is a result of our inability to control various normalizations which we now describe.

The first issue that arises is that the construction of the classes $c_n$ only determines them up to a scalar as the class $v_{\new} \otimes v_{\hw}$ is only well-defined up to a scalar.  Our normalization of the choice of this element is spelled out in section \ref{sec:normal} and is well-defined up to a $p$-adic unit.  For now, we just mention that our normalization guarantees that $v_{\new} \otimes v_{\hw} \in \pi(T)$ and thus the $c_n$ take values in $T(1)$ (rather than just in $V(1)$). It appears difficult however to compare this normalization to the normalization one makes to arise at the cohomological periods $\Omega_f^\pm$ which are also well-defined up to a $p$-adic unit.  This is the first reason we are obliged to state our theorem with the ambiguity of a single non-zero constant in $\O$ (independent of $n$ and $j$).  Note that we can assume the constant is in $\O$ as we can scale away any denominators appearing in Theorem A.

The second issue is that the cohomological periods $\Omega_f^\pm$ may not match the periods that arise when using Kato's Euler system (see \cite[section 4.3]{ChanHo} for more details).  Of course, one could simply replace the cohomological periods by Kato's periods, but we choose not to do this because the cohomogical periods are directly computable by modular symbols where as, to us, it is not clear how to compute Kato's periods.

Lastly, we note that Chan-Ho Kim in \cite{ChanHo} has recently proven (under some assumptions) Theorem A in the crystalline case with $j=(k-2)/2$, and moreover, he can take $C=1$ when $2 \leq k \leq p-1$.

\subsection{Organization of the paper}

We have divided the paper into two parts.  The first part is entirely local and devoted to the construction of the classes $c_{n,j}$.  The second half of the paper is more global in nature and is devoted to the construction of algebraic $\theta$-elements and the proof of Theorem A sketched above.  The second part depends on the first part only through (a) the existence of the local classes $c_{n,j}$ satisfying a three-term relation, and (b) an explicit formula for $\sum_\sigma \langle c_{n,j}^\sigma, z' \rangle_n \sigma$ (Proposition \ref{prop:recip}) which is derived through the explicit reciprocity law.  The reader only interested in the global methods from Iwasawa theory may attempt to read the second part with only a limited knowledge of the first part.

\vspace{0.2cm}

\noindent
{\it Acknowledgements}:\ We heartily thank Pierre Colmez and Chan-Ho Kim for several very helpful conversations about this paper and we thank the anonymous referees for their careful reading and their many valuable suggestions.   The second author thanks MPIM-Bonn for its hospitality and stimulating environment during his year-long visit there.  The first author was partially supported by NSF grant DMS-2201242.  The second author was partially supported by NSF grant DMS-2302285 and a Simons Travel Support award.

\part{The local theory}

\section{Rings}

We introduce various rings of Fontaine using  the notation introduced
by Berger and Colmez.

\subsection{Rings in characteristic $p$}

The basic ring $\widetilde{\mathbf E}^+$ is the perfection of the characteristic $p$ ring $\mathcal O_{\mathbb C_p}/p$:
$$\widetilde{\mathbf E}^+ := \plim{x\mapsto x^p} \mathcal O_{\mathbb C_p}/p.$$
This ring admits another description, namely
$$\widetilde{\mathbf E}^+ := \plim{x\mapsto x^p} \mathcal O_{\mathbb C_p};$$
with this latter description, the multiplication is given in the obvious way,
but the addition involves taking an appropriate limit (since raising to the $p$-th
power is a multiplicative homomorphism of $\mathcal O_{\mathbb C_p}$, but is not
an additive homomorphism).  
These two definitions give the same object as the map from the second projective limit to the first  given by reducing each component modulo $p$ is an isomorphism.

The second description has an advantage over the first, in that it puts into 
evidence the fact that $\widetilde{\mathbf E}^+$ is a complete valuation ring, with
valuation given simply by projecting onto the initial component in the
projective limit, and taking the $p$-adic valuation.

We let $\widetilde{\mathbf E}$ denote the field of fractions of the valuation
ring $\widetilde{\mathbf E}^+$.  Alternately, it can be described as 
$$\widetilde{\mathbf E} := \plim{x\mapsto x^p} {\mathbb C_p},$$
with multiplication being given componentwise in the projective limit, and
addition being given by the same formula as before.

We choose now a generator of~$\mathbb Z_p(1)$ and denote the corresponding element of $\widetilde{\mathbf E}^+$ as $\varepsilon$.  
The valuation of $\varepsilon - 1$ is equal to $p/(p-1)$, which is positive.
Thus we also have the description $\widetilde{\mathbf E} = \widetilde{\mathbf E}^+[
(\varepsilon - 1)^{-1} ].$

We define 
$$\mathbf E_{\mathbb Q_p}^+ := \mathbb F_p[[\varepsilon - 1 ]] \subset \widetilde{\mathbf E}^+,$$
and
$$\mathbf E_{\mathbb Q_p} := \mathbb F_p((\varepsilon - 1 )) \subset \widetilde{\mathbf E}.$$
The valuation on $\widetilde{\mathbf E}$ induces the usual discrete valuation (with
the slightly odd $p/(p-1)$ normalization mentioned above) on $\mathbf E_{\mathbb Q_p}$.

The field  $\widetilde{\mathbf E}$ is algebraically closed,
and is in fact the completion of the algebraic closure of $\mathbf E_{\mathbb Q_p}$,
or, equivalently, of the separable closure of $\mathbf E_{\mathbb Q_p}$.
We write $\mathbf E \subset \widetilde{\mathbf E}$
to denote the separable closure of the field $\mathbf E_{\mathbb Q_p}$ (it is a
non-complete valuation ring),
and also write $\mathbf E^+ := \mathbf E \cap \widetilde{\mathbf E}^+$ to denote
the ring of integers in $\mathbf E$.
As already noted, $\widetilde{\mathbf E}$ is the completion of $\mathbf E$.
Finally, we let $\widetilde{\mathbf E}_{\mathbb Q_p}$ denote the completion
of the radiciel ({\it i.e.}\ purely inseparable) closure of $\mathbf E_{\mathbb Q_p}$
in $\widetilde{\mathbf E}$, and write 
$\widetilde{\mathbf E}_{\mathbb Q_p}^+ := \widetilde{\mathbf E}_{\mathbb Q_p} \cap
\widetilde{\mathbf E}^+.$

An important point is that $\Aut(\widetilde{\mathbf E}/\mathbf E_{\mathbb Q_p})
= \Gal(\mathbf E/\mathbf E_{\mathbb Q_p}) = H,$
where $H \subset G_{\mathbb Q_p}$ is the kernel of the cyclotomic character,
and also that $\mathbf E_{\mathbb Q_p} = \mathbf E^H$,
while $\widetilde{\mathbf E}_{\mathbb Q_p} = \widetilde{\mathbf E}^H.$

\subsection{General notational principles}
In addition to introducing a host of fields and domains,
the preceding subsection illustrates some general notational conventions:
(i) Objects in characteristic $p$ are denoted via $\mathbf E$;
(ii) The objects with tildes are in some sense complete, while the objects without
the tildes are the purely algebraic, ``decompleted'' analogues;  
(iii) The objects with $+$ superscripts are integral with respect to the valuation;
(iv) The objects with the $\mathbb Q_p$-subscript are the ones with trivial Galois
action.

There will be some further notational conventions introduced below:\ (v) objects
in characteristic zero obtained from the $\mathbf E$s by Witt vector or Cohen ring
constructions will be denoted with an $\mathbf A$; (vi) objects obtained from the various
$\mathbf A$s by inverting $p$ will be denoted with a $\mathbf B$.

\subsection{Rings in characteristic zero}

The key ring is now $$\widetilde{\mathbf A}^+ := W(\widetilde{\mathbf E}^+)$$
where $W$ denotes Witt vectors.
This gives rise to other $\mathbf A$-type rings related to $(\varphi,\Gamma)$-modules,
and also underlies the definition of $\mathbf B_{\crys}$ and $\mathbf B_{\dR}$.
We let $\varphi$ denote the Frobenius on $\widetilde{\mathbf A}^+$.
As well as its $p$-adic topology, the ring $\widetilde{\mathbf A}^+$ has another topology,
which is more natural to consider, namely its so-called weak topology:
we think of the Witt vectors $W(\widetilde{\mathbf E}^+)$ as being isomorphic to
the product $\prod_{i=0}^{\infty}\widetilde{\mathbf E}^+,$ and give them the corresponding
product topology, where $\widetilde{\mathbf E}^+$ is given its valuation topology.

An important element in $\widetilde{\mathbf A}^+$ is $T := [\varepsilon] - 1$.
(Here, as usual, $[x]$ denotes the circular lift to the Witt vectors of an element
$x \in \widetilde{\mathbf E}^+$.)  Note that this lifts the positive valuation element
$\varepsilon - 1$ of $\widetilde{\mathbf E}^+$.
Thus the weak topology on $\widetilde{\mathbf A}^+$ is also the $(p,T)$-adic topology.

Since $T$ lifts $\varepsilon -1$,
we find that 
$$ T \equiv [\varepsilon - 1 ] \bmod p \widetilde{\mathbf A}^+,$$
and hence that 
$$
\widehat{\widetilde{\mathbf A}^+[\dfrac{1}{T}]}
= 
\widehat{\widetilde{\mathbf A}^+[\dfrac{1}{[\varepsilon -1]}]}
$$
(where $~\widehat{}~$ denotes the $p$-adic completion);
we denote these (equal) rings by $\widetilde{\mathbf A}$.
Since $\widetilde{\mathbf E} = \widetilde{\mathbf E}^+[(\varepsilon -1)^{-1}],$
we have the more canonical description
$$\widetilde{\mathbf A} := W(\widetilde{\mathbf E}).$$

We now imitate all the other $\mathbf E$ constructions in this new $\mathbf A$ context.
Firstly, we set
$$\mathbf A_{\mathbb Q_p} := \widehat{\Z_p((T))} \subset \widetilde{\mathbf A}$$
(this 
is a discretely valued Cohen ring with uniformizer $p$ and
residue field $\mathbf E_{\mathbb Q_p}$),
and then we write
$$\mathbf A^+_{\mathbb Q_p} := \Z_p[[T]] =
\mathbf A_{\mathbb Q_p} \cap \widetilde{\mathbf A}^+.$$ 
We then let $\mathbf A$ denote the $p$-adic completion of the maximal unramified extension
of $\mathbf A_{\mathbb Q_p}$ (in $\widetilde{\mathbf A}$), which is a Cohen ring with
residue field $\mathbf E$, and let $\mathbf A^+ :=
\mathbf A \cap \widetilde{\mathbf A}^+$.
Since $\mathbf E^+$ is dense in $\widetilde{\mathbf E}^+$,
we find that $\mathbf A^+$ is weakly dense in
$\widetilde{\mathbf A}^+$.

Finally, we write
$$\widetilde{\mathbf A}_{\mathbb Q_p} := W(\widetilde{\mathbf E}_{\mathbb Q_p}),$$
and $$\widetilde{\mathbf A}_{\mathbb Q_p}^+ := W(\widetilde{\mathbf E}^+_{\mathbb Q_p}) 
= \widetilde{\mathbf A}_{\mathbb Q_p} \cap \widetilde{\mathbf A}^+.$$
The $H$-action on $\widetilde{\mathbf E}$ induces an $H$-action on
$\widetilde{\mathbf A}$, and
$$\widetilde{\mathbf A}^H = \widetilde{\mathbf A}_{\mathbb Q_p}.$$
Since $H$ fixes $\varepsilon,$ we find that
the $H$-action on $\widetilde{\mathbf A}$ fixes $T$, and hence fixes $\mathbf A_{\mathbb Q_p}$
elementwise.  Thus it preserves $\mathbf A$, and we have
$$\mathbf A^H = \mathbf A_{\mathbb Q_p}.$$

\subsection{The $\mathbf B$-rings}
We can replace $\mathbf A$ by $\mathbf B$ everywhere by inverting $p$.
There is the famous surjection
$$\theta: \widetilde{\mathbf A}^+ \to \mathcal O_{\mathbb C_p},$$
which extends to a surjection
$$\theta: \widetilde{\mathbf B}^+ \to \mathbb C_p,$$
and as usual we write $\mathbf B_{\dR}^+$ to denote the completion of
$\widetilde{\mathbf B}^+$ with respect to the kernel of $\theta$.

Note that the kernel of $\theta$ in $\widetilde{\mathbf A}^+$,
and hence also in $\widetilde{
\mathbf B}^+,$ is principal,
generated by $T/\varphi^{-1}(T)$.  This latter element
is traditionally denoted $\omega$.  Note also that the famous element
$t$ (see just below) does not belong to $\widetilde{\mathbf B}^+$,
but only to the completion $\mathbf B_{\dR}$; there is thus some 
advantage to being aware of the element $\omega$, which generates
the kernel of $\theta$ {\em in} $\widetilde{\mathbf B}^+$.
Indeed, we will have use for $\omega$ later.

Recall that $\mathbf B_{\dR}^+$ is a DVR with uniformizer $t := \log (1 + T)$ and 
with residue field $\mathbb C_p$ (given by the extension of $\theta$,
which we again denote by $\theta: \mathbf B_{\dR}^+ \to \mathbb C_p$).
We set $$\mathbf B_{\dR} := \mathbf B_{\dR}^+[1/t] = \mathbf B_{\dR}^+[1/T].$$
(To see the asserted equality, note that 
$$t = \log (1 + T) = T\left(\sum_{n = 0}^{\infty} (-1)^{n}\dfrac{T^n}{n+1} \right),$$
where the second factor in the right-hand expression is a unit in
$\mathbf B_{\dR}^+$, as $T \in~\ker~\theta$.)

There are further observations regarding $T$, $t$, and $\mathbf B_{\dR}^+$ that
are useful.  For example, $\varphi(T)/T = \varphi(\omega)$ is a unit
in $\mathbf B_{\dR}^+$ (since its image under $\theta$ is $p$, which
is non-zero).  Thus in $\mathbf B_{\dR}^+$, the elements $t$ 
and $\varphi^n(T)$ differ by a unit for all $n \geq 0.$
On the other hand, if $n < 0$ then $\varphi^n(T)$ is a unit in $\mathbf B_{\dR}^+$
(since its image under $\theta$ is non-zero).

There are two more rings that we have to introduce, namely $\mathbf B_{\max}$
and $\widetilde{\mathbf B}_{\rig}^+$. 
From the point of view of $p$-adic Hodge theory,
these rings play the same role as
$$
\mathbf B_{\crys}^+  = \{ x \in \mathbf B_{\dR}^+ ~:~ x = \sum_{n=0}^\infty x_n \frac{\omega^n}{n!} \text{~with~} x_n \to 0 \text{~in~}  \widetilde{\mathbf B}^+\}
$$
in that they compute the crystalline Dieudonn\'e module, but they have the advantage of being
better behaved, and relating more directly to the rings of $(\varphi,\Gamma)$-module
theory.    The definitions are as follows:\ we first let
$\mathbf A_{\max}$ denote the $p$-adic completion of 
$\widetilde{\mathbf A}^+[\omega/p]$,
and then
set $\mathbf B_{\max} := \mathbf A_{\max}[1/p];$
these are both subrings of $\mathbf B_{\dR}$.
Note that $\varphi(\omega) \equiv \omega^p \bmod p \widetilde{\mathbf A}^+$
(by definition of the action of $\varphi$ on a ring of Witt vectors),
so that $\varphi(\omega)/p \in \widetilde{\mathbf A}^+[\omega/p]$.
Thus $\varphi$ extends by continuity to
$\mathbf A_{\max}$ and $\mathbf B_{\max}$.
However $\varphi$ is not surjective on either of these rings,
and we set
$$\widetilde{\mathbf B}_{\rig}^+ := \bigcap_{n \geq 0}\varphi^n(\mathbf B_{\max});$$
we then have, by construction, that
$\varphi$ is bijective on $\widetilde{\mathbf B}_{\rig}^+$.
The sequence of inclusions
$$\widetilde{\mathbf B}^+ \subset \widetilde{\mathbf B}_{\rig}^+
\subset \mathbf B_{\dR}^+$$
will be of fundamental importance to us.

The following proposition relating divisibilities in the rings involved
in these inclusions may seem technical, but is crucial.

\begin{prop}
\label{prop:divisibilities}
~
\begin{enumerate} 
\item
If $x \in \widetilde{\mathbf A}^+$
{\em (}resp.\ $\widetilde{\mathbf B}^+${\em )}
is such that $\varphi^n(x) \in \ker\theta$ for all $n \geq 0$
{\em (}i.e.\ if $t$ divides $\varphi^n(x)$ in $\mathbf B^+_{\dR}$
for all $n \geq 0${\em )},
then $x \in T\widetilde{\mathbf A}^+$
{\em (}resp.\ $T \widetilde{\mathbf B}^+${\em )}.
\item
If $x \in \widetilde{\mathbf B}^+_{\rig}$ is such that
$\varphi^n(x) \in \ker \theta$ for all $n \in \mathbb Z$
{\em (}i.e.\ if $t$ divides $\varphi^n(x)$ in $\mathbf B^+_{\dR}$
for all $n \in \mathbb Z${\em )},
then $x \in t \widetilde{\mathbf B}^+_{\rig}$.
\end{enumerate}
\end{prop}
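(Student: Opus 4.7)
The plan is to handle Part (1) first and then Part (2) by an analogous but simpler argument. The key structural input is that $\varphi$ is bijective on both rings in question: on $\widetilde{\mathbf A}^+ = W(\widetilde{\mathbf E}^+)$ because $\widetilde{\mathbf E}^+$ is perfect (so Frobenius lifts to a bijection on Witt vectors), and on $\widetilde{\mathbf B}^+_{\rig}$ by its very definition as $\bigcap_{n \geq 0} \varphi^n(\mathbf B_{\max})$. In particular, the hypothesis $\omega \mid \varphi^n(x)$ may be rewritten as $\varphi^{-n}(\omega) \mid x$ for every $n \geq 0$.

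For Part (1), I would first extract the factors $\varphi^{-k}(\omega)$ one at a time. Writing $x = \omega y_0$, the congruence $\varphi(\omega) \equiv p \pmod{\omega}$ (which follows from $\theta(\varphi(\omega)) = p$, itself immediate from $\varphi(T)/T = p + \binom{p}{2}T + \cdots + T^{p-1}$), together with the $p$-torsion-freeness of the quotient $\widetilde{\mathbf A}^+/\omega \cong \mathcal O_{\mathbb C_p}$, upgrades $\omega \mid \varphi(\omega)\varphi(y_0)$ to $\omega \mid \varphi(y_0)$ and hence $\varphi^{-1}(\omega) \mid y_0$. Iterating (one checks that $\theta(\varphi^j(\omega)) = p$ for all $j \geq 1$ so the same $p$-torsion argument runs at every step) produces, for every $k \geq 0$,
\[ x \,=\, \omega\cdot\varphi^{-1}(\omega)\cdots\varphi^{-(k-1)}(\omega)\cdot x_k \qquad \text{with } x_k \in \widetilde{\mathbf A}^+. \]
Telescoping the defining relation $T = \omega\,\varphi^{-1}(T)$ gives $T = \omega\,\varphi^{-1}(\omega)\cdots\varphi^{-(k-1)}(\omega)\,\varphi^{-k}(T)$, so $x\,\varphi^{-k}(T) = T\,x_k$ in $\widetilde{\mathbf A}^+$ for each $k$.

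Reducing modulo $p$ and working in the valuation ring $\widetilde{\mathbf E}^+$ (where a direct computation gives $v(\bar\omega) = 1$, and consequently $v(\overline{\varphi^{-j}(\omega)}) = p^{-j}$), the above factorization forces $v(\bar x) \geq \sum_{j=0}^{k-1} p^{-j}$ for every $k$, hence $v(\bar x) \geq p/(p-1) = v(\bar T)$, so $\bar T \mid \bar x$. Write $x = T\tilde x + p r$ with $\tilde x, r \in \widetilde{\mathbf A}^+$: from $\theta(\varphi^n(T)) = 0$ and $p$-torsion-freeness of $\mathcal O_{\mathbb C_p}$, one checks that $r$ itself again satisfies the hypothesis of the proposition. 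Iterating produces $\tilde x_i, r_i \in \widetilde{\mathbf A}^+$ with $x = T\sum_{i=0}^{m-1} p^i\tilde x_i + p^m r_m$ for every $m$, and the $p$-adic completeness of $\widetilde{\mathbf A}^+$ makes the partial sums converge to some $y \in \widetilde{\mathbf A}^+$ with $Ty = x$. The $\widetilde{\mathbf B}^+$ version follows by clearing a power of $p$ from the denominator.

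For Part (2), the element $t$, with $\varphi^n(t) = p^n t$ for all $n \in \mathbb Z$, plays the role that $T$ did above; since $\varphi$ is bijective on $\widetilde{\mathbf B}^+_{\rig}$ and $p$ is already invertible there, the analogous iterative extraction using the hypothesis for every $n \in \mathbb Z$ (not merely $n \geq 0$) yields $x \in t\widetilde{\mathbf B}^+_{\rig}$ directly, with no separate mod-$p$ lifting needed. The main obstacle is precisely that lifting step in Part (1): the delicate point is to observe that the $p$-adic remainder $r$ inherits the full hypothesis of the proposition, since only this propagation drives the successive $p$-adic approximations to convergence in $T\widetilde{\mathbf A}^+$.
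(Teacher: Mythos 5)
Your Part (1) argument is correct and complete, and is genuinely more than the paper provides (the paper simply cites Colmez, Lemma III.3.7). The chain of ideas — (i) bijectivity of $\varphi$ turns the hypothesis into $\varphi^{-n}(\omega)\mid x$; (ii) iterative extraction using $\theta(\varphi^j(\omega))=p$ for $j\geq 1$ and $p$-torsion-freeness of $\mathcal O_{\mathbb C_p}$; (iii) the mod-$p$ valuation computation $v(\bar x)\geq\sum_{j<k}p^{-j}\to p/(p-1)=v(\bar T)$; (iv) the $p$-adic lifting, driven by the observation that the remainder $r$ inherits the full hypothesis — all checks out, including the telescoping identity $T=\omega\varphi^{-1}(\omega)\cdots\varphi^{-(k-1)}(\omega)\varphi^{-k}(T)$ and the passage from $\widetilde{\mathbf A}^+$ to $\widetilde{\mathbf B}^+$ by clearing a power of $p$.

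Part (2), however, has a genuine gap. You assert that, since $p$ is invertible in $\widetilde{\mathbf B}^+_{\rig}$, "the analogous iterative extraction ... yields $x\in t\widetilde{\mathbf B}^+_{\rig}$ directly, with no separate mod-$p$ lifting needed." But the mod-$p$ reduction to $\widetilde{\mathbf E}^+$ followed by $p$-adic lifting is precisely what converted "$x$ is divisible by arbitrarily long products $\prod_{j<k}\varphi^{-j}(\omega)=T/\varphi^{-k}(T)$" into the single divisibility $T\mid x$ in Part (1). With $p$ invertible that tool disappears entirely, and nothing in your write-up replaces it: you never explain how the iterative extraction, which a priori only gives $\varphi^m(T)/\varphi^{-k}(T)\mid x$ in $\mathbf B^+_{\dR}$ for all $m,k$, yields $x/t\in\widetilde{\mathbf B}^+_{\rig}$. (There is also an unaddressed prerequisite: to run the extraction inside $\widetilde{\mathbf B}^+_{\rig}$ at all you need $\ker\theta\cap\widetilde{\mathbf B}^+_{\rig}=\omega\widetilde{\mathbf B}^+_{\rig}$, which is not obvious from the definition $\widetilde{\mathbf B}^+_{\rig}=\bigcap_n\varphi^n(\mathbf B_{\max})$.) The route the paper actually takes, via Colmez Lemma III.3.4, is to first prove the $\mathbf B_{\max}$ analogue using the hypothesis for $n\geq 0$ together with the explicit description of $\mathbf A_{\max}$ as the $p$-adic completion of $\widetilde{\mathbf A}^+[\omega/p]$, and only then deduce the $\widetilde{\mathbf B}^+_{\rig}$ statement by applying the $\mathbf B_{\max}$ result to each $\varphi^{-m}(x)$ (where the full $n\in\mathbb Z$ hypothesis enters) and intersecting: $x/t\in\varphi^m(\mathbf B_{\max})$ for all $m\geq 0$, hence $x/t\in\widetilde{\mathbf B}^+_{\rig}$. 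You should either reproduce an argument for the $\mathbf B_{\max}$ case — which requires its own careful work with the $(\omega/p)$-adic structure and is not an obvious adaptation of Part (1) — or cite it.
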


\begin{proof}
The first part is \cite[Lemma III.3.7]{Colm3}.  For the second part, in \cite[Lemma III.3.4]{Colm3}, it is proven that if $z \in {\mathbf B}_{\max}^+$ with $\varphi^n(z) \in\ker \theta$ for all $n \geq 0$, then $t$ divides $z$ in ${\mathbf B}_{\max}^+$.  Thus, we have $x = tb$ with $b \in \mathbf B_{\max}^+$ and need to show that $b \in \widetilde{\mathbf B}^+_{\rig}$.  
But, taking $z=\varphi^{-m}(x)$ for $m \geq 0$, gives $\varphi^{-m}(x) = t b_m$ for some $b_m \in \mathbf B_{\max}^+$.  Then $b = p^m \varphi^m(b_m)$ for all $m \geq 0$ implying that  $b \in \widetilde{\mathbf B}^+_{\rig}$.  
\end{proof}

\begin{cor}
\label{cor:divisibilities}
For any $m \geq 0,$
the inclusion
$\varphi^m(T) \widetilde{\mathbf A}^+ \subset
\widetilde{\mathbf A}^+
\cap \,  \varphi^m(T) \widetilde{\mathbf B}_{\rig}^+$
is an equality
{\em (}and similarly with $\widetilde{\mathbf B}^+$ 
in place of $\widetilde{\mathbf A}^+$.{\em )}
\end{cor}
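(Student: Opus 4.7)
The containment $\subset$ is immediate from $\widetilde{\mathbf A}^+ \subset \widetilde{\mathbf B}_{\rig}^+$. For the reverse inclusion, the task is: given $x \in \widetilde{\mathbf A}^+$ and $b \in \widetilde{\mathbf B}_{\rig}^+$ with $x = \varphi^m(T)\, b$, show $b \in \widetilde{\mathbf A}^+$ (so that $x \in \varphi^m(T)\widetilde{\mathbf A}^+$).

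My plan is a two-step reduction to Proposition \ref{prop:divisibilities}(1). First, I would reduce to the case $m = 0$ by twisting with $\varphi^{-m}$. Since $\widetilde{\mathbf E}^+$ is perfect, the Witt-vector Frobenius is an automorphism of $\widetilde{\mathbf A}^+ = W(\widetilde{\mathbf E}^+)$, and the paper has already noted that $\varphi$ is bijective on $\widetilde{\mathbf B}_{\rig}^+$. Setting $x' := \varphi^{-m}(x) \in \widetilde{\mathbf A}^+$ and $b' := \varphi^{-m}(b) \in \widetilde{\mathbf B}_{\rig}^+$, the relation becomes $x' = T b'$; once $b' \in \widetilde{\mathbf A}^+$ is established, applying $\varphi^m$ (which preserves $\widetilde{\mathbf A}^+$) recovers $b \in \widetilde{\mathbf A}^+$.

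Second, for the base case $m = 0$, I would apply $\varphi^n$ for each $n \geq 0$ to the identity $x = T b$, giving $\varphi^n(x) = \varphi^n(T)\varphi^n(b)$ with $\varphi^n(b) \in \widetilde{\mathbf B}_{\rig}^+ \subset \mathbf B^+_{\dR}$. As observed just before Proposition \ref{prop:divisibilities}, $\varphi^n(T)$ and $t$ differ by a unit in $\mathbf B^+_{\dR}$ for every $n \geq 0$, so $t$ divides $\varphi^n(x)$ in $\mathbf B^+_{\dR}$ for all $n \geq 0$. Proposition \ref{prop:divisibilities}(1) then yields $x = T a$ with $a \in \widetilde{\mathbf A}^+$, and cancelling $T$ in the domain $\widetilde{\mathbf A}$ gives $b = a \in \widetilde{\mathbf A}^+$. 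The parallel $\widetilde{\mathbf B}^+$-statement is obtained by clearing a power of $p$, using $\widetilde{\mathbf B}^+ = \widetilde{\mathbf A}^+[1/p]$.

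I do not anticipate a real obstacle here: the substantive content is entirely packaged into Proposition \ref{prop:divisibilities}(1), and the corollary is essentially a bookkeeping exercise. The one step that deserves care is the legitimacy of inverting $\varphi$ on $\widetilde{\mathbf A}^+$, but this is handled by the perfectness of $\widetilde{\mathbf E}^+$.
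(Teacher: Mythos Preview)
Your proof is correct and follows essentially the same route as the paper's own argument: reduce to $m=0$ by applying $\varphi^{-m}$ (using bijectivity of $\varphi$ on both $\widetilde{\mathbf A}^+$ and $\widetilde{\mathbf B}_{\rig}^+$), then verify the hypothesis of Proposition~\ref{prop:divisibilities}(1) by observing that $\varphi^n(T)$ lies in $\ker\theta$ (equivalently, $t \mid \varphi^n(T)$ in $\mathbf B_{\dR}^+$) for every $n \geq 0$. The paper phrases the divisibility check as ``$\varphi^{n-m}(x) \in \varphi^n(T)\widetilde{\mathbf B}_{\rig}^+ \subset \ker\theta$'' rather than invoking the unit relation between $\varphi^n(T)$ and $t$, but this is the same observation.
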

\begin{proof}
Let 
$x \in \widetilde{\mathbf A}^+
\cap \, \varphi^m(T) \widetilde{\mathbf B}_{\rig}^+$.
Then $\varphi^{-m}(x) \in
\widetilde{\mathbf A}^+
\cap \, T \widetilde{\mathbf B}_{\rig}^+$.
Then for any $n \geq 0,$
we have $\varphi^{n-m}(x) \in  \varphi^n(T) \widetilde{\mathbf B}_{\rig}^+
\subset \ker \theta$, and so part~(1) of 
the preceding proposition shows that $\varphi^{-m}(x) \in T \widetilde{\mathbf A}^+.$
Applying $\varphi^m$, we find that $x \in \varphi^m(T)\widetilde{\mathbf A}^+$,
as required.
\end{proof}

\section{$(\varphi,\Gamma)$-modules and $p$-adic Hodge theory}
We will recall the constructions of $(\varphi,\Gamma)$-modules
attached to representations of~$G_{\mathbb Q_p}$, as well as various 
constructions in $p$-adic Hodge theory, and the relations between them.  
This material is all standard, but the systematic use of the rings
with tildes is perhaps less familiar than it might be, 
and is very important for us, in part because it gives a very simple approach
to relating $(\varphi,\Gamma)$-module theory to $p$-adic Hodge theory.

\subsection{$(\varphi,\Gamma)$-modules}
If $T$ is a continuous $G_{\mathbb Q_p}$-representation on a finite type $\mathbb Z_p$-module,
then we define
$$\mathbf D(T) := (\mathbf A \otimes_{\Z_p} T)^H,$$
which is an \'etale $(\varphi,\Gamma)$-module over $\mathbf A_{\mathbb Q_p}$.
We can equally well define
$$\widetilde\bD(T) := (\widetilde{\mathbf A}\otimes_{\Z_p} T)^H,$$
which is an \'etale $(\varphi,\Gamma)$-module over $\widetilde{\mathbf A}_{\mathbb Q_p}$.
(Note that in this second context,
the notion of \'etale is particularly simple:\ the map $\varphi$
should simply be bijective.)
If $V$ is a continuous $G_{\mathbb Q_p}$-representation on a finite dimensional
$\mathbb Q_p$-vector space  (a $p$-{\em adic representation}, for short),
then we define $\mathbf D(V)$ and $\widetilde\bD(V)$ by the same 
formulas (or, perhaps a little more naturally, 
we could use $\mathbf B$ and $\widetilde{\mathbf B}$ instead),
and obtain \'etale $(\varphi,\Gamma)$-modules over $\mathbf B_{\mathbb Q_p}$
or $\widetilde{\mathbf B}_{\mathbb Q_p}$ instead.
Either one of these functors, $\mathbf D$ or $\widetilde\bD$,
gives an equivalence of categories between $G_{\mathbb Q_p}$-representations
and \'etale $(\varphi,\Gamma)$-modules.  (This is probably most familiar in the
$\mathbf D$ context.)   

Now one can similarly define $\mathbf D^+(T)$ and $\widetilde\bD^+(T)$
using $\mathbf A^+$ and $\widetilde{\mathbf A}^+$ instead.  
If $T$ is {\em torsion} over $\mathbf Z_p$, then $\mathbf D^+(T)$ is a
finite type $\mathbf A_{\mathbb Q_p}^+$-submodule of $\mathbf D(T)$,
which generates $\mathbf D(T)$ over $\mathbf A_{\mathbb Q_p}$.
However, if $T$ is not torsion,
then $\mathbf D^+(T)$ vanishes in general. 
Similarly, in the case of a $p$-adic representation~$V$,
it is generally the case that  $D^+(V)$  vanishes.
One says that $V$ is of {\em finite height}
if $\mathbf D^+(V)$ generates $\mathbf D(V)$ over $\mathbf A_{\mathbb Q_p}$,
and this is a very restrictive condition.\footnote{Related to this,
Colmez \cite{Colm4} has proved a conjecture of Fontaine, stating that
all crystalline representations are of finite height.   The theory of 
Wach modules \cite{Berger-limits}
is a strong witnessing of this fact, since the Wach module
of a crystalline representation $V$ is pretty close to $\mathbf D^+(V)$.}

On the other hand, the module $\widetilde\bD^+(V)$ {\em always}
generates $\widetilde\bD(V)$ over $\widetilde{\mathbf A}_{\mathbb Q_p}$.
Thus there is an asymmetry between the $\mathbf D^+$ context and the $\widetilde\bD^+$
context, which is not there when we remove the $+$.  

For later use, we note that there is a kind of trace map
$\Tr:\widetilde\bD(V)\to \mathbf D(V),$
which restricts to the identity on $\mathbf D(V)$ (or equivalently,
it is a projection onto $\mathbf D(V)$, {\it i.e.}\ it is surjective, and $\Tr\circ \Tr = \Tr$),
and with the additional properties that it is continuous (with respect to
the weak topologies on source and target), it is $\mathbf A_{\mathbb Q_p}$-linear,
and $\Tr\circ \varphi^{-1} = \psi \circ \Tr.$  (Here $\psi: \mathbf D(V) \to \mathbf D(V)$ is the usual left inverse of $\varphi$ on \'etale $(\varphi,\Gamma)$-modules.)

Using $\Tr$, we may define a map
$$\widetilde\bD(V) \to \plim{\psi} \bD(V) =: 
\bD(V) \boxtimes \mathbb Q_p,$$
via
$$\tilde{z} \mapsto  \bigl(\Tr\varphi^n(\tilde{z}) \bigr)_{n \geq 0}.$$
This map is injective (but not surjective in general), 
and the image of $\widetilde\bD^+(V)$ is equal to
the intersection of the image of $\widetilde\bD(V)$ with
$\bD^{\natural}(V)\boxtimes
\mathbb Q_p.$  Here $\bD^{\natural}(V)$ is as defined in \cite[I.3.2]{Colm2}.

{
There are a slew of other species of $(\varphi,\Gamma)$-module --- see, for instance, \cite[V.1]{Colm2}.    For instance, we can define $\widetilde\bD_{\rig}^+(V)
:= (\widetilde{\mathbf B}_{\rig}^+\otimes V)^H$ and this module will play a large role in what follows.
In section \ref{sec:rec}, we will also make use of several other kinds of $(\varphi,\Gamma)$-modules.  Namely, if $\sR$ is the Robba ring over $\Qp$ --- that is, the collection of $\Qp$-power series which converge on an annulus defined by $0 < v_p(\cdot) \leq r$ for {\it some} $r$ --- we have a $(\varphi,\Gamma)$-module $\bD_{\rig}(V)$ which is a module over $\sR$.  

We will also need $(\varphi,\Gamma)$-modules over even larger rings.  Namely, the ring $\widetilde{\mathbf B}_{\rig}^\dag$ is the largest of all the period rings we will use in this paper (see \cite[\S 2]{Berger-p-adic}).  With this ring in hand, we define $\widetilde{\bD}^\dag_{\rig}(V) := (\widetilde{\mathbf B}_{\rig}^\dag \otimes V)^H$ which is a module over $\widetilde{\sR} := (\widetilde{\mathbf B}_{\rig}^\dag)^H$.}

\subsection{Sen's theory, and Fontaine's generalization}
The relation between $\widetilde\bD$ and $\mathbf D$ (and in particular,
the fact that no information is lost when one passes from the former to the latter),
is an instance of a more general principle of ``decompletion'',
first studied by Sen.

The basic point in the $\widetilde\bD/\mathbf D$-context is that 
the ring $\widetilde{\mathbf A}$ used to compute
$\widetilde\bD$ is the completion (in the weak topology)
of the ring $\mathbf A$ used to compute $\mathbf D$.
Since the resulting coefficient ring $\widetilde{\mathbf A}_{\mathbb Q_p}$
is {\em not} the completion of $\mathbf A_{\mathbb Q_p}$, 
this situation does not compare perfectly with Sen's, but the analogy is still meaningful.

Let us now recall Sen's context.  He works with the period ring $\mathbb C_p$.
Recall that $\mathbb C_p^H = {\hatKinfp}$ where $\Kinfp := \Qp(\mu_{p^\infty})$.  Thus if $V$ is
a continuous $G_{\mathbb Q_p}$-representation over $\mathbb Q_p$, we can consider
$\widetilde\bD_{\Sen}(V)  :=(\mathbb C_p \otimes_{\mathbb Q_p} V)^H,$ which is a semi-linear $\Gamma$-representation
over $\hatKinfp$, of dimension (over $\hatKinfp$)
equal to the dimension of $V$ over $\mathbb Q_p$.
Now Sen's theorem says that we can descend this canonically to a semi-linear
$\Gamma$-representation over $\Kinfp$ itself, denoted
$\mathbf D_{\Sen}(V)$, where ``descend'' has the meaning that
the natural map
$$\hatKinfp\otimes_{\Kinfp} \mathbf D_{\Sen}(V) 
\to \widetilde\bD_{\Sen}(V)$$ is an isomorphism.

What is the benefit of this?  Well, any element of $\Kinfp$ 
is actually invariant under a sufficiently small open subgroup of $\Gamma$,
and so the Lie algebra $\Lie \Gamma$ (which is canonically identified with
$\mathbb Q_p$ via the derivative of the cyclotomic character) acts {\em linearly}
on the $\Kinfp$-vector space $\mathbf D_{\Sen}(V)$.
In particular, the basis vector $1 \in \mathbb Q_p$ gives rise to a linear
operator $\Delta_V$ on $\mathbf D_{\Sen}(V),$ and it is the eigenvalues of
this operator that are the Hodge--Sen--Tate weights of $V$.
(So the decompletion allows us to pass from a semi-linear context to a linear
one, where we can then use linear algebra to define invariants.)

Fontaine \cite{Fon-Sen-theory}
generalized Sen's theory as follows:\ instead of $\mathbb C_p$,
the period ring is now $\mathbf B_{\dR}$.  Remember that this a twisted version
of $\mathbb C_p((t))$.  Since it contains the algebraic closure $\Qbar_p$,
it does contain $\Qbar_p((t))$ as a dense subfield.  Noting that $H$ fixes $t$,
we find that $\mathbf B_{\dR}^H$ contains $\Kinfp((t))$ as a dense subfield.

Now if $V$ is as before, we define 
$$\widetilde\bD_{\dif}(V) := (\mathbf B_{\dR} \otimes_{\mathbb Q_p} V)^H,$$
which is a vector space over $\mathbf B_{\dR}^H$ of dimension equal to the
dimension of $V$ over $\mathbb Q_p$, equipped with a semi-linear $\Gamma$-action. 
What Fontaine shows is that we may canonically descend
$\widetilde\bD_{\dif}(V)$, together with its $\Gamma$-action,
to a vector space $\mathbf D_{\dif}(V)$ over $\Kinfp((t))$,
where again ``descend'' means that
$$\mathbf B_{\dR}^H\otimes_{\Kinfp((t))} \mathbf D_{\dif}(V)
\to \widetilde\bD_{\dif}(V)$$
is an isomorphism.
On $\bD_{\dif}(V)$, the basis vector $1 \in \mathbb Q_p = \Lie \Gamma$
acts by a $\Kinfp$-linear differential operator $\Delta_V$
(satisfying the Leibnitz rule $\Delta_V(t x) = tx + t \Delta_V(x)$).

We can proceed similarly with $\mathbf B_{\dR}^+$ in place of $\mathbf B_{\dR}$,
to define $\widetilde\bD^+_{\dif}(V)$ and $\mathbf D^+_{\dif}(V)$,
which are modules over $(\mathbf B_{\dR}^+)^H$ and $\Kinfp[[t]]$ 
respectively, and both of which are invariant under $\Gamma$. 
The latter is thus also preserved by the differential operator $\Delta_V$.
Note that reducing mod $t$ returns us to Sen's situation:\ $$\mathbf D_{\dif}^+(V)/t
\mathbf D_{\dif}^+(V) \iso \mathbf D_{\Sen}(V)$$
(compatibly with the operators $\Delta_V$).

\subsection{So what?}
\label{sec:sowhat}
The inclusions $\widetilde{\mathbf B}^+ \subset 
\widetilde{\mathbf B}_{\rig}^+ \subset \mathbf B_{\dR}^+$
gives rise to canonical inclusions 
\begin{equation}
\label{eqn:first}
\widetilde\bD^+(V) \subset
\widetilde\bD_{\rig}^+(V)\subset
\widetilde\bD^+_{\dif}(V)
\end{equation}
(the first being $(\varphi,\Gamma)$-equivariant, and the second
being $\Gamma$-equivariant).
These inclusions set up a relation between the world of $(\varphi,\Gamma)$-modules
and the world of $p$-adic Hodge theory which is at the basis of certain
explicit reciprocity laws, and also at the heart of Colmez's approach
to the $p$-adic local Langlands correspondence.

To begin our discussion, we first note that the inclusion
$
\widetilde\bD^+(V) \subset
\widetilde\bD^+_{\dif}(V)$
does not extend to a map with the $+$'s removed, because $\widetilde\bD(V)$
is obtained by inverting $T$ and then $p$-adically completing,
while $\widetilde\bD_{\dif}(V)$ is obtained by inverting $t$ (which
certainly allows us to invert $T$, since $T$ divides $t$), but without any subsequent
$p$-adic completion.

Nevertheless, each of the 
elements $\varphi^n(T)$ divides $t$ in $\widetilde{\mathbf B}_{\rig}^+$,
and so also in $\mathbf B_{\dR}^+$,
and so we do have inclusions
$$
\widetilde\bD^+(V)[1/\varphi^n(T)] \subset
\widetilde\bD^+_{\rig}(V)[1/t]
\subset
\widetilde\bD^+_{\dif}(V)[1/t] = \widetilde\bD_{\dif}(V),
$$
for each $n \geq 0.$
Thus, if we write
$$\widetilde\bD^+(V)[\bigr(1/\varphi^n(T)\bigr)_{n\geq 0}]
:=
\bigcup_{n \geq 0} \widetilde\bD^+(V)[1/\varphi^n(T)],
$$ 
then we obtain the inclusions
\begin{equation}
\label{eqn:second}
\widetilde\bD^+(V)[\bigl(1/\varphi^n(T)\bigr)_{n\geq 0}]
\subset
\widetilde\bD^+_{\rig}(V)[1/t]
\subset
\widetilde\bD_{\dif}(V),
\end{equation}
which in turn induces maps
\begin{equation*}
\widetilde\bD^+(V)[\bigl(1/\varphi^n(T)\bigr)_{n \geq 0}]/
\widetilde\bD^+(V)
\to
\widetilde\bD^+_{\rig}(V)[1/t]/\widetilde\bD^+_{\rig}(V)
\to
\widetilde\bD_{\dif}(V)/\widetilde\bD_{\dif}^+(V).
\end{equation*}
The first of these maps is an injection, by Corollary~\ref{cor:divisibilities},
and we regard it as an inclusion.
The second map, which is not injective
(unless $V = 0$),
will be crucial for us, and so we give
it a name, namely $\imath^-_0$. (The minus sign is to remind us that
we are quotienting out by the plus objects; the reason for 
the subscript $0$ will become apparent soon.)

A fundamental fact is that we have $\varphi$ actions on
our $(\varphi,\Gamma)$-modules, but not in $p$-adic Hodge theory.
In other words,
$\imath^-_0$ is a $\Gamma$-equivariant map from a module with
a $\varphi$-action to a module without such an action.  
General principles of algebra then suggest that we induce 
$\widetilde\bD_{\dif}(V)/\widetilde\bD_{\dif}^+(V)$
to obtain the module
$$
\prod_{i \in \mathbb Z}
\widetilde\bD_{\dif}(V)/\widetilde\bD_{\dif}^+(V),
$$
which we equip with the $\varphi$-action given by translation,
namely
$$\varphi\bigl((x_i)\bigr) := (x_{i-1}).$$
(The reason for the negative, rather than positive, shift, is to accord
with existing conventions in the literature, in particular in \cite{Colm2}.)
We make the product
$
\prod_{i \in \mathbb Z}
\widetilde\bD_{\dif}(V)/\widetilde\bD_{\dif}^+(V)
$
a module over $\widetilde{\mathbf B}^+_{\mathbb Q_p}$ by defining the action of 
an element $b$ of this ring on an element $(x_i)_{i \in \mathbb Z}$
of the product via
$$b\cdot (x_i)_{i \in \mathbb Z} := (\varphi^{-i}(b) x_i).$$
In this way the product
$
\prod_{i \in \mathbb Z}
\widetilde\bD_{\dif}(V)/\widetilde\bD_{\dif}^+(V),
$
becomes an \'etale $(\varphi,\Gamma)$-module over $\widetilde{\mathbf B}^+_{\mathbb Q_p}$.

We then define a map 
$$\imath^-: 
\widetilde\bD^+_{\rig}(V)[1/t] /
\widetilde\bD^+_{\rig}(V)
\to
\prod_{i \in \mathbb Z}
\widetilde\bD_{\dif}(V)/\widetilde\bD_{\dif}^+(V)
$$
of $(\varphi,\Gamma)$-modules over $\widetilde{\mathbf B}^+_{\mathbb Q_p}$ 
via the formula
$$\imath^-(x) = \Bigl(\imath^-_0\bigl(\varphi^{-i}(x)\bigr)\Bigr)_{i \in \mathbb Z}.$$
For any $i \in \mathbb Z,$ we define
$$\imath^-_i:
\widetilde\bD^+_{\rig}(V)[1/t]/
\widetilde\bD^+_{\rig}(V)
\to
\widetilde\bD_{\dif}(V)/\widetilde\bD_{\dif}^+(V)
$$
via
$\imath^-_i(x) := \imath^-_0\bigl(\varphi^{-i}(x)\bigr),$
so that $\imath^-$ also admits the description
$\imath^-(x) := \bigl(\imath^-_i(x)\bigr)_{i \in \mathbb Z}.$

\begin{prop}
\label{prop:support}
We have:
\begin{enumerate}
\item
The map $\imath^-$ is injective.
\item For any $n \geq 0,$ the restriction
of $\imath^-_i$ to
$
\widetilde\bD^+(V)[1/\varphi^n(T)]/
\widetilde\bD^+(V)$
vanishes if $i > n$.
\end{enumerate}
\end{prop}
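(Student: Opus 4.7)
The plan is to deduce both parts from Proposition~\ref{prop:divisibilities} applied coordinate-wise, using the identifications $\widetilde\bD^+_{\rig}(V) = (\widetilde{\mathbf B}^+_{\rig}\otimes_{\Q_p} V)^H$ and $\widetilde\bD^+_{\dif}(V) = (\mathbf B^+_{\dR}\otimes_{\Q_p} V)^H$, together with the facts recorded in Section~2 that $p$ is a unit in $\widetilde{\mathbf B}^+_{\rig}$ (hence in $\mathbf B^+_{\dR}$), that $\varphi(t) = pt$, and that $\varphi^m(T)$ is a unit in $\mathbf B^+_{\dR}$ whenever $m < 0$.

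For part~(2), any element of $\widetilde\bD^+(V)[1/\varphi^n(T)]$ can be written as $y/\varphi^n(T)^N$ with $y \in \widetilde\bD^+(V)$ and $N \geq 0$. Applying $\varphi^{-i}$, which is bijective on $\widetilde\bD^+_{\rig}(V)$, yields $\varphi^{-i}(y)/\varphi^{n-i}(T)^N$. When $i > n$, the denominator $\varphi^{n-i}(T)^N$ is a unit in $\mathbf B^+_{\dR}$, so the whole quotient already lies in $\widetilde\bD^+_{\dif}(V)$; thus its class in $\widetilde\bD_{\dif}(V)/\widetilde\bD^+_{\dif}(V)$ vanishes, which is the claim.

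For part~(1), let $x \in \widetilde\bD^+_{\rig}(V)[1/t]$ satisfy $\imath^-(x) = 0$, and write $x = y/t^N$ with $y \in \widetilde\bD^+_{\rig}(V)$ and $N \geq 1$. Unpacking $\imath^-_i(x) = \imath^-_0(\varphi^{-i}(x)) = 0$ and using $\varphi^{-i}(t)^N = p^{-iN} t^N$ with $p$ a unit, the hypothesis becomes $\varphi^j(y) \in t^N\widetilde\bD^+_{\dif}(V)$ for all $j \in \mathbb Z$. I proceed by induction on $N$. For the base case $N = 1$, pick a $\Q_p$-basis $e_1,\ldots,e_d$ of $V$ and expand $y = \sum_k b_k\otimes e_k$ with $b_k \in \widetilde{\mathbf B}^+_{\rig}$; the assumption then reads $\varphi^j(b_k) \in \ker\theta$ for all $j \in \mathbb Z$ and all $k$, so Proposition~\ref{prop:divisibilities}(2) forces $b_k \in t\widetilde{\mathbf B}^+_{\rig}$, whence $y \in t\widetilde\bD^+_{\rig}(V)$ (the $H$-invariance is preserved since $t$ is itself $H$-invariant). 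For $N \geq 2$, the $N = 1$ step yields $y = tz$ with $z \in \widetilde\bD^+_{\rig}(V)$, and then $\varphi^j(z) = \varphi^j(y)/(p^j t) \in t^{N-1}\widetilde\bD^+_{\dif}(V)$ verifies the hypothesis at level $N - 1$, so induction gives $z \in t^{N-1}\widetilde\bD^+_{\rig}(V)$; hence $x = z/t^{N-1} \in \widetilde\bD^+_{\rig}(V)$, which is zero in the quotient.

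The main thing to be careful about is the coordinate-wise reduction in part~(1): one must check that divisibility by $t$ of the vector $y \in \widetilde\bD^+_{\rig}(V) \subset \widetilde{\mathbf B}^+_{\rig}\otimes V$ is genuinely equivalent to divisibility by $t$ of each scalar coefficient $b_k$, and that the descent from $(\mathbf B^+_{\dR}\otimes V)$ to the $H$-invariants preserves this. Both hold because $t$ is a scalar and is $H$-invariant, so it commutes past the basis expansion and past the $H$-fixed-point functor. Everything else is bookkeeping with the ring inclusions assembled in Section~2.
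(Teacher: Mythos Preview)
Your proof is correct and follows essentially the same approach as the paper's: part~(2) via the unit property of $\varphi^{n-i}(T)$ in $\mathbf B^+_{\dR}$ for $i>n$, and part~(1) via Proposition~\ref{prop:divisibilities}(2). The paper's version is much terser --- it invokes Proposition~\ref{prop:divisibilities}(2) in one line without spelling out the induction on $N$ or the coordinate-wise reduction --- but your added care on those points is accurate and does no harm.
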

\begin{proof}
Fix $x = b/\varphi^n(T)^k \in 
\widetilde\bD^+(V)[1/\varphi^n(T)].$
For any $i \in \mathbf Z$ we see that
$\varphi^{- i}(x) = \varphi^{-i}(b)/(\varphi^{n-i}(T))^k.$
Since $\varphi^{n - i}(T)$ is a unit in $\mathbf B_{\dR}^+$ when $i > n,$
we see that $\varphi^{-i}(x) \in \widetilde\bD_{\dif}^+(V)$ when $i > n,$
proving~(2).
On the other hand, if $\varphi^{-i}(x) \in \widetilde\bD_{\dif}^+(V)$
for some $x \in \widetilde\bD^+_{\rig}(V)[1/t]/\widetilde\bD^+_{\rig}(V)$
and all $i$, then we conclude from part~(2) of Proposition~\ref{prop:divisibilities}
that $x \in \widetilde\bD^+_{\rig}(V)$, proving part~(1) of the present
proposition.
\end{proof}

From part~(2) of the preceding result we see in particular that,
when restricted to $\widetilde\bD^+(V)[\bigl(1/\varphi^n(T)\bigr)_{n\geq 0}]/
\widetilde\bD^+(V),$
the map $\imath^-$ in fact takes values in
$$\ilim{n} \prod_{i \leq n}
\widetilde\bD_{\dif}(V)/\widetilde\bD_{\dif}^+(V).
$$

The next result involves a slight refinement of the argument used to prove part~(1)
of the preceding proposition.

\begin{prop}
\label{prop:compact support}
We have:
\begin{enumerate}
\item
For $n \neq i$ we have that $\imath^-_i$ vanishes on
$$
\widetilde\bD^+(V)[1/\varphi^n(\omega)]/
\widetilde\bD^+(V),$$
while $\imath^-_n$ induces an isomorphism 
$$
\widetilde\bD^+(V)[1/\varphi^n(\omega)]/
\widetilde\bD^+(V)
\iso
\widetilde\bD_{\dif}(V)/
\widetilde\bD_{\dif}^+(V).
$$
\item
The evident map
$$
\bigoplus_{n\in \mathbb Z}
\widetilde\bD^+(V)[1/\varphi^n(\omega)]/
\widetilde\bD^+(V)
\to
\widetilde\bD^+(V)[\bigl(1/\varphi^n(\omega)\bigr)_{n \in \Z}]/
\widetilde\bD^+(V)
$$ 
is an isomorphism,
and $\imath^-$ induces an isomorphism
$$
\widetilde\bD^+(V)[\bigl(1/\varphi^n(\omega)\bigr)_{n \in \Z}]/
\widetilde\bD^+(V)
\iso
\bigoplus_{n\in \mathbb Z}
\widetilde\bD_{\dif}(V)/\widetilde\bD_{\dif}^+(V).
$$
\end{enumerate}
\end{prop}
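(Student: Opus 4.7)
The proof rests on the observation that, among $\{\varphi^m(\omega) : m \in \mathbb Z\}$, only $\omega$ itself lies in $\ker\theta$, while all the others are units in $\mathbf B_{\dR}^+$. This comes from $\varphi^m(\omega) = \varphi^m(T)/\varphi^{m-1}(T)$ together with the fact that $\theta(\varphi^m(T)) = 0$ iff $m \geq 0$ (since $\varepsilon^{(0)} = 1$ while $\varepsilon^{(j)}$ is a non-trivial $p^j$-th root of unity for $j \geq 1$). Consequently, for $x = b/\varphi^n(\omega)^k \in \widetilde\bD^+(V)[1/\varphi^n(\omega)]$ with $b \in \widetilde\bD^+(V)$, we compute $\varphi^{-i}(x) = \varphi^{-i}(b)/\varphi^{n-i}(\omega)^k$; when $n \neq i$ the denominator is a unit in $\mathbf B_{\dR}^+$, so $\varphi^{-i}(x) \in \widetilde\bD_{\dif}^+(V)$ and $\imath^-_i(x) = 0$. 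This settles the vanishing half of part~(1).

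For the isomorphism $\imath^-_n$ in part~(1), take injectivity first: if $\imath^-_n(x) = 0$, then combining with the vanishing above gives $\imath^-_i(x) = 0$ for all $i$, so Proposition~\ref{prop:support}(1) forces $x \in \widetilde\bD_{\rig}^+(V)$. Using $\varphi^n(\omega)^k = \varphi^n(T)^k/\varphi^{n-1}(T)^k$, the relation $x = b/\varphi^n(\omega)^k$ rewrites as $b\,\varphi^{n-1}(T)^k = \varphi^n(T)^k x$, placing $b\,\varphi^{n-1}(T)^k \in \widetilde\bD^+(V) \cap \varphi^n(T)^k \widetilde\bD_{\rig}^+(V)$; iterating Corollary~\ref{cor:divisibilities} identifies this intersection with $\varphi^n(T)^k \widetilde\bD^+(V)$, so $x \in \widetilde\bD^+(V)$.

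Surjectivity of $\imath^-_n$ is the main obstacle. Given a class $c/\omega^k + \widetilde\bD_{\dif}^+(V)$ with $c \in \widetilde\bD_{\dif}^+(V)$, one seeks $b' \in \widetilde\bD^+(V)$ with $b' \equiv c \pmod{\omega^k \widetilde\bD_{\dif}^+(V)}$; since $\varphi$ is bijective on $\widetilde{\mathbf B}^+$ (hence on $\widetilde\bD^+(V)$), the element $b := \varphi^n(b') \in \widetilde\bD^+(V)$ then gives $\imath^-_n(b/\varphi^n(\omega)^k) = c/\omega^k$. The required surjection $\widetilde\bD^+(V) \twoheadrightarrow \widetilde\bD_{\dif}^+(V)/\omega^k \widetilde\bD_{\dif}^+(V)$ is a density statement: $\mathbf B_{\dR}^+$ is the $\omega$-adic completion of $\widetilde{\mathbf B}^+$, so $\widetilde{\mathbf B}^+/\omega^k \widetilde{\mathbf B}^+ = \mathbf B_{\dR}^+/\omega^k \mathbf B_{\dR}^+$; the promotion to $H$-invariants (noting that $\omega$ is $H$-fixed, so $\omega^k \widetilde\bD_{\dif}^+(V) = \widetilde\bD^+_{\dif}(V) \cap \omega^k(\mathbf B_{\dR}^+ \otimes V)$) is handled by vanishing of a standard $H^1$-obstruction on $\omega^k \widetilde{\mathbf B}^+ \otimes V$, coming from classical Sen-theoretic input.

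Part~(2) is then a formal consequence of part~(1). The composite
$$\bigoplus_n \widetilde\bD^+(V)[1/\varphi^n(\omega)]/\widetilde\bD^+(V) \xrightarrow{\Sigma} \widetilde\bD^+(V)[\bigl(1/\varphi^n(\omega)\bigr)_{n\geq 0}]/\widetilde\bD^+(V) \xrightarrow{\imath^-} \bigoplus_i \widetilde\bD_{\dif}(V)/\widetilde\bD_{\dif}^+(V)$$
equals $\bigoplus_n \imath^-_n$, which is an isomorphism by part~(1); hence $\Sigma$ is injective and $\imath^-$ is surjective. For the remaining injectivity of $\imath^-$ on the union, any class $x$ in its kernel satisfies $x \in \widetilde\bD_{\rig}^+(V)$ by Proposition~\ref{prop:support}(1), after which exactly the Corollary~\ref{cor:divisibilities} argument used for the injectivity of $\imath^-_n$ above forces $x \in \widetilde\bD^+(V)$, so $x$ is zero in the quotient.
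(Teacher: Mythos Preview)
Your proof is correct and rests on exactly the same key observation as the paper's: for $m \neq 0$, the element $\varphi^m(\omega)$ is a unit in $\mathbf B_{\dR}^+$ (since $\varphi^m(\omega) = \varphi^m(T)/\varphi^{m-1}(T)$, and the numerator and denominator have the same image under~$\theta$ up to units when $m \neq 0$). The paper's proof is considerably terser --- it records only this unit fact and leaves everything else to the reader --- whereas you spell out the injectivity via Proposition~\ref{prop:support} and Corollary~\ref{cor:divisibilities}, and address surjectivity via the $\omega$-adic density of $\widetilde{\mathbf B}^+$ in $\mathbf B_{\dR}^+$ together with an $H^1$-vanishing to pass to $H$-invariants; the paper does not make this last step explicit, so your argument is in fact more complete (though the phrase ``classical Sen-theoretic input'' for that $H^1$-vanishing could stand to be made precise).
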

\begin{proof}
The key point is that if $n \neq 0,$ then $\varphi^n(\omega)$ is a unit
in $\mathbf B^+_{\dR},$
which in turn follows from the facts that
if $a \geq b\geq 0$, then $\varphi^a(T)/\varphi^b(T)$
is a unit in $\mathbf B^+_{\dR}$, while if $n < 0,$ then
$\varphi^n(T)$ is a unit in $\mathbf B^+_{\dR}.$
\end{proof}

\section{Representations of the mirabolic}
\label{sec:mirabolic}
Let
$P := \begin{pmatrix} \mathbb Q_p^{\times} & \mathbb Q_p \\ 0 & 1\end{pmatrix}$
denote the mirabolic subgroup of $\GL_2$.
We also write
$P^+  := \begin{pmatrix} \mathbb Z_p\setminus \{0\} & \mathbb Z_p \\ 0 & 1 \end{pmatrix};$
this is a submonoid of $P$.
Note that $P$ is generated by $P^+$ together with $\begin{pmatrix} p^{-1} & 0 \\ 0 & 1 
\end{pmatrix}$, and that giving a representation of $P$ is the same as giving a representation
of $P^+$ on which the action of $\begin{pmatrix} p & 0 \\ 0 & 1 \end{pmatrix}$ is invertible.  Also, set 
$N := \begin{pmatrix}  1 & \mathbb Q_p \\ 0 & 1\end{pmatrix}$, the unipotent  subgroup of $\GL_2$.

\subsection{$P$-representations from $(\varphi,\Gamma)$-modues.}
A $(\varphi,\Gamma)$-module $\mathbf D$ with coefficients in~$\mathbf A^+_{{\mathbb Q}_{p}}$
may be equipped with the structure of a $P^+$ representation
in the following manner:
the matrix $\begin{pmatrix}p & 0 \\ 0 & 1\end{pmatrix}$
acts via $\varphi,$ 
the matrices $\begin{pmatrix} \Gamma & 0 \\ 0 & 1 \end{pmatrix}$ act
through the given action of $\Gamma$ on the $(\varphi,\Gamma)$-module,
and
the matrix $\begin{pmatrix} 1 & 1 \\ 0 & 1 \end{pmatrix}$ acts via $1 + T$.
This construction applies in particular 
if $\widetilde\bD$ is an \'etale $(\varphi,\Gamma)$-module
over~$\widetilde{\mathbf A}^+_{{\mathbb Q}_{p}}$.
Since in this case the action of $\varphi$ on $\widetilde\bD$ is invertible
(this is the definition of \'etale in this context), we see that the $P^+$-action
on $\widetilde\bD$ extends to an action of~$P$, so that $\widetilde\bD$
is naturally a $P$-representation.

In particular, if $V$ is any $p$-adic $G_{\mathbb Q_p}$-representation,
we obtain a natural $P$-representation on the quotients
$$\widetilde\bD\bigl(V(1)\bigr)/\widetilde\bD^+\bigl(V(1)\bigr),$$
$$\widetilde\bD^+\bigl(V(1)\bigr)[\bigl(1/\varphi^n(T)\bigr)_{n\geq 0}]/
\widetilde\bD^+\bigl(V(1)\bigr),$$
and
$$\widetilde\bD_{\rig}^+\bigl(V(1)\bigr)[1/t]/
\widetilde\bD_{\rig}^+\bigl(V(1)\bigr),$$
as well as on the product
$$
\prod_{i \in \mathbb Z}
\widetilde\bD_{\dif}\bigl(V(1)\bigr)/
\widetilde\bD^+_{\dif}\bigl(V(1)\bigr),$$
and the map $\imath^-$ from either 
of the latter two quotients to the product is $P$-equivariant. 

In terms of this $P$-action,
the subspace
$\widetilde\bD^+\bigl(V(1)\bigr)[\bigl(1/\varphi^n(T)\bigr)_{n\geq 0}]/
\widetilde\bD^+\bigl(V(1)\bigr)$
of
$\widetilde\bD\bigl(V(1)\bigr)/\widetilde\bD^+\bigl(V(1)\bigr)$
is identified as its subspace of $N$-locally algebraic 
vectors.
In particular, the $N$-smooth vectors in
$\widetilde\bD\bigl(V(1)\bigr)/\widetilde\bD^+\bigl(V(1)\bigr)$
are equal to the subspace
$\bigcup_{n \geq 0} \dfrac{1}{\varphi^n(T)}\widetilde\bD^+\bigl(V(1)\bigr)/\widetilde\bD^+\bigl(V(1)\bigr).$
Their image under the map $\imath^-$ lies in the subspace of 
$N$-smooth vectors in 
$
\prod_{i \in \mathbb Z}
\widetilde\bD_{\dif}\bigl(V(1)\bigr)/
\widetilde\bD^+_{\dif}\bigl(V(1)\bigr),$
which is equal to
$$
\prod_{i \in \mathbb Z}
\dfrac{1}{t} \widetilde\bD_{\dif}^+\bigl(V(1)\bigr)/
\widetilde\bD^+_{\dif}\bigl(V(1)\bigr)
=
\prod_{i \in \mathbb Z}
\dfrac{1}{t} \widetilde\bD_{\Sen}\bigl(V(1)\bigr)
=
\prod_{i \in \mathbb Z}
\widetilde\bD_{\Sen}(V).
$$

The $P$-smooth vectors in 
$\widetilde\bD\bigl(V(1)\bigr)/\widetilde\bD^+\bigl(V(1)\bigr)$
may be identified with the $\Gamma$-smooth vectors of the $N$-smooth
vectors.  Thus, they map under $\imath^-$ to the
$\Gamma$-smooth vectors in
$\prod_{i \in \mathbb Z} \widetilde\bD_{\Sen}(V),$
which is contained in the product of the $\Gamma$-smooth vectors 
in each individual factor.
Sen theory shows that
the $\Gamma$-smooth vectors in $\widetilde\bD_{\Sen}(V)$
are non-zero if and only if $V$ admits zero as a Hodge--Sen--Tate weight, 
and, more precisely, that their dimension as $\Kinfp$-vector space
is equal to the multiplicity of zero as a Hodge--Sen--Tate weight
of $V$.  In particular,
if $V$ does not admit zero as a Hodge--Sen--Tate weight, then
the subspace of $P$-smooth vectors in 
$\widetilde\bD\bigl(V(1)\bigr)/\widetilde\bD^+\bigl(V(1)\bigr)$
vanishes.

\subsection{The relationship with $\mathbf D_{\crys}$}
\label{sec:crys}
For any $p$-adic representation $V$
of $G_{\mathbb Q_p}$, we have
$$\mathbf D_{\crys}(V) :=
\bigl(\widetilde{\mathbf B}_{\rig}^+[1/t]\otimes_{\mathbb Q_p}
V\bigr)^{G_{\Q_{p}}} =
\bigl(\widetilde\bD_{\rig}^+(V)[1/t]\bigr)^{\Gamma}.
$$
(The fact that we can do this with $\widetilde{\mathbf B}_{\rig}^+[1/t]$, rather than
with $\mathbf B_{\crys}$, is one of the basic results of the theory;
see {\it e.g.}\ \cite[\S III.2]{Colm3}.)
Thus, replacing $V$ by $V(1)$, we get a $\varphi$-equivariant map
\begin{equation}
\label{eqn:crys}
\mathbf D_{\crys}\bigl(V(1)\bigr)
\rightarrow
\Bigl(\widetilde\bD_{\rig}^+\bigl(V(1)\bigr)[1/t]/
\widetilde\bD_{\rig}^+\bigl(V(1)\bigr)\Bigr)^{\Gamma}.
\end{equation}
If we compose this with $\imath^-_0,$ we just get the canonical map
$$
\mathbf D_{\crys}\bigl(V(1)\bigr) \to \mathbf D_{\dR}\bigl(V(1)\bigr)/
\mathbf D_{\dR}^+\bigl(V(1)\bigr).
$$

\begin{remark}
The image of $\mathbf D_{\crys}\bigl(V(1)\bigr)$ under $\imath^-$
is finite-dimensional and stable under the torus (since it is
fixed by $\Gamma$, and stable under $\varphi$).  Thus, this image  has trivial intersection
with 
$\imath^-\Bigl(
\widetilde\bD^+\bigl(V(1))[\bigl(1/\varphi^n(T)\bigr)_{n\geq 0}]/
\widetilde\bD^+\bigl(V(1)\bigr)
\Bigr)$,
since the $i$-th coordinate of an element of
$\imath^-\Bigl(
\widetilde\bD^+\bigl(V(1))[\bigl(1/\varphi^n(T)\bigr)_{n\geq 0}]/
\widetilde\bD^+\bigl(V(1)\bigr)
\Bigr)$
vanishes for $i$ large enough, 
whereas clearly a non-zero $\varphi$-eigenvector doesn't have this property.
\end{remark}

\section{Iwasawa cohomology and explicit reciprocity laws}
\label{sec:rec}

Let $L$ denote a finite extension of $\Qp$ with ring of integers $\O_L$.   
For the remainder of the paper we will consider $p$-adic representations which are $L$-vector spaces.  Note that if $V$ is such a representation and $T \subseteq V$ is a Galois stable $\O_L$-lattice, then $D(T)$ is a module over $\bA_{\Qp} \otimes_{\Zp} \O_L$.

We also set some notation that will be in place for the remainder of the paper.  For a vector space $V$ over $L$, set $V^* = \Hom(V,L)$.  For an $\O_L$-lattice $T$, set $T^* = \Hom(T,\O_L)$.   For an $\O_L$-module $A$, set $A^\vee = \Hom(A,L/\O_L)$. 

\subsection{$(\varphi,\Gamma)$-modules and Iwasawa cohomology}
\label{sec:Iwcoh}
The fundamental fact (see \cite{CC}) relating $(\varphi,\Gamma)$-modules
to Iwasawa theory is the following:\ if $T$ is a  $G_{\Q_p}$-invariant
 $\mathcal O_L$-lattice in
a finite-dimensional representation $V$ of $G_{\mathbb Q_p}$ over~$L$,
then
\begin{equation}
\label{eqn:integral iwasawa}
\mathbf D(T)^{\psi = 1} \iso H^1_{\Iw}(T).
\end{equation}
Inverting $p$ then yields an isomorphism
\begin{equation}
\label{eqn:rational iwasawa}
\mathbf D(V)^{\psi = 1} \iso H^1_{\Iw}(V).
\end{equation}

We let $\mathcal R^+(\Gamma)$ denote the ring of locally analytic
distributions on $\Gamma$, or equivalently, the
ring of rigid analytic functions on weight space $\mathcal W :=
\Hom_{\cont}(\Gamma,\overline{\mathbb Q}_p^{\times}).$
(This notation is due to Colmez, and is inspired by
the analogous notation in the theory of the Robba ring.)
The isomorphism~(\ref{eqn:integral iwasawa}) induces an isomorphism
$$
\mathcal R^+(\Gamma)\otimes_{\mathcal O_L[[\Gamma]]}
\mathbf D(T)^{\psi = 1}
\iso 
\mathcal R^+(\Gamma)\otimes_{\mathcal O_L[[\Gamma]]}
H^1_{\Iw}(T).
$$
There is also a natural isomorphism
\begin{equation}
\label{eqn:formal to rigid}
\mathcal R^+(\Gamma)\otimes_{\mathcal O_L[[\Gamma]]}
\mathbf D(V)^{\psi = 1}
\iso 
\mathbf D_{\rig}(V)^{\psi = 1}
,
\end{equation}
and hence we obtain an isomorphism
\begin{equation}
\label{eqn:basic}
\mathbf D_{\rig}(V)^{\psi = 1}
\iso
\mathcal R^+(\Gamma)\otimes_{\mathcal O_L[[\Gamma]]}
H^1_{\Iw}(T).
\end{equation}

\subsection{Pairings:\ $(\varphi,\Gamma)$-modules}\label{sec:pair-phi}
Let $\mathbf D := \mathbf D(T)$ be an \'etale $(\varphi,\Gamma)$-module over $\bA_{\Qp} \otimes_{\Zp} \O_L$, 
associated to a $G_{\Q_p}$-invariant $\mathcal O_L$-lattice $T$ in a $p$-adic
representation~$V$.
As in \cite[I.2.2]{Colm1}, we define the Tate dual of $\mathbf D$ as 
$
\check\bD := \Hom_{\bA_{\Qp}}(\mathbf D,\bA_{\Qp} \frac{dT}{1+T} \otimes_{\Zp} \O_L),
$
its $(\varphi,\Gamma)$-module structure on $\check\bD$ being determined by
the formulas
$\sigma_a(\frac{dT}{1+T}) = a \frac{dT}{1+T}$ for $\sigma_a \in \Gamma$
corresponding to an element $a \in \Z_p^{\times}$,
and $\varphi(\frac{dT}{1+T}) = \frac{dT}{1+T}$.
Note that these formulas identify $\bA_{\Qp} \frac{dT}{1+T}$
with~$\mathbf D\bigl(\Z_p(1)\bigr),$
and so Fontaine's equivalence of categories yields an identification
$\check\bD = \mathbf D\bigl(T^*(1)\bigr)$.
We denote the corresponding tautological pairing by
$
\langle \cdot , \cdot \rangle : \mathbf D \times \check\bD \to \bA_{\Qp}  \frac{dT}{1+T} \otimes_{\Zp} \O_L.
$ 

Define the residue map $\res_0 : \bA_{\Qp}dT = \bA_{\Q_p}\frac{dT}{1+T}\to \Zp$ by $\res_0(f dT) = a_{-1}$ where $f = \sum_k a_k T^k$.
Extending this map $\mathcal O_L$-linearly,
and then passing to residues in $\langle \cdot,\cdot \rangle$,
yields a perfect pairing
\begin{align*}
\{ \cdot , \cdot \} : \mathbf D \times \check\bD &\to \O_L \\
(x,y) &\mapsto \res_0( \langle \sigma_{-1} x, y \rangle )
\end{align*}
which identifies $\check\bD$ with the topological $\O_L$-dual of $\mathbf D$.

The preceding discussion has analogues for other flavors of
$(\varphi,\Gamma)$-modules.
The  most evident comes from working over $L$  rather than $\mathcal O_L$,
so that $\mathbf D$ is the \'etale $(\varphi,\Gamma)$-module associated to the
$p$-adic representation $V$ itself (rather than its lattice~$T$). 
Then $\check\bD$ is associated to $V^*(1)$, and the pairing
$\{ \cdot, \cdot \}$ identifies $\check\bD$ with the topological $L$-dual of~$\mathbf D$.
Similarly, 
for \'etale $(\varphi,\Gamma)$-modules defined over the Robba ring~$\sR \otimes_{\Qp} L$,
we have analogous perfect pairings, still denoted by $\langle \cdot, \cdot \rangle$
and $\{ \cdot,\cdot \}$, again valued in $L$. 
 See \cite[pg.\ 402]{Colm2} for details.

Somewhat less obviously, the story carries over
to \'etale
$(\varphi,\Gamma)$-modules over $\widetilde{\bA}_{\Qp}$ and $\widetilde{\sR}$.  Indeed, using the trace maps $\widetilde{\bA}_{\Qp} \to {\bA}_{\Qp}$ and $\widetilde{\sR} \to \sR$ (see \cite[Chapter 8]{Colmez-Monodromy} and \cite[\S 4.3]{Colm1}),
one can define $\res_0$ for these rings, and thus deduce pairings $\{\cdot ,\cdot\}$ from the
tautological pairing~$\langle \cdot, \cdot \rangle$, just as in 
the cases already considered above.

\subsection{Pairings:\ $p$-adic Hodge theory}
\label{sec:pair-hodge}

Recall that $\wt\bD_{\dif}(V) = (V \otimes_{\Qp} {\bf B}_{\dR})^H$ and 
that we can descend $\wt\bD_{\dif}(V)$ to an $L_\infty((t))$-module $\bD_{\dif}(V)$ where $L_\infty := \Kinfp \otimes_{\Qp} L$.  Further, for each $n \gg 0$, we can define a vector space $\bD_{\dif,n}(V)$ over $L_n((t))$, where $L_n := \Knp \otimes_{\Qp} L$ and $\Knp = \Q_{p}(\mu_{p^n})$. Namely, following the notation from~\cite[VI.3.1]{Colm2}, 
on $\mathbf D^{]0,r_n]} \subseteq \bD_{\rig}(V)$, there is a map $\imath_n : \mathbf D^{]0,r_n]} \to \wt\bD^+_{\dif}\left( V \right)$.  We denote the image of this map by $\mathbf D^+_{\dif,n}(V)$ and set $\mathbf D_{\dif,n}(V) := \mathbf D^+_{\dif,n}(V)[1/t]$.
(Then $\bD_{\dif}(V)$ is recovered by extending the scalars
of $\bD_{\dif,n}(V)$ from  $L_n((t))$  to $L_{\infty}((t))$.)

Then, as in \cite[VI.3.4]{Colm2}, there is a perfect $\Gamma$-equivariant pairing
$$
\langle \cdot , \cdot \rangle_{\dif}:
\mathbf D_{\dif,n}(V(1)) \times \mathbf D_{\dif,n}(V^*) \lra L
$$
which is independent 
of $n \gg 0$ (in the sense that, if $n' \geq n \gg 0$,
then the pairing on the $\mathbf D_{\dif,n'}$ restricts to the pairing on the~$\mathbf D_{\dif,n}$), 
and under which $\mathbf D^+_{\dif,n}(V(1))$ and $\mathbf D^+_{\dif,n}(V^*)$ are orthogonal complements.  

We set $\bD_{\dR,n}(V) = \wt\bD_{\dif}(V)^{\Gamma_n}$ equal to the de Rham Dieudonn\'e modules at cyclotomic level
$n$ where $\Gamma_n$ is the subgroup of $\Gamma$ of index $p^{n-1}(p-1).$  By \cite[VIII.2]{Colm2}, we have
$$
\bD_{\dR,n}(V) \cong \bD_{\dR,\Knp}(V) \cong \bD_{\dR,\Qp}(V) \otimes_{\Qp} \Knp
$$
where $\bD_{\dR,K}(V) := (V \otimes {\mathbf B}_{\dR})^{G_K}$ for any finite extension $K/\Qp$.

There is a $L_n$-valued natural pairing
$$
\langle \cdot,\cdot \rangle'_{\dR,n} : \bD_{\dR,\Knp}(V(1)) \times \bD_{\dR,\Knp}(V^*) \to 
\bD_{\dR,\Knp}(L(1)) \cong    L_n
$$
where the final isomorphism is defined by taking $t^{-1}$ as a basis of $\bD_{\dR,L_n}(L(1))$.
We then have an induced pairing
$$
\langle \cdot,\cdot \rangle_{\dR,n} : \bD_{\dR,L_n}(V(1)) \times \bD_{\dR,L_n}(V^*)  \to L
$$
defined by $\langle \cdot,\cdot \rangle_{\dR,n} = \Tr^{L_n}_L (\langle \cdot,\cdot \rangle'_{\dR,n})$.

We note that $\mathbf D_{\dR,n}(V) \subseteq \mathbf D_{\dif,n}(V)$ if $n \gg 0$,
and under this inclusion we have the relation $\frac{1}{p^n} \langle \cdot,\cdot \rangle_{\dR,n} = \langle \cdot,\cdot \rangle_{\dif}$ as the pairing on $\bD_{\dif,n}$ objects is defined via a normalized trace while on $\bD_{\dR,n}$ objects, the pairing is defined via an absolute trace.  

We close this subsection with a relation between $\imath_n$ and the dual exponential map
$$
\exp^* : H^1(\Q(\mu_{p^n}),V) \to \bD_{\dR,n}(V).
$$

\begin{lemma}
\label{lemma:inexp}
For $z \in \bD(V)^{\psi=1} \cong H^1_{\Iw}(V)$, write $z_n$ for the projection of $z$ to $H^1(\Qp(\mu_{p^n}),V)$.
We then have $p^n \exp^*(z_n)$ is the image of $\iota_m(z)$ modulo $\gamma_n-1$
for $m \gg 0$ where $\gamma_n$ is a topological generator of $\Gamma_n$.
\end{lemma}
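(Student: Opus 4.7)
The plan is to derive this statement from the standard explicit reciprocity laws of Colmez \cite{Colm2}, and really just to track the normalization factor carefully.

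First, I would recall the Cherbonnier--Colmez description of the isomorphism $\bD(V)^{\psi=1}\iso H^1_{\Iw}(V)$. To $z\in\bD(V)^{\psi=1}$ one attaches a Galois cocycle by choosing $b\in\widetilde\bD(V)$ (or in a slightly larger period ring) satisfying $(\varphi-1)b = z$; then the class of $\sigma\mapsto(\sigma-1)b$, read through the inclusion $\widetilde\bD(V)\hookrightarrow\widetilde{\mathbf{B}}\otimes V$, takes values in $V$ and represents the Iwasawa cohomology class associated to $z$. The projection $z_n\in H^1(\Qp(\mu_{p^n}),V)$ is the restriction of this cocycle to $G_{\Qp(\mu_{p^n})}$.

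Second, I would invoke Kato's / Colmez's formula for the Bloch--Kato dual exponential in $(\varphi,\Gamma)$-module terms. Concretely, $\exp^*$ factors through the embedding $V\hookrightarrow V\otimes\mathbf B_{\dR}/\mathbf B_{\dR}^+$, and, after unwinding definitions, one obtains $\exp^*(z_n)$ by taking the cocycle from the previous step and pushing it into $\bD_{\dR,n}(V)\subset\bD_{\dif,n}(V)$. But that pushforward is exactly what the map $\imath_n:\mathbf D^{]0,r_n]}\to\widetilde\bD^+_{\dif}(V)$ computes on $z$ itself, since $\imath_n$ is built from the very same inclusion $\widetilde{\mathbf B}_{\rig}^+[1/t]\subset\mathbf B_{\dR}$ used to define $\exp^*$. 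Thus, up to a scalar coming from normalizations, $\exp^*(z_n)$ and $\imath_n(z)$ agree.

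Third, I would pin down the scalar. The module $\bD_{\dif,n}(V)$ is defined so that the pairing $\langle\cdot,\cdot\rangle_{\dif}$ uses the \emph{normalized} trace, whereas $\exp^*$ and the pairing $\langle\cdot,\cdot\rangle_{\dR,n}$ use the \emph{absolute} trace. As recorded at the end of section \ref{sec:pair-hodge}, the conversion factor between these two pairings is exactly $p^n$: $\tfrac1{p^n}\langle\cdot,\cdot\rangle_{\dR,n}=\langle\cdot,\cdot\rangle_{\dif}$. This produces the $p^n$ in the statement once one works out which side of the formula lives in which normalization, and the hypothesis $m\gg 0$ just encodes that $\imath_m$ is well-defined and independent of the choice of $m$ (once it lands in the smaller Dieudonn\'e submodule at level $n$).

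The main obstacle is genuinely just the bookkeeping: matching the sign and period conventions of \cite{Colm2}, \cite{Colm3}, and \cite{CC} with the ones fixed in sections \ref{sec:pair-phi}--\ref{sec:pair-hodge} above, and checking that no additional factor of $(p-1)$, $p^{-1}$, or twist by $t$ sneaks in. Once this is done, the identity reduces to the statement of \cite[Prop.~VI.3.4]{Colm2} (or its direct translation to Iwasawa cohomology), so no new ideas are needed beyond a careful normalization audit.
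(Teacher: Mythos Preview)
Your approach is essentially the same as the paper's: both reduce to Colmez and identify the factor $p^n$ as the normalized-versus-absolute trace discrepancy $\tfrac{1}{p^n}\langle\cdot,\cdot\rangle_{\dR,n}=\langle\cdot,\cdot\rangle_{\dif}$. The paper's execution is slightly cleaner---it cites \cite[Lemme~VIII.2.1]{Colm2} (not Prop.~VI.3.4) for the identity in Colmez's normalization $\widetilde{\exp}^*$, and then pins down $p^n\exp^*=\widetilde{\exp}^*$ by comparing the two adjointness formulas $\langle x,\widetilde{\exp}^*(y)\rangle_{\dif}=\langle\exp(x),y\rangle_n$ \cite[Th\'eor\`eme~VIII.2.2]{Colm2} and $\langle x,\exp^*(y)\rangle_{\dR,n}=\langle\exp(x),y\rangle_n$ \cite[Theorem~1.4.1(4)]{Kato-exp}, avoiding any cocycle unwinding.
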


\begin{proof}
This lemma is essentially \cite[Lemme VIII.2.1]{Colm2}, but we need to explain the factor of $p^n$ appearing.  The reason this factor appears is that the dual exponential used in \cite{Colm2} is normalized in a way that does not match the standard normalization as in \cite{Kato-exp}.

Let's write $\widetilde{\exp}^*$ for the dual exponential map normalized as in \cite{Colm2}.  Then for 
$x \in \mathbf D_{\dR,n}(V^*(1))$ and $y \in H^1(\Knp,V)$, by \cite[Th\'eor\`eme~VIII.2.2]{Colm2}, we have
$$
\langle x , \widetilde{\exp}^*(y) \rangle_{\dif} = \langle \exp(x), y \rangle_n
$$
where $\langle \cdot , \cdot \rangle_n$ is the pairing of Tate local duality.  Meanwhile, we also have that
$$
\langle x , \exp^*(y) \rangle_{\dR,n} = \langle \exp(x), y \rangle_n
$$
by \cite[Theorem 1.4.1(4)]{Kato-exp}.  Since $\frac{1}{p^n} \langle \cdot,\cdot \rangle_{\dR,n} = \langle \cdot,\cdot \rangle_{\dif}$, we deduce that $p^n \exp^* = \widetilde{\exp^*}$ and the lemma follows from 
\cite[Lemme VIII.2.1]{Colm2}.
\end{proof}

\subsection{An explicit reciprocity law}
Colmez has proven the following explicit reciprocity law \cite[Prop.~VI.3.4]{Colm2}.
(Recall that $\mathbf D_{\rig}$ is the union of the $\mathbf D^{]0,r_n]},$
and so for any element of $\mathbf D_{\rig}$, it is in the domain  of the
map $\imath_n: \bD^{]0,r_n]}  \to \widetilde\bD^+_{\dif}$ for $n \gg 0.$)

\begin{theorem}
\label{thm:Colmez explicit}
Let $z \in \bD^+_{\rig}\bigl(V(1)\bigr)[1/t]$
and let $z' \in \bD_{\rig}(V^{*})^{\psi = 1}.$
If $(1-\varphi)z$ lies in $\mathbf D_{\rig}^+\bigl(V(1)\bigr)$,
then the sequence $\langle \imath^-_n(z),\imath_n(z')\rangle_{\dif}$
of elements of $L$ becomes constant for $n\gg 0$, and this constant
value is equal to $\{(1-\varphi)z, \sigma_{-1} \cdot z'\}.$
\end{theorem}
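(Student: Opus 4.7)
The plan is to reduce both sides to a residue computation inside $\bD_{\dif,n}$, using three key compatibilities: the identity $\imath_n \circ \varphi = \imath_{n-1}$ (which follows from $\varphi([\varepsilon]) = [\varepsilon]^p$ together with the definition of $\imath_n$ via the chosen system of $p^n$-th roots of unity); the correspondence between $\psi$-invariance of $z'$ and compatibility of $\imath_n(z')$ under the normalized trace $\bD_{\dif,n} \to \bD_{\dif,n-1}$; and the orthogonality of $\bD^+_{\dif,n}(V(1))$ with $\bD^+_{\dif,n}(V^*)$ under $\langle\cdot,\cdot\rangle_{\dif}$.

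Set $w := (1-\varphi)z$, so $w \in \bD^+_{\rig}(V(1))$ by hypothesis and $\imath_n(w) \in \bD^+_{\dif,n}(V(1))$ for $n \gg 0$. Applying $\imath_n$ to $z = w + \varphi(z)$ and using the first compatibility gives
\[
\imath_n(z) = \imath_n(w) + \imath_{n-1}(z) \in \bD_{\dif,n}(V(1)),
\]
so $\imath^-_n(z)$ is identified with the class of $\imath_n(z)$ modulo $\bD^+_{\dif,n}(V(1))$. Pairing against $\imath_n(z')$ and using the second compatibility to relate the level-$n$ and level-$(n-1)$ pairings, I would establish that the successive differences $\langle \imath^-_n(z),\imath_n(z')\rangle_{\dif} - \langle \imath^-_{n-1}(z),\imath_{n-1}(z')\rangle_{\dif}$ vanish once $n$ is large enough that no further interaction remains between $\imath_n(w) \in \bD^+_{\dif,n}(V(1))$ and the ``singular'' part of $\imath_n(z')$; this yields the desired stabilization.

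To identify the stable value with $\{w, \sigma_{-1} z'\}$, I would express both pairings as residues: on the $(\varphi,\Gamma)$-module side, $\{w, \sigma_{-1} z'\} = \res_0 \langle w, \sigma_{-1} z'\rangle$; on the $p$-adic Hodge side, the stable value is a trace from $L_n$ to $L$ of a residue at $t=0$ in $\bD_{\dif,n}$ of the corresponding product. These two residues match, with the $\sigma_{-1}$-twist arising from the comparison between the choice of $p^n$-th root of unity implicit in $\imath_n$ and the convention $\sigma_a(\tfrac{dT}{1+T}) = a\,\tfrac{dT}{1+T}$ used to define $\check\bD$. The main obstacle is making the stabilization step precise: it requires a careful interplay between $\psi$-invariance of $z'$ and the trace maps between cyclotomic levels on $\bD_{\dif,\bullet}$, combined with tracking of normalization factors (including the $p^n$ relating $\langle\cdot,\cdot\rangle_{\dR,n}$ to $\langle\cdot,\cdot\rangle_{\dif}$, cf.\ Lemma~\ref{lemma:inexp}). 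Since the statement is essentially \cite[Prop.~VI.3.4]{Colm2}, one cites that result directly; the sketch above is how one would reconstruct it intrinsically.
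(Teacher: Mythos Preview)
Your proposal is correct and matches the paper's approach exactly: the paper does not give an independent proof of this theorem but simply attributes it to Colmez \cite[Prop.~VI.3.4]{Colm2}, and you conclude by doing the same. Your additional sketch of the underlying mechanism (stabilization via $\imath_n\circ\varphi$, trace compatibility from $\psi$-invariance, and orthogonality of the $+$-parts under $\langle\cdot,\cdot\rangle_{\dif}$) is a faithful outline of Colmez's argument and goes beyond what the paper records, though you should double-check the direction of the index shift in $\imath_n\circ\varphi$ against the conventions of \cite{Colm2} if you ever need to make the sketch precise.
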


We need the following variant of Theorem~\ref{thm:Colmez explicit}.

\begin{theorem}
\label{thm:explicit}
Let $z \in \widetilde\bD_{\rig}^+\bigl(V(1)\bigr)[1/t]$, $z' \in \bD_{\rig}(V^{*})^{\psi = 1}$.  Assume
\begin{enumerate}
\item $(1-\varphi)z \in \widetilde\bD^+(V(1)) \left[ 1/\varphi^r(T) \right]$ for some $r \in \Z^{\geq 0}$,
\item $z$ is fixed by $\Gamma_n$ for some $n$.
\end{enumerate}
Then 
$$
 \{(1-\varphi)z, \sigma_{-1} \cdot z'\} = \langle \imath^-_s(z),\imath_m(z')\rangle_{\dif} 
$$
for $s \geq r$ and $m$ large enough.
\end{theorem}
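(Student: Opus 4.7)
The plan is to reduce Theorem~\ref{thm:explicit} to Colmez's Theorem~\ref{thm:Colmez explicit}; there are two generalizations to handle, namely (i) $z$ now lives in the tilde version $\widetilde\bD_{\rig}^+$ rather than the non-tilde $\bD_{\rig}^+$, and (ii) $(1-\varphi)z$ is permitted to have poles along $\varphi^r(T)$. Hypothesis~(2) will be used to descend the tilde objects to a finite cyclotomic level where the pairing $\langle \cdot,\cdot\rangle_{\dif}$ is defined.

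First, I will check that $\imath^-_s(z)$ is independent of $s$ for $s \geq r$. Since $\imath^-_i \circ \varphi = \imath^-_{i-1}$ (directly from the definition $\imath^-_i = \imath^-_0 \circ \varphi^{-i}$), we have
$$\imath^-_s\bigl((1-\varphi)z\bigr) = \imath^-_s(z) - \imath^-_{s-1}(z).$$
By hypothesis~(1), the class of $(1-\varphi)z$ lies in the subspace $\widetilde\bD^+(V(1))[1/\varphi^r(T)]/\widetilde\bD^+(V(1))$ of $\widetilde\bD^+_{\rig}(V(1))[1/t]/\widetilde\bD^+_{\rig}(V(1))$, on which $\imath^-_s$ vanishes for $s > r$ by Proposition~\ref{prop:support}(2). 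Thus $\imath^-_s(z) = \imath^-_{s-1}(z)$ for all $s > r$, yielding the desired stabilization.

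Next, I will exploit hypothesis~(2): the $\Gamma_n$-invariance of $z$ propagates to $\imath^-_s(z)$, so by Fontaine's generalization of Sen's theory one obtains $\imath^-_s(z) \in \bD_{\dif,n}(V(1))/\bD^+_{\dif,n}(V(1))$. The pairing $\langle \imath^-_s(z), \imath_m(z') \rangle_{\dif}$ then takes place at finite cyclotomic level, and by the compatibility of the pairings $\langle \cdot, \cdot \rangle_{\dif}$ with varying $m$, it is independent of $m$ once $m$ is large enough.

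The final step is to identify the common value of the pairing with $\{(1-\varphi)z, \sigma_{-1} \cdot z'\}$. I will do this by running through Colmez's proof of Theorem~\ref{thm:Colmez explicit} in the tilde setting: both the residue pairing $\{\cdot,\cdot\}$ (extended to the tilde version via the trace map, as in \S\ref{sec:pair-phi}) and the map $\imath^-$ are compatible with Colmez's residue computation, and the $\Gamma_n$-invariance of $z$ ensures that the auxiliary traces and decompletions behave coherently; the stabilization from the first step then lets one read off the identity already at $s = r$ rather than only for $s \gg 0$. The main technical obstacle will be this last step: carefully verifying that Colmez's residue-type argument extends to the tilde version while allowing poles at $\varphi^r(T)$, and in particular that the trace map $\widetilde\bD \to \bD$ is compatible with the pairings on both sides and respects the $(1-\varphi)$ decomposition used in Colmez's identity.
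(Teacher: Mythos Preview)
Your proposal is correct and follows essentially the same approach as the paper: the paper likewise invokes Colmez's proof of Theorem~\ref{thm:Colmez explicit} \emph{mutatis mutandis} to obtain $\{(1-\varphi)z,\sigma_{-1}\cdot z'\} = \langle \imath^-_m(z),\imath_m(z')\rangle_{\dif}$ for $m\gg 0$, and then uses Proposition~\ref{prop:support}(2) exactly as you do to show $\imath^-_s(z)$ stabilizes for $s\geq r$. The only difference is the order of the two steps, and your explicit discussion of how hypothesis~(2) makes the pairing well-defined is handled in the paper by the Remark following the statement rather than in the proof itself.
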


\begin{remark}
We explain the pairings in the statement of Theorem \ref{thm:explicit}.  First note that we can view $(1-\varphi)z \in \widetilde\bD^+(V(1))\left[ 1/\varphi^r(T) \right] \subseteq \widetilde{\bD}_{\rig}^{\dag}(V(1))[1/t]$ and $ z' \in \bD_{\rig}(V^{*}) \subseteq \widetilde{\bD}_{\rig}^{\dag}(V^*)$.  The pairing $ \{(1-\varphi)z,  \sigma_{-1} \cdot z'\}$ is thus the pairing between
$\widetilde{\bD}_{\rig}^{\dag}(V(1))$ and $\widetilde{\bD}_{\rig}^{\dag}(V^*)$
noted at the end of section \ref{sec:pair-phi}, as these are $(\varphi,\Gamma)$-modules defined over $\widetilde{\sR} \otimes_{\Qp} L$, 
extended to a pairing between
$\widetilde{\bD}_{\rig}^{\dag}(V(1))[1/t]$ and $\widetilde{\bD}_{\rig}^{\dag}(V^*)[1/t]$
in the evident way.  (The pairing $\langle  \, , \, \rangle$ now takes values 
in $\widetilde{\sR}[1/t] \otimes_{\Q_p} L,$ the normalized traces map this to
$\sR[1/t]\otimes_{\Q_p} L$, and the pairing $(f,g) \mapsto \res_0(fg \frac{dT}{1+T}) = \res_0 (fg dt)$
still makes sense.)  

To make sense of $\langle \imath^-_s(z),\imath_m(z')\rangle_{\dif}$ for $z \in \widetilde\bD_{\rig}^+\bigl(V(1)\bigr)[1/t]$ and for $m$ large enough,
note that since $z$ is fixed by $\Gamma_n$, we have
$$
\imath_s(z) \in \widetilde\bD_{\dif}(V(1))^{\Gamma_n} \subseteq \mathbf D_{\text{pdR},n}(V(1)) \subseteq \mathbf D_{\dif,n}(V(1)) \subseteq \mathbf D_{\dif,m}(V(1))
$$
as long as $m \geq n$ (see \cite[VI.3.4]{Colm2}).  As $\imath_m(z') \in \mathbf D_{\dif,m}(V^*)$, the pairing is defined as in section \ref{sec:pair-hodge}.
\end{remark}

\begin{proof}[Proof of Theorem \ref{thm:explicit}]
Following the proof of \cite[Prop.~VI.3.4]{Colm2}, mutatis mutandis, we find that
$$
 \{(1-\varphi)z,  \sigma_{-1} \cdot z'\} = \langle \imath^-_m(z),\imath_m(z')\rangle_{\dif} 
$$
for $m$ large enough.  (Note that the assumption in \cite[Prop.~VI.3.4]{Colm2} on $z$ being fixed by $\psi$ is only used in the second part of that proposition; the proof of the first part of that proposition does not use this assumption.) Further, since $(1-\varphi)z$ lies in $\widetilde\bD^+\bigl(V(1)\bigr)[1/\varphi^r(T)]$, by Proposition \ref{prop:support}, we have 
$\imath^-_s\bigl((1-\varphi)z\bigr)=0$ for $s > r$.  Thus,
$$
\imath^-_s(z) = \imath^-_s(\varphi z) = \imath^-_{s-1}(z) = \dots = 
\imath^-_{r+1}(z) = \imath^-_{r+1}(\varphi z) = \imath^-_r(z)
$$ 
as desired.
\end{proof}

\section{Kirillov models and local Mellin transforms}
This section is an interlude, in which we recall some
basic facts regarding Kirillov models and local Mellin transform
formulas for local Euler factors.
We begin with the case of $\GL_2(\mathbb Q_p)$-representations 
with complex coefficients
before explaining the modifications necessary for treating
the case of representations with $p$-adic coefficients.

\subsection{Kirillov models of complex
smooth $\GL_2(\mathbb Q_p)$-representations}
If $\pi$ is an infinite-dimensional irreducible representation
 of $\GL_2(\mathbb Q_p)$ over $\mathbb C$, its Kirillov model  
  identifies it  with a space of locally constant
functions on $\mathbb Q_p^{\times}$, each of which is compactly
supported as a function on $\mathbb Q_p$.   (More precisely, the closure 
in $\Q_p$ of the support of each function in the Kirillov model is compact,
or, equivalently, the support in $\mathbb Q_p^{\times}$ of each
such function is bounded away
from infinity.)  The entire $\GL_2(\mathbb Q_p)$-action on this space of functions
is a bit tricky to describe, but the action of $B$ is quite explicit.  Firstly,
the centre acts by  the central character of~$\pi$.  
Secondly, the subgroup $\begin{pmatrix}\mathbb Q_p^{\times} & 0 \\ 0 & 1
\end{pmatrix}$ acts by scaling the variable.  
Finally, if $b \in \mathbb Q_p$,
then $\Bigl( \begin{pmatrix} 1 & b \\ 0 & 1 \end{pmatrix} \phi \Bigr)
(x) = e^{2 \pi i b x} \phi(x).$
(Note that $e^{2 \pi i y}$ makes sense for $y \in \mathbb Q_p$, via the natural isomorphism
$\mathbb Q_p/\mathbb Z_p \iso (\mathbb Q/\mathbb Z)[p^{\infty}].$)
This can be summarized by the following formula:
$$\Bigl(\begin{pmatrix} a & b \\ 0 & d \end{pmatrix} \phi\Bigr)(x)
= \delta(d) e^{2 \pi i (b/d) x} \phi\bigl((a/d)x\bigr),
$$
where $\delta$ is the central character of $\pi$.

An important aspect of the theory is that the realization 
of $\pi$ in the Kirillov model is unique (up to rescaling by a non-zero
constant).
One way to phrase and prove this is as follows:
if $\phi$ is an element of $\pi$, thought of in the Kirillov model,
the map $\phi \mapsto \phi(1)$
 gives a non-zero linear functional
$\ell:\pi \rightarrow \mathbb C$,
with the property that 
$\ell\Bigl( \begin{pmatrix} 1 & b \\ 0 & 1 \end{pmatrix} v \Bigr)
= e^{2 \pi i b } \ell(v)$
for all $v \in \pi.$  The uniqueness of Kirillov models can then
be stated as saying that (when $\pi$ is irreducible and infinite-dimensional)
the space of such $\ell$ is one-dimensional, hence $\ell$ is uniquely
determined up to a non-zero scalar.  Note that we can then recover the
Kirillov function $\phi_v$ attached to an element $v \in \pi$ via the formula
$$\phi_v(a) = 
\ell\Bigl( \begin{pmatrix} a & 0 \\ 0 & 1 \end{pmatrix} v \Bigr).$$

As an example of Kirillov models,
suppose that $\pi$ is the principal series
\begin{multline*}
\Ind_{\Bbar}^{\GL_2(\mathbb Q_p)} \chi_1\otimes\chi_2\\
= \{f : \GL_2(\mathbb Q_p) \rightarrow \mathbb C \, | \, f(\overline{n} t g)
= (\chi_1\otimes\chi_2)(t) f(g) \text{ for all } g \in \GL_2(\mathbb Q_p),
\overline{n} \in \Nbar, t \in T\}  
\end{multline*}
(where $\Nbar$ denotes the lower triangular unipotent subgroup
of $\GL_2(\mathbb Q_p),$ $T$ denotes the diagonal torus,
and $\Bbar = \overline{N} T$ denotes the lower triangular Borel),
equipped with the right regular $\GL_2(\mathbb Q_p)$-action.
We may regard $\pi$ as the space of locally constant functions
on $\mathbb Q_p$ which are proportional to $\chi_1/\chi_2$ in a
neighbourhood of infinity, via
$$f(x) := f\left( \begin{pmatrix} 1 & x \\ 0 & 1 \end{pmatrix}\right);$$
the $\GL_2(\mathbb Q_p)$-action on $\pi$ is then given by the formula
$$
\left(\begin{pmatrix} a & b \\ c & d \end{pmatrix} f\right)\bigl(x\bigr)
=
\frac{\chi_1}{\chi_2}(c x + a) \chi_2(a d - b c)
f\left( \dfrac{d x + b}{c x + a}\right).$$
Forming the Kirillov model of $\pi$ in this case amounts to taking
the Fourier transform:\ we map a function $f$ in the principal
series to (a rescaling of) its Fourier transform, namely to
$$\phi(x) := | x |_p \chi_1(x) \int_{\mathbb Q_p} f(y)e^{-2\pi i x y} dy.$$
(For definiteness, we normalize Haar measure on $\Q_p$ so that
$\Z_p$ has measure~$1$.)

\subsection{Local newvectors}
\label{sec:newvec}
If $\pi$ is an irreducible infinite-dimensional
smooth representation of $\GL_2(\mathbb Q_p)$,
then the newvector in $\pi$ (which is well-defined up to a scalar)
is the vector
fixed by $\begin{pmatrix} \mathbb Z_p^{\times} & 
\mathbb Z_p \\ 0 & 1 \end{pmatrix}$
which is of smallest conductor,
{\it i.e.}\ which is also fixed under $\begin{pmatrix} 1 & 0
\\ p^n \mathbb Z_p & 1\end{pmatrix}$ for the smallest possible choice of $n$.

We will also consider the $p$-deprived newvector of $\pi$ (which is 
again well-defined up to a scalar); this is any element of
$\pi$ which is
fixed by $\begin{pmatrix} \mathbb Z_p^{\times} & 
\mathbb Z_p \\ 0 & 1 \end{pmatrix}$
and which is annihilated by the operator
$$U_p := \sum_{i = 0}^{p-1} \begin{pmatrix} p & i \\ 0 & 1 \end{pmatrix}.$$
Note that newvector and $p$-deprived newvector of $\pi$ coincide
if and only if the local Euler factor attached to $\pi$ is trivial.

As already noted, both the Kirillov model of $\pi$ and the newvector
(or the $p$-deprived newvector) of $\pi$ are only determined up to a non-zero scalar.
However, given a choice of Kirillov model, we can fix the newvector
precisely by requiring that the corresponding function in the Kirillov model takes 
the value $1$ at the point $1 \in \mathbb Q_p^{\times}$.
With this normalization,
the $p$-deprived newvector corresponds in the Kirillov model to 
the characteristic function $\triv_{\mathbb Z_p^{\times}}.$

If $\pi$ is the unramified principal series $\Ind_{\Bbar}^{\GL_2(\mathbb Q_p)}
\chi_{\alpha}\otimes\chi_{\beta/p}$ with $\alpha, \beta \in \C$ and 
$\chi_{\lambda}$ the unramified character taking $p$ to $\lambda$
(and $\pi$ is assumed to be irreducible; equivalently,
we assume that $\alpha/\beta \neq p^{\pm 1}$),
then the newvector $v_{\new}$, as a function on~$\Q_p$,
coincides (up to scaling) with the function
$$v_{\new} = (1 - \beta/(\alpha p))^{-1} \bigl( \triv_{\mathbb Z_p} +
(\chi_{\alpha p/\beta})\triv_{\mathbb Q_p\setminus \mathbb Z_p}\bigr).$$
Converting to the Kirillov model (an exercise in taking Fourier transforms)
one finds that in the Kirillov model (so now as a function on 
$\mathbb Q_p^{\times}$), 
one has
$$
v_{\new}
=
\sum_{n = 0}^{\infty}  \dfrac{1}{p^n} \dfrac{\alpha^{n+1}
-\beta^{n+1}}{\alpha - \beta} \triv_{p^n \Z_p^{\times}}
=
\sum_{n = 0}^{\infty}  \dfrac{(\alpha/p)^{n+1}
-(\beta/p)^{n+1}}{(\alpha/p) - (\beta/p)} \triv_{p^n \Z_p^{\times}}.
$$
(This is the $\GL_2(\mathbb Q_p)$ case of a formula of 
Casselman and Shalika.)

\subsection{The local Birch Lemma}
\label{sec:localbirch}
The goal of this subsection is to explain how to describe the $p$-deprived newvector
of a ramified twist of a smooth $\GL_2(\mathbb Q_p)$-representation $\pi$
in terms of the newvector of $\pi$.

If $\chi:\mathbb Q_p^{\times} \rightarrow \mathbb C^{\times}$
is a character, then $f \mapsto f\chi$ gives a bijection between
the Kirillov model of $\pi$ and the Kirillov model of the twist
$\pi_{\chi} := (\chi\circ \det)\otimes \pi.$  In particular, if $\phi$ in Kirillov model corresponds to the newvector of $\pi$
 (normalized as above so that $\phi(1) = 1$), 
then $\phi\chi$ is a function in the Kirillov model
of $\pi_{\chi}$ (satisfying $(\phi\chi)(1) = 1$).  

Let $p^n$ be the conductor of $\chi|_{\mathbb Z_p^{\times}}.$
If $n = 0,$ {\it i.e.}\ if $\chi$ is unramified, then $\phi\chi$ is the newvector
of $\pi_{\chi}$.   However, if $n > 0$, {\it i.e.}\ if $\chi$ is ramified,
then $\phi\chi$, although it is invariant under 
$\begin{pmatrix} 1 & \mathbb Z_p \\ 0 & 1\end{pmatrix}$,
is not invariant under $\begin{pmatrix} \mathbb Z_p^{\times} & 0 \\ 0 & 1
\end{pmatrix},$ and so cannot be the newvector.
We remedy this by forming a Gauss sum.

\begin{lemma}
\label{lemma:Birch}
With the notation as above, if $\chi|_{\mathbb Z_p^{\times}}$ has conductor $p^n > 1$, we have
$$
\sum_{a \in (\mathbb Z/p^n\mathbb Z)^{\times}}
\chi(a) \begin{pmatrix} 1 & a/p^n \\ 0 & 1 \end{pmatrix} (\phi\chi)
= \tau(\chi) \triv_{\mathbb Z_p^{\times}}
$$
as elements of the Kirillov model of $\pi_\chi$.  Here $\tau(\chi)$ is the usual Gauss sum attached to $\chi$.
\end{lemma}

\begin{proof}
Evaluating the left hand side at $x$ gives
\begin{align*}
\sum_{a \in (\mathbb Z/p^n\mathbb Z)^{\times}}
&\chi(a) \begin{pmatrix} 1 & a/p^n \\ 0 & 1 \end{pmatrix} (\phi\chi)(x)\\
&= 
\left( \sum_{a \in (\mathbb Z/p^n\mathbb Z)^{\times}}
\chi(ax)  e^{2\pi i ax/p^n} \right) \phi(x) = 
\begin{cases}
\tau(x) \phi(x) & x \in \Zp^\times, \\
0 & {\rm otherwise}. 
\end{cases}
\end{align*}
Since $\phi(x)=1$ when $x$ is a unit, we have proven the lemma.
\end{proof}

Thus the expression on the right hand side of Lemma \ref{lemma:Birch}
is equal to $\tau(\chi)$ times the $p$-deprived newform
of $\pi_{\chi}$.  
(Note that, since $\chi$ is ramified, typically the $p$-deprived
newform of $\pi_\chi$ coincides with the newform itself; the only 
exceptions occur for very particular choices of $\chi$ when $\pi$
is a ramified principal series or a ramified twist of Steinberg.)
The formula in Lemma \ref{lemma:Birch}
 is a local version of the formula known as the Birch Lemma
in the theory of modular symbols.

\subsection{Local Mellin transforms}
Let $\pi$ be an infinite dimensional irreducible representation
of $\GL_2(\mathbb Q_p)$ over $\mathbb C$, and let
$\phi \in \Con^{\sm}(\mathbb Q_p^{\times},\mathbb C)$
be an element of $\pi$, thought of in the Kirillov model.
The local Mellin transform of $\phi$ is the integral
$$Z(\phi,s) := \int_{\Q_p^{\times}} |x|^{s-1} \phi(x) d^{\times}x.$$
(We normalize the Haar measure $d^{\times}x$
on $\mathbb Q_p^{\times}$ so that $\mathbb Z_p^{\times}$ has 
unit measure.)
{\em A~priori}, unless $\phi$ is compactly supported on $\mathbb Q_p^{\times}$,
this integral only converges when the real part of $s$
is sufficiently large.  However, for a fixed choice of $\phi$, the convergent values of the integral are a rational function of $p^{-s}$, and so the
integral can be defined for any value of $s$ in terms of this rational
function.

These integrals come up in Jacquet--Langlands \cite{JL} in the description of local
Euler factors and epsilon factors; in particular, taking $\phi$ to be
the newvector gives the local Euler factor.
For example, in the case of a principal series, taking $\phi$ to be the newvector,
we find that this integral is a sum of geometric series, 
which we may formally evaluate to obtain the value
$$(1-\alpha p^{-s})^{-1}(1 - \beta p^{-s})^{-1}.$$
Note also that the local Euler factor is the ``largest'' denominator that
can occur in any $Z(\phi,s)$; more precisely, it is divisible by the denominator 
of  $Z(\phi,s)$ for any function $\phi$ in the Kirillov model.

We now describe an alternative approach to evaluating the Mellin transform
integral.

\begin{prop} 
\label{prop:Mellin}
Suppose that the local Euler factor 
attached to $\pi$
does not have a pole at $s = 1$.  
Then given any function $\phi$ in the Kirillov model of $\pi$,
we may find a uniquely determined function $\xi$ in 
$\Con^{\sm}(\Q_p^{\times},\mathbb C)$
such that:
\begin{enumerate}
\item \label{part:1}
The restriction of $\xi$ to (the intersection with $\mathbb Q_p^{\times}$ with) 
any compact neighbourhood of $0$ in $\mathbb Q_p$ lies in the Kirillov
model of $\pi$.
\item \label{part:2} $\left(1 - \begin{pmatrix} p & 0 \\ 0 & 1 \end{pmatrix}\right) \xi = \phi.$
\end{enumerate}
If $\phi$ is $\Gamma$-invariant, then so is $\xi$.
Furthermore, one then has that
$$Z(\phi,1)  = \int_{p^{-i} \Z_p^{\times}} \xi(x) d^{\times} x =
\int_{\Z_p^{\times}} \xi(p^{-i}x)d^{\times} x, $$
for any $i$ chosen so large that
$\phi$ vanishes outside of $p^{-i}\Z_p$.
\end{prop}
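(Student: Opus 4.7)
The plan is to reduce the whole proposition to the invertibility of $1 - A$ on the Jacquet module $J(\pi)$, where $A := \left(\begin{smallmatrix} p & 0 \\ 0 & 1 \end{smallmatrix}\right)$ acts on the Kirillov model by $(A\phi)(x) = \phi(px)$.  The Kirillov realization fits in a short exact sequence
$$0 \to \Con_c^{\infty}(\Q_p^\times, \C) \to \pi \to J(\pi) \to 0$$
in which the finite-dimensional quotient records the asymptotic behavior of Kirillov functions near~$0$.  Writing $L(\pi, s) = \prod_j (1 - \alpha_j p^{-s})^{-1}$, the Casselman--Shalika-type formula of \S\ref{sec:newvec} (and its analogues in the ramified principal series, Steinberg twist, and supercuspidal cases, the last having $J(\pi) = 0$) shows that the eigenvalues of $A$ on $J(\pi)$ are precisely the $\alpha_j/p$.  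Thus the hypothesis that $L(\pi, s)$ has no pole at $s = 1$ is equivalent to $1 - A$ being invertible on $J(\pi)$.

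Given this, for existence I would first solve $(1-A)\bar\xi = \bar\phi$ in $J(\pi)$, lift $\bar\xi$ to some $\tilde\xi \in \pi$, and observe that $\phi' := \phi - (1-A)\tilde\xi$ lies in $\Con_c^\infty(\Q_p^\times, \C)$.  For such a compactly supported $\phi'$, the formula $\xi' := \sum_{n \geq 0} A^n \phi'$ is a locally finite sum defining an element of $\Con_c^\infty(\Q_p^\times, \C)$ with $(1-A)\xi' = \phi'$, so that $\xi := \tilde\xi + \xi' \in \pi$ satisfies~\eqref{part:1} trivially.  For uniqueness, suppose $\eta \in \Con^{\sm}(\Q_p^\times, \C)$ satisfies $(1-A)\eta = 0$ with $\eta_M := \eta \cdot \triv_{\{|x|\leq p^M\}}$ lying in $\pi$ for every $M$.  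The identity $A \eta_M = \eta_{M+1}$ (immediate from the $A$-invariance of $\eta$) combined with the fact that $\eta_M - \eta_{M+1}$ is compactly supported in $\Q_p^\times$ shows that the common image $\bar\eta \in J(\pi)$ is $A$-fixed, hence zero.  Consequently each $\eta_M$ is itself compactly supported in $\Q_p^\times$, so $\eta$ vanishes near~$0$, and $A$-invariance propagates this vanishing to all of $\Q_p^\times$.  The $\Gamma$-invariance of $\xi$ when $\phi$ is $\Gamma$-invariant is then automatic, since $\Gamma$ commutes with $A$ and so $\sigma\xi$ solves the same equation as $\xi$ for each $\sigma \in \Gamma$.

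The Mellin identity comes from Abel summation.  Setting $c_n(\xi) := \int_{\Z_p^\times} \xi(p^n u)\,d^\times u$ and $q := p^{s-1}$, the relation $c_n(\phi) = c_n(\xi) - c_{n+1}(\xi)$ combined with the Dirichlet series expansion $Z(\phi, s) = \sum_{n \geq -i} q^{-n} c_n(\phi)$ (valid for $\Re(s) \gg 0$) yields the identity of rational functions in $q$
$$Z(\phi, s) = q^i c_{-i}(\xi) + (1-q) \sum_{n \geq -i+1} q^{-n} c_n(\xi).$$
For $n$ sufficiently large, $\xi$ agrees near~$0$ with a combination of $A$-eigenfunctions, so $c_n(\xi) = \sum_j C_j (\alpha_j/p)^n$ exactly, and the remaining sum continues to a rational function of~$q$ whose only possible poles are at $q = \alpha_j/p$.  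By hypothesis these are all different from~$1$, so the sum is analytic at $q = 1$ and the $(1-q)$ prefactor annihilates it; evaluating at $s = 1$ yields $Z(\phi, 1) = c_{-i}(\xi) = \int_{p^{-i}\Z_p^\times} \xi(x)\,d^\times x$.  The main technical obstacle I anticipate is pinning down the explicit asymptotic expansion of Kirillov functions in the ramified and non-spherical cases, but this is classical and can be read off from the structure of~$J(\pi)$.
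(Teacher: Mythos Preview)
Your approach and the paper's are structurally the same: both split $\phi$ into a compactly-supported piece and an ``asymptotic'' piece, solve $(1-A)\xi = \phi$ on each, and then compute the Mellin integral. The paper works explicitly---writing $\phi = \phi^+ + \phi^-$ with $\phi^-$ a combination of functions $x \mapsto \gamma^{\ord_p(x)}\triv_{\Z_p}$ and giving a closed formula for the corresponding $\xi$---while you package the same decomposition abstractly via the exact sequence $0 \to \Con_c^{\infty} \to \pi \to J(\pi) \to 0$ and the invertibility of $1-A$ on $J(\pi)$. Your Abel-summation argument for the Mellin identity is likewise a repackaging of the paper's direct geometric-series computation. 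One genuine addition on your side: you supply a uniqueness argument (via $A$-fixed vectors in $J(\pi)$), which the paper's proof does not spell out.

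There is one slip. You claim that $\xi' := \sum_{n\geq 0} A^n \phi'$ lies in $\Con_c^{\infty}(\Q_p^\times,\C)$ and hence that $\xi = \tilde\xi + \xi' \in \pi$. This is false: if $\phi'$ is supported where $\ord_p(x) \in [a,b]$, then $\xi'$ is supported on all of $\{\ord_p(x) \leq b\}$, which runs off to infinity, so $\xi' \notin \Con_c^{\infty}$ and $\xi \notin \pi$. This does not damage the argument, however, since what matters for condition~(1) is only the behaviour near~$0$, and $\xi'$ does vanish there; thus near $0$ one has $\xi = \tilde\xi \in \pi$ and~(1) holds. You should rephrase that step accordingly.
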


\begin{proof}
We first consider the case where $\phi$ has support bounded away from 0.  In this case, we define $\displaystyle \xi = \sum_{n \geq  0} \psmallmat{p^n}{0}{0}{1} \phi$.  Note that for a fixed $x \in \Qp^\times$, $\xi(x)$ is a finite sum.  Since $\phi$ has support bounded away from $0$, the same is true of $\xi$, proving (\ref{part:1}).  Further (\ref{part:2}) follows immediately from the definition of $\xi$,
as does the fact that $\xi$ is $\Gamma$-invariant if $\phi$ is.  Lastly, for $i$ large enough so that $\phi$ vanishes outside of $p^{-i}\Zp$, we have
$$
\int_{\Z_p^{\times}} \xi(p^{-i} x)d^{\times} x
= \sum_{n \geq 0} \int_{\Z_p^{\times}} \phi(p^{n-i} x) d^{\times} x
= \int_{p^{-i}\Z_p \setminus \{0\}} \phi(x) d^{\times} x 
= \int_{\Q_p^{\times}} \phi(x) d^{\times} x
$$
as desired.

For the general case, we may write $\phi = \phi^- + \phi^+$ where $\phi^+$ is bounded away from $0$, and $\phi^-$ is a linear combination of functions supported on $\Zp$ of the form $x \mapsto \gamma^{\ord_p(x)}$ for some $\gamma \in \C$ (see {\it e.g}.\ \cite[Chapter 6]{GH}).  Note further that our assumption on the local Euler factor forces that $\gamma \neq 1$.  
To complete the proof, we are thus reduced to the case where $\phi(x) = \begin{cases}\gamma^{\ord_p(x)} & x \in \Zp, \\ 0 & \text{otherwise.} \end{cases}$

In this case, we define $\xi(x) = \begin{cases} \frac{\gamma^{\ord_p(x)}}{1-\gamma} & x \in \Zp, \\ \frac{1}{1-\gamma} & \text{otherwise}.
\end{cases}$  Clearly, $\xi$ satisfies (\ref{part:1}) as near zero it is of the form $x \mapsto D \gamma^{\ord_p(x)}$ for some constant $D$.  To check (\ref{part:2}), for $x \in \Zp$, we have
$$
\left( 1 - \psmallmat{p}{0}{0}{1} \right) \xi(x) = \xi(x) - \xi(px) = \frac{\gamma^{\ord_p(x)}}{1-\gamma} - \frac{\gamma^{\ord_p(x)+1}}{1-\gamma} = \gamma^{\ord_p(x)} = \phi(x)
$$
while for $x \not\in \Zp$, we have
$$
\left( 1 - \psmallmat{p}{0}{0}{1} \right) \xi(x) = \xi(x) - \xi(px) = \frac{1}{\gamma-1} - \frac{1}{\gamma-1} = 0 = \phi(x)
$$
as desired.

We note that from our explicit construction of $\xi$, it is clear that if $\phi$ is invariant under $\Gamma$, then so is $\xi$. Further, regarding the uniqueness of $\xi$, if $\psi$ satisfies $
\left(\psmallmat{p}{0}{0}{1}-1\right)\psi = 0$, then $\psi(px) = \psi(x)$ for all $x$ and thus $\psi$ is not bounded away from infinity unless $\psi=0$.

Lastly, we compute that
$$
Z(\phi,s) = \int_{\Qp^\times} |x|^{s-1} \phi(x) d^\times x  
= \sum_{n=0}^\infty p^{-n(s-1)} \gamma^n = \frac{1}{1 - \frac{\gamma}{p^{s-1}}}
$$
for $s \gg 0$.  Thus, $Z(\phi,1) = \frac{1}{1-\gamma}$.  Finally, for $i>0$, 
$$
\int_{\Zp^\times} \xi(p^{-i}x) d^\times x = 
\int_{\Zp^\times} \frac{1}{1-\gamma} d^\times x = \frac{1}{1-\gamma} = Z(\phi,1),
$$
completing the proof.
\end{proof}

\subsection{Kirillov models of smooth representations of $\GL_2(\Q_p)$
with $p$-adic coefficients.}
\label{sec:smoothp-adic}
We now consider the theory of
Kirillov models for infinite dimensional irreducible smooth representations  $\pi$
of $\GL_2(\mathbb Q_p)$ having
coefficients in $L$, a finite extension of $\mathbb Q_p$. 
The main difference  between this case and the case of complex coefficients is one of 
rationality:\ the $p$-power roots of unity are not all in $L$.   

Recall $\Linf:=\Kinfp\otimes_{\mathbb Q_p} L$. If we let $\Con^{\sm}(\mathbb Q_p^{\times},L)$
(resp.\ $\Con^{\sm}(\mathbb Q_p^{\times},
\Linf)$)
denote the space of locally constant 
functions on $\mathbb Q_p^{\times}$ with values in $L$
(resp.\ $\Linf$),
then the Kirillov model will be an embedding
$$\pi \hookrightarrow \Kinfp\otimes_{\mathbb Q_p} 
\Con^{\sm}(\mathbb Q_p^{\times},L)
\subseteq 
\Con^{\sm}(\mathbb Q_p^{\times}, \Linf),
$$
which is $B$-equivariant for a certain $B$-action on the target.

To describe this $B$-action, we have to replace our complex valued additive character
on $\mathbb Q_p$ by a $\Kinfp$-valued one.
Recall that we have chosen a generator $\varepsilon$ of $\mathbb Z_p(1)$;
write $\varepsilon = (\varepsilon^{(n)}),$ where $\varepsilon^{(n)}$ is a primitive
$p^n$th root of unity, and $(\varepsilon^{(n+1)})^p = \varepsilon^{(n)}$.
We will replace the complex valued character $e^{2 \pi i x}$ of $\mathbb Q_p$ by the
$\Qbar_p$-valued character $$\varepsilon(x) := \lim_{n \to \infty}
(\varepsilon^{(n)})^{p^n x}.$$  (Note that the sequence $(\varepsilon^{(n)})^{p^n x}$
eventually stabilizes, so that the limit is just equal to this stable value.)
We then define the $B$-action on 
$ \Con^{\sm}(\mathbb Q_p^{\times}, \Linf) $
via the formula
$$\Bigl(\begin{pmatrix} a & b \\ 0 & d \end{pmatrix} \phi\Bigr)(x)
= \delta(d) \varepsilon\bigl( (b/d) x \bigr) \phi\bigl((a/d)x\bigr),
$$
where, as before, $\delta$ denotes the central character of $\pi$.
We may also define a Galois-twisted action of $\Gamma$ on
$ \Con^{\sm}(\mathbb Q_p^{\times}, \Linf)$
via the formula
$(a\cdot \phi)(x) = \sigma_a\bigl(\phi(a^{-1} x)\bigr).$
(Here the Galois element $\sigma_a$ acts on $\Linf$
via its action on the first factor in the tensor product which we normalize so that $\sigma_a(\zeta)=\zeta^a$ for $\zeta \in \mu_{p^\infty}$.)
Using the fact that $\pi$ is defined over~$L$, together with the uniqueness of
Kirillov models,
one then shows that the image of the Kirillov model lies in
$\bigl(\Kinfp\otimes \Con^{\sm}(\mathbb Q_p^{\times},L)\bigr)^{\Gamma}$,
where the invariants are computed with respect to the Galois-twisted action.

Note that the Kirillov functional given by evaluation at $1 \in \mathbb Q_p^{\times}$
is now a non-zero linear functional
$$\ell: \pi \rightarrow \Linf,$$
such that
$$\ell\Bigl( \begin{pmatrix} 1 & b \\ 0 & 1 \end{pmatrix} v \Bigr)
= \varepsilon(b) \ell(v),$$
and such that
$$\ell\Bigl( \begin{pmatrix} a & 0 \\ 0 & 1 \end{pmatrix} v \Bigr)
= \sigma_a \bigl(\ell(v)\bigr),$$
for all $a \in \Gamma$.
The second property of $\ell$ 
shows that (unlike in the complex case), in the $p$-adic setting
the functions in the Kirillov model of $\pi$ are determined by their values
on the elements $p^i$ ($i \in \mathbb Z$) of $\mathbb Q_p^{\times}$.
Thus, in the following, we will frequently consider the functions
in the Kirillov model just as functions on the powers $p^i$
(and lose no information by so doing).

\subsection{Local Mellin transforms with $p$-adic coefficients}
Suppose that $\pi$ is defined over $L,$
and let $$\phi \in \Con^{\sm}(\mathbb Q_p^{\times},\Linf)^{\Gamma}$$
be an element of $\pi$, thought of via its Kirillov model.
(Here $\Gamma$-invariance of $\phi$ refers to invariance under the Galois-twisted
$\Gamma$-action, as discussed above.)

When $s$ is an integer, we can write down the local Mellin transform integral, and express it as a rational function
of $p^{-s}$.  However, we can further simplify the integral in this
context to be just a sum over powers of $p$
(just as the Kirillov model can be regarded simply as functions
on the set of powers of $p$, rather than on all of $\mathbb Q_p^{\times}$),
as we now explain.

Let $\Tr:\Kinfp \rightarrow \mathbb Q_p$
be the normalized trace, {\it i.e.}\ its restriction to $\Knp := \Qp(\mu_{p^n})$ is given by
$\dfrac{1}{(p-1)p^{n-1}}\Tr^{\Knp}_{\mathbb Q_p}.$
We then have the following formula, easily proved using the Galois-twisted
$\Gamma$-invariance of $\phi$:
$$\int_{\mathbb Z_p^{\times}} \phi(x) d^{\times}x = (\Tr \otimes\id_L)\bigl( \phi(1) \bigr).$$
From this, we deduce the corresponding formula for the local Mellin transform:
\begin{equation*}
\int_{\mathbb Q_p^{\times}} \phi(x) d^{\times}x =
(\Tr \otimes\id_L)\bigl(
\sum_{n=-\infty}^{\infty} \phi(p^n) \bigr).
\end{equation*}
In light of this formula, we regard
the expression
\begin{equation}
\label{eqn:mellin}
\widetilde{Z}(\phi,1)
=
\sum_{n=-\infty}^{\infty} \phi(p^n)
  \in \Linf
\end{equation}
(which takes values in $\Knp\otimes_{\mathbb Q_p}L$
if $\phi$ is invariant under the (regular, not Galois-twisted) action of $1 +p^n \Z_p$)
as being a refined Mellin transform.

We then have the following analogue of Proposition~\ref{prop:Mellin}.

\begin{prop} 
\label{prop:Mellinpadic}
Suppose that the local Euler factor 
attached to $\pi$
does not have a pole at $s = 1$.  
Then given any function $\phi$ in the Kirillov model of $\pi$,
we may find a uniquely determined function $\xi$ in 
$\Con^{\sm}(\mathbb Q_p^{\times},\Linf)^{\Gamma}$
such that:
\begin{enumerate}
\item 
The restriction of $\xi$ to (the intersection with $\mathbb Q_p^{\times}$ with) 
any compact neighbourhood of $0$ in $\mathbb Q_p$ lies in the Kirillov
model of $\pi$.
\item $\left(1 - \begin{pmatrix} p & 0 \\ 0 & 1 \end{pmatrix}\right) \xi = \phi.$
\end{enumerate}
If $\phi$ is invariant under the regular $\Gamma$-action, then so is $\xi$. 
Furthermore, one then has that
$\tilde{Z}(\phi,1)  = \xi(p^{-i})$, 
for any $i$ chosen so large that
$\phi$ vanishes outside of $p^{-i}\Z_p$.  
\end{prop}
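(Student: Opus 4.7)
The plan is to mimic the proof of Proposition~\ref{prop:Mellin} essentially verbatim, with the extra bookkeeping required to keep track of (i) the Galois-twisted $\Gamma$-action that cuts out the Kirillov model inside $\Con^{\sm}(\Q_p^{\times},\Linf)$, and (ii) the regular $\Gamma$-action which we want $\xi$ to inherit from $\phi$.

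First I would split $\phi = \phi^+ + \phi^-$, where $\phi^+$ has support in $\Q_p^{\times}$ bounded away from $0$ and $\phi^-$ is supported on $\Z_p$. The asymptotic theory of Kirillov functions (Casselman's theorem on the Jacquet module, adapted to the $p$-adic coefficient setting of \S\ref{sec:smoothp-adic}) shows that near $0$, any function in the Kirillov model is a finite sum of terms of the form $x \mapsto \gamma^{\ord_p(x)}$ with $\gamma$ running through inverse roots of the local Euler factor; so one can arrange $\phi^-$ to be such a linear combination. The hypothesis that the local Euler factor has no pole at $s=1$ is exactly the statement that none of the $\gamma$'s equals $1$. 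This decomposition is the real content behind the proposition and will be the main technical obstacle, since one must check it can be made compatibly with both the Galois-twisted $\Gamma$-invariance (placing $\phi^\pm$ in the Kirillov model) and the regular $\Gamma$-action (preserved in each piece when $\phi$ is regularly invariant). Both compatibilities are immediate since the regular $\Gamma$-action on $\Q_p^{\times}$ preserves $\Z_p$ and commutes with the subdivision by support, and the Galois-twisted action commutes with multiplication by $\gamma^{\ord_p(x)}$ (as $\ord_p$ is $\Z_p^\times$-invariant).

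On $\phi^+$, I would set
\[
\xi^+ := \sum_{n \geq 0} \begin{pmatrix} p^n & 0 \\ 0 & 1 \end{pmatrix} \phi^+ ,
\]
which is a \emph{finite} sum at each $x \in \Q_p^{\times}$ because $\phi^+(p^n x) = 0$ for $n \gg 0$. Property~(2) follows by a telescoping computation as in the complex case, and property~(1) is automatic since $\xi^+$ equals $\phi^+$ near $0$. The regular $\Gamma$-action commutes with $\mathrm{diag}(p,1)$, so invariance is inherited. For $\phi^- (x) = \gamma^{\ord_p(x)} \triv_{\Z_p}(x)$ with $\gamma \neq 1$, I would set
\[
\xi^-(x) := \begin{cases} \dfrac{\gamma^{\ord_p(x)}}{1-\gamma}, & x \in \Z_p, \\[4pt] \dfrac{1}{1-\gamma}, & x \notin \Z_p, \end{cases}
\]
and verify property~(2) and both $\Gamma$-invariances directly; property~(1) holds since $\xi^-|_{\Z_p} = \phi^-/(1-\gamma)$ is in the Kirillov model near $0$.

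Uniqueness of $\xi$ follows because any two solutions to (2) would differ by a function $\eta$ with $\eta = \mathrm{diag}(p,1)\eta$; combined with local membership in the Kirillov model near $0$ (which forces compact support in $\Q_p$) and the relation $\eta(p^{-n}x) = \eta(x)$ this would force $\eta \equiv 0$. Finally, I would compute the refined Mellin transform piece by piece: for $\phi^+$,
\[
\xi^+(p^{-i}) \;=\; \sum_{n \geq 0} \phi^+(p^{n-i}) \;=\; \sum_{m \in \Z} \phi^+(p^m) \;=\; \widetilde{Z}(\phi^+, 1),
\]
using that $\phi^+(p^m) = 0$ for $m < -i$; and for $\phi^-$,
\[
\xi^-(p^{-i}) \;=\; \frac{1}{1-\gamma} \;=\; \sum_{n \geq 0} \gamma^n \;=\; \widetilde{Z}(\phi^-,1)
\]
for $i > 0$. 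Adding the two contributions, and taking $i$ large enough that $\phi$ vanishes outside $p^{-i}\Z_p$, yields the asserted formula $\widetilde{Z}(\phi,1) = \xi(p^{-i})$.
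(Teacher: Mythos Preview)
Your proposal is correct and follows essentially the same approach as the paper, which simply refers back to the complex-coefficient case (Proposition~\ref{prop:Mellin}); indeed, the decomposition $\phi = \phi^+ + \phi^-$, the explicit formulas for $\xi^+$ and $\xi^-$, and the verification of the Mellin identity are exactly those of that earlier proof. Your additional remarks tracking both the Galois-twisted and the regular $\Gamma$-invariance through the construction are appropriate elaborations for the $p$-adic setting that the paper leaves implicit.
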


\begin{proof}
The proof of this proposition proceeds along the same lines as that of Proposition \ref{prop:Mellin}.  
\end{proof}

\subsection{The case of locally algebraic representations}
\label{sec:localg}

In what follows, we are closely following \cite[VI.4]{Colm2}.
Namely, we now consider Kirillov models of locally algebraic representations ({\it e.g.}\ representations of the form $\pi \otimes \Sym^{k-2}(L^2) \otimes \det^{\ell}$ with $\pi$ a smooth representation).  Here we view $\Sym^{k-2}(L^2)$ as homogenous polynomials in $e_1$ and $e_2$ of degree $k-2$ where $\GL_2(\Qp)$ acts by 
$$
\begin{pmatrix} a & b \\ c & d \end{pmatrix} e_1 = ae_1 + ce_2 \text{~and~}
\begin{pmatrix} a & b \\ c & d \end{pmatrix} e_2 = be_1 + de_2.
$$

In the smooth $p$-adic case (section \ref{sec:smoothp-adic}), our Kirillov models were smooth functions taking values in $\Linf$.  We now instead consider larger spaces of functions.

Define $\LP^{[\ell,\ell+k-2]}(\Qp^\times,t^\ell L_\infty[t] / t^{k-1+\ell} L_\infty[t])$ to be the space of locally 
polynomial functions on $\Qp^\times$ valued in $t^\ell L_\infty[t] / t^{k-1+\ell} L_\infty[t]$.  We equip this space with an action of $B$ via 
$$
\left( \begin{pmatrix} a & b \\ 0 & d \end{pmatrix} \phi \right)(x)  = \delta(d) [(1+T)^{bx/d}] \phi(ax/d)
$$
where $\delta$ is the central character of our locally algebraic representation, 
and where $[(1+T)^z]$ is the character of $\Q_p$ discussed in~\cite[I.1.6]{Colm2}.
The simplest way 
to describe it in our present context  is to use the formula
$[(1+T)^{z}] = \ve(z) e^{tz}$ (see {\em loc.\ cit.}).

In \cite[Prop VI.2.11]{Colm2}, it is proven that $\pi \otimes \Sym^{k-2}(L^2) \otimes \det^{\ell}$ admits a locally algebraic Kirillov model; that is, it admits a $B$-equivariant map 
\begin{align*}
\pi \otimes \Sym^{k-2}(L^2) \otimes {\det}^{\ell} &\lra \LP^{[\ell,\ell+k-2]}(\Qp^\times,t^\ell L_\infty[t] / t^{k-1+\ell}L_\infty[t]).\\
z &\mapsto {\mathcal K}_z
\end{align*}
Explicitly, fix a Kirillov model of $\pi$ where we write $\phi_v$ for the function corresponding to $v \in \pi$.  Then for 
$$
z = \sum_{i=0}^{k-2} z_i \otimes e_1^{k-2-i} e_2^i \in \pi \otimes \Sym^{k-2}(L^2) \otimes {\det}^{\ell}
$$
we set 
\begin{equation}
\label{eqn:localg}
{\mathcal K}_z(x) = (tx)^\ell \sum_{i=0}^{k-2} i!~\!\phi_{z_i}(x) (tx)^{k-2-i}.
\end{equation}

It is easy to see that for $a \in \Zp^\times$, we have $\sigma_a({\mathcal K}_z(x)) = {\mathcal K}_z(ax)$ where $\sigma_a(t) = at$ as usual.  Thus, as before, ${\mathcal K}_z$ is invariant under the Galois-twisted action of $\Gamma$. In particular, all of the values of ${\mathcal K}_z(x)$ are determined by its values on powers of $p$.

\begin{prop} 
\label{prop:Mellinpadic_localg}
Suppose that the local Euler factor 
attached to $\pi$
does not have a pole at $s = j+3 -\ell -k$.  
Then given $$z = v \otimes e_1^{k-2-j} e_2^j \in \pi \otimes \Sym^{k-2}(L^2) \otimes {\det}^{\ell},$$
we may find a uniquely determined function $\xi$ in 
$\LP^{[\ell,\ell+k-2]}(\Qp^\times,t^\ell L_\infty[t] / t^{k+\ell-1} L_\infty[t])^\Gamma$ such that:
\begin{enumerate}
\item 
The restriction of $\xi$ to (the intersection with $\mathbb Q_p^{\times}$ with) 
any compact neighbourhood of $0$ in $\mathbb Q_p$ lies in the Kirillov
model of $\pi \otimes \Sym^{k-2}(L^2) \otimes {\det}^{\ell}$.
\item $\left(1 - \begin{pmatrix} p & 0 \\ 0 & 1 \end{pmatrix}\right) \xi = {\mathcal K}_z$.
\end{enumerate}
If $\phi_{v}$ is invariant under the regular $\Gamma$-action, then so is $\xi$. 
Furthermore, one then has that
$\widetilde{Z}({\mathcal K}_z,1)  = \xi(p^{-i})$, 
for any $i$ chosen so large that
$\phi_{v}$ vanishes outside of $p^{-i}\Z_p$.  
\end{prop}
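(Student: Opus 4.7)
The proof will follow the strategy of Proposition~\ref{prop:Mellin} (and its $p$-adic variant Proposition~\ref{prop:Mellinpadic}), adapted to keep track of the extra polynomial factor in~$t$ contributed by the locally algebraic structure. By the explicit formula~\eqref{eqn:localg}, for $z = v \otimes e_1^{k-j} e_2^j$ one has $\mathcal{K}_z(x) = j!\,\phi_v(x)(tx)^{k+\ell-j}$. First decompose $\phi_v = \phi_v^+ + \phi_v^-$, where $\phi_v^+$ has support bounded away from $0$ and $\phi_v^-$ is supported in $\Zp$. By the standard asymptotic description of smooth $\GL_2(\Qp)$-representations near zero (as used in the proof of Proposition~\ref{prop:Mellin}), $\phi_v^-$ is a finite $\Linf$-linear combination of elementary pieces $x \mapsto \gamma^{\ord_p(x)} \triv_{\Zp}(x)$ for finitely many scalars~$\gamma$; multiplying through by $(tx)^{k+\ell-j}$ gives a matching decomposition $\mathcal{K}_z = \mathcal{K}^+ + \mathcal{K}^-$.

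For $\mathcal{K}^+$ the recipe from Proposition~\ref{prop:Mellin} works verbatim:\ setting $U := \begin{pmatrix} p & 0 \\ 0 & 1 \end{pmatrix}$, define $\xi^+ := \sum_{n \geq 0} U^n\mathcal{K}^+$, a pointwise finite sum satisfying $(1-U)\xi^+ = \mathcal{K}^+$. For each elementary summand $\psi_\gamma(x) := \gamma^{\ord_p(x)}(tx)^{k+\ell-j}\triv_{\Zp}(x)$ of $\mathcal{K}^-$, I would make the ansatz $\xi_\gamma(x) := D_\gamma \gamma^{\ord_p(x)}(tx)^{k+\ell-j}$ on $\Zp$, extended to $\Qp \setminus \Zp$ by $\xi_\gamma(x) := \xi_\gamma(p^{-\ord_p(x)}x)$. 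This extension takes values in the truncated polynomial ring, is $\Gamma$-invariant, and automatically satisfies $(1-U)\xi_\gamma = 0$ off $\Zp$. Imposing $(1-U)\xi_\gamma = \psi_\gamma$ on $\Zp$ then reduces to the scalar equation $D_\gamma(1 - \gamma p^{k+\ell-j}) = 1$, which is solvable exactly when $\gamma p^{k+\ell-j} \neq 1$.

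The key point is to identify $\gamma p^{k+\ell-j} \neq 1$ with the hypothesis that the local Euler factor has no pole at $s = j+1-\ell-k$. In the unramified principal series case, the Casselman--Shalika formula of section~\ref{sec:newvec} shows that the $\gamma$'s arising are $\alpha/p$ and $\beta/p$, so $\gamma p^{k+\ell-j} = 1$ iff $\alpha = p^{j+1-\ell-k}$ or $\beta = p^{j+1-\ell-k}$, which is exactly when the Euler factor has a pole at $s = j+1-\ell-k$. The Steinberg and ramified principal series cases are analogous, and for supercuspidal $\pi$ the newvector is compactly supported in $\Qp^\times$, so $\mathcal{K}^- = 0$ and there is nothing to check. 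Setting $\xi := \xi^+ + \sum_\gamma c_\gamma \xi_\gamma$ with the $c_\gamma$ the coefficients of $\mathcal{K}^-$ yields a function with the required properties; uniqueness reduces to the triviality of the kernel of $1 - U$ acting on each $\gamma$-mode, again from the same non-vanishing condition, and $\Gamma$-invariance is immediate from the construction.

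For the Mellin identity, telescoping $(1-U)\xi = \mathcal{K}_z$ gives $\xi^+(p^{-i}) = \sum_{m \in \Z} \mathcal{K}^+(p^m)$ once $\mathcal{K}^+$ vanishes outside $p^{-i}\Zp$, while directly from the ansatz $\xi_\gamma(p^{-i}) = D_\gamma t^{k+\ell-j} = t^{k+\ell-j}/(1 - \gamma p^{k+\ell-j})$ matches the formal geometric series $\sum_{m \geq 0} \psi_\gamma(p^m) = t^{k+\ell-j}\sum_{m\geq 0}(\gamma p^{k+\ell-j})^m$. The main technical (but routine) obstacle I anticipate is the case-by-case verification of the near-zero asymptotic form of $\phi_v$ across the various types of smooth representations of $\GL_2(\Qp)$ (principal series, special, supercuspidal, including ramified variants), together with the identification of the $\gamma$'s with the Satake data giving rise to the Euler factor.
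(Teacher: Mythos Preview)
Your proposal is correct and follows essentially the same approach as the paper: decompose $\phi_v$ into a piece with support bounded away from zero (handled by the convergent geometric sum in $U$) and elementary pieces $\gamma^{\ord_p(x)}\triv_{\Zp}$, then write down an explicit ansatz for $\xi$ on each piece; the paper's explicit formula for $\xi$ on the elementary piece is exactly the one your ansatz produces once you solve for $D_\gamma$. The case-by-case verification across representation types that you flag as the main technical obstacle is something the paper simply absorbs into the phrase ``implied by our condition on the local Euler factor of~$\pi$,'' so you are being more thorough than necessary there.
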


\begin{proof}
First note that we have 
$$
{\mathcal K}_z(x) = (tx)^\ell j!~\!\phi_{v}(x) (tx)^{k-2-j} = j! (tx)^{k-2+\ell-j} \phi_{v}(x).
$$
If $\phi_v$ is bounded away from 0, then we can argue exactly as in Proposition \ref{prop:Mellin} and define $\displaystyle \xi = \sum_{n \geq 0} \left( \begin{smallmatrix} p^n & 0 \\ 0 & 1 \end{smallmatrix}\right) {\mathcal K}_z$.  Otherwise, again as in Proposition \ref{prop:Mellin}, we can reduce to the case where $\phi_v(x) = \begin{cases}\gamma^{\ord_p(x)} & x \in \Zp, \\ 0 & \text{otherwise.} \end{cases}$
In this case, we define $\displaystyle \xi(x) = \begin{cases}  \displaystyle\frac{(tx)^a \gamma^{\ord_p(x)}}{1-p^a\gamma} & x \in \Zp, \vspace{0.2cm}\\ 
\displaystyle\frac{(tx|x|)^a }{1-p^a\gamma} & x \not\in \Zp,
\end{cases}$
where $a=k-2+\ell-j$.  A direct computation with this function verifies the remaining claims of the proposition.  We also note that to define $\xi$, we need that $\gamma \neq p^{-a} = p^{2+j-k-\ell}$ which is implied by our condition on the local Euler factor of $\pi$.
\end{proof}

\section{$p$-adic local Langlands correspondence and Kirillov models}
\label{sec:Kirillov}

In this section we will turn to the consideration of Colmez's work
on the $p$-adic local Langlands correspondence.  This work uses the constructions
of the preceding sections in the particular case of two-dimensional
$p$-adic representations of $G_{\Qp}$
to associate a $\GL_2(\mathbb Q_p)$-Banach space
representation to any such two-dimensional representation. 
For us, one of the most important features of Colmez's work
will be the very powerful analogue of the classical
theory of Kirillov models for smooth representations of $\GL_2(\mathbb Q_p)$
that he develops using the maps $\imath^-_i$ and~$\imath^-$ introduced
above. 

\subsection{Kirillov models via $(\varphi,\Gamma)$-modules}

\label{subsec:LL}

If $V$ is a continuous irreducible two-dimensional
representation of $G_{\mathbb Q_p}$ over some finite extension $L$
of $\mathbb Q_p$,
then Colmez
associates to $V$ an $L$-Banach space representation $\pi(V)$ of
$\GL_2(\mathbb Q_p)$.  (We use slightly different conventions from Colmez; he would call this $\pi(V(1))$.)  We won't recall the construction of $\pi(V)$ as a
$\GL_2(\mathbb Q_p)$-representation here.  Rather, we will focus on 
$\pi(V)$ just as a $P$-representation (recall $P$ is the mirabolic subgroup),
in which case $\pi(V)$ admits the following description:\
there is a canonical isomorphism of $P$-modules
\begin{equation*}
\widetilde\bD\bigl(V(1)\bigr)/\widetilde\bD^+\bigl(V(1)\bigr)
\iso \pi(V),
\end{equation*}
where the source has the $P$-representation structure discussed in the
section \ref{sec:mirabolic} (see \cite[Corollaire II.2.9]{Colm2}).

Suppose now that $V$ admits zero as a Hodge--Sen--Tate weight with multiplicity one.
If we let 
$
(\widetilde\bD\bigl(V(1)\bigr)/\widetilde\bD^+\bigl(V(1)\bigr))_{
P-\sm}
$ 
denote the subspace of $P$-smooth vectors of the $P$-representation
$\widetilde\bD\bigl(V(1)\bigr)/\widetilde\bD^+\bigl(V(1)\bigr)$,
and if we choose an $L$-basis $\mathbf e$ for the one-dimensional space of $\Gamma$-invariant vectors in $\widetilde\bD_{\Sen}$,
then we see that
$\imath^-_0$ restricts to a map 
\begin{equation}
\label{eqn:third}
\imath^-_0:
\Bigl(\widetilde\bD\bigl(V(1)\bigr)/
\widetilde\bD^+\bigl(V(1)\bigr)\Bigr)_{P-\sm}
\to
\Linf{\mathbf e},
\end{equation}
with the following two properties:\ for any $n \geq 0$ and any $b \in \mathbb Z_p,$
$$
\imath^-_0\left(\varphi^{-n}(1+T)^b x\right) = 
(\varepsilon^{(n)})^b \imath^-_0(x),
$$
or, rewriting this in terms of the $P$-action,
\begin{equation}
\label{eqn:fourth}
\imath^-_0\left(\begin{pmatrix}1 & b/p^n \\ 0 & 1 \end{pmatrix} x\right) = 
(\varepsilon^{(n)})^b \imath^-_0(x);
\end{equation}
and for any $a \in \Gamma$,
$$\imath^-_0(a \cdot x) = \sigma_a\bigl(\imath^-_0(x)\bigr),$$
or, again rewriting this in terms of the $P$-action,
\begin{equation}
\label{eqn:fifth}
\imath^-_0\left(\begin{pmatrix} a & 0 \\ 0 & 1 \end{pmatrix} x\right)
= \sigma_a\left(\imath^-_0(x)\right).
\end{equation}

We suppose now (and for the remainder of this section) that $V$ is furthermore de Rham,
with Hodge--Tate weights equal to $0$ and $1 - k$, for some
$k\geq 2$ (so that the jumps in the Hodge filtration of
$\bD_{\dR}(V)$ occur at $0$ and $k-1$).
The theory of $p$-adic local Langlands shows that
$\pi(V)$ 
contains as a subrepresentation
$\pi_{\sm}(V)\otimes_L (\Sym^{k-2} L^2)^{*},$
where $\pi_{\sm}(V)$ is the smooth representation of $\GL_2(\mathbb Q_p)$
over $L$ attached to the Weil--Deligne representation underlying the
potentially semi-stable Dieudonn\'e module of~$V$ (see \cite[Th\'eor\`eme 0.20, 0.21]{Colm2}).

Let $v_{\hw}$ denote a highest weight vector of
$(\Sym^{k-2} L^2)^{*}$ (well-defined up to scaling).  Note that 
$v_{\hw}$ is not only fixed by $N$ (by definition) but is actually
fixed by $P$.  Thus we have an inclusion
$$\pi_{\sm}(V) \iso
\pi_{\sm}(V)\otimes_L v_{\hw} \hookrightarrow \pi(V)_{P-\sm}.$$
In particular, we may restrict the map $\imath^-_0$ of~(\ref{eqn:third})
above to a map
$$\pi_{\sm}(V) \rightarrow \Linf \mathbf e.$$
A consideration of the formulas~(\ref{eqn:fourth}) and~(\ref{eqn:fifth}) above then
shows that this map is precisely of the form
$$v \mapsto \ell(v) \mathbb e,$$
where $\ell$ is the (suitably scaled) Kirillov functional of $\pi_{\sm}(V)$.
In summary, we have the following result, due to Colmez.

\begin{theorem}[Colmez]
\label{thm:Kirillov}
The Kirillov model of $\pi_{\sm}(V)$ (suitably scaled) satisfies the following:\ if  $v_{\sm} \in \pi_{\sm}(V)$ corresponds to the function $\phi$ in 
the Kirillov model, then $\imath^-_0(v_{\sm}\otimes v_{\hw}) = \phi(1) \mathbf e.$
\end{theorem}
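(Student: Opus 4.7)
The plan is to extract from $\imath_0^-$ a linear functional on $\pi_{\sm}(V)$, verify that it has the two transformation laws which (by uniqueness of Kirillov models) characterize the Kirillov functional, and then normalize. Since $v_{\hw}$ is fixed by all of $P$, the embedding $v_{\sm} \mapsto v_{\sm} \otimes v_{\hw}$ of $\pi_{\sm}(V)$ into $\pi(V)$ is $P$-equivariant, and its image lies in the $P$-smooth subspace because $\pi_{\sm}(V)$ is smooth. Combined with~\eqref{eqn:third}, this lets us define $\ell \colon \pi_{\sm}(V) \to L_\infty$ unambiguously by
\[
\imath_0^-(v_{\sm} \otimes v_{\hw}) = \ell(v_{\sm})\, \mathbf e.
\]

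Next I would verify the two transformation properties. An arbitrary element of $\mathbb Q_p$ can be written as $b/p^n$ with $b \in \mathbb Z_p$ and $n \geq 0$. The $P$-equivariance of $v_{\sm} \mapsto v_{\sm} \otimes v_{\hw}$ combined with~\eqref{eqn:fourth} gives
\[
\ell\!\left( \begin{pmatrix} 1 & b/p^n \\ 0 & 1 \end{pmatrix} v_{\sm} \right) \mathbf e
= \imath_0^-\!\left( \begin{pmatrix} 1 & b/p^n \\ 0 & 1 \end{pmatrix} (v_{\sm} \otimes v_{\hw}) \right)
= (\varepsilon^{(n)})^b \, \ell(v_{\sm})\, \mathbf e
= \varepsilon(b/p^n)\, \ell(v_{\sm})\, \mathbf e,
\]
the last equality using the explicit formula $\varepsilon(b/p^n) = \lim_m (\varepsilon^{(m)})^{p^{m-n}b} = (\varepsilon^{(n)})^b$. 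Similarly~\eqref{eqn:fifth} yields $\ell\bigl(\begin{pmatrix} a & 0 \\ 0 & 1 \end{pmatrix} v_{\sm}\bigr) = \sigma_a\bigl(\ell(v_{\sm})\bigr)$ for $a \in \mathbb Z_p^\times$. These are precisely the transformation rules that characterize a $p$-adic Kirillov functional on $\pi_{\sm}(V)$ (as recalled in section~\ref{sec:smoothp-adic}).

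Finally I would invoke uniqueness: the space of linear functionals on $\pi_{\sm}(V)$ satisfying these two transformation laws is at most one-dimensional (this is the content of the uniqueness of Kirillov models for infinite-dimensional smooth irreducible representations of $\GL_2(\mathbb Q_p)$). Hence $\ell$ coincides with the Kirillov functional up to a scalar. ``Suitably scaling'' the Kirillov model, i.e.\ rescaling the Kirillov embedding by this scalar, we arrange that $\ell$ \emph{is} the Kirillov functional, so that $\ell(v_{\sm}) = \phi_{v_{\sm}}(1)$ by the very definition of the Kirillov functional; then $\imath_0^-(v_{\sm} \otimes v_{\hw}) = \phi(1) \mathbf e$.

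The only genuine point requiring care, and thus the main obstacle, is the non-vanishing of $\ell$ (needed so that rescaling is actually possible). This is a non-degeneracy property of Colmez's construction: the locally algebraic subspace $\pi_{\sm}(V) \otimes (\Sym^{k-2} L^2)^{*}$ is not swallowed by the kernel of $\imath_0^-$, precisely because $p$-adic local Langlands is designed so that the embedding $\pi_{\sm}(V)\otimes (\Sym^{k-2}L^2)^* \hookrightarrow \pi(V)$ recovers classical local Langlands via the $P$-action on $\widetilde\bD(V(1))/\widetilde\bD^+(V(1))$ and the map $\imath_0^-$.
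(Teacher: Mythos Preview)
Your proposal is correct and follows precisely the approach the paper itself sketches in the paragraph immediately preceding the theorem statement (defining $\ell$ via $\imath_0^-$, checking the transformation laws~\eqref{eqn:fourth} and~\eqref{eqn:fifth}, and invoking uniqueness of Kirillov models). The paper's formal proof is simply a citation to \cite[Proposition~VI.5.6]{Colm2}, since the result is due to Colmez; your write-up fleshes out that sketch and correctly isolates the non-vanishing of $\ell$ as the one point requiring the deeper input from Colmez's construction.
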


\begin{proof}
See \cite[Proposition VI.5.6]{Colm2}.
\end{proof}

Note that since $V$ is assumed to be de Rham with
Hodge--Tate weights $0$ and $1-k$,
the $\Gamma$-fixed points of $\widetilde\bD_{\Sen}(V)$
are naturally identified
with $\mathbf D_{\dR}(V(1))/ \mathbf D^+_{\dR}(V(1)),$
and so our choice of $\mathbf e$ is tantamount to a choice of basis of the
latter quotient.

In light of the preceding theorem, we regard the map
$\imath^-$ as providing generalized Kirillov models 
for the $P$-representations
$\widetilde\bD^+\bigl(V(1)\bigr)[\bigl(1/\varphi^n(T)\bigr)_{n \geq 0}]/
\widetilde\bD^+\bigl(V(1)\bigr)$
and 
$\widetilde\bD_{\rig}^+\bigl(V(1)\bigr)[1/t]/
\widetilde\bD_{\rig}^+\bigl(V(1)\bigr).$
More precisely, we regard $\imath^-_i$ as evaluation at $p^{-i}$, 
and regard elements of these representations as functions on the
set $\{p^{-i}\}_{i \in \mathbb Z}$, taking values in 
$
\widetilde\bD_{\dif}\bigl(V(1)\bigr)/
\widetilde\bD_{\dif}^+\bigl(V(1)\bigr).
$

\subsection{The locally algebraic case}

\label{subsec:localg}

In the previous subsection, we embedded $\pi_{\sm}(V)$ into $\pi(V)_{P-\sm}$ by utilizing the highest weight vector in $\Sym^{k-2}(L^2)^*$.  However,  using only the highest weight vector loses information and we want to be able to work with all of $\Sym^{k-2}(L^2)^*$.  This consideration was exactly the reason we introduced Kirillov models of locally algebraic representations in section \ref{sec:localg}. 
We now explain how to see this locally algebraic Kirillov model in terms
of the morphism~$\imath^-$, {\it i.e.}\ in terms of Colmez's generalized Kirillov model.

To this end, following Colmez \cite[VI.5.2]{Colm2}, we first define 
$$
\displaystyle \pi(V)_{U-\alg} := \bigcup_{n \geq 0} {\frac{1}{(\varphi^n(T))^{k-1}}{\widetilde{\bD}}^+(V(1))} /{\widetilde{\bD}^+(V(1))}.
$$
The map $\imath^-_0$ (of section \ref{sec:sowhat}) then induces a map
$$
\imath^-_0 : \displaystyle \pi(V)_{U-\alg} \lra t^{1-k} \widetilde\bD^+_{\dif}(V(1)) / \widetilde\bD^+_{\dif}(V(1)).
$$
For $z \in \pi(U)_{U-\alg}$, we define a function
\begin{align*}
\widetilde{\mathcal K}_z : \Qp^\times &\lra t^{1-k} \widetilde\bD^+_{\dif}(V(1)) / \widetilde\bD^+_{\dif}(V(1)) \\
x &\mapsto \imath^-_0\left( \begin{pmatrix} x & 0 \\ 0 & 1 \end{pmatrix} z\right).
\end{align*}

In \cite[Lemme VI.5.4]{Colm2}, it is verified that 
$\sigma_a(\widetilde{\mathcal K}_z(x)) = \widetilde{\mathcal K}_z(ax)$ for $a \in \Zp^\times$ and thus $\widetilde{\mathcal K}_z$ is uniquely determined by its values on powers of $p$. 
That is, $\widetilde{\mathcal K}$ is simply (a concrete reinterpretation  of)
the restriction of $\imath^-$ to~$\pi(V)_{U-\alg}$.

We now state an analogue of Theorem~\ref{thm:Kirillov},
which will describe the precise
relationship of this generalized Kirillov model to the Kirillov model of a locally algebraic representation of Section \ref{sec:localg}.
To this end,
recall that for each $$z \in \pi_{\sm}(V)\otimes_L (\Sym^{k-2} L^2)^{*} \cong \pi_{\sm}(V)\otimes_L (\Sym^{k-2} L^2) \otimes {\det}^{2-k},$$ we have an associated locally polynomial map ${\mathcal K}_z : \Qp^\times \to t^{2-k} L_\infty[t] / tL_\infty[t]$.
Further, we have previously fixed a basis ${\bf e}$ of $\mathbf D_{\dR}(V(1))/ \mathbf D^+_{\dR}(V(1))$ (see section \ref{sec:normal}).  Since the Hodge-Tate weights of $V(1)$ are $1$ and $2-k$, we have that ${\bf e} \in t^{-1} \widetilde\bD^+_{\dif}(V(1))$.  Thus 
\begin{align*}
t^{2-k} L_\infty[t] / tL_\infty[t] &\to {\bf \wt{D}}_{\dif}^+(V(1))[1/t]/{\bf \wt{D}}_{\dif}^+(V(1)) \\
f(t) &\mapsto f(t) {\bf e}
\end{align*}
is a well-defined $\Gamma$-equivariant map.

Write ${\mathcal K}_z {\bf e}$ to be the map with values in ${\bf \wt{D}}_{\dif}^+(V(1))[1/t]/{\bf \wt{D}}_{\dif}^+(V(1))$ defined by sending $x$ to ${\mathcal K}_z(x) {\bf e}$. Thus ${\mathcal K}_z {\bf e}$ and $\widetilde{{\mathcal K}}_z$ both take values in the same space.  The following theorem of Colmez compares these two Kirillov models.

\begin{thm}[Colmez]
\label{thm:Kirillov_localg}
As functions of $z$ on 
$$
\pi_{\sm}(V)\otimes_L (\Sym^{k-2} L^2) \otimes {\det}^{2-k} \subseteq \pi(V)_{U-\alg},
$$
${\mathcal K}_z {\bf e}$ and $\widetilde{\mathcal K}_z$ agree up to multiplication by a scalar.
\end{thm}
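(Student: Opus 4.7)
My plan is to show that both $z\mapsto {\mathcal K}_z{\bf e}$ and $z\mapsto \widetilde{\mathcal K}_z$ are realizations of the (essentially unique) locally algebraic Kirillov embedding of the irreducible locally algebraic representation $\pi_{\sm}(V)\otimes\Sym^{k-2}(L^2)\otimes\det^{2-k}$, and then to identify the constant of proportionality on the highest weight line using Theorem~\ref{thm:Kirillov}.

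First I would verify agreement for $z = v\otimes v_{\hw}$, where $v_{\hw}$ is the $P$-fixed highest weight vector. The explicit formula~\eqref{eqn:localg} (with $k$ replaced by $k-2$ and $\ell = 2-k$) collapses in this case to ${\mathcal K}_{v\otimes v_{\hw}}(x)= \phi_v(x)$, since only the $i=0$ term contributes. On the other hand, using the $P$-fixedness of $v_{\hw}$ and the standard torus equivariance of the smooth Kirillov model, we get $\widetilde{\mathcal K}_{v\otimes v_{\hw}}(x) = \imath^-_0(\psmallmat{x}{0}{0}{1}v \otimes v_{\hw}) = \phi_v(x){\bf e}$ by Theorem~\ref{thm:Kirillov}. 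So ${\mathcal K}_{v\otimes v_{\hw}}{\bf e}$ and $\widetilde{\mathcal K}_{v\otimes v_{\hw}}$ coincide (with scalar~$1$) on the submodule $\pi_{\sm}(V)\otimes v_{\hw}$.

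Next I would argue that both $z\mapsto {\mathcal K}_z{\bf e}$ and $z\mapsto \widetilde{\mathcal K}_z$ are $P$-equivariant embeddings of $\pi_{\sm}(V)\otimes\Sym^{k-2}(L^2)\otimes\det^{2-k}$ into the same $P$-module of functions $\Qp^\times\to \widetilde\bD_{\dif}(V(1))/\widetilde\bD_{\dif}^+(V(1))$. For ${\mathcal K}_z{\bf e}$ this is built into Section~\ref{sec:localg} combined with the $\Gamma$-invariance of ${\bf e}$. For $\widetilde{\mathcal K}_z$ it follows from the $P$-equivariance of $\imath^-_0$ (formulas~\eqref{eqn:fourth} and~\eqref{eqn:fifth}) together with the conjugation identities within~$P$ applied to the defining formula $\widetilde{\mathcal K}_z(x)=\imath^-_0(\psmallmat{x}{0}{0}{1}z)$. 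Once this $P$-equivariance is in hand, the uniqueness of the locally algebraic Kirillov model --- the classical Whittaker uniqueness argument adapted to locally algebraic vectors, as developed in~\cite{Colm2} --- implies that any two such $P$-equivariant embeddings agree up to a single scalar, and the highest-weight comparison above pins this scalar down.

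The main obstacle I anticipate is matching the two $P$-actions on the target space, especially for the unipotent subgroup~$N$. One must identify the operator $[(1+T)^{bx/d}]$ appearing in the definition of the target's $N$-action in Section~\ref{sec:localg} with the action of $(1+T)^{bx/d}$ on $\widetilde{\bD}(V(1))/\widetilde{\bD}^+(V(1))$ for non-integer exponents, which requires the extension via the character $\varepsilon$ encoded in the formula $[(1+T)^z]=\varepsilon(z)(e^{tz}-1)$; this is precisely the point where the locally algebraic refinement of the Kirillov model enters, beyond the smooth case covered by Theorem~\ref{thm:Kirillov}.
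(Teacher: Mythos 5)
The paper's proof is a direct citation to \cite[Prop.\ VI.5.6(iii)]{Colm2}, and your sketch reconstructs the ideas underlying that proposition --- highest-weight agreement via Theorem~\ref{thm:Kirillov}, then locally algebraic Kirillov uniqueness --- so this is essentially the same approach, with both arguments ultimately deferring to Colmez for the technical heart (your uniqueness input is itself cited from \cite{Colm2}, as is Theorem~\ref{thm:Kirillov}). One caution worth making explicit: the uniqueness argument applies only after the $P$-action on the target of $z\mapsto{\mathcal K}_z\mathbf{e}$ (the $[(1+T)^{bx/d}]$-twisted action on $L_\infty[t]$-valued functions of Section~\ref{sec:localg}) has been literally identified with the $P$-action on the target of $z\mapsto\widetilde{\mathcal K}_z$ induced by $\imath^-_0$ from $\widetilde\bD\bigl(V(1)\bigr)/\widetilde\bD^+\bigl(V(1)\bigr)$; you rightly flag this as the main obstacle, but a complete proof would need to carry out that identification, and it --- together with the one-dimensionality of the space of locally algebraic Whittaker functionals on $\pi_{\sm}(V)\otimes\Sym^{k-2}(L^2)\otimes\det^{2-k}$ --- is exactly what Colmez's proposition supplies.
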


\begin{proof}
This theorem follows from \cite[Prop.\ VI.5.6(iii)]{Colm2}.
\end{proof}

Note that by suitably scaling the Kirillov model of $\pi_{\sm}(V)$ we can (and will) force ${\mathcal K}_z {\bf e}$ and $\widetilde{\mathcal K}_z$ to exactly agree.

We now return to the problem of inverting $1-\left(\begin{smallmatrix} p & 0 \\ 0 & 1 \end{smallmatrix}\right)$ in the setting of generalized Kirillov models.  Recall that in Proposition~{\ref{prop:Mellinpadic_localg}, we solved the equation 
$\left(1-\left(\begin{smallmatrix} p & 0 \\ 0 & 1 \end{smallmatrix}\right)\right)\xi = {\mathcal K}_z$.  However, the solution $\xi$ was not literally in the Kirillov model of $\pi$, but instead $\xi$ restricted to any compact neighborhood was in the Kirillov model of $\pi$.  The following proposition realizes $\xi {\bf e}$ in a generalized Kirillov model.

\begin{prop}
\label{prop:Mellin two}
Suppose that the local Euler factor of $\pi_{\sm}(V)$ does not 
have a pole at $s = 1+j$.  For $z = v \otimes e_1^{k-2-j} e_2^j \in \pi \otimes \Sym^{k-2}(L^2) \otimes {\det}^{2-k}$,
let 
$$
\xi \in \LP^{[2-k,0]}(\Qp^\times,t^{2-k}L_\infty[t] / t L_\infty[t])^\Gamma 
$$
be the function of Proposition~{\em \ref{prop:Mellinpadic_localg}}
such that $\left(1-\left(\begin{smallmatrix} p & 0 \\ 0 & 1 \end{smallmatrix}\right)\right)\xi = {\mathcal K}_z$.
Then there exists an element $w \in \widetilde\bD_{\rig}^+\bigl(V(1)\bigr)[1/t]/
\widetilde\bD_{\rig}^+\bigl(V(1)\bigr)$ such that
$$
\imath_i^-(w) = \xi(p^{-i}) {\bf e}
$$
in ${\bf \wt{D}}_{\dif}^+(V(1))[1/t]/{\bf \wt{D}}_{\dif}^+(V(1))$ for all $i$.
\end{prop}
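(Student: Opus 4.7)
The plan is to extend the generalized Kirillov map of Subsection~\ref{subsec:localg} to the larger quotient $\widetilde\bD_{\rig}^+(V(1))[1/t]/\widetilde\bD_{\rig}^+(V(1))$ and then construct $w$ by mirroring the two-case construction of $\xi$ from the proof of Proposition~\ref{prop:Mellinpadic_localg}. Since this quotient carries a natural $P$-action (Section~\ref{sec:mirabolic}), the formula
$$
\widetilde{\mathcal K}_w(x) := \imath^-_0\Bigl(\begin{pmatrix} x & 0 \\ 0 & 1 \end{pmatrix} w\Bigr)
$$
defines a map on it, and tautologically $\widetilde{\mathcal K}_w(p^{-i}) = \imath^-_i(w)$. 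Hence the content of the proposition is the existence of $w$ with $\widetilde{\mathcal K}_w = \xi\,\mathbf{e}$; such $w$ is then automatically unique by the injectivity of $\imath^-$ (Proposition~\ref{prop:support}(1)).

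Following the proof of Proposition~\ref{prop:Mellinpadic_localg}, I split $\phi_v$ into a part $\phi_v^+$ supported away from~$0$ and a part $\phi_v^-$ of the form $\gamma^{v_p(\cdot)}\triv_{\Z_p}$, with corresponding decomposition $z = z^+ + z^-$. For the $+$-part, $\xi^+ = \sum_{n \geq 0} \bigl(\begin{smallmatrix} p^n & 0 \\ 0 & 1 \end{smallmatrix}\bigr)\mathcal{K}_{z^+}$, and I take $w^+ := \sum_{n \geq 0} \varphi^n(z^+)$. By Theorem~\ref{thm:Kirillov_localg} combined with Proposition~\ref{prop:support}(2), $\imath^-_j(z^+)$ vanishes outside a bounded range of $j$: one direction comes from $z^+\in \pi(V)_{U-\alg}$ and the other from $\phi_v^+$ being supported away from $0$. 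This makes the sum $\imath^-_i(w^+) = \sum_{n \geq 0}\imath^-_{i-n}(z^+)$ finite at each~$i$ and matches it term-by-term with $\xi^+(p^{-i})\mathbf{e}$; a further convergence argument shows that the formal series defining $w^+$ lands in the target quotient.

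For the $-$-part, I construct $w^-$ explicitly. The semi-invariance $\phi_v^-(px) = \gamma\phi_v^-(x)$ on $\Z_p$ corresponds, via the $p$-adic local Langlands correspondence of Subsection~\ref{subsec:LL}, to a $\varphi$-structure on an appropriate lift in $\widetilde\bD_{\rig}^+(V(1))[1/t]$; inverting $1-\varphi$ on this structure amounts to dividing by the scalar $1 - p^{-j}\gamma$, which is non-zero precisely by the Euler-factor hypothesis at $s = j+1$. A direct computation then verifies that the resulting element has $\imath^-_i$-images matching the explicit formula for $\xi^-(p^{-i})\mathbf{e}$ from Proposition~\ref{prop:Mellinpadic_localg}.

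The main obstacle is the construction in the $-$-case: one must produce the explicit lift realizing $\phi_v^-$ inside $\widetilde\bD_{\rig}^+(V(1))[1/t]$ via the Langlands correspondence, and verify the $\imath^-_i$-values against the explicit piecewise formula for $\xi^-$. Once both cases are handled, $w := w^+ + w^-$ yields the desired element.
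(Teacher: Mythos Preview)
Your decomposition is at the wrong level, and this creates a genuine gap. You split $\phi_v = \phi_v^+ + \phi_v^-$ at the level of the smooth Kirillov function, but then your $\xi^+$ is \emph{not} compactly supported in $\Q_p$: for $i \ll 0$ one has $\xi^+(p^i) = \sum_{j}\mathcal K_{z^+}(p^j)$, a nonzero constant. Consequently the formal series $w^+ = \sum_{n\geq 0}\varphi^n(z^+)$ does not live in $\pi(V)_{U-\alg}$, and your ``further convergence argument'' that it lands in $\widetilde\bD_{\rig}^+\bigl(V(1)\bigr)[1/t]/\widetilde\bD_{\rig}^+\bigl(V(1)\bigr)$ is exactly the missing content. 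Knowing that $\imath^-_i$ of the partial sums stabilizes at each $i$ does not produce an element of the quotient: $\imath^-$ is injective but you have no description of its image that would let you lift a compatible system of values. Your $-$-case has the same problem in disguise --- the piecewise formula for $\xi^-$ also has a constant tail at infinity, and ``inverting $1-\varphi$ by dividing by $1-p^{-j}\gamma$'' does not explain where the resulting element lives.

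The paper instead decomposes $\xi$ itself (not $\phi_v$) as $\xi_1 + \xi_2$, where $\xi_2$ is the constant tail ($\xi_2(p^i) = 0$ for $i\geq 0$, constant for $i<0$) and $\xi_1 = \xi - \xi_2$. The point is that $\xi_1$ is then compactly supported in $\Q_p$, hence by condition~(1) of Proposition~\ref{prop:Mellinpadic_localg} lies \emph{globally} in the locally algebraic Kirillov model, and Theorem~\ref{thm:Kirillov_localg} supplies the corresponding element of $\pi(V)_{U-\alg}$ directly --- no infinite sum needed. The constant tail $\xi_2$ is handled by invoking \cite[Lemme~VI.4.11]{Colm2}, which is precisely the statement that such step-functions are realized by elements of $\widetilde\bD_{\rig}^+\bigl(V(1)\bigr)[1/t]/\widetilde\bD_{\rig}^+\bigl(V(1)\bigr)$. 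That lemma (or an equivalent construction) is the substantive input you are missing.
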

\begin{proof}
From the proof of Proposition \ref{prop:Mellinpadic_localg}, we can write the function $\xi$ as $\xi_1+\xi_2$ where $\xi_1$ lies in the Kirillov model, and $\xi_2$ satisfies:\ $\xi_2(p^i) = \begin{cases} 0 & \text{if~} i  \geq 0,\\  c &\text{if~} i < 0,\end{cases}$  for $c \in t^{2-k}L_\infty[t] / t L_\infty[t]$.  In light of Theorem \ref{thm:Kirillov_localg}, we thus need to show that there is some element $w$ in 
$\widetilde\bD_{\rig}^+\bigl(V(1)\bigr)[1/t]/
\widetilde\bD_{\rig}^+\bigl(V(1)\bigr)$ such that $\imath_i^-(w) = \xi_2(p^{-i}) {\bf e}$ for $i \in \Z$.  Lastly, the proof of \cite[Lemme VI.4.11]{Colm2} describes how such an element $w$ can be constructed.
\end{proof}

\subsection{Construction of local cohomology classes}
\label{sec:cn}

We now build the local cohomology classes $c_{n,j} \in H^1(\Knp,V(1+j))$ which will play a key role in the global arguments in the second half of the paper.

For any $n\geq 0,$ we define 
$$
d_{n,j} \in \pi_{\sm}(V) \otimes \Sym^{k-2}(L^2) \otimes {\det}^{2-k} \subseteq 
\pi(V) = \wt\bD(V(1))/\wt\bD^+(V(1))
$$
by
$$
d_{n,j} := 
 \left( \begin{smallmatrix} 1 & 1 \\ 0 & 1 \end{smallmatrix} \right) \left( \begin{smallmatrix} p^n & 0 \\
0 & 1 \end{smallmatrix} \right)  v_{\new}\otimes e_1^{k-2-j} e_2^j.
$$
Recall that $e_1$ and $e_2$ are our basis of $L^2$ and the induced action on $\Sym^{k-2}(L^2)$ is described in Section~\ref{sec:localg}.

\begin{lemma}
\label{lemma:dm}
We have
\begin{enumerate}
\item $d_{n,j} \in \displaystyle \frac{1}{\varphi^n(T)^{j+1}} \wt\bD^+(V(1))/\wt\bD^+(V(1))$;
\item for $a \in \Gamma_n = 1 + p^n \Zp$, we have $  \left( \begin{smallmatrix} a & 0 \\ 0 & 1 \end{smallmatrix} \right)  d_{n,j} = a^{-j} \cdot d_{n,j}$.
\end{enumerate}
\end{lemma}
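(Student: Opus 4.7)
The plan is to verify each part by an elementary computation in the locally algebraic subspace, treating $d_{n,j} = (Mv_{\new})\otimes e_1^{k-2-j}e_2^j$ (with $M := \left(\begin{smallmatrix} 1 & 1 \\ 0 & 1\end{smallmatrix}\right)\left(\begin{smallmatrix} p^n & 0 \\ 0 & 1\end{smallmatrix}\right) = \left(\begin{smallmatrix} p^n & 1 \\ 0 & 1\end{smallmatrix}\right)$) as an element of the $\GL_2(\Q_p)$-equivariant subspace $\pi_{\sm}(V)\otimes\Sym^{k-2}(L^2)\otimes{\det}^{2-k}\subseteq\pi(V)$, on which group elements act diagonally on the two tensor factors.

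For part~(2), I would exploit the matrix identity
$$
\left(\begin{smallmatrix} a & 0 \\ 0 & 1 \end{smallmatrix}\right) M = M\left(\begin{smallmatrix} a & (a-1)p^{-n} \\ 0 & 1 \end{smallmatrix}\right).
$$
For $a\in\Gamma_n=1+p^n\Z_p$ one has $(a-1)p^{-n}\in\Z_p$, so the right-hand matrix lies in $\left(\begin{smallmatrix} \Z_p^\times & \Z_p \\ 0 & 1\end{smallmatrix}\right)$ and hence fixes the newvector~$v_{\new}$.  Therefore $\left(\begin{smallmatrix} a & 0 \\ 0 & 1 \end{smallmatrix}\right)$ fixes the smooth factor $Mv_{\new}$ of $d_{n,j}$, while its action on $e_1^{k-2-j}e_2^j\in\Sym^{k-2}(L^2)\otimes{\det}^{2-k}$ is the scalar $a^{k-2-j}\cdot 1^{j}\cdot a^{2-k}=a^{-j}$ (the last factor coming from the $\det^{2-k}$-twist).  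Combining these gives $\left(\begin{smallmatrix} a & 0 \\ 0 & 1 \end{smallmatrix}\right)d_{n,j}=a^{-j}d_{n,j}$.

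For part~(1), I would use the analogous conjugation identity $\left(\begin{smallmatrix} 1 & p^n \\ 0 & 1 \end{smallmatrix}\right)M = M\left(\begin{smallmatrix} 1 & 1 \\ 0 & 1\end{smallmatrix}\right)$.  Since $v_{\new}$ is fixed by $\left(\begin{smallmatrix} 1 & 1 \\ 0 & 1\end{smallmatrix}\right)$, the vector $Mv_{\new}$ is fixed by $\left(\begin{smallmatrix} 1 & p^n \\ 0 & 1\end{smallmatrix}\right)$.  Hence by the diagonal tensor action,
$$
\bigl(\left(\begin{smallmatrix} 1 & p^n \\ 0 & 1\end{smallmatrix}\right)-1\bigr)^{j+1}d_{n,j}=(Mv_{\new})\otimes\bigl(\left(\begin{smallmatrix} 1 & p^n \\ 0 & 1\end{smallmatrix}\right)-1\bigr)^{j+1}\bigl(e_1^{k-2-j}e_2^j\bigr).
$$
The operator $\left(\begin{smallmatrix} 1 & p^n \\ 0 & 1\end{smallmatrix}\right)-1$ on $\Sym^{k-2}(L^2)$ sends $e_1\mapsto 0$ and $e_2\mapsto p^n e_1$, so it strictly lowers the $e_2$-degree of each monomial; applied $j+1$ times to $e_1^{k-2-j}e_2^j$ (which has $e_2$-degree~$j$) it vanishes.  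The final step is to identify $\left(\begin{smallmatrix} 1 & p^n \\ 0 & 1\end{smallmatrix}\right)=\left(\begin{smallmatrix} 1 & 1 \\ 0 & 1\end{smallmatrix}\right)^{p^n}$ with multiplication by $(1+T)^{p^n}=1+\varphi^n(T)$ on $\pi(V)=\widetilde\bD(V(1))/\widetilde\bD^+(V(1))$ (by iterating $\varphi(T)=(1+T)^p-1$); hence $\varphi^n(T)^{j+1}d_{n,j}=0$ in $\pi(V)$, which is precisely the claim that $d_{n,j}\in\frac{1}{\varphi^n(T)^{j+1}}\widetilde\bD^+(V(1))/\widetilde\bD^+(V(1))$.

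The only conceptually nontrivial step will be the translation from the group-theoretic operator $\left(\begin{smallmatrix} 1 & p^n \\ 0 & 1\end{smallmatrix}\right)-1$ to the ring element $\varphi^n(T)$ via the formula $\varphi(T)=(1+T)^p-1$; beyond that, both parts reduce to a short matrix computation together with a degree count in $\Sym^{k-2}(L^2)$.
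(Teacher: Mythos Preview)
Your proof is correct and follows essentially the same approach as the paper: both parts reduce to the same matrix conjugation identities (pulling $\left(\begin{smallmatrix} a & 0 \\ 0 & 1\end{smallmatrix}\right)$ and $\left(\begin{smallmatrix} 1 & p^n \\ 0 & 1\end{smallmatrix}\right)$ past $M$), the invariance of $v_{\new}$ under $\left(\begin{smallmatrix} \Z_p^\times & \Z_p \\ 0 & 1\end{smallmatrix}\right)$, and the observation that $\left(\begin{smallmatrix} 1 & p^n \\ 0 & 1\end{smallmatrix}\right)-1=\varphi^n(T)$ strictly lowers the $e_2$-degree on $\Sym^{k-2}(L^2)$. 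Your presentation is slightly more streamlined (packaging the part~(2) computation into a single conjugation identity rather than a chain of factorizations), but the content is the same.
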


\begin{proof}
First note the simple matrix equation:
$$
\left( \begin{smallmatrix} 1 & p^{n} \\ 0 & 1 \end{smallmatrix} \right)  \left( \begin{smallmatrix} 1 & 1 \\ 0 & 1 \end{smallmatrix} \right) \left( \begin{smallmatrix} p^n & 0 \\ 0 & 1 \end{smallmatrix} \right)
=
\left( \begin{smallmatrix} 1 & 1 \\ 0 & 1 \end{smallmatrix} \right)  \left( \begin{smallmatrix} p^n & 0 \\ 0 & 1 \end{smallmatrix} \right) \left( \begin{smallmatrix} 1 & 1 \\ 0 & 1 \end{smallmatrix} \right).
$$
Thus,
\begin{align*}
 \left( \begin{smallmatrix} 1 & p^{n} \\ 0 & 1 \end{smallmatrix} \right)  d_{n,j}
&= \left( \begin{smallmatrix} 1 & 1 \\ 0 & 1 \end{smallmatrix} \right)  \left( \begin{smallmatrix} p^n & 0 \\ 0 & 1 \end{smallmatrix} \right)  v_{\new} \otimes e_1^{k-2-j} (p^n e_1 + e_2)^j
\end{align*}
as $\left( \begin{smallmatrix} 1 & 1 \\ 0 & 1 \end{smallmatrix} \right)$ fixes $v_\new$.
Computing further, we see
\begin{align*}
\varphi^n(T)(d_{n,j}) 
&= \left( \begin{smallmatrix} 1 & p^{n} \\ 0 & 1 \end{smallmatrix} \right)d_{n,j} -d_{n,j}\\
&= \left( \begin{smallmatrix} 1 & 1 \\ 0 & 1 \end{smallmatrix} \right)  \left( \begin{smallmatrix} p^n & 0 \\ 0 & 1 \end{smallmatrix} \right)  v_{\new} \otimes (e_1^{k-2-j} (p^n e_1 + e_2)^j -  e_1^{k-2-j} e_2^j);
\end{align*}
note that $e_1^{k-2-j} (p^n e_1 + e_2)^j -  e_1^{k-2-j} e_2^j$ is a homogenous polynomial in $e_1$ and $e_2$ where $e_2^{j-1}$ is the highest power of $e_2$ which occurs.
The same computation as above then shows
$$
(\varphi^n(T))^2(d_{n,j}) 
= \left( \begin{smallmatrix} 1 & 1 \\ 0 & 1 \end{smallmatrix} \right)  \left( \begin{smallmatrix} p^n & 0 \\ 0 & 1 \end{smallmatrix} \right)  v_{\new} \otimes g(e_1,e_2) 
$$
where $g(e_1,e_2)$ is again a homogenous polynomial in $e_1$ and $e_2$, but all powers of $e_2$ present are less than or equal to $j-2$.  Iterating this argument yields that $(\varphi^n(T))^{j+1}(d_{n,j}) = 0$, proving the first part of the proposition.

For the second part, for $a = 1+p^n x$ with $x \in \Zp$, we have
\begin{align*}
\left( \begin{smallmatrix} a & 0 \\ 0 & 1  \end{smallmatrix} \right) d_{n,j} 
&=
\left( \begin{smallmatrix} a & 0 \\ 0 & 1  \end{smallmatrix} \right) 
\left( \left( \begin{smallmatrix} 1 & 1 \\ 0 & 1  \end{smallmatrix} \right) 
 \left( \begin{smallmatrix} p^n & 0 \\ 0 & 1  \end{smallmatrix} \right) 
 v_{\new}\otimes e_1^{k-2-j} e_2^j \right)\\
&=
a^{-j} \left( \begin{smallmatrix} p^n a & a \\ 0 & 1  \end{smallmatrix} \right) 
 v_{\new}\otimes e_1^{k-2-j} e_2^j \\
&=
a^{-j} \left( \begin{smallmatrix} p^n & 1 \\ 0 & 1  \end{smallmatrix} \right) 
 \left( \begin{smallmatrix} 1 & x \\ 0 & 1  \end{smallmatrix} \right) 
 \left( \begin{smallmatrix} a & 0 \\ 0 & 1  \end{smallmatrix} \right) 
 v_{\new}\otimes e_1^{k-2-j} e_2^j \\
&=
a^{-j} \left( \begin{smallmatrix} p^n & 1 \\ 0 & 1  \end{smallmatrix} \right) 
 v_{\new}\otimes e_1^{k-2-j} e_2^j
\end{align*}
proving the claim.
We note that the factor of $a^{-j}$ arises by combining the action of $\left( \begin{smallmatrix} a & 0 \\ 0 & 1  \end{smallmatrix} \right)$ on $e_1^{k-2-j}e_2^j$ and the twist by ${\det}^{2-k}$.
\end{proof}

We now aim to build local cohomology classes from the elements $d_{n,j} \in \pi(V)$.
To this end, note that as vector spaces we have $\pi(V) = \pi(V(j))$ though the underlying $\Gamma$-action differs.  For $z \in \pi(V)$, we write $z^{(j)}$ for the element $z$, but viewed in $\pi(V(j))$.  Let $z' \in {\bf D}(V^{*}(-j))^{\psi = 1}$ which we identify with $H^1_{\Iw}(V^{*}(-j))$ and write $z'_n$ for the image of $z'$ to level $n$ in $H^1(\Knp,V^{*}(-j))$ where $\Knp := \Qp(\mu_{p^n})$.
Lastly, recall the dual exponential map
$$
\exp^* : H^1\bigl(\Knp,V^{*}(-j)\bigr) \to \bD^+_{\dR}\bigl(V^{*}(-j)\bigr) \otimes \Knp.
$$
We then have the following twisted version of Theorem~\ref{thm:explicit}.

\begin{cor}
\label{cor:specialized explicit}
Let $z \in \widetilde\bD_{\rig}^+\bigl(V(1)\bigr)[1/t]$
and let $z' \in \mathbf D(V^{*}(-j))^{\psi = 1}.$  Suppose that 
\begin{enumerate}
\item 
$(1-\varphi)z$ lies in $\widetilde\bD^+\bigl(V(1)\bigr)[1/\varphi^r(T)]$ for some $r \geq 0$,
\item 
\label{item:2}
for some $n~\geq~0$, we have $\gamma z = \gamma^{-j} \cdot z$ for all $\gamma \in 1+p^n\Zp = \Gamma_n \subseteq \Gamma$.
\end{enumerate}
Then
$$
\{(1-\varphi)z^{(j)}, z'\} = p^n \langle \imath^-_r(z^{(j)}), \sigma_{-1} \exp^*(z'_n)\rangle_{\dif}.
$$
\end{cor}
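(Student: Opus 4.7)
The plan is to deduce this from Theorem \ref{thm:explicit} applied to the twist $V(j)$, combined with Lemma \ref{lemma:inexp} which converts $\imath_m$ into $p^n\exp^*$. The key observation is that $\widetilde\bD(V(1+j))$ agrees with $\widetilde\bD(V(1))$ as a $\varphi$-module, its $\Gamma$-action being twisted by $\chi^j$; hence $z^{(j)} \in \widetilde\bD_{\rig}^+(V(1+j))[1/t]$ and $(1-\varphi)z^{(j)} \in \widetilde\bD^+(V(1+j))[1/\varphi^r(T)]$, and hypothesis (\ref{item:2}) translates to the statement that $z^{(j)}$ is $\Gamma_n$-fixed under the twisted action: indeed, $\gamma \cdot z^{(j)} = \chi(\gamma)^j(\gamma z) = \gamma^j \cdot \gamma^{-j}z = z^{(j)}$.

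Now I apply Theorem \ref{thm:explicit} with $V$ replaced by $V(j)$, $z$ by $z^{(j)}$, and $z'$ by $\sigma_{-1}\cdot z'$ (the substitution $z' \mapsto \sigma_{-1}\cdot z'$ is introduced so that the $\sigma_{-1}$ on the left-hand side cancels via $\sigma_{-1}^2 = 1$), obtaining
$$
\{(1-\varphi)z^{(j)},\, z'\} = \langle \imath_s^-(z^{(j)}),\, \imath_m(\sigma_{-1}\cdot z')\rangle_{\dif}
$$
for all $s \geq r$ and all $m$ sufficiently large. Since $\imath_m$ arises from the $\Gamma$-equivariant inclusion $\bD^{]0,r_m]} \hookrightarrow \widetilde\bD^+_{\dif}$, it commutes with $\sigma_{-1}$; and Lemma \ref{lemma:inexp} supplies $\imath_m(z') = p^n\exp^*(z'_n)$ for $m \gg 0$. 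Combining,
$$
\imath_m(\sigma_{-1}\cdot z') = \sigma_{-1}\cdot \imath_m(z') = p^n\,\sigma_{-1}\exp^*(z'_n),
$$
so the right-hand side becomes $p^n\langle \imath_s^-(z^{(j)}),\, \sigma_{-1}\exp^*(z'_n)\rangle_{\dif}$.

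It remains to reduce $s$ from the large integer required above down to $s = r$. By hypothesis (1), $(1-\varphi)z^{(j)} \in \widetilde\bD^+(V(1+j))[1/\varphi^r(T)]$, so Proposition \ref{prop:support}(2) gives $\imath_s^-((1-\varphi)z^{(j)}) = 0$ for $s > r$; combined with the shift identity $\imath_s^-\circ\varphi = \imath_{s-1}^-$ (immediate from $\imath_s^-(x) := \imath_0^-(\varphi^{-s}x)$), this rearranges to $\imath_s^-(z^{(j)}) = \imath_{s-1}^-(z^{(j)})$ for $s > r$. Iterating downward yields $\imath_s^-(z^{(j)}) = \imath_r^-(z^{(j)})$ for every $s \geq r$, and substituting $s = r$ produces the stated equality. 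The argument is essentially bookkeeping around the already-established Theorem \ref{thm:explicit} and Lemma \ref{lemma:inexp}; the main point requiring care is tracking the twist by $j$ consistently on the $(\varphi,\Gamma)$-module side (where it modifies the $\Gamma$-action) and on the Galois-cohomology side (where it shifts the Tate dual from $V^*$ to $V^*(-j)$).
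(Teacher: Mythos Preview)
Your proof is correct and follows essentially the same route as the paper's: verify that $z^{(j)}$ is $\Gamma_n$-fixed, apply Theorem~\ref{thm:explicit} (with the substitution $z'\mapsto\sigma_{-1}z'$ to absorb the $\sigma_{-1}$ on the left), and invoke Lemma~\ref{lemma:inexp}. Your final paragraph is redundant, however: Theorem~\ref{thm:explicit} already delivers the conclusion for \emph{every} $s\geq r$, in particular for $s=r$, so there is nothing to ``reduce'' --- the descent argument you reproduce there is precisely what was used inside the proof of Theorem~\ref{thm:explicit} itself.
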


\begin{proof}
By assumption (\ref{item:2}), $z^{(j)}$ is fixed by $\Gamma_n$ and Theorem \ref{thm:explicit} implies
$$
\{(1-\varphi)z^{(j)},  z'\} = \langle \imath^-_r(z^{(j)}), \sigma_{-1} \cdot  \imath_m(z') \rangle_{\dif}
$$
for $m$ large enough.  This corollary then follows immediately from Lemma \ref{lemma:inexp}.
\end{proof}

As we intend to be pairing with $d_{n,j}$, to apply the above corollary, we need to show that $d_{n,j}$ is in the image of $1 - \varphi$.  To this end, suppose that the local Euler factor of $\pi_{\sm}(V)$ has no pole
at $s = 1+j$.  Thus, by Propositions~\ref{prop:Mellinpadic_localg}, we can solve the equation 
$\left(1-\left(\begin{smallmatrix} p & 0 \\ 0 & 1 \end{smallmatrix}\right)\right)\xi_{n,j} = {\mathcal K}_{d_{n,j}}$
in the Kirillov model of $\pi_{\sm}(V) \otimes \Sym^{k-2}(L^2) \otimes {\det}^{2-k}$.  
By Proposition~\ref{prop:Mellin two}, we can then realize $\xi_{n,j}$ in a generalized Kirillov model for $\pi(V)$; that is, we can  find $w_{n,j} \in \widetilde\bD_{\rig}^+(V(1))[1/t] $ such that $\imath^-_i(w_{n,j}) = \xi_{n,j}(p^{-i}) {\bf e}$ for all $i$.  
Since $\imath^-$ is injective (Proposition \ref{prop:support}), we deduce that 
$(1-\varphi)w_{n,j} \equiv d_{n,j} \bmod \widetilde\bD_{\rig}^+\bigl(V(1)\bigr).$

\begin{remark}
One can explicitly write down the classes $d_{n,j}$ and $w_{n,j}$ in the (locally algebraic) Kirillov model of   $\pi_{\sm}(V) \otimes \Sym^{k-2}(L^2) \otimes {\det}^{2-k}$.  For instance, when $\pi_{\sm}(V)$ is supercuspidal, then $v_{\new}$ corresponds to ${\bf 1}_{\Zp^\times}(x)$ in the (smooth) Kirillov model of $\pi_{\sm}(V)$ (when appropriately normalized) and $\left( \begin{smallmatrix} 1 & 1 \\ 0 & 1 \end{smallmatrix} \right)\left( \begin{smallmatrix} p^n & 0 \\ 0 & 1 \end{smallmatrix} \right) v_{\new}$ corresponds to $\ve(x) \cdot {\bf 1}_{\Zp^\times}(p^n x)$. Thus, by (\ref{eqn:localg}),
$$
{\mathcal K}_{d_{n,j}}(x) = j! \cdot (tx)^{-j} \cdot \ve(x) \cdot {\bf 1}_{\Zp^\times}(p^nx);
$$
that is, 
$$
{\mathcal K}_{d_{n,j}}(p^r) = \begin{cases} j! \cdot p^{nj} \cdot \ve(p^{-n}) \cdot t^{-j} & r=-n, \\ 0 & r \neq -n.\end{cases}
$$
Further, tracing through the proof of Proposition \ref{prop:Mellinpadic_localg}, we see that 
$$
{\mathcal K}_{w_{n,j}} = \sum_{n \geq 0} \left( \begin{smallmatrix} p^n & 0 \\ 0 & 1 \end{smallmatrix} \right) {\mathcal K}_{d_{n,j}};
$$
that is,
$$
{\mathcal K}_{w_{n,j}}(p^r) = \begin{cases} 0 & r>-n,\\ j! \cdot p^{nj} \cdot \ve(p^{-n}) \cdot t^{-j} & r \leq -n.\end{cases}
$$
\end{remark}

Recall that when we view the element $d_{n,j}$ in $\pi(V(j))$ rather than $\pi(V)$, we write it as $d_{n,j}^{(j)}$. To ease notation a little, let's simply write $d_n^{(j)}$ for $d_{n,j}^{(j)}$ and likewise for $w_n^{(j)}$.
Then given any element $z' \in \mathbf D(V^{*}(-j))^{\psi = 1}$,
Lemma \ref{lemma:dm} and Corollary~\ref{cor:specialized explicit} 
show that
\begin{equation}
\label{eqn:pairing}
\{d_n^{(j)}, z'\} = \{(1-\varphi)w_n^{(j)}, z'\} = p^n \left\langle \imath^-_n(w_n^{(j)}),\sigma_{-1} \exp^*z'_n \right\rangle_{\dif}.
\end{equation}

Note that pairing with $d_n^{(j)}$ gives a functional on $\mathbf D(V^{*}(-j))^{\psi = 1} \cong H^1_{\Iw}(V^{*}(-j))$. By (\ref{eqn:pairing}), we see that this pairing only depends on the projection of $z' \in H^1_{\Iw}(V^{*}(-j))$ to $H^1(\Knp,V^{*}(-j))$.  Further, since $H^1_{\Iw}(V^{*}(-j)) \to H^1(\Knp,V^{*}(-j))$ is surjective (see \cite[1.6]{PR99}), this pairing factors through $H^1(\Knp,V^{*}(-j))$.
 Thus, by Tate local duality, there is a unique $c_{n,j} \in H^1(\Knp,V(1+j))$ such that 
$$
\langle c_{n,j},   z'_n \rangle_n = p^n \left\langle \imath^-_n(w_{n}^{(j)}),\sigma_{-1} \exp^*z'_n \right\rangle_{\dif}
$$
where $\langle \cdot, \cdot\rangle_n$ is the perfect pairing on Galois cohomology induced by Tate local duality.
Moreover, 
(\ref{eqn:pairing}) implies that pairing with $c_{n,j}$ kills the kernel of $\exp^*$ which is $H^1_g(\Knp,V^{*}(-j))$, and thus $c_{n,j} \in H^1_e(\Knp,V(1+j))$.  Since we are assuming (\ref{nopole}), \cite[Theorem 4.1(ii)]{BK} implies that $H^1_e(\Knp,V(1+j)) = H^1_f(\Knp,V(1+j))$. We summarize these observations in the following proposition.

\begin{prop}
\label{prop:cn_summary}
Suppose that the local Euler factor of $\pi_{\sm}(V)$ does not 
have a pole at $s = 1+j$.  Then there exist classes $c_{n,j} \in H^1_f(\Knp,V(1+j))$ such that for $z' \in \mathbf D(V^{*}(-j))^{\psi = 1} \cong H^1_{\Iw}(V^{*}(-j))$, we have
$$
\langle c_{n,j},   z'_n \rangle_n =p^n \left\langle \imath^-_n(w_{n}^{(j)}),\sigma_{-1} \exp^*z'_n \right\rangle_{\dif}
$$
where $(1-\varphi) w_n^{(j)} = d_n^{(j)}$ in $\pi(V)$.
\end{prop}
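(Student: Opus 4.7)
The plan is to assemble the pieces developed in the preceding pages into three moves: produce a preimage $w_n^{(j)}$ of $d_n^{(j)}$ under $1-\varphi$ inside the generalized Kirillov model, invoke the explicit reciprocity law (Corollary~\ref{cor:specialized explicit}) to evaluate $\{d_n^{(j)},z'\}$, and extract $c_{n,j}$ by Tate local duality.

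First, I represent $d_{n,j}$ by its locally algebraic Kirillov function ${\mathcal K}_{d_{n,j}}$ of Section~\ref{sec:localg}. The hypothesis that the local Euler factor of $\pi_{\sm}(V)$ has no pole at $s=1+j$ lets me apply Proposition~\ref{prop:Mellinpadic_localg}, which produces a unique $\Gamma$-invariant function $\xi_{n,j} \in \LP^{[2-k,0]}(\Qp^\times,t^{2-k}L_\infty[t]/tL_\infty[t])$ with $\bigl(1-\psmallmat{p}{0}{0}{1}\bigr)\xi_{n,j} = {\mathcal K}_{d_{n,j}}$, and Proposition~\ref{prop:Mellin two} then lifts $\xi_{n,j}$ to an element $w_{n,j} \in \widetilde\bD_{\rig}^+(V(1))[1/t]/\widetilde\bD_{\rig}^+(V(1))$ satisfying $\imath^-_i(w_{n,j}) = \xi_{n,j}(p^{-i})\mathbf{e}$. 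The injectivity of $\imath^-$ (Proposition~\ref{prop:support}(1)), together with the $P$-equivariance of $\imath^-$ (which intertwines the action of $\psmallmat{p}{0}{0}{1}$ with $\varphi$), forces $(1-\varphi)w_{n,j} \equiv d_{n,j} \pmod{\widetilde\bD_{\rig}^+(V(1))}$; choosing the lift appropriately, one can moreover arrange that $(1-\varphi)w_{n,j}$ lies in $\widetilde\bD^+(V(1))[1/\varphi^n(T)]$, as demanded by the first hypothesis of Corollary~\ref{cor:specialized explicit}.

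Second, viewing everything in $\pi(V(j))$ and writing $w_n^{(j)}$ accordingly, I verify the remaining hypothesis of Corollary~\ref{cor:specialized explicit}. Lemma~\ref{lemma:dm}(1) furnishes the $\varphi^r(T)$-control just mentioned, while Lemma~\ref{lemma:dm}(2) shows that $d_{n,j}$ transforms under $\Gamma_n$ by $\gamma\mapsto\gamma^{-j}$; the uniqueness assertion in Proposition~\ref{prop:Mellinpadic_localg} propagates this transformation property to $\xi_{n,j}$, hence to $w_{n,j}$, and the Tate twist by $(j)$ cancels the character so that $w_n^{(j)}$ is genuinely $\Gamma_n$-fixed in $\pi(V(j))$. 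The corollary then delivers
$$\{d_n^{(j)},z'\} \;=\; p^n\bigl\langle \imath^-_n(w_n^{(j)}),\ \sigma_{-1}\exp^*(z'_n)\bigr\rangle_{\dif}$$
for every $z' \in \bD(V^*(-j))^{\psi=1} \cong H^1_{\Iw}(V^*(-j))$.

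Third, since the right-hand side depends on $z'$ only through its projection $z'_n$ to $H^1(\Knp,V^*(-j))$, the functional $z'\mapsto \{d_n^{(j)},z'\}$ factors through that projection; Tate local duality then produces the unique class $c_{n,j} \in H^1(\Knp,V(1+j))$ asserted in the statement. Finally, since $\exp^*$ annihilates $H^1_g(\Knp,V^*(-j))$, the displayed formula shows that $\langle c_{n,j},\cdot\rangle_n$ vanishes on $H^1_g$, and the standard orthogonality between $H^1_e$ and $H^1_g$ under Tate duality places $c_{n,j}$ in $H^1_e(\Knp,V(1+j))$. The main subtlety I expect is the bookkeeping of the $\Gamma$-action through the twist by $(j)$ in the second step (to match the precise formulation of Corollary~\ref{cor:specialized explicit}); once this is handled, the result is a direct assembly of the tools already in place.
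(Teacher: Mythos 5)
Your proposal is correct and follows the same route as the paper: solve $(1-\psmallmat{p}{0}{0}{1})\xi_{n,j}={\mathcal K}_{d_{n,j}}$ via Proposition~\ref{prop:Mellinpadic_localg}, lift to $w_{n,j}$ by Proposition~\ref{prop:Mellin two} and injectivity of $\imath^-$, verify the two hypotheses of Corollary~\ref{cor:specialized explicit} via Lemma~\ref{lemma:dm}, and then read off $c_{n,j}$ from Tate duality and the vanishing of $\exp^*$ on $H^1_g$. Your explicit justification that the $\Gamma_n$-transformation property $\gamma\mapsto\gamma^{-j}$ propagates from $d_{n,j}$ to $\xi_{n,j}$ and hence to $w_{n,j}$ (via the uniqueness clause in Proposition~\ref{prop:Mellinpadic_localg}) is a useful detail that the paper leaves implicit when it simply invokes ``Lemma~\ref{lemma:dm} and Corollary~\ref{cor:specialized explicit}''.
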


The local classes $c_{n,j}$ will play a key role in the global computations in the second half of the paper.

\subsection{Twisted pairing sums}

The formula of the following proposition will be used in the global half of the paper to relate our algebraic $\theta$-elements to the Mazur-Tate elements.  

In what follows, recall from section \ref{sec:pair-hodge} that $\langle \cdot, \cdot \rangle'_{\dR,n}$ denotes the pairing
$$
\langle \cdot, \cdot \rangle'_{\dR,n} : (\bD_{\dR}(V(1+j)) \otimes \Knp) \times (\bD_{\dR}(V^{*}(-j)) \otimes \Knp) \to L \otimes \Knp
$$
and $\langle \cdot, \cdot \rangle_{\dR,n} = \Tr^{L_n}_L \langle \cdot, \cdot \rangle'_{\dR,n}$.

\begin{prop}
\label{prop:recip}
Suppose that the local Euler factor of $\pi_{\sm}(V)$ does not 
have a pole at $s = 1 + j$.
For $z'_n \in H^1(\Knp,V^{*}(-j))$ and $\chi$ a Dirichlet character of conductor $p^n$, we have
\begin{multline*}
\sum_{a \in (\mathbb Z/p^n\mathbb Z)^{\times}}
\chi^{-1}(a) \left\langle  c_{n,j}^{\sigma_a},z'_n\right\rangle_n 
= \\
\begin{cases}
\displaystyle j! \cdot p^{nj} \cdot \tau(\chi^{-1})
\left\langle t^{-j} \mathbf e, 
\displaystyle \sum_{a \in (\mathbb Z/p^n \mathbb Z)^{\times}}
\chi(-a)  \exp^* (z_n')^{\sigma_a}\right\rangle'_{\dR,n} & n \geq 1, \\
&\\
j! \cdot \widetilde{Z}(x^{-j} \phi_{v_{\new}},1) \left\langle t^{-j} \mathbf e, \exp^* z_0'\right\rangle_{\dR,0} & n=0.
\end{cases}
\end{multline*}
\end{prop}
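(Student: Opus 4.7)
The plan is to rewrite the left-hand side as a single pairing on $(\varphi,\Gamma)$-modules, apply Corollary~\ref{cor:specialized explicit}, and then extract $\tau(\chi^{-1})$ by combining an explicit Mellin computation with a Galois-trace identity. The key first step is the equality
\[
\langle c_{n,j}^{\sigma_a}, z_n'\rangle_n \;=\; \{\sigma_a\cdot d_n^{(j)},\, z'\},
\]
which follows from the defining relation of $c_{n,j}$ in Proposition~\ref{prop:cn_summary} together with the Galois-equivariance of the Tate local duality and $(\varphi,\Gamma)$-module pairings.

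To compute $\sigma_a\cdot d_n^{(j)}$, I note that in $\pi(V(j))$ the element $\sigma_a$ acts as $a^j\cdot\psmallmat{a}{0}{0}{1}$ on $\pi(V)$ (the extra $a^j$ coming from the Tate twist by $L(j)$). This $a^j$ cancels the factor $a^{-j}$ produced by the action of $\psmallmat{a}{0}{0}{1}$ on $\Sym^{k-2}(L^2)\otimes{\det}^{2-k}$ (Lemma~\ref{lemma:dm}); together with the matrix identity $\psmallmat{a}{0}{0}{1}\psmallmat{1}{1}{0}{1}\psmallmat{p^n}{0}{0}{1}=\psmallmat{ap^n}{a}{0}{1}$, this gives $\sigma_a\cdot d_n^{(j)}=\bigl(\psmallmat{ap^n}{a}{0}{1}v_{\new}\otimes e_1^{k-2-j}e_2^j\bigr)^{(j)}$.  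Via equation~(\ref{eqn:localg}) and the $B$-action on Kirillov models, its locally-algebraic Kirillov function is $j!\,(tx)^{-j}\,\varepsilon(ax)\,\phi_{v_{\new}}(ap^n x)$.

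Next, Proposition~\ref{prop:Mellin two} yields $w_{\Sigma}$ with $(1-\varphi)w_{\Sigma}=\sum_a \chi^{-1}(a)\sigma_a d_n^{(j)}$ and $\imath_n^{-}(w_{\Sigma})=\widetilde{Z}(\mathcal{K}_{\Sigma},1)\,\mathbf{e}$.  Indexing the Mellin sum by $s=n+\ord_p(x)$ and using $\Zp^\times$-invariance of $\phi_{v_{\new}}$ together with $\phi_{v_{\new}}(1)=1$, the computation reduces to analyzing $\sum_a\chi^{-1}(a)\varepsilon(ap^{s-n})$ for each $s\geq 0$: for $s=0$ this is the Gauss sum $\tau(\chi^{-1})$; for $0<s<n$ it is a Gauss sum against a root of unity of ``wrong conductor'' $p^{n-s}$ and vanishes (since $\chi$ has conductor exactly $p^n$); for $s\geq n$ it reduces to $\sum_a\chi^{-1}(a)=0$.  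Consequently, for $n\geq 1$,
\[
\imath_n^{-}(w_{\Sigma})\;=\; j!\cdot p^{nj}\cdot \tau(\chi^{-1})\cdot t^{-j}\,\mathbf{e}.
\]
For $n=0$ the sum over $a$ is trivial and the same procedure gives $j!\,\widetilde{Z}(x^{-j}\phi_{v_{\new}},1)\cdot t^{-j}\,\mathbf{e}$.

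Applying Corollary~\ref{cor:specialized explicit} and the relation $p^n\langle\cdot,\cdot\rangle_{\dif}=\Tr^{L_n}_L\langle\cdot,\cdot\rangle'_{\dR,n}$ together with the $L_n$-bilinearity of $\langle\cdot,\cdot\rangle'_{\dR,n}$, the left-hand side (for $n\geq 1$) becomes
\[
j!\,p^{nj}\,\Tr^{L_n}_L\!\bigl(\tau(\chi^{-1})\,\langle t^{-j}\mathbf{e},\sigma_{-1}\exp^{*}z_n'\rangle'_{\dR,n}\bigr).
\]
The concluding step uses the trace identity $\Tr^{L_n}_L(\tau(\chi^{-1})y)=\tau(\chi^{-1})\sum_b\chi(b)\sigma_b(y)$, which follows from $\sigma_b\tau(\chi^{-1})=\chi(b)\tau(\chi^{-1})$.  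Since $t^{-j}\mathbf{e}\in\bD_{\dR,\Knp}(V(1+j))$ is Galois-fixed and $\exp^{*}$ is $\Gamma$-equivariant, $\sigma_b(y)=\langle t^{-j}\mathbf{e},(\exp^{*}z_n')^{\sigma_{-b}}\rangle'_{\dR,n}$; the substitution $a=-b$ then produces the claimed formula with $\chi(-a)$.  The case $n=0$ follows since $L_0=L$, $\sigma_{-1}$ acts trivially on $\bD_{\dR}(V^{*}(-j))$, and the trace is the identity.  The main technical obstacle is the wrong-conductor Gauss-sum vanishing, which depends crucially on $\chi$ having conductor exactly $p^n$.
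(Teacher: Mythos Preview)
Your proof is correct, and it reaches the same destination as the paper's argument, but the two routes differ in the order of operations and in how the Gauss sum is extracted.

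The paper first writes $\langle \sigma_a c_{n,j}, z_n'\rangle_n = p^n\langle \psmallmat{a}{0}{0}{1}\imath_n^-(w_n^{(j)}),\sigma_{-1}\exp^*z_n'\rangle_{\dif}$, then immediately expands the trace $\Tr^{L_n}_L$ as a sum over $b\in(\Z/p^n\Z)^\times$ and reindexes the resulting double sum so that the two variables decouple into a factor $\sum_a\chi^{-1}(a)\psmallmat{a}{0}{0}{1}\imath_n^-(w_n^{(j)})$ and a factor $\sum_b\chi(b)\sigma_{-b}\exp^*z_n'$. The first factor is then handled by rewriting it as $\widetilde{Z}(\mathcal K_\alpha\mathbf e,1)$ with $\alpha=\psmallmat{p^n}{0}{0}{1}\bigl(\sum_a\chi^{-1}(a)\psmallmat{1}{a/p^n}{0}{1}v_{\new}\bigr)\otimes e_1^{k-2-j}e_2^j$, and invoking the local Birch lemma of \S\ref{sec:localbirch} to identify the inner sum with $\tau(\chi^{-1})\mathbf 1_{\Z_p^\times}$ in the smooth Kirillov model.

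You instead keep the $\chi^{-1}$-sum intact inside the Kirillov model, compute the refined Mellin transform $\widetilde{Z}(\mathcal K_\Sigma,1)$ directly by the term-by-term Gauss-sum analysis (only $s=0$ survives), and postpone the trace expansion to the very end, where the identity $\Tr^{L_n}_L(\tau(\chi^{-1})y)=\tau(\chi^{-1})\sum_b\chi(b)\sigma_b(y)$ does the work of producing the $\chi$-averaged sum on the right-hand side. Your Gauss-sum vanishing for $0<s<n$ and $s\ge n$ is precisely the content of the local Birch lemma, unpacked; so the two proofs are doing the same computation, just organized differently. Your ordering has the minor advantage that after the $\chi^{-1}$-sum the Kirillov function $\mathcal K_\Sigma$ is compactly supported (at the single point $p^{-n}$), so the applicability of Propositions~\ref{prop:Mellinpadic_localg} and~\ref{prop:Mellin two} and the evaluation $\xi_\Sigma(p^{-n})=\widetilde{Z}(\mathcal K_\Sigma,1)$ are immediate. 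The paper's ordering, on the other hand, makes the appearance of the $p$-deprived newvector (and hence the conceptual link to \S\ref{sec:localbirch}) more transparent.
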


\begin{proof}
Using Proposition \ref{prop:cn_summary}, we compute for $n\geq 1$
\begin{align*}
\langle \sigma _a c_{n,j},   z'_n \rangle_n 
&= p^n \left\langle \imath^-_n( \left(\begin{smallmatrix} a & 0 \\ 0 & 1 \end{smallmatrix}\right) w_{n}^{(j)}),\sigma_{-1}  \exp^*z'_n \right\rangle_{\dif}\\
&=
\left\langle 
\left(\begin{smallmatrix} a & 0 \\ 0 & 1 \end{smallmatrix}\right) \imath^-_n(  w_{n}^{(j)}),
\sigma_{-1}  \exp^*z'_n
\right\rangle_{\dR,n}\\
&=
\Tr^{L_n}_L 
\left\langle 
\left(\begin{smallmatrix} a & 0 \\ 0 & 1 \end{smallmatrix}\right) \imath^-_n(  w_{n}^{(j)}),
\sigma_{-1}  \exp^*z'_n
\right\rangle'_{\dR,n}\\
&=
\sum_b
\sigma_b \left\langle 
\left(\begin{smallmatrix} a & 0 \\ 0 & 1 \end{smallmatrix}\right) \imath^-_n(  w_{n}^{(j)}),
\sigma_{-1}  \exp^*z'_n
\right\rangle'_{\dR,n}\\
&=
\sum_b
\left\langle 
\left(\begin{smallmatrix} ab & 0 \\ 0 & 1 \end{smallmatrix}\right) \imath^-_n(  w_{n}^{(j)}),
\sigma_{-b} \exp^*z'_n
\right\rangle'_{\dR,n}
\end{align*}
where the sums above and below are over any systems of representatives for $(\Z/p^n\Z)^\times$.

Thus
\begin{align*}
\sum_{a}
\chi^{-1}(a) &\left\langle  {\sigma_a} c_{n,j},z'_n\right\rangle_n \\
&=
\sum_{a}
\chi^{-1}(a)
\sum_b
\left\langle 
\left(\begin{smallmatrix} ab & 0 \\ 0 & 1 \end{smallmatrix}\right) \imath^-_n(  w_{n}^{(j)}),
\sigma_{-b} \exp^*z'_n
\right\rangle'_{\dR,n}\\
&=
\sum_{b}
\left\langle 
\sum_a \chi^{-1}(a)
\left(\begin{smallmatrix} ab & 0 \\ 0 & 1 \end{smallmatrix}\right) \imath^-_n(  w_{n}^{(j)}),
\sigma_{-b} \exp^*z'_n
\right\rangle'_{\dR,n}\\
&=
\sum_{b}
\left\langle 
\sum_a \chi^{-1}(a)
\left(\begin{smallmatrix} a & 0 \\ 0 & 1 \end{smallmatrix}\right) \imath^-_n(  w_{n}^{(j)}),
\chi(b) \sigma_{-b} \exp^*z'_n
\right\rangle'_{\dR,n}\\
&=
\left\langle 
\sum_a \chi^{-1}(a)
\left(\begin{smallmatrix} a & 0 \\ 0 & 1 \end{smallmatrix}\right) \imath^-_n(  w_{n}^{(j)}),
\sum_b \chi(b)\sigma_{-b} \exp^*z'_n
\right\rangle'_{\dR,n}.
\end{align*}
Focusing on the first term in the above pairing and applying Propositions~\ref{prop:Mellinpadic_localg} and~\ref{prop:Mellin two}, we have
\begin{align*}
\sum_a \chi^{-1}(a)
\left(\begin{smallmatrix} a & 0 \\ 0 & 1 \end{smallmatrix}\right) \imath^-_n(  w_{n}^{(j)})
&=
\sum_a \chi^{-1}(a)
\left(\begin{smallmatrix} a & 0 \\ 0 & 1 \end{smallmatrix}\right) \imath^-_n(  w_{n,j}) a^j\\
&=
\sum_a \chi^{-1}(a)
\left(\begin{smallmatrix} a & 0 \\ 0 & 1 \end{smallmatrix}\right) \widetilde{Z}\left({\mathcal K}_{d_{n,j}} {\bf e},1\right) a^j
=
\widetilde{Z}\left({\mathcal K}_\alpha {\bf e},1\right)
\end{align*}
where
\begin{align*}
\alpha &= \sum_a \chi^{-1}(a) a^j \left(\begin{smallmatrix} a & 0 \\ 0 & 1 \end{smallmatrix}\right) d_{n,j} \\
&= \sum_a \chi^{-1}(a) a^j \left(\begin{smallmatrix} a & 0 \\ 0 & 1 \end{smallmatrix}\right)
\left(\left(\begin{smallmatrix} 1 & 1 \\ 0 & 1 \end{smallmatrix}\right)\left(\begin{smallmatrix} p^n & 0 \\ 0 & 1 \end{smallmatrix}\right) v_{\new} \otimes e_1^{k-2-j} e_2^j  
\right)\\
&= \sum_a \chi^{-1}(a) \left(\begin{smallmatrix} p^na & a \\ 0 & 1 \end{smallmatrix}\right) v_{\new} \otimes e_1^{k-2-j} e_2^j  \\
&= \sum_a \chi^{-1}(a) \left(\begin{smallmatrix} p^n & 0 \\ 0 & 1 \end{smallmatrix}\right)\left(\begin{smallmatrix} 1 & a/p^n \\ 0 & 1 \end{smallmatrix}\right) 
\left(\begin{smallmatrix} a & 0 \\ 0 & 1 \end{smallmatrix}\right)v_{\new} \otimes e_1^{k-2-j} e_2^j  \\
&= \left(\begin{smallmatrix} p^n & 0 \\ 0 & 1 \end{smallmatrix}\right) \left(\sum_a \chi^{-1}(a) \left(\begin{smallmatrix} 1 & a/p^n \\ 0 & 1 \end{smallmatrix}\right) 
v_{\new}\right) \otimes e_1^{k-2-j} e_2^j.
\end{align*}
By the local Birch lemma (section \ref{sec:localbirch}), we have $\sum_a \chi^{-1}(a) \left(\begin{smallmatrix} 1 & a/p^n \\ 0 & 1 \end{smallmatrix}\right) v_{\new}$ corresponds simply to $\tau(\chi^{-1}) {\bf 1}_{\Zp^\times}(x)$ in the smooth Kirillov model attached to $\pi_{\sm}(V)$.
Thus, by (\ref{eqn:localg}), we have
\begin{align*}
{\mathcal K}_\alpha(x) = 
j! \cdot (tx)^{-j} \cdot \tau(\chi^{-1}) \cdot {\bf 1}_{\Zp^\times}(p^nx)
\end{align*}
and
$$
\widetilde{Z}({\mathcal K}_\alpha {\bf e},1) = j! \cdot p^{nj} \cdot \tau(\chi^{-1}) \cdot t^{-j}{\bf e}.
$$
Putting it all together gives
$$
\sum_{a}
\chi^{-1}(a) \left\langle  {\sigma_a} c_{n,j},z'_n\right\rangle_n 
=
j! \cdot p^{nj} \cdot \tau(\chi^{-1})
 \left\langle 
t^{-j} {\bf e},
\sum_b \chi(-b)\sigma_{b} \exp^*z'_n
\right\rangle'_{\dR,n}
$$
as desired.

For $n=0$, we have
\begin{align*}
\left\langle c_{0,j}, z_0' \right \rangle_0 = \left\langle \imath^-_0(w_0^{(j)}),\exp^* z'_0\right\rangle_{\dif}
\end{align*}
and
\begin{align*}
\imath^-_0(w_0^{(j)}) &= \imath^-_0(w_{0,j}) = \widetilde{Z}\left(\mathcal K_{d_{0,j}},1\right) {\bf e}
= \widetilde{Z}\left(j! (tx)^{-j} \phi_{v_{\new}},1\right){\bf e} = 
j! \widetilde{Z}\left( x^{-j} \phi_{v_{\new}},1\right) t^{-j} {\bf e}.
\end{align*}
Thus 
\begin{align*}
\left\langle c_{0,j}, z_0' \right \rangle_0 = j! \cdot \widetilde{Z}\left( x^{-j} \phi_{v_{\new}},1\right) \cdot \left\langle   t^{-j} {\bf e},\exp^* z'_0\right\rangle_{\dif}
\end{align*}
as desired.
\end{proof}

\subsection{Three-term relation}
\label{sec:three-term}

In this subsection, we describe some basic properties of the local cohomology classes $\{c_{n,j}\}$ including their key three-term relation.

We continue to assume that $V$ is de Rham with Hodge–Tate weights equal to $0$ and $1 - k$, for some $k \geq 2$, and let $\pi_{\sm}(V)$ be the associated smooth representation of $\GL_2(\Qp)$ with central character $\chi_V$.  If $T$ is the Hecke operator associated to the matrix $\psmallmat{p}{0}{0}{1}$, write $T v_{\new} = a v_{\new}$ with $a \in \O_L$.  Also, define $\delta = \chi_V(p)$ if $\pi_{\sm}(V)$ is unramified ({\it i.e.}\ if $V$ is crystalline)
and $0$ otherwise.  We then have the following lemma.

\begin{lemma}
\label{lemma:newvector}
We have
$$
\sum_{i=0}^{p-1} \begin{pmatrix} p & i \\ 0 & 1\end{pmatrix} v_{\new}  + \delta \begin{pmatrix} p^{-1} & 0 \\ 0 & 1 \end{pmatrix} v_{\new} = a \cdot v_{\new}.
$$
\end{lemma}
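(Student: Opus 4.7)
The plan is to compute $T v_{\new}$ directly via the standard double-coset decomposition of the Hecke operator associated to $\psmallmat{p}{0}{0}{1}$, splitting into the unramified and ramified cases according to the open compact by which $v_{\new}$ is fixed.

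First I will handle the unramified case, where $\pi_{\sm}(V)$ is unramified (equivalently, $V$ is crystalline) and $\delta = \chi_V(p)$. Then $v_{\new}$ is the spherical vector fixed by $K := \GL_2(\Z_p)$, and the classical coset decomposition
$$
K \psmallmat{p}{0}{0}{1} K = \bigsqcup_{i=0}^{p-1} \psmallmat{p}{i}{0}{1} K \;\sqcup\; \psmallmat{1}{0}{0}{p} K
$$
produces $a \cdot v_{\new} = T v_{\new} = \sum_{i=0}^{p-1} \psmallmat{p}{i}{0}{1} v_{\new} + \psmallmat{1}{0}{0}{p} v_{\new}$. The final step is to factor $\psmallmat{1}{0}{0}{p} = \psmallmat{p}{0}{0}{p}\psmallmat{p^{-1}}{0}{0}{1}$ and pull the central element out as the scalar $\chi_V(p) = \delta$, converting the last term into $\delta \psmallmat{p^{-1}}{0}{0}{1} v_{\new}$ and yielding the stated identity.

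Next I will handle the ramified case, where $\delta = 0$ by definition, and $v_{\new}$ is fixed by $K_0(p^n)$ for the conductor $p^n$ with $n \geq 1$, but not by all of $K$. The operator $T$ is then the usual $U_p$, corresponding to the decomposition
$$
K_0(p^n) \psmallmat{p}{0}{0}{1} K_0(p^n) = \bigsqcup_{i=0}^{p-1} \psmallmat{p}{i}{0}{1} K_0(p^n),
$$
with no ``extra'' coset $\psmallmat{1}{0}{0}{p} K_0(p^n)$ appearing once $n \geq 1$ (since $\psmallmat{1}{0}{0}{p}$ moves the $K_0(p^n)$-fixed line out of itself). Hence $T v_{\new} = \sum_{i=0}^{p-1} \psmallmat{p}{i}{0}{1} v_{\new} = a v_{\new}$, and the asserted identity holds with its $\delta$-term simply vanishing.

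I do not expect a serious obstacle: the lemma is a direct bookkeeping exercise with the standard $\GL_2(\Q_p)$ Hecke coset decomposition, and the only point of subtlety is observing that the two cases in the definition of $\delta$ correspond exactly to the presence or absence of the ``extra'' coset $\psmallmat{1}{0}{0}{p} K$.
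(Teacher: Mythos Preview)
Your proposal is correct and follows essentially the same approach as the paper: both arguments reduce to the standard coset decomposition of the Hecke operator $T$ into $p$ or $p+1$ cosets according to whether $\pi_{\sm}(V)$ is ramified or unramified, with the extra coset $\psmallmat{1}{0}{0}{p}$ in the unramified case converted to $\delta\psmallmat{p^{-1}}{0}{0}{1}$ via the central character. Your write-up is simply a more explicit version of the paper's terse proof.
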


\begin{proof}
We have
$$
T = 
\begin{cases}
\displaystyle \sum_{i=0}^{p-1} \left( \begin{smallmatrix} p & i \\ 0 & 1\end{smallmatrix} \right) + \left( \begin{smallmatrix} 1 & 0 \\ 0 & p\end{smallmatrix} \right) & \text{if~}\pi_{\sm}(V) \text{~is~unramified,}\\
~\\
\displaystyle  \sum_{i=0}^{p-1} \left( \begin{smallmatrix} p & i \\ 0 & 1\end{smallmatrix} \right)   &\text{otherwise,}
\end{cases}
$$
which implies the lemma.
\end{proof}

For $n \geq m$, let $\cores^n_m : H^1(\Knp,V(1+j)) \to H^1(\Kmp,V(1+j))$ and $\res^n_m : H^1(\Kmp,V(1+j)) \to H^1(\Knp,V(1+j))$ denote the natural corestriction and restriction maps.  We then have the following three-term relation for the local cohomology classes $\{c_{n,j}\}$.

\begin{prop}
\label{prop:three_term}
For $n \geq 1$, we have
$$
\cores^{n+1}_n(c_{n+1,j}) = a c_n - \delta \res^n_{n-1}(c_{n-1,j}),
$$
and
$$
\cores^{1}_0(c_{1,j}) = (a-\delta-1) c_{0,j}.
$$
\end{prop}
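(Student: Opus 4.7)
The strategy is to transfer the Hecke eigenvalue relation for $v_{\new}$ (Lemma~\ref{lemma:newvector}) into an identity on the $d_{n,j}$ inside $\pi(V(j))$, then convert it to the three-term relation for the $c_{n,j}$ using the defining pairing $\{d_m^{(j)}, z'\} = \langle c_{m,j}, z'_m\rangle_m$ from Proposition~\ref{prop:cn_summary} together with Tate local duality.

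For the main case $n\geq 1$, I would left-multiply Lemma~\ref{lemma:newvector} by $\left(\begin{smallmatrix}1 & 1\\ 0 & 1\end{smallmatrix}\right)\left(\begin{smallmatrix}p^n & 0\\ 0 & 1\end{smallmatrix}\right)$, reorganize using the matrix identity $\left(\begin{smallmatrix}p^{n+1} & p^n i+1\\ 0 & 1\end{smallmatrix}\right) = \left(\begin{smallmatrix}1 & 1+p^n i\\ 0 & 1\end{smallmatrix}\right)\left(\begin{smallmatrix}p^{n+1} & 0\\ 0 & 1\end{smallmatrix}\right)$, and tensor with $e_1^{k-2-j}e_2^j$. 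A slight extension of Lemma~\ref{lemma:dm}(2) gives $\sigma_b^{(j)} d_m^{(j)} = \left(\begin{smallmatrix}p^m & b\\ 0 & 1\end{smallmatrix}\right) v_{\new}\otimes e_1^{k-2-j}e_2^j$ for any $b \in \mathbb Z_p^\times$, since the $b^j$-twist from the $(j)$-shift of the central character cancels the $b^{-j}$ coming from the diagonal action on $\Sym^{k-2}\otimes\det^{2-k}$. Combined with the Hecke computation, this yields the identity
\[\sum_{i=0}^{p-1}\sigma_{1+p^n i}^{(j)}\, d_{n+1}^{(j)} = a\, d_n^{(j)} - \delta\, d_{n-1}^{(j)} \qquad\text{in}\;\pi(V(j)).\]

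Pairing both sides with an arbitrary $z' \in \mathbf D(V^*(-j))^{\psi=1}\cong H^1_{\Iw}(V^*(-j))$ and using the $\Gamma$-equivariance $\{\sigma x, y\} = \{x, \sigma^{-1} y\}$ of Colmez's pairing, the left-hand side becomes $\langle c_{n+1,j}, \sum_i \sigma_{1+p^n i}^{-1} z'_{n+1}\rangle_{n+1}$. Since $\{1+p^n i\}_{i=0}^{p-1}$ is a set of coset representatives for $\Gamma_{n+1}$ in $\Gamma_n$, the inner sum equals the Galois norm $Nz'_{n+1}$, which by the standard identity $\res^{n+1}_n\circ\cores^{n+1}_n = N$ combined with the Iwasawa compatibility $\cores^{n+1}_n(z'_{n+1}) = z'_n$ is $\res^{n+1}_n(z'_n)$. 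Adjointness of $\cores$ and $\res$ under Tate local duality then gives $\langle\cores^{n+1}_n(c_{n+1,j}), z'_n\rangle_n$. The right-hand side rewrites analogously as $a\langle c_{n,j}, z'_n\rangle_n - \delta\langle\res^n_{n-1}(c_{n-1,j}), z'_n\rangle_n$, and non-degeneracy of the Tate pairing yields the three-term relation at level $n$.

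The base case $n=0$ follows the same pattern but with $|\Gamma_0/\Gamma_1|=p-1$, so summing $\sigma_a^{(j)} d_1^{(j)}$ over $a=1,\ldots,p-1$ captures all terms of the Hecke relation except the missing $a=0$ contribution $\left(\begin{smallmatrix}p & 0\\ 0 & 1\end{smallmatrix}\right) v_{\new}\otimes e_1^{k-2-j}e_2^j$; once the twist is tracked, this term equals $\varphi^{(j)}(d_0^{(j)})$ (the $(j)$-shift multiplies $\varphi$ by $p^j$, precisely cancelling the $p^{-j}$ from the diagonal action on $\Sym^{k-2}\otimes\det^{2-k}$), and similarly the $\delta\left(\begin{smallmatrix}p^{-1} & 0\\ 0 & 1\end{smallmatrix}\right)v_{\new}\otimes\cdots$ term equals $\delta(\varphi^{(j)})^{-1}(d_0^{(j)})$. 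Pairing with $z'$ and using the adjointness $\{\varphi x, y\} = \{x, \psi y\}$ together with $\psi(z')=z'$, both of these extra terms contribute $\{d_0^{(j)}, z'\} = \langle c_0, z'_0\rangle_0$ to the pairing, producing the coefficient $a-\delta-1$. I expect the main obstacle to be the careful bookkeeping of the $(j)$-twist of $\varphi$ on $\pi(V(j))$ relative to $\pi(V)$, which governs the identification of the missing $\varphi$-terms in the base case, and ensuring that the image of $H^1_{\Iw}$ in $H^1(K_{n,p}, V^*(-j))$ suffices to invoke non-degeneracy of Tate local duality.
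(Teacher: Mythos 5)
Your proposal follows essentially the same route as the paper: both arguments derive the identity $\sum_i \sigma_{1+p^ni}^{(j)}\,d_{n+1}^{(j)} = a\,d_n^{(j)} - \delta\,d_{n-1}^{(j)}$ in $\pi(V(j))$ from the Hecke eigenvalue relation of Lemma~\ref{lemma:newvector} by the same matrix manipulation (the paper expands $\cores(d_{n+1}^{(j)})$ directly while you left-multiply the Hecke relation, but these are the same computation read in opposite directions), and both then conclude via the pairing $\{d_m^{(j)},z'\}=\langle c_{m,j},z'_m\rangle_m$ and the identity $\{\psmallmat{p^{\pm 1}}{0}{0}{1}z,z'\}=\{z,z'\}$ for $z'$ a $\psi$-invariant. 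Your write-up is somewhat more explicit about the Tate-duality bookkeeping (unwinding $\Gamma$-equivariance of $\{\cdot,\cdot\}$, the norm $N=\res\circ\cores$, and adjointness of $\res$/$\cores$) which the paper leaves as the terse closing sentence ``the above relations for the $d_n^{(j)}$ imply the corresponding relations for the $c_{n,j}$,'' and you track the twist of $\varphi$ under the $(j)$-shift carefully enough to see that the extra $\psmallmat{p^{\pm 1}}{0}{0}{1}$-terms in the $n=0$ case contribute exactly $\{d_0^{(j)},z'\}$; but these are presentational differences, not a different argument.
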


\begin{proof}
For $n\geq1$, we compute
\begin{align*}
\cores^{n+1}_n(d_{n+1}^{(j)}) 
&= 
\sum_{i=0}^{p-1} \psmallmat{1+ip^n}{0}{0}{1} d_{n+1}^{(j)}
=
\sum_{i=0}^{p-1} \psmallmat{1+ip^n}{0}{0}{1} \psmallmat{1}{1}{0}{1} \psmallmat{p^{n+1}}{0}{0}{1} v_{\new} \otimes e_1^{k-2-j} e_2^j \\
&=
\sum_{i=0}^{p-1} \psmallmat{p^{n+1}(1+p^ni)}{1+ip^n}{0}{1}  v_{\new} \otimes e_1^{k-2-j} e_2^j\\
&=
\sum_{i=0}^{p-1} \psmallmat{1}{1}{0}{1} \psmallmat{p^{n}}{0}{0}{1}  \psmallmat{p}{i}{0}{1} \psmallmat{1+ip^n}{0}{0}{1} v_{\new} \otimes e_1^{k-2-j} e_2^j\\
&=
\psmallmat{1}{1}{0}{1}\psmallmat{p^{n}}{0}{0}{1} \left(\sum_{i=0}^{p-1}   \psmallmat{p}{i}{0}{1}  v_{\new}\right) \otimes e_1^{k-2-j} e_2^j\\
&=
\psmallmat{1}{1}{0}{1}\psmallmat{p^{n}}{0}{0}{1} \left( a \cdot v_{\new} - \delta \psmallmat{p^{-1}}{0}{0}{1} v_{\new} \right) \otimes e_1^{k-2-j} e_2^j\\
&=
a \cdot \psmallmat{1}{1}{0}{1}\psmallmat{p^{n}}{0}{0}{1} v_{\new} \otimes e_1^{k-2-j} e_2^j 
- \delta \psmallmat{1}{1}{0}{1}\psmallmat{p^{n-1}}{0}{0}{1} v_{\new}  \otimes e_1^{k-2-j} e_2^j\\
&= a \cdot d_{n}^{(j)} - \delta \cdot d_{n-1}^{(j)}.
\end{align*}
Since $\langle c_{n,j},   z'_n \rangle_n = \{d_n^{(j)}, z'\}$, the above relations for the $d_{n}^{(j)}$ imply the corresponding relations for the $c_{n,j}$ for $n\geq 1$.

For $n=0$, we have
\begin{align*}
\cores^1_0(d_1^{(j)})
&=
\sum_{i=1}^{p-1} \psmallmat{i}{0}{0}{1} d_1^{(j)}
=
\sum_{i=1}^{p-1} \psmallmat{i}{0}{0}{1} \psmallmat{1}{1}{0}{1} \psmallmat{p}{0}{0}{1} v_{\new} \otimes e_1^{k-2-j} e_2^j \\
&=
\sum_{i=1}^{p-1} \psmallmat{p}{i}{0}{1} \psmallmat{i}{0}{0}{1}  v_{\new} \otimes e_1^{k-2-j} e_2^j \\
&=
\left( a \cdot  v_{\new} - \delta \psmallmat{p^{-1}}{0}{0}{1} v_{\new} - \psmallmat{p}{0}{0}{1}  v_{\new} \right)\otimes e_1^{k-2-j} e_2^j \\
&= \left( a - \delta \psmallmat{p^{-1}}{0}{0}{1} - \psmallmat{p}{0}{0}{1}\right) d_0^{(j)}.
\end{align*}
The relation between $c_{1,j}$ and $c_{0,j}$ then follows as above combined with the fact that $\{z,z'\} = \left\{\psmallmat{p^{\pm1}}{0}{0}{1}z,z'\right\}$ for any $z \in \pi(V)$ and $z' \in D(V^*)^{\psi=1}$.  
\end{proof}

\subsection{The crystalline case}
Suppose now that $V(1)$ is crystalline of dimension two,
and that $D_{\crys}\bigl(V(1)\bigr)$  has 
distinct Frobenius eigenvalues $\alpha/p$ and $\beta/p$ arising
from eigenvectors 
$$e_{\alpha/p}, e_{\beta/p} \in D_{\crys}\bigl(V(1)\bigr) \subseteq \widetilde\bD_{\rig}^+\bigl(V(1)\bigr)[1/t].
$$
Suppose that neither $\alpha$ nor $\beta$ equals $p$ so that the local Euler factor of $\pi_{\sm}(V)$ does not have a pole at $s=1$.  Further suppose that $V$ is irreducible, so that $e_{\alpha/p}$ and $e_{\beta/p}$ both
have non-zero image in $\mathbf D_{\dR}\bigl(V(1)\bigr)/\mathbf D_{\dR}^+
\bigl(V(1)\bigr)$, and scale them so that their images coincide.
We take this common image to be our basis $\mathbf e$ in the preceding
discussion, and set $\omega = e_{\alpha/p} - e_{\beta/p} \in \mathbf D_{\dR}^+
\bigl(V(1)\bigr)$.

In this case, we can give another (more direct) description of the classes $c_n := c_{n,0}$ as follows.  (So we are fixing $j=0$ for this subsection.). First, for $n\geq 0$, set
$$
k_n = 
\ve^{(n)} \otimes \varphi \omega + \dots + \ve^{(1)} \otimes \varphi^n\omega +
(1-\varphi)^{-1} \varphi^{n+1}\omega \in \Knp \otimes_{\Qp} D_{\crys}(V(1))
$$
where for $n=0$ this formula simply means the final term.   Note that $1-\varphi$ is invertible as we have assumed that $\varphi$ does not have 1 as an eigenvalue.

Recall the exponential map
$$
\exp: \Knp \otimes D_{\crys}(V(1)) \to H^1_f(\Knp,V(1))
$$
and consider the elements $\exp(k_n) \in H^1(\Knp,V(1))$ for $n\geq 0$.  A simple computation shows that these elements satisfy the three term relation given in Proposition \ref{prop:three_term}.  
In fact, we will show the following:

\begin{prop}
For $n\geq 0$, we have $c_n = \left(\frac{\alpha}{p}-\frac{\beta}{p}\right)^{-1} \exp(k_n)$.
\end{prop}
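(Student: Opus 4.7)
I would verify the equality via Tate local duality, by showing that the two sides pair identically against every $z'_n \in H^1(K_{n,p}, V^*)$. Lifting $z'_n$ to $z' \in \bD(V^*)^{\psi=1}$, Proposition~\ref{prop:cn_summary} yields
$$
\langle c_n, z'_n\rangle_n = p^n\bigl\langle \imath^-_n(w_n^{(0)}), \sigma_{-1}\exp^*(z'_n)\bigr\rangle_{\dif},
$$
while the adjointness $\langle\exp(x), y\rangle_n = \langle x, \exp^*(y)\rangle_{\dR,n}$ (implicit in the proof of Lemma~\ref{lemma:inexp}) together with $\langle\cdot,\cdot\rangle_{\dR,n} = p^n\langle\cdot,\cdot\rangle_{\dif}$ gives
$$
\bigl\langle \lambda\exp(k_n), z'_n\bigr\rangle_n = \lambda\, p^n\bigl\langle k_n, \exp^*(z'_n)\bigr\rangle_{\dif}
$$
with $\lambda = p/(\alpha-\beta)$. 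Using $\Gamma$-equivariance of $\langle\cdot,\cdot\rangle_{\dif}$ to move $\sigma_{-1}$ across, the claim reduces to the identity
$$
\sigma_{-1}\imath^-_n(w_n^{(0)}) \equiv \lambda\, k_n \pmod{\wt\bD_{\dif}^+(V(1))},
$$
taking place in the one-dimensional $L_n$-space $\bD_{\dR,n}(V(1))/\bD_{\dR,n}^+(V(1)) = L_n\cdot\mathbf{e}$.

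Next, I would verify this identity by explicit Kirillov-model computation. Since $V(1)$ is crystalline, $\pi_{\sm}(V)$ is the irreducible unramified principal series attached to $\chi_\alpha \otimes \chi_{\beta/p}$, and the Casselman--Shalika formula of Section~\ref{sec:newvec} decomposes the newvector as
$$
v_{\new} = \tfrac{\alpha}{\alpha-\beta}\,\eta_{\alpha/p} - \tfrac{\beta}{\alpha-\beta}\,\eta_{\beta/p},\qquad \eta_\gamma(x) := \gamma^{\ord_p(x)}\mathbf{1}_{\Z_p}(x).
$$
For $j = 0$ the locally algebraic Kirillov function is $\mathcal{K}_{d_n^{(0)}}(x) = \varepsilon(x)\,v_{\new}(p^n x)$, and each $\eta_\gamma$ is of the $\gamma^{\ord_p}$-type handled directly in the proof of Proposition~\ref{prop:Mellinpadic_localg}. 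Solving the recursion $\xi_n(p^r) - \xi_n(p^{r+1}) = \mathcal{K}_{d_n^{(0)}}(p^r)$ termwise telescopes the geometric tail for $r \geq 0$ to $\gamma^n/(1-\gamma)$ and sums the roots-of-unity contributions $\varepsilon^{(s)}\gamma^{n-s}$ for $1 \leq s \leq n$; by Proposition~\ref{prop:Mellin two} the resulting closed form gives $\imath^-_n(w_n^{(0)}) = \xi_n(p^{-n})\mathbf{e}$.

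To match $k_n$, use $\omega = e_{\alpha/p} - e_{\beta/p}$ so $\varphi^i\omega = (\alpha/p)^i e_{\alpha/p} - (\beta/p)^i e_{\beta/p}$, with both $e_{\alpha/p}, e_{\beta/p}$ projecting to $\mathbf{e}$ in the quotient. The image of $k_n$ then becomes
$$
\Bigl[\sum_{i=1}^n \varepsilon^{(n+1-i)}\bigl((\alpha/p)^i - (\beta/p)^i\bigr) + \tfrac{(\alpha/p)^{n+1}}{1-\alpha/p} - \tfrac{(\beta/p)^{n+1}}{1-\beta/p}\Bigr]\mathbf{e}.
$$
Applying $\sigma_{-1}$ (which inverts each $\varepsilon^{(m)}$) to the explicit formula for $\xi_n(p^{-n})$, and using the identity $\tfrac{\alpha}{\alpha-\beta}(\alpha/p)^{i-1} - \tfrac{\beta}{\alpha-\beta}(\beta/p)^{i-1} = \lambda\bigl((\alpha/p)^i - (\beta/p)^i\bigr)$, produces precisely the above expression multiplied by $\lambda$.

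The main obstacle will be the careful bookkeeping of several intertwined conventions: the orientation of the character $\varepsilon(x) = \lim_n(\varepsilon^{(n)})^{p^n x}$, the direction of $\sigma_{-1}$ on the roots of unity arising from the explicit reciprocity law, the choice of $\mathbf{e}$ as common image of the Frobenius eigenvectors, and the normalization of $t^{-1}$ as basis of $\bD_{\dR,n}(L(1))$. The $n = 0$ case, where $\sigma_{-1}$ acts trivially on $L = L_0$ and the identity reduces at once to $\xi_0(1)\mathbf{e} = \lambda k_0$ modulo $\bD_{\dR,0}^+$, provides both the base case and a useful consistency check; the three-term recurrence proved just above can be used to propagate the identity between consecutive levels.
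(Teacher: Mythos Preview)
Your proposal is correct and follows essentially the same route as the paper: reduce via Tate duality and the adjunction between $\exp$ and $\exp^*$ to a comparison of $\imath^-_n(w_n)$ and $\lambda k_n$ modulo $\bD_{\dR}^+\bigl(V(1)\bigr)$, then carry out that comparison by writing the newvector via Casselman--Shalika, computing $\xi_n(p^{-n})$ explicitly, and expanding $k_n$ in terms of the Frobenius eigenvectors $e_{\alpha/p},e_{\beta/p}$. The paper does exactly this, citing \cite[Th\'eor\`eme VIII.2.2]{Colm2} for the $\exp$/$\exp^*$ adjunction and then matching the two explicit formulas directly; your suggestion to use the three-term relation as a propagation device is an extra safety check the paper does not invoke, and your tracking of the $\sigma_{-1}$ is more explicit than the paper's (which silently absorbs it into the comparison), but neither changes the substance of the argument.
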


\begin{proof}
Recall that $c_n$ is defined by the formula
$$
\langle c_n,  z_n'\rangle_n = p^n \langle \imath^-_n(w_n), \sigma_{-1}  \exp^*z_n' \rangle_{\dif}
$$
where $w_n := w_{n,0}$.  Further, by \cite[Th\'eor\`eme VIII.2.2]{Colm2}, we have
$$
\langle \exp(k_n),  \sigma_{-1} \cdot z_n'\rangle_n = p^n \langle k_n, \sigma_{-1}  \exp^*z_n' \rangle_{\dif}.
$$
Thus it suffices to see that the images of $\left(\frac{\alpha}{p}-\frac{\beta}{p}\right)^{-1} k_n$ and $\imath^-_n(w_n)$ agree in $D_{\dR}(V(1)) / D^+_{\dR}(V(1))$.

To this end, we first explicitly describe the realization of $d_n := d_{n,0}$ and $w_n$ in (generalized) Kirillov models.  
If $\phi$ denotes the realization of $v_{\new}$ in the Kirillov model of $\pi_{\sm}(V)$, then 
$$
\phi(p^n) = \begin{cases}
\dfrac{(\alpha/p)^{n+1}
-(\beta/p)^{n+1}}{(\alpha/p) - (\beta/p)} & n \geq 0, \\
0 & \text{otherwise,}
\end{cases}
$$
as in section \ref{sec:newvec}.  Thus,
$$
\mathcal K_{d_n}(p^r){\bf e} = \begin{cases} \phi(p^{n+r}){\bf e} &  r \geq 0, \\
\ve^{(-r)} \phi(p^{n+r}){\bf e} & -n \leq r \leq 0, \\ 0 & r < -n, \end{cases}
$$
where $\ve^{(m)} = \ve(p^{-m})$ is a $p^m$-th root of unity.

Tracing through the proof of Proposition \ref{prop:Mellinpadic}, we then see that
\begin{multline*}
\xi_n(p^{-n}) = \\
\left(\frac{\alpha}{p}-\frac{\beta}{p}\right)^{-1} 
\left(
 \sum_{j=0}^{n-1} \ve^{(n-j)} 
\left( \left(\frac{\alpha}{p} \right)^{j+1} - \left(\frac{\beta}{p} \right)^{j+1} \right)
+
\frac{(\alpha/p)^{n+1}}{1-\alpha/p}-
\frac{(\beta/p)^{n+1}}{1-\beta/p}
\right) {\mathbf e}.
\end{multline*}
Since $\omega = e_{\alpha/p} - e_{\beta/p}$, we have
\begin{align*}
k_n
&= 
\sum_{j=0}^{n-1}
\ve^{(n-j)} \otimes \varphi^{j+1} \omega +
(1-\varphi)^{-1} \varphi^{n+1}\omega
\\
&= 
\sum_{j=0}^{n-1}
\ve^{(n-j)} \otimes \left(\left(\frac{\alpha}{p}\right)^{j+1} e_{\alpha/p}
-
\left(\frac{\beta}{p}\right)^{j+1} e_{\beta/p}\right)
+
\frac{(\alpha/p)^{n+1}}{1-\alpha/p} e_{\alpha/p}
- \frac{(\beta/p)^{n+1}}{1-\beta/p} e_{\beta/p}
\end{align*}
and thus
$$
k_n \equiv \left(\frac{\alpha}{p}-\frac{\beta}{p}\right) \imath^-_n(w_n) \pmod{D^+_{\dR}(V(1))}
$$
as desired.
\end{proof}

\begin{remark}
One can even go further and directly check that the classes $\exp(k_n)$ satisfy Proposition \ref{prop:recip}.  Nonetheless, we note that the explicit construction in this section for the crystalline case do not supplant the main arguments given so far in this paper.  Indeed, these explicit constructions with the exponential map do not give us any integral control of these cohomology classes.  In the next section, we describe how the $c_n$ can be normalized so that they are in $H^1(\Knp,T(1))$ for all $n\geq 0$ for $T$ some Galois-stable lattice in $V$.  We cannot directly check this property for the classes $\exp(k_n)$.
\end{remark}

\subsection{Normalizations}
\label{sec:normal}
If $T$ is a $G_{\mathbb Q_p}$-invariant $\mathcal O_L$-lattice
in the two-dimensional
irreducible continuous $G_{\mathbb Q_p}$-representation $V$ over $L$,
then we can form 
$\widetilde\bD^+\bigl(T(1)\bigr) \subset 
\widetilde\bD^+\bigl(V(1)\bigr)$
and
$\widetilde\bD\bigl(T(1)\bigr) \subset 
\widetilde\bD\bigl(V(1)\bigr)$.
Note that 
$\widetilde\bD^+\bigl(T(1)\bigr) =
\widetilde\bD\bigl(T(1)\bigr) \cap
\widetilde\bD^+\bigl(V(1)\bigr)$
(the intersection taking place in
$\widetilde\bD\bigl(V(1)\bigr)$),
so that we also obtain an embedding
$$
\pi(T) := \widetilde\bD\bigl(T(1)\bigr)/
\widetilde\bD^+\bigl(T(1)\bigr)
\hookrightarrow
\widetilde\bD\bigl(V(1)\bigr)/
\widetilde\bD^+\bigl(V(1)\bigr) = \pi(V)
,$$
which gives an integral structure on the target.

Suppose now, as before, that $V$ is de Rham, with Hodge--Tate weights
$0$ and $1 - k < 0.$  Then we have the inclusion
$\pi_{\sm}(V)\otimes_L (\Sym^{k-2} L^2)^{*} \hookrightarrow \pi(V)$,
and we may normalize the vector $v_{\new} \in \pi_{\sm}(V)$ (up to an element
of $\mathcal O_L^{\times}$) by asking that
$v_{\new} \otimes_{\mathcal O_L} (\Sym^{k-2} \mathcal O_L^2)^{*}$ be contained
in 
$\pi(T)$, but not in $\unif\pi(T)$. (Here $\unif$ is a uniformizer of $\mathcal O_L$.)
Further, if we apply $\imath^-_0$ to $v_{\new}\otimes v_{\hw}$ 
(where $v_{\hw}$ is a basis element
for the highest weight space of $(\Sym^{k-2} \mathcal O_L^2)^{*}$),
we obtain an element of
$\mathbf D_{\dR}(V)/ \mathbf D^+_{\dR}(V),$
well-determined up to multiplication by an element of $\mathcal O_L^{\times}$;
in other words, we obtain an $\mathcal O_L$-structure on the one-dimensional
$L$-vector space
$\mathbf D_{\dR}(V)/ \mathbf D^+_{\dR}(V)$. Setting ${\bf e}$ equal $\imath^-_0(v_{\new}\otimes v_{\hw})$ then pins down the elements $d_{n,j}$ (and thus the $c_{n,j}$) up to scaling by a single element in $\mathcal O_L^\times$.

Ideally, under this normalization,  we would like to say that the $c_{n,j}$, which live in $H^1(\Knp,V(1+j))$, are actually in $H^1(\Knp,T(1+j))$.  Unfortunately, $H^1(\Knp,T(1+j))$ need not even be a subspace of $H^1(\Knp,V(1+j))$ as it may contain $\O_L$-torsion elements.  However, the next best integrality property of the $c_{n,j}$ hold; namely, the $c_{n,j}$ are in the image of $H^1(\Knp,T(1+j))$.  Before proving this, we give some notation and a lemma.

Write $\overline{T}(1+j) = T(1+j)/\varpi T(1+j)$ and $\overline{T}^*(-j) = T^*(-j)/\varpi T^*(-j)$. 
Set $$
W_1 := \im \left(  H^1(\Knp,T(1+j))_{\O_L-\tor} \lra H^1(\Knp,\overline{T}(1+j)) \right)
$$
and
$$
W_2 := \im \left(  H^1(\Knp,T^*(-j)) \lra H^1(\Knp,\overline{T}^*(-j)) \right).
$$

\begin{lemma}
\label{lemma:ann}
We have $W_1$ and $W_2$ are exact annihilators under Tate local duality.
\end{lemma}

\begin{proof}
Let $\overline{t}$ be an element of $W_1$ and let $t$ be some lift to $H^1(\Knp,T(1+j))_{\O_L-\tor}$.  Likewise, let $\overline{z}$ be an element of $W_2$ and let $z$ be some lift to $H^1(\Knp,T^*(-j))$.  Since $\langle t,z \rangle_n$ can be computed after inverting $p$ and $t$ is killed by inverting $p$, we deduce that $\langle t,z \rangle_n=0$.
 Thus 
$\langle \overline{t}, \overline{z} \rangle_n = 0$ and $W_1 \subseteq W_2^{\perp}$.

It suffices then to compare the dimensions of $W_1$ and $W_2^\perp$.  
To this end, we have the exact sequence:
$$
0 \to \frac{H^1(\Knp,T^*(-j))}{ \varpi H^1(\Knp,T^*(-j))} \to H^1(\Knp,\overline{T}^*(-j)) \to H^2(\Knp,T^*(-j))[\varpi] \to 0.
$$
The first term here is isomorphic to $W_2$.  For the last term, Tate local duality gives
$$
H^2(\Knp,T^*(-j))[\varpi] \cong 
H^0(\Knp,T(1+j))^\vee[\varpi].
$$
Further, since 
$$
H^1(\Knp,T(1+j))_{\O_L-\tor} \cong H^0(\Knp,T(1+j)) \otimes L/\O_L
$$
we have
$$
W_1 \cong \frac{H^0(\Knp,T(1+j))}{\varpi H^0(\Knp,T(1+j))}.
$$
In particular, $\dim W_1 + \dim W_2 = \dim H^1(\Knp,\overline{T}^*(-j))$ and thus $W_1 = W_2^\perp$.
\end{proof}

\begin{prop}
The classes $c_{n,j}$ are in the image of the natural map $$\alpha : H^1(\Knp,T(1+j)) \to H^1(\Knp,V(1+j)).$$
\end{prop}

\begin{proof}
The image of $H^1(\Knp,T(1+j))$ in $H^1(\Knp,V(1+j))$ is a lattice and thus $\varpi^r c_{n,j}$ is in the image of $H^1(\Knp,T(1+j))$ for some $r$.  Take the smallest such $r$ making this true and assume that $r \geq 1$.   Choose $h \in H^1(\Knp,T(1+j))$ with $\alpha(h) = \varpi^r c_{n,j}$.

Our normalization of the $c_{n,j}$ implies the following key property:\ if $z$ is in the image of $H^1(\Knp,T^*(-j)) \to H^1(\Knp,V^*(-j))$, then $\langle c_{n,j}, z \rangle_n \in \O_L$; that is, the Tate local duality pairing between these elements is integral.

In particular, we have that $\langle h, z \rangle_n  = \langle p^r c_{n,j}, z \rangle_n \equiv 0 \pmod{p}$ for any $z \in H^1(\Knp,T^*(-j))$ as $r \geq 1$.  Hence, $\overline{h} \in H^1(\Knp,\overline{T}(1+j))$ annihilates $W_2$ and thus, by Lemma \ref{lemma:ann}, $\overline{h} \in W_1$.  We can then write $h = t + \varpi h'$ with $h', t \in H^1(\Knp,T(1+j))$ and $t$ an $\O_L$-torsion element.

Finally, $\varpi^ r c_{n,j} = \alpha(h) = \alpha(t+\varpi h') = \varpi \alpha(h')$ and $\varpi^{r-1} c_{n,j}$ is in the image of $\alpha$, contradicting the minimality of $r$.
\end{proof}

\part{The global theory}

\section{Algebraic $\theta$-elements}
\label{sec:algtheta}

\subsection{Notations and assumptions}
Let $f$ be a newform in $S_k(\Gamma_1(N),\psi,\Qpbar)$ for $k \geq 2$, and let $L/\Qp$ denote the field generated by the Fourier coefficients of $f$.  Let $\rho_f : G_\Q \to \Aut(V_f)$ denote the associated (cohomological) Galois representation; that is, the determinant of $\rho_f$ is $\psi^{-1} \varepsilon^{1-k}$ where $\varepsilon$ is the cyclotomic character.  Let $\Sigma$ denote the finite set of primes where $\rho_f$ ramifies.

As $f$ will be fixed for the remainder of the paper, we simply write $V := V_f$ and $\overline{V} := V_{\overline{f}}$ where $\overline{f}$ is the complex conjugate of $f$.  Fix $T$ a Galois stable lattice in $V$.  Set $A = V/T$ and $A^* = V^*/ T^*$.   We similarly write $\overline{T}$, $\overline{A}$, and $\overline{A}^*$ for the analogous constructions using $\overline{V}$.  Note that $A^* \cong \overline{A}(k-1)$. 

Further, set $\Kn = \Qmpn$, $\Kinf = \Q(\mu_{p^\infty})$,
$\Gamma_n = \Gal(\Kinf/\Kn)$, $\G_n = \Gal(\Kn/\Q)$, $\Lambda = \O_L[[\Gal(\Kinf/\Q)]]$, and  $\Ln = \Lambda_{\Gamma_n} \cong \O_L[\Gal(\Kn/\Q)]$.   We fix an isomorphism of $(\Z/p^n\Z)^\times$ with $\G_n$ by mapping $a$ to $\sigma_a$ where $\sigma_a(\zeta) = \zeta^a$ for $\zeta \in \mu_{p^n}$.

 For $n \geq m$, let $\pi^{n}_m : \Ln \to \Lambda_m$ be the natural projection, and let $\nu^{n}_m :\Lambda_m \to \Ln$ be the natural injection which sends a group like element $\sigma \in \G_m$ to $\sum \tau$ where $\tau$ runs over all elements in $\G_{n}$ whose image in $\G_m$ equals $\sigma$.

\subsection{Selmer groups}
\label{sec:selmer}

In this section, we give a construction of algebraic $\theta$-elements attached to $f$ along the $\Zp^\times$-extension $\Kinf/\Q$.  In what follows $j$ will always be an integer such that $0 \leq j \leq k-2$.  We begin by recalling the definition of the relevant Selmer groups.  We have
$$
H^1_f(\Kn,\Adualj) := \ker\left( H^1(\Kn,\Adualj) \to \prod_{v \nmid p} H^1(I_v,\Adualj) \times \frac{H^1(\Knp,\Adualj) }{ H^1_f(\Knp,\Adualj)}
\right)
$$
where $v$ runs over places of $\Kn$ and $I_v$ denotes an inertia group at $v$, and
$$
H^1_f(\Kinf,\Adualj) := \ker\left( H^1(\Kinf,\Adualj) \to \prod_{w \nmid p} H^1(I_w,\Adualj) 	
\right)
$$
where $w$ runs over places of $\Kinf$.
We note that no condition at $p$ is imposed in this second Selmer group since $H^1_f(\Kinfp,\Adualj) = H^1(\Kinfp,\Adualj)$ by \cite[Theorem A]{Berger-univnorm}.

The key global input in what follows is the sequence in the following proposition which precisely describes the failure of the control theorem in this non-ordinary situation.

\begin{prop}
\label{prop:control}
Suppose \eqref{irred} and that $\rhobar_f$ is irreducible.  Then there is a natural exact sequence
\begin{equation}
\label{eqn:control}
0 \to H^1_f(\Kn,\Adualj) \to H^1_f(\Kinf,\Adualj)^{\Gamma_n} \to 
\frac{H^1(\Knp,\Adualj)}{H^1_f(\Knp,\Adualj)} \oplus B_n
\end{equation}
where $B_n$ is a finite group with size bounded independent of $n$.
\end{prop}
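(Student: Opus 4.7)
The plan is to derive the exact sequence via a snake-lemma argument on the defining exact sequences of $H^1_g(\Kn,\Adualj)$ and $H^1_g(\Kinf,\Adualj)^{\Gamma_n}$, in the spirit of Greenberg's control theorem (cf.\ \cite[Theorem 1.14]{Kim-Kurihara}).

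The first step is to show that under \eqref{irred} the fixed module $(\Adualj)^{G_{\Kinf}}$ vanishes. A nonzero $G_{\Kinf}$-fixed line in the residual representation $\Adualj[\m]$ would be $G_\Q$-stable, since $G_{\Kinf}$ is normal in $G_\Q$. By irreducibility of $\rhobar_f$ (and hence of $\Adualj[\m]$), such a line would have to be all of $\Adualj[\m]$, forcing $\rhobar_f|_{G_{\Kinf}}$ to be trivial, and hence $\rhobar_f$ to factor through the abelian group $\Gal(\Kinf/\Q)$ --- contradicting irreducibility of the two-dimensional representation. Hence $\Adualj[\m]^{G_{\Kinf}}=0$, and $\m$-adic induction gives $(\Adualj)^{G_{\Kinf}}=0$. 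The inflation-restriction sequence then yields an isomorphism
$$
H^1(\Kn,\Adualj) \xrightarrow{\sim} H^1(\Kinf,\Adualj)^{\Gamma_n}.
$$

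Next, I compare the local conditions. At the prime $p$, Berger's theorem \cite[Theorem A]{Berger-univnorm} gives $H^1_g(\Kinfp,\Adualj)=H^1(\Kinfp,\Adualj)$, so $H^1_g(\Kinf,\Adualj)$ imposes no condition at $p$, while $H^1_g(\Kn,\Adualj)$ requires membership in $H^1_g(\Knp,\Adualj)$; the resulting defect is exactly the summand $H^1(\Knp,\Adualj)/H^1_g(\Knp,\Adualj)$. At each prime $v\nmid p$, the local extension $\Kinfw/\Knv$ is unramified (since $\Q_\ell(\mu_{p^\infty})/\Q_\ell$ is unramified for $\ell\neq p$), so the inertia subgroups coincide, $I_v=I_w$, and the restriction $H^1(I_v,\Adualj)\to H^1(I_w,\Adualj)$ is the identity. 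Consequently, unramifiedness at $v$ is equivalent to unramifiedness at $w$ for a class and its restriction. When we pass to $\Gamma_n$-invariants of the full product $\prod_{w\nmid p} H^1(I_w,\Adualj)$ and compare with $\prod_{v\nmid p} H^1(I_v,\Adualj)$, the only discrepancy comes from $\Gamma_n$ acting on the finite orbits of places $w$ above each $v\in\Sigma\setminus\{p\}$; these finitely many contributions assemble into the finite group $B_n$, whose order is controlled by the (finite) invariants $\Adualj^{I_v}$ uniformly in $n$.

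Applying the snake lemma to
$$
\begin{CD}
0 @>>> H^1_g(\Kn,\Adualj) @>>> H^1(\Kn,\Adualj) @>>> P_n \\
@. @VVV @VV{\cong}V @VVV \\
0 @>>> H^1_g(\Kinf,\Adualj)^{\Gamma_n} @>>> H^1(\Kinf,\Adualj)^{\Gamma_n} @>>> P_\infty^{\Gamma_n}
\end{CD}
$$
(where $P_n$ and $P_\infty$ denote the respective products of local defects) produces the desired four-term exact sequence, since the middle vertical is an isomorphism and the kernel of the right vertical is identified with $H^1(\Knp,\Adualj)/H^1_g(\Knp,\Adualj)\oplus B_n$ by the local analysis. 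The main obstacle is verifying the uniform-in-$n$ bound on $B_n$: while the number of $\Kinf$-places above a ramified $v\in\Sigma\setminus\{p\}$ can change with $n$, one must carefully track how the $\Gamma_n$-orbit structure interacts with the finite modules $\Adualj^{I_v}$ to conclude that the accumulated kernels remain bounded independent of $n$.
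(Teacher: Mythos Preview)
Your proposal is correct and follows essentially the same route as the paper: a snake-lemma control argument in the style of Greenberg, with Berger's theorem \cite{Berger-univnorm} handling the local condition at~$p$. The paper's own proof is just a pointer to \cite{Greenberg-CIME} and \cite{IP} together with the two local ingredients (bounded finiteness of $H^1(\Kinfw/\Knv,\Adualj(\Kinfw))$ for $v\nmid p$, and $H^1_g(\Kinfp,\Adualj)=H^1(\Kinfp,\Adualj)$), so your write-up is in fact more detailed.

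One remark on your local analysis at $v\nmid p$: your observation that $I_v=I_w$ (since $\Kinf/\Kn$ is unramified away from $p$) is exactly right, and it shows that the restriction map $H^1(I_v,\Adualj)\to H^1(I_w,\Adualj)$ is literally the identity, hence injective. With the Selmer groups defined as in the paper (via maps to $H^1(I_v,\Adualj)$ rather than to $H^1(\Knv,\Adualj)/H^1_{\mathrm{ur}}$), this already forces the away-from-$p$ contribution to the kernel to vanish, so one may take $B_n=0$. The ``main obstacle'' you flag at the end is therefore a phantom: there is nothing to bound. The paper's formulation with a bounded finite $B_n$ reflects Greenberg's standard setup, where one compares $H^1(\Knv,\Adualj)/H^1_{\mathrm{ur}}$ to $H^1(\Kinfw,\Adualj)/H^1_{\mathrm{ur}}$ and the kernel is governed by $H^1(\Kinfw/\Knv,\Adualj(\Kinfw))$; that approach is equivalent but slightly less sharp here.

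A small caveat on your first step: the conclusion that $\rhobar_f$ cannot factor through the abelian group $\Gal(\Kinf/\Q)$ requires \emph{absolute} irreducibility of $\rhobar_f$ (a two-dimensional representation of an abelian group over a non-closed field can be irreducible). In this paper's context that is implicit in \eqref{irred} (and certainly forced by \eqref{Euler}), but it is worth stating.
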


\begin{proof}
This sequence is derived via the Snake Lemma as in \cite[Chapter 3]{Greenberg-CIME} or as in \cite[Theorem 3.1]{IP}.  We note that the local ingredients needed to prove this are:
\begin{enumerate}
\item $H^1(\Kinfw/\Knv,H^0(\Kinfw,\Adualj))$ is finite with size bounded independent of $n$ for $v \nmid p$.
\item $H^1_f(\Kinfp,\Adualj) = H^1(\Kinfp,\Adualj)$.
\end{enumerate}
The first of these facts can be proven as in \cite[Lemma 3.3]{Greenberg-CIME}.  The second fact follows from \cite[Theorem A]{Berger-univnorm} by Tate local duality and \eqref{irred}.
\end{proof}

Dualizing (\ref{eqn:control}) and applying Tate local duality yields
\begin{equation}
\label{eqn:controldual}
H^1_f(\Knp,\Tj)  \oplus \D{B_n} \to (\D{H^1_f(\Kinf,\Adualj)})_{\Gamma_n} \to \D{H^1_f(\Kn,\Adualj)} \to 0.
\end{equation}

Let $\Q_\Sigma$ denote the maximal extension of $\Q$ unramified outside of $\Sigma$, and let
$$
\H^i(\Tdualj) := \varprojlim_n H^i(\Q_\Sigma/\Kn,\Tdualj).
$$
The following is a deep theorem of Kato.

\begin{thm}[Kato]
\label{thm:Katorank} 
We have:
\begin{enumerate}
\item If $\rhobar_f$ is irreducible, then $\H^1(\Tdualj)$ is a free $\Lambda$-module of rank one.
\item If \eqref{irred} holds, then $\D{H^1_f(\Kinf,\Adualj)}$ is a rank one $\Lambda$-module.
\end{enumerate}
\end{thm}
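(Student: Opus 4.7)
The plan is to deduce both statements from Kato's main theorem that $\H^2(\Tdualj)$ is $\Lambda$-torsion (cf.\ \cite[Theorem 12.4]{Kato}), combined with the Tate global Euler characteristic formula and Poitou-Tate duality.

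For part (1), I would establish three facts about $\H^1(\Tdualj)$: torsion-freeness, rank one, and freeness. Torsion-freeness is a consequence of \eqref{irred}: any nonzero $\Lambda$-torsion element would, after localizing at a height-one prime, arise from some $H^0$ over a finite layer twisted by a finite-order character, and irreducibility of $\rhobar_f$ forces all such $H^0$ to vanish. For the rank, Tate's global Euler characteristic formula applied at each level $\Kn$ and then passed to the inverse limit gives $\mathrm{rank}_\Lambda \H^0 - \mathrm{rank}_\Lambda \H^1 + \mathrm{rank}_\Lambda \H^2 = -1$ (the contribution comes from the archimedean places of the totally complex field $\Kinf$ with $\dim V = 2$); combined with the vanishing of $\H^0$ and Kato's torsionness of $\H^2$, this forces $\mathrm{rank}_\Lambda \H^1(\Tdualj) = 1$. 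For freeness, note that $\Lambda$ decomposes as a finite product of two-dimensional regular local rings of the form $\sO_L[[T]]$ (indexed by characters of the torsion subgroup of $\Gal(\Kinf/\Q) \cong \Z_p^\times$), and over each such factor any rank-one reflexive module is free; reflexivity of $\H^1$ follows from its presentation as an inverse limit of free modules.

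For part (2), I would apply Poitou-Tate global duality in the Iwasawa limit to relate $H^1_g(\Kinf, \Adualj)^\vee$ to the global Iwasawa cohomology of the Tate dual $\Tj = T(r)$. Since $H^1_g(\Kinfp, \Adualj) = H^1(\Kinfp, \Adualj)$ by Berger's theorem (as invoked in the proof of Proposition~\ref{prop:control}), the Selmer condition defining $H^1_g(\Kinf, \Adualj)$ is vacuous at $p$ and the unramified condition elsewhere. The Poitou-Tate sequence then takes the form
$$\H^1(\Tj) \xrightarrow{\mathrm{loc}_p} \varprojlim_n H^1(\Knp, \Tj) \to H^1_g(\Kinf, \Adualj)^\vee \to \H^2(\Tj) \oplus E \to 0,$$
with $E$ a $\Lambda$-torsion error term coming from primes $v \neq p$ (bounded as in the proof of Proposition~\ref{prop:control}). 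Part (1), applied symmetrically to the lattice $T(r)$ in $V(r)$ --- the hypotheses transfer under twisting and duality --- yields that $\H^1(\Tj)$ also has $\Lambda$-rank one. The local Iwasawa cohomology at $p$ has $\Lambda$-rank $\dim V = 2$ by the local Euler-Poincar\'e formula, and $\H^2(\Tj)$ is again $\Lambda$-torsion by Kato. Moreover, Kato's zeta element has non-trivial image under $\mathrm{loc}_p$ (its projection to the local cohomology at $p$ encodes the $p$-adic $L$-function), so this image has $\Lambda$-rank exactly one. Reading off $\Lambda$-ranks in the displayed sequence yields $\mathrm{rank}_\Lambda H^1_g(\Kinf, \Adualj)^\vee = 2 - 1 = 1$.

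The main obstacle is packaged entirely inside the input from Kato --- the $\Lambda$-torsionness of $\H^2$, equivalent to the weak Leopoldt conjecture for $V$ along the cyclotomic tower, which in turn rests on his Euler system construction. Once this is granted, the remainder is standard Iwasawa-theoretic bookkeeping via Tate's Euler characteristic formula and Poitou-Tate global duality.
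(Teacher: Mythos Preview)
Your proposal is essentially correct, and for part~(1) it unpacks what the paper simply cites as \cite[Theorem~12.4.3]{Kato}: the torsion-freeness via \eqref{irred}, the rank computation via the global Euler characteristic and Kato's torsionness of $\H^2$, and freeness via reflexivity over the regular local components of $\Lambda$ is exactly how that theorem of Kato is established.

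For part~(2), you take a genuinely different route from the paper. The paper argues directly on the $\Adualj$ side: by \cite[Proposition~1.3.2]{PR-book}, the $\Lambda$-corank of $H^1(\Q_\Sigma/\Kinf, \Adualj)$ equals the $\Lambda$-rank of $\H^1(\Tdualj)$, which is~$1$ by part~(1); then the defining sequence
\[
0 \to H^1_g(\Kinf,\Adualj) \to H^1(\Q_\Sigma/\Kinf,\Adualj) \to \bigoplus_{w \in \Sigma-\{p\}} H^1(\Kinfw,\Adualj),
\]
together with the fact that each $H^1(\Kinfw,\Adualj)$ for $w\nmid p$ is $\Lambda$-cotorsion, gives $\cork_\Lambda H^1_g(\Kinf,\Adualj)=1$ immediately. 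Your approach instead passes to the Tate dual $T(r)$ via Poitou--Tate and counts ranks on that side. This can be made to work, but it is more circuitous and needs more inputs: you must re-invoke part~(1) for $T(r)$ rather than $\Tdualj$ (fine, since the hypotheses are stable under twist and duality), and you must show that $\mathrm{loc}_p$ on $\H^1(T(r))$ has rank-one image. Your justification of the latter via ``Kato's zeta element has non-trivial image under $\mathrm{loc}_p$'' is slightly imprecise as stated: the Euler system $z_{\Kato}$ of this paper lies in $\H^1(\Tdualj)$, not $\H^1(T(r))$, so you would need Kato's construction for the conjugate form $\overline{f}$ (or else argue directly that the kernel of $\mathrm{loc}_p$ is a strict Selmer group, hence $\Lambda$-torsion). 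The paper's approach sidesteps all of this by never leaving the $\Adualj$ side.
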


\begin{proof}
The first part is \cite[Theorem 12.4.3]{Kato}.  
For the second part, we have 
that the $\Lambda$-corank of $H^1(\Q_\Sigma/\Kinf,\Adualj)$ equals the $\Lambda$-rank of $\H^1(\Tdualj)$  by \cite[Proposition 1.3.2]{PR-book}. Further, consider the exact sequence
\begin{equation}
\label{eqn:selmerdef}
0 \to H^1_f(\Kinf,\Adualj) \to H^1(\Q_\Sigma/\Kinf,\Adualj) \to \bigoplus_{w \in \Sigma-\{p\}} H^1(\Kinfw,\Adualj).
\end{equation}
(Here we are again using $H^1_f(\Kinfp,\Adualj) = H^1(\Kinfp,\Adualj)$ as in the proof of Proposition \ref{prop:control}.) 
Since $H^1(\Kinfw,\Adualj)$ is $\Lambda$-cotorsion if $w \nmid p$ (see \cite[Proposition 2]{Greenberg-ordinary}), we have   
$$
\cork_{\Lambda} H^1_f(\Kinf,\Adualj) = \cork_{\Lambda} H^1(\Q_\Sigma/\Kinf,\Adualj) = 1
$$
as desired.
\end{proof}

To ease notation, let $\Xj = \D{H^1_f(\Kinf,\Adualj)}$.  Set $(\Xj)_{\Ltor}$ equal to the $\Lambda$-torsion submodule of $\Xj$, and let $\Zj = \Xj/(\Xj)_{\Ltor}$, a torsion-free $\Lambda$-module.   Let $R_\Lambda(\Zj) := \Hom_{\Lambda}\bigl(\Hom_{\Lambda}(\Zj,\Lambda),\Lambda\bigr)$
denote the reflexive hull of $\Zj$.
Choosing an isomorphism of $\alpha: R_\Lambda(\Zj) \cong \Lambda$ (via Theorem \ref{thm:Katorank}) yields maps for each $n\geq 0$
\begin{equation}
\label{eqn:jn}
(\Xj)_{\Gamma_n} \lra (\Zj)_{\Gamma_n} \lra \Ln.
\end{equation}

Combining (\ref{eqn:controldual}) and (\ref{eqn:jn}), we get a map
$$
\psi_n = \psi_{n,\alpha}: H^1_f(\Knp,\Tj) \to H^1_f(\Knp,\Tj)  \oplus \D{B_n} \lra (\Xj)_{\Gamma_n} \to (\Zj)_{\Gamma_n} \to \Ln.
$$

\begin{lemma}
The diagram 
$$
\begin{CD}
H^1_f(\Knp, \Tj) @>\psi_n>> \Ln\\
@A\res^n_mAA @AA \nu^n_mA\\
H^1_f(\Kmp, \Tj) @>\psi_m>> \Lambda_m
\end{CD}
$$
commutes for $m \leq n$.
\end{lemma}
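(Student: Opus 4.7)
The plan is to deduce the desired commutativity from a more transparent ``dual'' compatibility, combined with a Galois-invariance argument and $p$-adic cancellation.

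First, I would verify the naturally compatible statement
$$\pi^n_m \circ \psi_n = \psi_m \circ \cores^n_m,$$
where $\pi^n_m : \Ln \twoheadrightarrow \Lambda_m$ is the canonical projection. This is functoriality of the control sequence~(\ref{eqn:control}) in $n$: for $m \leq n$, the inclusion $H^1_g(\Kinf,\Adualj)^{\Gamma_m} \hookrightarrow H^1_g(\Kinf,\Adualj)^{\Gamma_n}$ (from $\Gamma_n \subseteq \Gamma_m$), together with restriction on the local-at-$p$ quotients, fits~(\ref{eqn:control}) at levels $m$ and $n$ into a commutative ladder.  Dualizing and invoking Tate local duality (under which local restriction on $H^1/H^1_g$ passes to corestriction on $H^1_e$) yields the compatibility of $\iota_n$ and $\iota_m$ via $\cores^n_m$ on the left and the natural surjection $(\Xj)_{\Gamma_n} \twoheadrightarrow (\Xj)_{\Gamma_m}$ on the right.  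Composing with the coinvariants of the single $\Lambda$-linear map $\Xj \to \Zj \to R_\Lambda(\Zj) \xrightarrow{\alpha} \Lambda$, which respects natural projections by functoriality of $(-)_{\Gamma_\bullet}$, finishes this step.

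Next, I would exploit Galois-equivariance.  Every ingredient in the construction of $\psi_n$ is $\G_n$-equivariant, so $\psi_n$ is too.  For any $c \in H^1_e(\Kmp,\Tj)$, the restricted class $\res^n_m(c)$ is fixed by $\Gal(\Kn/\Kmp) \subseteq \G_n$, so $\psi_n(\res^n_m(c)) \in \Ln^{\Gal(\Kn/\Kmp)}$.  A direct computation in the group algebra identifies this invariant subspace with $\cores^n_m(\Lambda_m)$, so there is a unique $y \in \Lambda_m$ with $\psi_n(\res^n_m(c)) = \cores^n_m(y)$, and the task reduces to proving $y = \psi_m(c)$.

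For the final step, apply $\pi^n_m$ to both expressions.  Combining the first step with the standard identity $\cores^n_m \circ \res^n_m = [\Kn:\Kmp]$ gives
$$\pi^n_m\bigl(\psi_n(\res^n_m c)\bigr) = \psi_m\bigl(\cores^n_m \res^n_m c\bigr) = [\Kn:\Kmp]\cdot \psi_m(c),$$
while $\pi^n_m \circ \cores^n_m = [\Kn:\Kmp]$ yields $\pi^n_m(\cores^n_m y) = [\Kn:\Kmp]\cdot y$.  Since $[\Kn:\Kmp] = p^{n-m}$ is a non-zero-divisor in $\Lambda_m = \sO_L[\G_m]$, cancellation forces $y = \psi_m(c)$, completing the proof.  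The main subtlety is keeping track of the dualities: the compatibility as stated in the lemma is the ``dual'' of the directly natural one coming from~(\ref{eqn:control}), and the bridge between them is precisely the Galois-invariance plus $p^{n-m}$-cancellation trick.
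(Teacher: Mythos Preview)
Your argument is correct and complete. The paper itself does not supply a proof; it simply asserts that the commutativity ``can easily be checked'' and refers to \cite[Proposition 6.3]{IP} for a similar computation. Your approach---first establishing the natural compatibility $\pi^n_m \circ \psi_n = \psi_m \circ \cores^n_m$ from functoriality of the control sequence, then using $\G_n$-equivariance of $\psi_n$ to land $\psi_n(\res^n_m c)$ in $\cores^n_m(\Lambda_m)$, and finally cancelling the index $[\Knp:\Kmp]$---is a clean and self-contained argument. Two minor remarks: you wrote $[\Kn:\Kmp]$ where you mean $[\Knp:\Kmp]$ (or equivalently $[K_n:K_m]$), and for $m=0$ this index is $(p-1)p^{n-1}$ rather than $p^{n-m}$, but this is still a non-zero-divisor in $\Lambda_m$, so the cancellation step is unaffected.
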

\begin{proof}
This commutativity can easily be checked.  See \cite[Proposition 6.3]{IP} for a similar computation.
\end{proof}

Recall our local cohomology classes $c_{n,j} \in H^1_f(\Knp,\Tj) / ({\rm torsion})$ defined in sections \ref{sec:cn} and \ref{sec:normal}. For $0 \leq j \leq k-2$, we set
$$
\psi^{\alg}_{n,j}(f) = \psi_n(c_{n,j}) 
$$
and
$$
\algnj = \psi_{n,j}^{\alg}(f) \cdot \chr_\Lambda((\Xj)_{\Ltor})
$$
which is our $n$-th layer algebraic $\theta$-element.  

Further, let $I^{\alg}_{n,j}(f)$ (resp.\ $J^{\alg}_{n,j}(f)$) denote the ideal of $\Ln$ generated by the
elements
$\nu^n_m(\algvj{m})$ (resp.\ $\nu^n_m(\psi^{\alg}_{m,j}(f))$) for $0 \leq m \leq n$. 
Note that $I^{\alg}_{n,j}(f) = J^{\alg}_{n,j}(f) \cdot \chr_\Lambda (\Xj)_{\Ltor}$.  Also, changing our fixed isomorphism $\alpha$ has the effect of scaling all of the $\psi_{n,j}^{\alg}(f)$ by a unit in $\Lambda$.  In particular, the ideals $I^{\alg}_{n,j}(f)$ and $J^{\alg}_{n,j}(f)$ are independent of this choice.

\begin{remark}
This construction of algebraic $\theta$-elements dates back to Perrin-Riou
in her 1990 paper \cite{PR90}.
\end{remark}

\section{Algebraic results on Fitting ideals}

\subsection{Main algebraic results}

The remainder of this section will be occupied with proving the following purely algebraic result.

\begin{thm}
\label{thm:algfit}
\label{thm:mainthm}
Suppose \eqref{irred} holds and $\rhobar_f$ is irreducible.  Then we have
$$\psi_n(c) \cdot \chr_\Lambda (\Xj)_{\Ltor} \in \Fit_{\Lambda_n} \D{H^1_f(\Kn,\Adualj)}$$
for any $c$ in $H^1_f(\Knp,\Tj)$.
\end{thm}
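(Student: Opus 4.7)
The plan is to realize $\chr_\Lambda((X_r)_{\Ltor})\cdot\psi_n(c)$ as an explicit $(s{+}1){\times}(s{+}1)$-minor of a $\Lambda_n$-presentation matrix for $\D{H^1_g(\Kn,A^*_r)}$, obtained via the Cramer/adjoint construction together with the rigidity of $\Hom_\Lambda(X_r,\Lambda)$. Since $X_r$ has rank one over $\Lambda$ by Theorem~\ref{thm:Katorank}(2), I would first choose a free presentation $\Lambda^s\xrightarrow{M}\Lambda^{s+1}\to X_r\to 0$. Reducing modulo the augmentation ideal of $\Gamma_n$ yields $\Lambda_n^s\xrightarrow{\bar M}\Lambda_n^{s+1}\to (X_r)_{\Gamma_n}\to 0$, and combined with a lift $\tilde y\in\Lambda_n^{s+1}$ of $\phi(c)$ (and lifts of the remaining generators of the image of $H^1_e(\Knp,\Tj)\oplus\D{B_n}$), the sequence~(\ref{eqn:controldual}) presents $\D{H^1_g(\Kn,A^*_r)}$ over $\Lambda_n$. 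In particular $\Fit_{\Lambda_n}(\D{H^1_g(\Kn,A^*_r)})$ contains the minor $\det(\bar M\mid\tilde y)$.

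The key algebraic step is to identify this minor with $\bar\theta\,\psi_n(c)$ up to units, where $\theta=\chr_\Lambda((X_r)_{\Ltor})$. Let $\mu=(\mu_0,\dots,\mu_s)$ with $\mu_j$ the signed $s{\times}s$-minor of $M$ obtained by deleting row $j$; Cramer gives $\mu\cdot M=0$, so $\mu$ descends to a $\Lambda$-linear map $\mu\colon X_r\to\Lambda$ with image $\Fit_1(X_r)$ and kernel $(X_r)_{\Ltor}$ (as $\Lambda$ is torsion-free). Both $\mu$ and $\tilde\psi$ factor through the embedding $Z_r\hookrightarrow R_\Lambda(Z_r)\cong\Lambda$ with pseudo-null cokernel, and since $\Hom_\Lambda(Z_r,\Lambda)\cong\Hom_\Lambda(R_\Lambda(Z_r),\Lambda)\cong\Lambda$ (using vanishing of $\Hom$ and $\Ext^1$ from pseudo-null modules into the depth-$2$ ring $\Lambda$), one has $\mu=\lambda\tilde\psi$ for a unique $\lambda\in\Lambda$. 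Comparing characteristic ideals of the cokernels: the pseudo-isomorphism $X_r\sim\Lambda\oplus(X_r)_{\Ltor}$ combined with additivity of Fitting ideals on direct sums gives $\chr_\Lambda(\Lambda/\Fit_1(X_r))=\chr_\Lambda((X_r)_{\Ltor})=(\theta)$, while the short exact sequence $0\to\Lambda/\tilde\psi(X_r)\xrightarrow{\lambda}\Lambda/\lambda\tilde\psi(X_r)\to\Lambda/(\lambda)\to 0$ with pseudo-null first term yields $\chr_\Lambda(\Lambda/\lambda\tilde\psi(X_r))=(\lambda)$. Hence $(\lambda)=(\theta)$, so $\lambda=u\theta$ for some unit $u\in\Lambda^\times$.

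Laplace-expanding along the last column of $(\bar M\mid\tilde y)$ then gives (up to an overall sign)
$$\det(\bar M\mid\tilde y)\;=\;\bar\mu(\phi(c))\;=\;\bar u\,\bar\theta\,\psi_n(c),$$
which lies in $\Fit_{\Lambda_n}(\D{H^1_g(\Kn,A^*_r)})$ by the previous paragraph. Since $\bar u$ is a unit in $\Lambda_n$, dividing through produces $\chr_\Lambda((X_r)_{\Ltor})\cdot\psi_n(c)\in\Fit_{\Lambda_n}(\D{H^1_g(\Kn,A^*_r)})$, as required.

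The hard part is the comparison step identifying $\lambda$ with $\theta$ up to a unit; this rests on the invariance of $\chr_\Lambda(\Lambda/\Fit_1(-))$ under pseudo-isomorphism of rank-one $\Lambda$-modules, together with the reflexive-hull structure of torsion-free rank-one modules. A secondary subtlety is the existence of a length-one free presentation of $X_r$: when $X_r$ has projective dimension $2$ over $\Lambda$, one must instead work with the family of signed minors $\mu_I$ coming from all $s$-column subsets $I$ of a longer presenting matrix (each $\mu_I$ still descends to $X_r$, since the rank-one hypothesis forces any $s{+}1$ columns of $M$ to be $\mathrm{Frac}(\Lambda)$-linearly dependent) and verify that the resulting scalars $u_I$ generate the unit ideal of $\Lambda_n$, using the additional flexibility afforded by the columns contributed by $\D{B_n}$ and $H^1_e(\Knp,\Tj)$.
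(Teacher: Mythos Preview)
Your argument is essentially correct and takes a genuinely different route from the paper's, so let me compare the two and then point out the one place where you owe a justification.

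\textbf{Comparison.} The paper does not compute any explicit minor. Instead it passes to the short exact sequence $0\to(\Xj)_{\Ltor}\to\Xj\to\Zj\to0$, takes $\Gamma_n$-coinvariants (which stays exact since $\Zj^{\Gamma_n}=0$), and then quotients by the image $E_n$ of $H^1_e(\Knp,\Tj)$. Multiplicativity of Fitting ideals in short exact sequences reduces the problem to bounding $\Fit_{\Ln}\bigl(((\Xj)_{\Ltor})_{\Gamma_n}\bigr)$ and $\Fit_{\Ln}\bigl((\Zj)_{\Gamma_n}/F_n\bigr)$ separately; the former is handled by $\Fit=\chr$ for torsion modules without finite submodules (Greenberg), and the latter by a short bespoke lemma (their Lemma~\ref{lemma:fitLn}) saying that for a finite-index ideal $A\subset\Lambda$ and any submodule $I_n\subset A_{\Gamma_n}$, the image of $I_n$ in $\Ln$ lies in $\Fit_{\Ln}(A_{\Gamma_n}/I_n)$. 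Your approach instead keeps $\Xj$ in one piece, writes down a single length-one free resolution, and identifies the relevant $(s{+}1)\times(s{+}1)$ minor $\det(\bar M\mid\tilde y)$ directly with $\bar\theta\,\psi_n(c)$ via the rigidity of $\Hom_\Lambda(\Xj,\Lambda)$. This is more explicit and avoids the auxiliary Lemma~\ref{lemma:fitLn}; the paper's approach, on the other hand, is more modular and would adapt more readily to situations where no short free resolution is available.

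\textbf{The gap.} You assume a resolution $0\to\Lambda^s\to\Lambda^{s+1}\to\Xj\to0$ and flag the existence of such a resolution only as a ``secondary subtlety,'' offering an incomplete fallback for projective dimension~$2$. In fact no fallback is needed: $\Xj$ always has projective dimension $\leq 1$ over each local component of $\Lambda$. This is because $\Xj$ has no nonzero finite $\Lambda$-submodule --- any such submodule would land in $(\Xj)_{\Ltor}$ (since $\Zj$ is torsion-free), and Greenberg's theorem (the same input the paper invokes when applying Lemma~\ref{lemma:F4}) says $(\Xj)_{\Ltor}$ has none. By Auslander--Buchsbaum over the $2$-dimensional regular local components of $\Lambda$, depth $\geq 1$ forces $\mathrm{pd}\leq 1$, and then the rank-one hypothesis gives $m=n-1$. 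So your main argument goes through cleanly once you invoke Greenberg at this point, and the final paragraph of your proposal can be deleted.
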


We note that as an immediate corollary we get.

\begin{cor}
\label{cor:algfit}
Suppose \eqref{irred} holds and $\rhobar_f$ is irreducible.  Then we have
$$
I_{n,j}^{\alg}(f) \subseteq \Fit_{\Ln} \D{H^1_f(\Kn,{\Adualj})}. 
$$
\end{cor}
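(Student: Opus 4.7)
The corollary follows directly from Theorem~\ref{thm:mainthm} by unwinding the definition of $I_{n,r}^{\alg}(f)$ and using the standard functoriality of the maps $\psi_m$ with respect to corestriction.

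Fix $0 \leq m \leq n$. Writing $\bar\xi$ for the image of $\chr_\Lambda(\Xj)_{\Ltor}$ in either $\Ln$ or $\Lambda_m$ (the images being compatible under $\pi^n_m$), we have by definition $\theta^{\alg}_{m,r}(f) = \psi_m(c_{m,r-1}) \cdot \bar\xi$ in $\Lambda_m$. The projection formula for the corestriction $\cores^n_m : \Lambda_m \to \Ln$ then yields
\[
\cores^n_m\bigl(\theta^{\alg}_{m,r}(f)\bigr)
= \cores^n_m\bigl(\psi_m(c_{m,r-1})\bigr) \cdot \bar\xi \quad\text{in } \Ln,
\]
and the commutativity relation $\cores^n_m \circ \psi_m = \psi_n \circ \res^n_m$ recorded in Section~\ref{sec:algtheta} rewrites this as $\psi_n(\res^n_m c_{m,r-1}) \cdot \bar\xi$.

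Since $\res^n_m c_{m,r-1} \in H^1_e(\Knp, \Tj)$, applying Theorem~\ref{thm:mainthm} to this class places the element $\psi_n(\res^n_m c_{m,r-1}) \cdot \chr_\Lambda(\Xj)_{\Ltor}$ in the Fitting ideal $\Fit_{\Ln}\D{H^1_g(\Kn, \Adualj)}$. As $m$ was arbitrary, every generator $\cores^n_m\bigl(\theta^{\alg}_{m,r}(f)\bigr)$ of the ideal $I_{n,r}^{\alg}(f)$ lies in this Fitting ideal, yielding the desired inclusion $I_{n,r}^{\alg}(f) \subseteq \Fit_{\Ln}\D{H^1_g(\Kn, \Adualj)}$. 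The only non-formal input is Theorem~\ref{thm:mainthm} itself; everything else is routine bookkeeping with restriction, corestriction, and the projection formula.
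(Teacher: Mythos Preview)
Your proof is correct and follows the same approach as the paper: reduce to Theorem~\ref{thm:mainthm} by moving the class $c_{m,r-1}$ up to level $n$ via restriction, using the commutativity $\cores^n_m \circ \psi_m = \psi_n \circ \res^n_m$ and the projection formula. The paper compresses this to a single line (``take $c = \cores^n_m(c_{m,r})$'', with a minor notational slip), while you have spelled out the bookkeeping explicitly; the content is identical.
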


\begin{proof}
This corollary follows immediately from Theorem \ref{thm:algfit} by taking $c = \res^n_m(c_{m,j})$ for $0 \leq m \leq n$.
\end{proof}

\subsection{Fitting ideal lemmas}

\begin{lemma}
\label{lemma:F}
If $A_n, B_n, C_n$ are $\Ln$-modules, and $Y$ is a $\Lambda$-module then
\begin{enumerate}
\item 
\label{item:F1}
If $A_n \to B_n$ is surjective, then $\Fit_{\Ln}(A_n) \subseteq \Fit_{\Ln}(B_n)$.
\item
\label{item:F2}
If $0 \to A_n \to B_n \to C_n \to 0$ is exact, then 
$$\Fit_{\Ln}(B_n) \supseteq \Fit_{\Ln}(A_n) \cdot \Fit_{\Ln}(C_n).$$
\item 
\label{item:F3}
$\Fit_{\Ln}(Y_{\Gamma_n}) = \pi_n(\Fit_{\Lambda}(Y))$ where $\pi_n : \Lambda \to \Ln$ is the natural map.
\end{enumerate}
\end{lemma}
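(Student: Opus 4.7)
The plan is to prove each of the three parts by appealing directly to the definition of Fitting ideals as ideals generated by minors of presentation matrices; all three statements are standard and I will essentially cite Northcott's treatment.

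For part (\ref{item:F1}), I would fix a (possibly infinite) free presentation $\Ln^{(I)} \xrightarrow{\varphi} \Ln^{k} \to A_n \to 0$. Composing the surjection onto $A_n$ with $A_n \twoheadrightarrow B_n$ produces a surjection $\Ln^{k} \twoheadrightarrow B_n$ whose kernel contains the image of $\varphi$. Extending the relation module to the full kernel gives a presentation $\Ln^{(J)} \to \Ln^{k} \to B_n \to 0$ whose relation matrix has all columns of the original $\varphi$ among its columns. Since the Fitting ideal is generated by the $(k-j)$-minors of the relation matrix and adding columns can only enlarge the ideal of minors, we obtain $\Fit_{\Ln}(A_n) \subseteq \Fit_{\Ln}(B_n)$.

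For part (\ref{item:F2}), I would choose presentations of $A_n$ and $C_n$, lift a set of generators of $C_n$ to $B_n$, and combine these with generators of $A_n \hookrightarrow B_n$ to present $B_n$. The resulting relation matrix has a block upper-triangular shape with the presentation matrices of $A_n$ and $C_n$ along the diagonal. A Laplace expansion along the block structure expresses each maximal minor of the block matrix as a sum of products of a minor from the $A_n$-block with a minor from the $C_n$-block of the complementary size, which yields the containment $\Fit_{\Ln}(A_n) \cdot \Fit_{\Ln}(C_n) \subseteq \Fit_{\Ln}(B_n)$.

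For part (\ref{item:F3}), I would invoke the compatibility of Fitting ideals with arbitrary base change. Since $\Ln = \Lambda / I_{\Gamma_n} \Lambda$ where $I_{\Gamma_n}$ is the kernel of $\Lambda \to \Ln$, we have $Y_{\Gamma_n} \cong Y \otimes_{\Lambda} \Ln$. Starting from a free presentation $\Lambda^{(I)} \to \Lambda^{k} \to Y \to 0$ and tensoring with $\Ln$ yields a free presentation of $Y_{\Gamma_n}$ whose relation matrix is the entry-wise $\pi_n$-reduction of the one for $Y$. The generators of $\Fit_{\Ln}(Y_{\Gamma_n})$ are therefore precisely the images under $\pi_n$ of the generators of $\Fit_{\Lambda}(Y)$, giving the stated equality.

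None of these steps presents a real obstacle; the one minor subtlety is that $Y$ may not be finitely presented in part (\ref{item:F3}), but this is handled by writing $\Fit_{\Lambda}(Y)$ as a directed union of Fitting ideals associated with finite subsets of relations, and observing that $\pi_n$ commutes with directed unions of ideals.
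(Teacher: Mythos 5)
Your proofs are correct, and they are in substance the same as what lies behind the paper's one-line citation: the paper refers to the appendix of Mazur--Wiles \cite{MW}, whose relevant propositions prove exactly these three facts by the presentation-matrix and minor computations you describe (enlarging the relation matrix for (1), a block-triangular relation matrix for the middle term of a short exact sequence for (2), and base change of presentations for (3)). So you have simply unpacked the citation rather than found a different route.

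One small remark on your caveat for part (3): in the setting of the paper, $\Lambda = \sO_L[[\Gal(\Kinf/\Q)]]$ is Noetherian, so every finitely generated $\Lambda$-module is already finitely presented, and the directed-union argument you sketch for the non-finitely-presented case is not actually needed here. (If $Y$ were not finitely generated, the Fitting ideal in the usual sense would not even be defined, so the lemma is implicitly about finitely generated modules.) Aside from that, the arguments are clean and complete; the Laplace-expansion phrasing in part (2) is a bit heavier than necessary, since for a block upper-triangular square matrix the determinant is simply the product of the diagonal blocks' determinants, which directly exhibits $\det(S)\det(R)$ as a maximal minor of the relation matrix of $B_n$.
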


\begin{proof}
See \cite[Appendix:\ 1,9,4]{MW} 
\end{proof}

\begin{lemma}
\label{lemma:F4}
If $Y$ is a finitely generated torsion $\Lambda$-module with no non-zero finite submodules, then $\Fit_{\Lambda}(Y) = \chr_\Lambda(Y)$.  In particular, $\Fit_{\Lambda}(Y)$ is a principal ideal.
\end{lemma}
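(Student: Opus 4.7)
The plan is to reduce to the case where $\Lambda$ is a two-dimensional regular local ring, produce a square free resolution of $Y$ via the Auslander--Buchsbaum formula, and then compare the resulting determinant ideal with $\chr_\Lambda(Y)$ locally at height-one primes.

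First, write $\Gal(\Kinf/\Q) \cong \Delta \times \Gamma$ with $\Delta$ a finite group of order prime to $p$ and $\Gamma \cong \Z_p$. The group ring $\Lambda$ decomposes (possibly after enlarging $\sO_L$ to split $\Delta$) as a finite product $\prod_\chi \Lambda_\chi$ of two-dimensional regular local rings of the form $\sO'[[T]]$. Both Fitting ideals and characteristic ideals respect this product decomposition, as does the hypothesis of having no nonzero finite submodule. So I may assume without loss of generality that $\Lambda = R$ is a single two-dimensional regular local ring, and that $Y$ is a finitely generated torsion $R$-module with no nonzero finite submodule.

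The hypothesis means that the maximal ideal of $R$ is not an associated prime of $Y$, so $\mathrm{depth}_R(Y) \geq 1$. By the Auslander--Buchsbaum formula,
$$\mathrm{pd}_R(Y) + \mathrm{depth}_R(Y) = \mathrm{depth}(R) = 2,$$
so $\mathrm{pd}_R(Y) \leq 1$; since $Y$ is torsion and nonzero it cannot be free, so $\mathrm{pd}_R(Y) = 1$. Thus $Y$ admits a free resolution $0 \to R^m \xrightarrow{A} R^n \to Y \to 0$, and comparing ranks after localizing at the generic point of $R$ (where $Y$ vanishes because it is torsion) forces $m = n$. By the definition of Fitting ideals, $\Fit_R(Y) = (\det A)$, which in particular is principal.

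It remains to show $(\det A) = \chr_R(Y)$. Since $R$ is a UFD, a principal ideal is determined by its valuations at all height-one primes. For any height-one prime $\mathfrak p$ of $R$, the localization $R_\mathfrak p$ is a DVR and $Y_\mathfrak p$ is a finite-length module, of length $\ell_\mathfrak p$ equal to the multiplicity of $\mathfrak p$ in $\chr_R(Y)$. The elementary divisor theorem applied to $A$ over the DVR $R_\mathfrak p$ identifies $(\det A) R_\mathfrak p = \mathfrak p^{\ell_\mathfrak p} R_\mathfrak p$. Since $\chr_R(Y) R_\mathfrak p = \mathfrak p^{\ell_\mathfrak p} R_\mathfrak p$ as well, the two principal ideals agree at every height-one prime of $R$, hence are equal. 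The main obstacle is simply keeping the reduction to the regular local ring setting clean; once that is done the Auslander--Buchsbaum step and the height-one localization comparison are standard.
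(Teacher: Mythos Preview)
Your argument is correct. The paper does not give its own proof but simply cites \cite[Proposition 1.3.4]{Taleb}; your route via the Auslander--Buchsbaum formula to obtain a square free resolution, followed by comparison at height-one primes, is the standard argument and is presumably what lies behind that reference. One minor remark: the decomposition $\Gal(\Kinf/\Q) \cong \Delta \times \Gamma$ with $|\Delta|$ prime to $p$ tacitly assumes $p$ is odd, since for $p=2$ the torsion part of $\Z_2^\times$ has order $2$ and $\sO_L[\Delta]$ is no longer a product of copies of $\sO_L'$; the statement remains true for $p=2$ but the reduction to a regular local ring requires a bit more care there.
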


\begin{proof}
See \cite[Proposition 1.3.4]{Taleb}.
\end{proof}

We thank Cornelius Greither for the following argument.  

\begin{lemma}
\label{lemma:fitlam}
Let $I \subseteq A$ be ideals of $\Lambda$ with $A$ finite-index in $\Lambda$.  Then
$$
I \subseteq \Fit_{\Lambda}(A/I).
$$
\end{lemma}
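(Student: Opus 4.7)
The plan is to reduce, via a product decomposition of $\Lambda$, to an application of the Hilbert--Burch theorem on a two-dimensional complete regular local ring. Writing $\Gal(\Kinf/\Q)\cong\Delta\times\Gamma_1$ with $|\Delta|$ prime to $p$ and $\Gamma_1\cong\Z_p$, Hensel's lemma applied to $x^{|\Delta|}-1$ decomposes $\sO_L[\Delta]$ as a finite product of discrete valuation rings, so
\[
\Lambda=\sO_L[\Delta][[\Gamma_1]]\cong\prod_i\sO_i[[T]]
\]
is a finite product of two-dimensional complete regular local rings. The ideals $A,I$, the module $A/I$, and the Fitting ideal all decompose compatibly along this product, so it suffices to prove the lemma assuming $\Lambda$ itself is a single such factor. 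The case $A=\Lambda$ being trivial, assume henceforth that $\Lambda$ is a two-dimensional complete regular local ring and $A\subsetneq\Lambda$.

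Because $A$ has finite index, $\Lambda/A$ has depth zero, hence $\mathrm{pd}_\Lambda A=1$ by Auslander--Buchsbaum. Choosing generators $a_1,\dots,a_n$ of $A$ (necessarily $n\geq 2$, since no principal ideal in a two-dimensional domain has finite index), one obtains a free resolution
\[
0\to\Lambda^{n-1}\xrightarrow{\,M\,}\Lambda^n\to A\to 0.
\]
The ideal of maximal minors of $M$ has grade $2$ (it contains $A$, which is $\mathfrak m$-primary), so the Hilbert--Burch theorem applies and, after a unit adjustment of the isomorphism $\coker(M)\cong A$, yields the identification
\[
a_i=(-1)^{i+n}\det M_i,\qquad i=1,\dots,n,
\]
where $M_i$ is the $(n-1)\times(n-1)$ matrix obtained from $M$ by deleting the $i$-th row.

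A presentation of $A/I$ is then given by $[M\mid M_I]\colon\Lambda^{n-1}\oplus\Lambda^m\to\Lambda^n$, where the columns of $M_I$ are lifts of a chosen set of generators $x_k$ of $I$: if $x_k=\sum_i\mu_{ki}a_i$, the $k$-th column of $M_I$ is $(\mu_{k1},\dots,\mu_{kn})^T$. By definition $\Fit_\Lambda(A/I)$ is the ideal generated by the $n\times n$ minors of $[M\mid M_I]$. Given an arbitrary $x\in I$, write $x=\sum_i\nu_i a_i$ and set $v=(\nu_1,\dots,\nu_n)^T$; enlarging the generating set of $I$ to include $x$ (which does not alter $\Fit_\Lambda(A/I)$) ensures that $v$ appears as a column of $[M\mid M_I]$. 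Cofactor expansion along this last column, combined with the Hilbert--Burch identification, gives
\[
\det[M\mid v]=\sum_{i=1}^{n}(-1)^{i+n}\nu_i\det M_i=\sum_{i=1}^{n}\nu_i a_i=x,
\]
exhibiting $x$ as a maximal minor of the presentation matrix. Thus $x\in\Fit_\Lambda(A/I)$, and since $x\in I$ was arbitrary, $I\subseteq\Fit_\Lambda(A/I)$.

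The decisive ingredient is the Hilbert--Burch theorem, which requires $\Lambda$ to be a regular local ring of dimension exactly $2$: in higher dimension the analogous statement fails already for $A=\mathfrak m$ and $I=\mathfrak m^2$ in $\Z_p[[T,U]]$, where $\Fit(\mathfrak m/\mathfrak m^2)=\mathfrak m^3\not\supseteq\mathfrak m^2$. Once the product decomposition of $\Lambda$ is used to reduce to the two-dimensional regular local case, the remainder of the argument is a direct cofactor computation.
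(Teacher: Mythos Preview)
Your proof is correct, but it takes a genuinely different route from the paper's. The paper first reduces to the case where $I=f\Lambda$ is principal (by noting that $f\Lambda\subseteq\Fit_\Lambda(A/f\Lambda)$ for each $f\in I$ implies the full statement via the surjection $A/f\Lambda\twoheadrightarrow A/I$), and then handles the principal case by a localization argument: from the exact sequence $0\to A/f\Lambda\to\Lambda/f\Lambda\to\Lambda/A\to 0$ one sees that $(A/f\Lambda)_{\mathfrak p}\cong(\Lambda/f\Lambda)_{\mathfrak p}$ at every height-one prime, whence $\Fit_\Lambda(A/f\Lambda)$ and $f\Lambda$ agree after localization at every such $\mathfrak p$; since $A/f\Lambda$ is torsion without finite submodules, Lemma~\ref{lemma:F4} gives that $\Fit_\Lambda(A/f\Lambda)$ is principal, and two principal ideals agreeing at all height-one primes coincide. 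Your argument instead exploits the two-dimensionality of $\Lambda$ directly through the Hilbert--Burch structure theorem, producing an explicit $n\times n$ minor of the presentation matrix of $A/I$ equal to any prescribed $x\in I$. Your approach is more constructive and makes the role of $\dim\Lambda=2$ transparent (as you note, the statement fails in dimension $\ge 3$); the paper's approach is closer to standard Iwasawa-theoretic technique and in fact yields the sharper equality $\Fit_\Lambda(A/f\Lambda)=f\Lambda$ in the principal case. Both proofs ultimately rest on the same structural fact about grade-two perfect ideals, seen from different angles.
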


\begin{proof}
We first note that it suffices to assume that $I$ is a principal ideal.  Indeed, 
assume that we have proven this lemma for all principal ideals, and let $f$ be any element of $I$.  Then we have 
$$
f \Lambda \subseteq \Fit_\Lambda (A/f\Lambda) \subseteq
\Fit_\Lambda (A/I)
$$
where the second inclusion follows from Lemma \ref{lemma:F}.  Since this is true for all $f \in I$, we have $I \subseteq \Fit_\Lambda (A/I)$.

Thus, we will now assume that $I = f \Lambda$.  We then have an exact sequence
$$
0 \to A/f\Lambda \to \Lambda/f \Lambda \to \Lambda / A \to 0.
$$
Since $A$ has finite index in $\Lambda$, for any height one prime $\p$ in $\Lambda$, we have
$$
(A/f\Lambda)_\p \cong (\Lambda/f \Lambda)_\p.
$$
Thus,
$$
\left(\Fit_{\Lambda}  (A/f \Lambda) \right)_\p
=
\Fit_{\Lambda_\p} \left( (A/f \Lambda)_\p \right)
=
\Fit_{\Lambda_\p} \left( (\Lambda/f \Lambda)_\p \right)
=
(f \Lambda)_{\p}.
$$
By Lemma \ref{lemma:F4}, we have $\Fit_{\Lambda} (A/f \Lambda) = g \Lambda$ for some $g$ in $\Lambda$.  Thus,
$$
(g \Lambda)_\p = 
(f \Lambda)_\p 
$$
for all height one primes $\p$.  Hence $f\Lambda=g\Lambda$ and $f \Lambda= \Fit_\Lambda(A/f\Lambda)$ as desired.
\end{proof}

\begin{lemma}
\label{lemma:fitLn}
Let $A$ be some finite-index ideal of $\Lambda$, and let $I_n$ be a $\Ln$-submodule of $A_{\Gamma_n}$.  Then
$$
i_n(I_n) \subseteq \Fit_{\Ln} A_{\Gamma_n} / I_n 
$$
where $i_n$ is the natural map $A_{\Gamma_n} \maps \Ln$. 
\end{lemma}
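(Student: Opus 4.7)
The plan is to reduce to Lemma \ref{lemma:fitlam} by lifting $I_n$ to a genuine ideal of $\Lambda$ and then pushing down again via $\pi_n$. First I would let $I \subseteq A$ denote the preimage of $I_n$ under the canonical surjection $A \twoheadrightarrow A_{\Gamma_n}$. Since $I_n$ is a $\Lambda_n$-submodule of $A_{\Gamma_n}$, it is in particular a $\Lambda$-submodule, so $I$ is a $\Lambda$-submodule of $A$. Because $A$ is itself an ideal of $\Lambda$, so is $I$; and because $A$ has finite index in $\Lambda$, so does $I$.

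Next I would identify $A/I$ with $A_{\Gamma_n}/I_n$. The kernel of $A \to A_{\Gamma_n}$ (namely $J_n A$, with $J_n$ the augmentation ideal of $\mathbb Z_p[[\Gamma_n]]$) is contained in $I$ by construction, and the induced map $A/J_n A = A_{\Gamma_n} \to A_{\Gamma_n}/I_n$ has kernel exactly $I/J_n A$. Hence $A/I \iso A_{\Gamma_n}/I_n$ as $\Lambda$-modules. In particular $\Gamma_n$ acts trivially, so $(A/I)_{\Gamma_n} = A/I$.

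Now I would apply Lemma \ref{lemma:fitlam} to the pair $I \subseteq A$ to obtain the inclusion
$$
I \;\subseteq\; \Fit_{\Lambda}(A/I).
$$
Applying the projection $\pi_n : \Lambda \to \Lambda_n$ to both sides and using Lemma \ref{lemma:F}(\ref{item:F3}) together with the triviality of the $\Gamma_n$-action, we get
$$
\pi_n(I) \;\subseteq\; \pi_n\bigl(\Fit_{\Lambda}(A/I)\bigr) \;=\; \Fit_{\Lambda_n}\bigl((A/I)_{\Gamma_n}\bigr) \;=\; \Fit_{\Lambda_n}(A_{\Gamma_n}/I_n).
$$

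Finally I would observe that the restriction of $\pi_n:\Lambda \to \Lambda_n$ to $A$ factors as $A \to A_{\Gamma_n} \xrightarrow{i_n} \Lambda_n$, so the image of $I$ under $\pi_n$ equals $i_n(I_n)$, yielding the desired conclusion $i_n(I_n) \subseteq \Fit_{\Lambda_n}(A_{\Gamma_n}/I_n)$. The argument is essentially formal once Lemma \ref{lemma:fitlam} is in hand; the only potential subtlety is verifying that $I$ is genuinely an ideal (rather than merely a submodule), which is immediate from $A$ being an ideal, and the identification $(A/I)_{\Gamma_n} = A/I$, which falls out of the definition of $I$.
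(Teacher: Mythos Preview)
Your argument is essentially identical to the paper's: set $I = p_n^{-1}(I_n)$, identify $A/I \cong A_{\Gamma_n}/I_n$, apply Lemma~\ref{lemma:fitlam}, and push down via $\pi_n$ using Lemma~\ref{lemma:F}(\ref{item:F3}). One small correction: your claim that $I$ has finite index in $\Lambda$ is false in general (e.g.\ take $I_n = 0$, so $I = J_n A$ and $\Lambda/I$ surjects onto $\Lambda_n$), but this is harmless since Lemma~\ref{lemma:fitlam} only requires $A$, not $I$, to have finite index.
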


\begin{proof}
Let $\pi_n$ denote the natural map from $\Lambda \to \Ln$, and let $p_n$ denote the natural map from $A$ to $A_{\Gamma_n}$ so that $i_n \circ p_n = \pi_n|_A$.
The map $p_n$ induces an isomorphism
\begin{equation*}
\frac{A}{p_n^{-1}(I_n)}
\cong
\frac{A_{\Gamma_n}}{I_n},
\end{equation*}
and thus
\begin{align*}
\Fit_{\Ln}\left( \frac{A_{\Gamma_n}}{I_n}\right) 
&= \pi_n \left(\Fit_\Lambda \left( \frac{A}{p_n^{-1}(I_n)}\right) \right)
  &\mbox{by Lemma \ref{lemma:F}.\ref{item:F3}} \\
&\supseteq \pi_n \left( p_n^{-1}(I_n) \right)
  &\mbox{by Lemma \ref{lemma:fitlam}} \\
&= i_n(I_n)
\end{align*}
as desired.
\end{proof}

We can now prove the main algebraic theorem of this section.

\begin{proof}[Proof of Theorem \ref{thm:mainthm}]
Consider the sequence
$$
0 \to (\Xj)_{\Ltor} \to \Xj \to \Zj \to 0.
$$
Taking $\Gamma_n$-coinvariants yields
$$
0  \to ((\Xj)_{\Ltor})_{\Gamma_n} \to (\Xj)_{\Gamma_n} \to (\Zj)_{\Gamma_n} \to 0
$$
since the kernel of the first map equals $\Zj^{\Gamma_n}=0$.  Let $E_n$ denote the image of $H^1_f(\Knp,\Tj) \oplus \{0\}$ in $(\Xj)_{\Gamma_n}$ under (\ref{eqn:controldual}), and let $F_n$ denote its image in $(\Zj)_{\Gamma_n}$.   If $i_n$ denotes the natural map $(\Zj)_{\Gamma_n} \to \Ln$, we recall that by definition $i_n(F_n)$ equals the image of $\psi_n$.

We  have
$$
0  \to \frac{((\Xj)_{\Ltor})_{\Gamma_n}}{((\Xj)_{\Ltor})_{\Gamma_n} \cap E_n} \to \frac{(\Xj)_{\Gamma_n}}{E_n} \to \frac{(\Zj)_{\Gamma_n}}{F_n} \to 0
$$
Note that (\ref{eqn:controldual}) equates a quotient of the middle term of this sequence with $\D{H^1_f(\Kn,\Adualj)}$.  Thus,
\begin{align*}
\Fit_{\Ln}&(\D{H^1_f(\Kn,\Adualj)})\\
&\supseteq \Fit_{\Ln} \frac{(\Xj)_{\Gamma_n}}{E_n} 
  &\mbox{by Lemma \ref{lemma:F}.\ref{item:F1}} \\
&\supseteq \Fit_{\Ln}\left(\frac{((\Xj)_{\Ltor})_{\Gamma_n}}{((\Xj)_{\Ltor})_{\Gamma_n} \cap E_n}\right) 
\cdot \Fit_{\Ln}\left(\frac{(\Zj)_{\Gamma_n}}{F_n}\right) 
  &\mbox{by Lemma \ref{lemma:F}.\ref{item:F2}} \\
&\supseteq \Fit_{\Ln}\left(((\Xj)_{\Ltor})_{\Gamma_n}\right) 
\cdot \Fit_{\Ln}\left(\frac{(\Zj)_{\Gamma_n}}{F_n}\right) 
  &\mbox{by Lemma \ref{lemma:F}.\ref{item:F1}} \\
&= \pi_n(\Fit_{\Lambda}\left((\Xj)_{\Ltor}\right) )
\cdot \Fit_{\Ln}\left(\frac{(\Zj)_{\Gamma_n}}{F_n}\right) 
  &\mbox{by Lemma \ref{lemma:F}.\ref{item:F3}} \\
&= \pi_n(\chr_{\Lambda}\left((\Xj)_{\Ltor}\right) )
\cdot \Fit_{\Ln}\left(\frac{(\Zj)_{\Gamma_n}}{F_n}\right) 
  &\mbox{by Lemma \ref{lemma:F4}} \\
&\supseteq \pi_n(\chr_{\Lambda}\left((\Xj)_{\Ltor}\right) )
\cdot i_n(F_n) 
  &\mbox{by Lemma \ref{lemma:fitLn}} \\
&\supseteq \pi_n(\chr_{\Lambda}\left((\Xj)_{\Ltor}\right) )
\cdot \psi_n(H^1_f(\Knp,\Tj) )
\end{align*}
as desired.

We note that in applying Lemma  \ref{lemma:F4}, the fact that $(\Xj)_{\Ltor}$ has no non-zero finite submodules is a theorem of Greenberg (see \cite[Proposition 4.1.1]{Greenberg-structure}).
\end{proof}

\section{Mazur-Tate elements and Kato's Euler system}

\subsection{Mazur-Tate elements and statement of main results}

Fix an isomorphism of $i: \C \to \Qpbar$ so that we can view $f$ as defined over either field.  Following \cite{MTT}, we define
$$
\lambda(f,z^j,a,m) := 2 \pi i \int_\infty^{-a/m} f(z) (mz+a)^j ~\!dz
$$
and 
$$
\lambda^\pm(f,z^j,a,m) := \lambda(f,z^j,a,m) \pm \lambda(f,z^j,-a,m).
$$
By a theorem of Shimura \cite{Shimura}, there exist periods $\Omega_f^\pm \in \C$ such that $\lambda^\pm(f,z^j,a,m)/\Omega_f^\pm$ takes values in $K_f$, the field generated by the Fourier coefficients of $f$.  

The periods $\Omega_f^\pm$ are well-defined up to a scalar in $K_f^\times$.  To further specify these periods and make them into the cohomological periods of \cite[Definition 2.1]{PW}, we insist $\lambda^\pm(f,z^j,a,m)/\Omega_f^\pm$ takes values in $\O_f$, the ring of integers of $K_f$, and for some $a$, $m$, $j$, this value is a $p$-adic unit (with respect to our fixed isomorphism $i$).  

We then define
$$
\varphi(f,z^j,a,m) := \frac{\lambda^+(f,z^j,a,m)}{\Omega_f^+} + \frac{\lambda^-(f,z^j,a,m)}{\Omega_f^-}.
$$
With this notation in hand, we define our Mazur-Tate elements of $f$ over $\Kn$:
$$
\MTnj := \sum_{\sigma_a \in \G_n} \varphi(f,z^j,a,p^n) \sigma_a^{-1}
$$
where $a$ runs over representatives of $\left( \Z / p^n \Z\right)^\times$.
For $\chi$ a primitive character of $\G_n$ we have
\begin{equation}
\label{eqn:interp}
\chi(\MTnj) = \chi(-1) \cdot j! \cdot p^{nj} \cdot \tau(\chi^{-1}) \cdot \frac{L(f,\chi,j+1)}{(-2\pi i)^{j}\Omega_f^\pm}.
\end{equation}
where the sign $\pm$ equals the sign of $(-1)^{j} \chi(-1)$.

\begin{remark}
\label{rmk:nonstandard}
Our definition here is a bit non-standard as Mazur and Tate's original definition would replace $\sigma_a^{-1}$ above with $\sigma_a$.  That is, if $\iota$ is the involution on $\Lambda_n$ sending $\sigma$ to $\sigma^{-1}$, then in our notation above $\MTnj^\iota$ is the more standard definition of Mazur-Tate elements.  Nonetheless, the methods of this paper naturally show that our $\MTnj$ belong to a Fitting ideal of a Selmer group and so we chose to state our results in this more natural form.
\end{remark}

The Mazur-Tate elements satisfy the relations:
\begin{equation}
\label{eqn:recur}
\pi^{n+1}_n (\MTvj{n+1}) =
\begin{cases}
  a_p(f) \cdot \MTnj - \psi(p) p^{k-2} \cdot \nu^{n}_{n-1} (\MTvj{n-1}) &  n \geq 1, \\
~\\
 (a_p(f) - 1 - \psi(p) p^{k-2}) \cdot \MTvj{0} & n=0,
\end{cases}
\end{equation}
where $\pi^{n+1}_n$ is the natural projection from $\Lambda_{n+1}$ to $\Ln$.  These formulas follow from \cite[(4.2)]{MTT}.  See also \cite[Proposition 2.5]{PW} for the case $j=0$.

We have the following conjecture relating these analytically defined Mazur-Tate elements to Selmer groups.

\begin{conj}(Mazur-Tate \cite{MT})
\label{conj:MT}
For each $n \geq 0$, we have
$$
\MTnj \in \Fit_{\Ln}(\D{H^1_f(\Kn,\Adualj)}).
$$
\end{conj}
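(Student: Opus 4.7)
The plan is to prove the Mazur-Tate conjecture at level $n$ by factoring it through an intermediate algebraic object: construct an element $\algnj \in \Ln$ that (a) provably lies in $\Fit_{\Ln}(H^1_g(\Kn,A^*_r)^\vee)$ purely by Galois-cohomological/Fitting-ideal arguments, and (b) divides the Mazur-Tate element $\MTnj$ up to a single nonzero $\sO$-constant $C$. The containment for $\MTnj$ (resp.\ for $C \cdot \MTnj$, in the form actually provable) then follows at once from the divisibility of the Fitting ideal by any of its elements.

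For step (a), I would take as input the local classes $c_{n,r-1} \in H^1_e(\Knp,\Tj)$ produced in Section \ref{sec:cn} via the $p$-adic local Langlands correspondence. The construction of Section \ref{sec:selmer} feeds these into the map $\psi_n: H^1_e(\Knp,\Tj) \to \Ln$ obtained from the control sequence (\ref{eqn:controldual}), Kato's freeness result (Theorem \ref{thm:Katorank}), and a chosen trivialization $\alpha: R_\Lambda(\Zj) \iso \Lambda$. Setting $\algnj := \psi_n(c_{n,r-1}) \cdot \chr_\Lambda (\Xj)_{\Ltor}$, the containment $\algnj \in \Fit_{\Ln}(H^1_g(\Kn,\Adualj)^\vee)$ is exactly Theorem \ref{thm:algfit}; its proof is the purely algebraic Fitting-ideal manipulation in Section 8.2, whose essential ingredients are Greenberg's no-finite-submodule result for $(\Xj)_{\Ltor}$ (to replace a Fitting ideal by a characteristic ideal) and Lemma \ref{lemma:fitLn} (to push forward the Fitting-ideal content of the torsion-free quotient $\Zj$).

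For step (b), I would apply the explicit reciprocity law of Proposition \ref{prop:recip} to $z' = z_{\Kato,n}$, Kato's norm-compatible class in $\H^1(T^*_r)$. Character by character this computes the sums $\sum_a \chi^{-1}(a) \langle c_{n,r-1}^{\sigma_a}, z_{\Kato,n}\rangle_n$ in terms of $\exp^*(z_{\Kato,n}^{\sigma_a})$; Kato's explicit reciprocity for $z_{\Kato}$ identifies these $\exp^*$ values with $L$-values, which after accounting for the Gauss sum, the $(r-1)!$, and the $p^{n(r-1)}$ produced by Proposition \ref{prop:recip}, reproduce exactly the interpolation formula (\ref{eqn:interp}) for $\chi(\MTnj)$. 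One concludes a clean identity
\[
C \cdot \MTnj = \sum_{\sigma \in \G_n} \langle c_{n,r-1}^\sigma, z_{\Kato,n}\rangle_n\, \sigma^{-1}
\]
in $\Ln$, for some nonzero $C \in \sO$ independent of $n$ and $r$. Kato's divisibility theorem \cite[Theorem 12.5]{Kato} then says $z_{\Kato}$ is a multiple of a chosen $\Lambda$-generator $w$ of $\H^1(T^*_r)$ by an element which is itself a multiple of $\chr_\Lambda(\Xj)_{\Ltor}$, and substituting this into the pairing rewrites the right-hand side as a $\Ln$-multiple of $\algnj$. Combining with step (a) gives $C \cdot \MTnj \in \Fit_{\Ln}(H^1_g(\Kn,\Adualj)^\vee)$, which is the conjecture up to the ambiguity $C$.

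The main obstacle is the constant $C$. The classes $c_{n,r-1}$ are only pinned down up to $\sO^\times$ by the normalization of Section \ref{sec:normal}, which demands $v_{\new}\otimes v_{\hw} \in \pi(T) \setminus \unif\,\pi(T)$, while the periods $\Omega_f^{\pm}$ are independently pinned down (up to $\sO^\times$) via integral modular symbols. Matching these two normalizations would require comparing the integral structures $\pi(T)$ and the lattice used in \cite{Vatsal}, and no such comparison seems available in the present generality (beyond the crystalline low-weight regime treated in \cite{ChanHo}). Every other step in the argument is either formal or a direct application of results available in the literature; it is this period comparison that forces the weaker statement with the unknown scalar $C$.
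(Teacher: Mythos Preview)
The statement you are asked to prove is a \emph{conjecture} in the paper, not a theorem; the paper does not claim to prove it. What the paper proves is the weaker Theorem~A, namely $C\cdot\MTnj\in\Fit_{\Ln}(\D{H^1_g(\Kn,\Adualj)})$ for a single nonzero constant $C\in\sO$ independent of $n$ and $r$, under the hypotheses \eqref{irred}, \eqref{nopole}, \eqref{Euler}. Your proposal is in fact a correct outline of the paper's proof of Theorem~A --- the two-step strategy (algebraic $\theta$-element in the Fitting ideal via Theorem~\ref{thm:algfit}, then divisibility $\algnj\mid C\cdot\MTnj$ via Kato's Euler system and the explicit reciprocity law) is exactly the paper's argument, and you invoke the same ingredients in the same order.

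You yourself flag the gap: your argument yields only $C\cdot\MTnj$ in the Fitting ideal, not $\MTnj$ itself, because the integral normalization of $v_{\new}\otimes v_{\hw}$ in $\pi(T)$ cannot be compared to the periods $\Omega_f^\pm$. This is not a defect of your write-up relative to the paper; it is the same obstruction the paper identifies, and it is the reason the full Mazur--Tate conjecture remains open in this generality. So as a proof of the conjecture, your proposal has a genuine and acknowledged gap; as a proof of Theorem~A, it is essentially the paper's own proof.
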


We note that Mazur and Tate only formulated this conjecture for elliptic curves, but in that formulation, the conjecture was much more general, covering Selmer groups over all abelian extensions of $\Q$.

\begin{remark}
\label{rmk:duality}
The functional equation for $L$-values gives an equality 
$$
\MTnj = B \cdot \MTnvv{k-2-j}{\overline{f}}^\iota
$$
where $B$ is some constant.  Thus, it may be equally reasonable to conjecture that $\MTnvv{k-2-j}{\overline{f}}^\iota$ belongs to the Fitting ideal in Conjecture \ref{conj:MT}.  Namely, that
$$
\MTnvv{k-2-j}{\overline{f}}^\iota \in \Fit_{\Ln}(\D{H^1_f(\Kn,\Adualj)})
$$  
or equivalently
$$
\MTnvv{j}{f}^\iota \in \Fit_{\Ln}(\D{H^1_f(\Kn,A(1+j))})
$$  
since $\Adualj \cong \overline{A}(k-j-1)$. In this formulation of the conjecture, the Mazur-Tate element which interpolates $L$-values at $s=r$ is related to the Selmer group of $A(r)$ which might be a more familiar formulation of the conjecture to some (with the caveat that the presence of $\iota$ is caused by our non-standard definition of the Mazur-Tate element as in Remark \ref{rmk:nonstandard}).  

However, as the above constant $B$ need not be integral, neither conjecture is stronger nor weaker than the other.  We do note that for $n$ large enough, the constant $B$ is a $p$-adic unit and thus the two conjectures (for $n$ large) are equivalent.
Also, if one could show that these Fitting ideals were stable under the $\iota$-involution (which seems very plausible), this would give a neat explanation of the equivalence of the two conjectures.
 \end{remark}

We set  $I^{\an}_{n,j}(f)$ to be the ideal of $\Ln$ generated by the elements
$\nu^n_m (\MTvj{m})$ for $0 \leq m \leq n$.  The remainder of the paper will be devoted to proving the following divisibility.

\begin{thm}
\label{thm:divisibility}
If \eqref{irred}, \eqref{nopole}, and \eqref{Euler} hold, then there exists a non-zero constant $C \in \O_L$ (independent of $n$ and $j$) such that
$$
C \cdot I^{\an}_{n,j}(f) \subseteq I^{\alg}_{n,j}(f)
$$
for all $n \geq 0$.
\end{thm}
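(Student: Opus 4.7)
The plan follows the two-step program sketched in the introduction. The first step is to establish an identity
$$C \cdot \MTnj \;=\; \sum_{\sigma \in \G_n} \bigl\langle c_{n,r-1}^{\sigma},\, z_{\Kato,n}\bigr\rangle_n\, \sigma^{-1}$$
in $\Ln$, for a single non-zero constant $C \in \sO_L$ independent of $n$ and $r$. The second step is to use Kato's main-conjecture divisibility to extract the factor $\chr_{\Lambda}((\Xj)_{\Ltor})$ from $z_{\Kato,n}$, reducing the right hand side to a $\Ln$-multiple of $\algnj$. Taking $\cores^n_m$ of the resulting relation for each $m\le n$ (using corestriction compatibility of both the Kato Euler system and, via Proposition~\ref{prop:three_term}, the classes $c_{m,r-1}$) yields the ideal inclusion $C\cdot I^{\an}_{n,r}(f)\subseteq I^{\alg}_{n,r}(f)$.

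For Step~1, the approach is to apply Proposition~\ref{prop:recip} with $z' = z_{\Kato}$ (which is legitimate under \eqref{nopole}) and evaluate both sides at every primitive Dirichlet character $\chi$ of conductor $p^n$ (and at the trivial character when $n = 0$). The character value of the right hand side then becomes $(r-1)!\cdot p^{n(r-1)}\cdot \tau(\chi^{-1})$ times a $\mathbf D_{\dR}$-pairing involving $\sum_a \chi(-a)\exp^*(z_{\Kato,n})^{\sigma_a}$. Kato's explicit reciprocity law (from \cite[Ch.~16]{Kato}) expresses $\exp^*(z_{\Kato,n})$ in terms of the modular symbols $\lambda^\pm(f,z^{r-1},a,p^n)$ that enter the definition of $\MTnj$, and comparing with the interpolation formula~\eqref{eqn:interp} for $\chi(\MTnj)$ shows that the two sides agree up to a single scalar. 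That this scalar is independent of $n$ and $r$ follows because $(\MTnj)_n$ and $(c_{n,r-1})_n$ satisfy compatible three-term relations (\eqref{eqn:recur} and Proposition~\ref{prop:three_term}), while the explicit $r$-dependence on the two sides is absorbed into the matching $(r-1)!$ and $p^{n(r-1)}$ factors; the residual ambiguity records precisely the ratio between the normalization of $v_{\new}\otimes v_{\hw}$ from section~\ref{sec:normal} and the canonical periods $\Omega_f^\pm$.

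For Step~2, Theorem~\ref{thm:Katorank} gives that $\H^1(\Tdualj)$ is free of rank one over $\Lambda$; fix a generator $w$ and write $z_{\Kato} = \lambda_{\Kato}\cdot w$ with $\lambda_{\Kato}\in\Lambda$. The divisibility half of Kato's main conjecture without $p$-adic $L$-functions (\cite[Theorem~12.5]{Kato}), whose hypotheses are exactly \eqref{irred} and \eqref{Euler}, implies that $\chr_{\Lambda}((\Xj)_{\Ltor})$ divides $\lambda_{\Kato}$ in $\Lambda$; write $\lambda_{\Kato}=\mu\cdot\chr_{\Lambda}((\Xj)_{\Ltor})$ for some $\mu\in\Lambda$. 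Using the Galois-equivariance $\langle c^{\sigma},\tau y\rangle_n = \langle c^{\sigma\tau^{-1}}, y\rangle_n$ of Tate local duality, a change-of-variable computation yields
$$\sum_{\sigma\in\G_n}\bigl\langle c_{n,r-1}^{\sigma},z_{\Kato,n}\bigr\rangle_n\,\sigma^{-1} \;=\; \lambda_{\Kato,n}^{\iota}\cdot\sum_{\sigma\in\G_n}\bigl\langle c_{n,r-1}^{\sigma},w_n\bigr\rangle_n\,\sigma^{-1},$$
where $\lambda_{\Kato,n}$ is the image of $\lambda_{\Kato}$ in $\Ln$. By the comparison of the two definitions of algebraic $\theta$-element in section~\ref{sec:compare}, the second sum equals $\psi^{\alg}_{n,r}(f)$ up to a unit in $\Ln$. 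Substituting yields $C\cdot\MTnj\in\chr_{\Lambda}((\Xj)_{\Ltor})\cdot\psi^{\alg}_{n,r}(f)\cdot\Ln = \algnj\cdot\Ln$, which is the desired divisibility.

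The main obstacle will be Step~1: reconciling the output of Proposition~\ref{prop:recip} (phrased via $\imath_n^-$, the basis $\mathbf e$, and Gauss sums) with Kato's explicit reciprocity formula for $\exp^*(z_{\Kato,n})$, and carefully tracking every normalization factor — Gauss sums, factorials, powers of $p^n$, and the choice of $\mathbf e$ — to confirm that the single scalar $C$ really is independent of both $n$ and $r$. Once Step~1 is established, Step~2 is a short consequence of Kato's structural results combined with the bilinearity of Tate local duality and the comparison of section~\ref{sec:compare}.
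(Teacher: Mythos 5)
Your overall two-step strategy — first relating $\MTnj$ to the pairing sum against $z_{\Kato,n}$ (your Step~1 is precisely Proposition~\ref{prop:katoMT}), then factoring $z_{\Kato}$ through a generator $w$ of $\H^1(\Tdualj)$ and invoking Kato's divisibility — is the paper's approach, and your change-of-variable formula producing $\lambda_{\Kato,n}^\iota$ is Lemma~\ref{lemma:linear} (after applying $\iota$). However, there is a genuine gap at the hinge of Step~2. You assert that \cite[Theorem~12.5]{Kato} directly gives $\chr_\Lambda\bigl((\Xj)_{\Ltor}\bigr)\mid \lambda_{\Kato}$. Kato's theorem (Theorem~\ref{thm:katodiv} in the paper) is a statement about $\H^2_P(\Tdualj)$, not about the torsion of the Pontryagin dual of the discrete Selmer group $\Xj$. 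Bridging these requires a separate and nontrivial duality input — Theorem~\ref{thm:nekduality}, proved via Nekov\'a\v{r}'s Selmer complexes — which yields $\chr_\Lambda\bigl((\Xj)_{\Ltor}\bigr)=\chr_\Lambda\bigl(\H^2_P(\Tdualj)^\iota\bigr)$. Your proposal omits this step entirely.

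Moreover, the omission is not innocuous: it also misplaces the $\iota$-twist. Your own change-of-variable correctly exhibits
$$\sum_{\sigma\in\G_n}\bigl\langle c_{n,r-1}^{\sigma},z_{\Kato,n}\bigr\rangle_n\,\sigma^{-1} \;=\; \lambda_{\Kato,n}^{\iota}\cdot \psi^{\alg}_{n,r}(f),$$
so to conclude $C\cdot\MTnj\in\algnj\cdot\Ln$ you need $\chr_\Lambda\bigl((\Xj)_{\Ltor}\bigr)\mid\lambda_{\Kato}^{\iota}$, not $\chr_\Lambda\bigl((\Xj)_{\Ltor}\bigr)\mid\lambda_{\Kato}$. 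Since $\chr_\Lambda\bigl((\Xj)_{\Ltor}\bigr)$ is not $\iota$-stable in general, these are different statements. The paper's argument resolves this because Kato's bound is $\chr_\Lambda\bigl(\H^2_P(\Tdualj)\bigr)\mid\lambda_{\Kato}$ and Nekov\'a\v{r} duality gives $\chr_\Lambda\bigl((\Xj)_{\Ltor}\bigr)=\iota\bigl(\chr_\Lambda(\H^2_P(\Tdualj))\bigr)$, which together yield exactly $\chr_\Lambda\bigl((\Xj)_{\Ltor}\bigr)\mid\lambda_{\Kato}^{\iota}$. You should add the Selmer-complex duality result as an explicit ingredient and track the $\iota$-twist through it; once that is done your argument closes.
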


From Corollary \ref{cor:algfit} and Theorem \ref{thm:divisibility}, we get our main  result.

\begin{cor}
If \eqref{irred}, \eqref{nopole}, and \eqref{Euler} hold, then there exists a non-zero constant $C \in \O_L$ (independent of $n$ and $j$) such that
$$
C \cdot I^{\an}_{n,j}(f) \subseteq \Fit_{\Ln}(\D{H^1_f(\Kn,\Adualj)}).
$$
for all $n \geq 0$.  In particular, $C \cdot \MTnj \in \Fit_{\Ln}(\D{H^1_f(\Kn,\Adualj)})$ for all $n \geq 0$.
\end{cor}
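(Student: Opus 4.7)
The corollary is a direct consequence of two ingredients already in place: the algebraic containment of Corollary \ref{cor:algfit}, namely $I^{\alg}_{n,r}(f) \subseteq \Fit_{\Ln}(\D{H^1_g(\Kn,\Adualj)})$, and the analytic-to-algebraic divisibility of Theorem \ref{thm:divisibility}, namely $C \cdot I^{\an}_{n,r}(f) \subseteq I^{\alg}_{n,r}(f)$ for some non-zero $C \in \sO_L$ independent of $n$ and $r$. My plan is simply to chain these two containments.

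First, I verify that the hypotheses line up: Corollary \ref{cor:algfit} requires only \eqref{irred}, while Theorem \ref{thm:divisibility} requires all three of \eqref{irred}, \eqref{nopole}, and \eqref{Euler}. Under the joint hypotheses of the corollary, both are available. Note that $C$ here is the same constant as in Theorem \ref{thm:divisibility}; in particular, it does not depend on $n$ or $r$, which is exactly what the corollary asserts.

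The main containment then follows from the trivial observation that multiplying both sides of $I^{\alg}_{n,r}(f) \subseteq \Fit_{\Ln}(\D{H^1_g(\Kn,\Adualj)})$ is unnecessary, because the inclusion
\[
C \cdot I^{\an}_{n,r}(f) \;\subseteq\; I^{\alg}_{n,r}(f) \;\subseteq\; \Fit_{\Ln}(\D{H^1_g(\Kn,\Adualj)})
\]
holds at the level of ideals of $\Ln$. This establishes the first assertion. For the ``in particular'' clause, I invoke the definition of $I^{\an}_{n,r}(f)$ as the ideal of $\Ln$ generated by $\cores^n_m(\MTvj{m})$ for $0 \le m \le n$; taking $m = n$ shows $\MTnj \in I^{\an}_{n,r}(f)$, so $C \cdot \MTnj \in C \cdot I^{\an}_{n,r}(f) \subseteq \Fit_{\Ln}(\D{H^1_g(\Kn,\Adualj)})$, as required.

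There is no real obstacle in this derivation itself---it is a one-line composition of containments---so my only task in writing it out is to be explicit about the uniformity of $C$ in $(n,r)$ and about the fact that the individual Mazur--Tate element sits inside the corestriction-generated ideal. The substantive content is carried entirely by Corollary \ref{cor:algfit} (a consequence of Theorem \ref{thm:algfit}) and by Theorem \ref{thm:divisibility}, both of which are invoked as black boxes for this final corollary.
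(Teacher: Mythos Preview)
Your proof is correct and matches the paper's approach exactly: the paper states this corollary immediately after Theorem~\ref{thm:divisibility} with no written proof, indicating it follows directly from combining Corollary~\ref{cor:algfit} with Theorem~\ref{thm:divisibility}. Your derivation of the ``in particular'' clause from the definition of $I^{\an}_{n,r}(f)$ (taking $m=n$) is also the intended reading.
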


One can describe all forms that fail to satisfy \eqref{nopole} and we do so in the following lemma.

\begin{lemma}
\label{lemma:nopole}
If $f$ is a newform in $S_k(\Gamma_1(N),\psi)$ and \eqref{nopole} does not hold, then either:
\begin{enumerate}
\item $p \nmid N$, $j = \frac{k-3}{2}$, and $a_p(f) = p^{\frac{k-1}{2}}(1 + \psi(p))$;
\item $p || N$, $\psi(p)=1$, $j = \frac{k-4}{2}$, and $a_p(f) = p^{\frac{k-2}{2}}$;
\item $p |N$, $\ord_p(\cond(\psi)) = \ord_p(N)$, $j=\frac{k-3}{2}$, and $a_p = p^{\frac{k-1}{2}}$ (where $\cond(\psi)$ is the conductor of the character $\psi$).
\end{enumerate}
\end{lemma}

\begin{proof}
If $p \nmid N$ and \eqref{nopole} fails, then $p^{j+1}$ is a root of $X^2 - a_p(f)X + \psi(p) p^{k-1}$.  Thus $a_p(f) = p^{j+1} + \psi(p) p^{k-j-2}$.  
The generalized Ramanujan bounds imply that $j = \frac{k-3}{2}$ and hence $a_p(f) = p^{\frac{k-1}{2}}(1 + \psi(p))$ as claimed.

If $p | N$, then by \cite[Theorem 3]{Li}, $a_p(f)$ is non-zero in only two situations.  First, if $p||N$ and $\psi(p) \neq 0$ in which case $a_p^2 = \psi(p) p^{k-2}$.  Second, if $\ord_p(\cond(\psi)) = \ord_p(N)$ in which case $|a_p(f)| = p^{\frac{k-1}{2}}$.  In either case, \eqref{nopole} fails if $a_p = p^{j+1}$.  In the first case, this happens if and only if $j=\frac{k-4}{2}$, $\psi(p) = 1$, and $a_p = p^{\frac{k-2}{2}}$.  In the second case, this happens if only if $j = \frac{k-3}{2}$ and $a_p = p^{\frac{k-1}{2}}$.
\end{proof}

It is easy to write down examples of the first two cases.  For the first case, take a form with $a_p(f)=0$ and $\psi(p)=-1$.  For the second case, any form of level $\Gamma_0(N)$ with $p||N$, has $a_p(f) = \pm p^{\frac{k-2}{2}}$; the forms where $a_p$ is positive contradict \eqref{nopole}.  We know of no examples where the third case occurs.

\subsection{Kato's Euler system}

In this section, we describe how to recover the Mazur-Tate elements from Kato's Euler system and the local classes $c_{n,j}$ constructed in section \ref{sec:cn}.  

Let $z_{\Kato} = (z_{\Kato,n})_n \in \H^1(\Tdualj) = \H^1(\overline{T}(k-j-1))$ denote Kato's Euler system where we recall that $\overline{T}$ is a lattice in the Galois representation attached to $\overline{f}$, the complex conjugate of $f$.
We now state Kato's theorem relating $z_{\Kato}$ to $L$-values.  To this end, 
let $\e$ continue to denote our fixed basis from section \ref{subsec:LL} of $\mathbf D_{\dR}(V(1))/\mathbf D^+_{\dR}(V(1))$, and let $\omega_f$ denote the element of $\mathbf D_{\dR}^+(V^{*})$ canonically identified with $\overline{f}$, via Eichler--Shimura theory and the comparison theorems.

\begin{thm}[Kato]
\label{thm:katorecip}
Let $\chi$ denote a primitive character of $\G_n$.  Then 
$$
\sum_{\sigma \in \G_n}
\chi(\sigma)  \exp^*(\res_p(z_{\Kato,n})^\sigma)) 
= \dfrac{L_{\{p\}}(f,\chi,j+1)}{(-2\pi i)^{j} \Omega_{f,\Kato}^{\pm}} \cdot t^{j} \omega_f
$$
where the sign $\pm$ equals the sign of $(-1)^j \chi(-1)$, $L_{\{p\}}$ denotes the $L$-value with the Euler factor at $p$ removed, and $\Omega_{f,\Kato}^{\pm} \in \C$.
\end{thm}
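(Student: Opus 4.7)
The plan is to deduce the formula from Kato's explicit reciprocity law \cite[Theorem 16.6]{Kato} (and its $L$-value interpretation in the subsequent sections), which identifies the image of $z_{\Kato,n}$ under $\exp^*$ with a classical modular-symbol-type expression attached to $f$. Granting that identification, the stated formula reduces to a routine character sum. I would carry this out in four steps.

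First, I would apply Kato's reciprocity to write $\exp^*(z_{\Kato,n})$ as a linear combination, indexed by $a \in (\Z/p^n\Z)^\times$, of the modular symbols $\lambda(f, z^{r-1}, a, p^n)$ defined earlier, paired (via Eichler--Shimura) with the de Rham class $t^{r-1}\omega_f$. Second, using the equivariance of $\exp^*$ for the $\G_n$-action on both sides, the sum $\sum_\sigma \chi(\sigma) \exp^*(\sigma(z_{\Kato,n}))$ collapses to a single character sum of modular symbols multiplying $t^{r-1}\omega_f$.

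Third, I would evaluate the character sum. For primitive $\chi$ of conductor $p^n$ with $n \geq 1$, the standard Birch-lemma-type unfolding (the classical relation between twisted modular symbols and twisted $L$-values; cf.\ \cite{MTT}) produces the Gauss sum $\tau(\chi^{-1})$, the factorial $(r-1)!$, and the $L$-value $L_{\{p\}}(f,\chi,r)/(-2\pi i)^{r-1}$. The Euler factor at $p$ is absent from the right-hand side precisely because Kato's zeta element $z_{\Kato,n}$ is built with level structure $p^n$ at $p$; projecting to the primitive $\chi$-component kills the would-be local Euler factor at~$p$. The remaining normalization by $\Omega_f^\pm$, with sign determined by the parity $\chi(-1)$, comes from the Eichler--Shimura rationality of $\lambda(f,\cdot)/\Omega_f^{\pm}$. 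The case $n = 0$ is handled separately by direct inspection of Kato's formula at the bottom of the tower.

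The hard part is not the mathematics of any individual step --- each is either Kato's theorem or classical --- but the coherent bookkeeping of normalizations. Kato's convention for $z_{\Kato}$, his normalization of $\exp^*$ (related to ours via the factor $p^n$ of Lemma~\ref{lemma:inexp}), the Eichler--Shimura normalization of $\omega_f$ and its comparison with the basis $\e$ of $\bD_{\dR}(V(1))/\bD_{\dR}^+(V(1))$ fixed in~\S\ref{subsec:LL}, and the sign conventions distinguishing $\Omega_f^+$ from $\Omega_f^-$ (which must be the same as those governing the interpolation formula~\eqref{eqn:interp} for $\MTnj$), all have to be pinned down consistently before the formula emerges in the clean form stated.
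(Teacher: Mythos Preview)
Your proposal is reasonable as a sketch of how one might unpack the content of Kato's result, but the paper does not actually argue any of this. The paper's proof consists of a single line: ``See \cite[Theorem 12.5]{Kato}.'' In other words, the theorem is attributed to Kato and cited as a black box; no derivation from \cite[Theorem 16.6]{Kato}, no character-sum unfolding, and no normalization bookkeeping appears in the paper itself.

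Your route --- starting from the explicit reciprocity law \cite[Theorem 16.6]{Kato} and recovering the $L$-value via a Birch-lemma-type character sum --- is essentially an outline of the path Kato himself takes within \cite{Kato} to get from his Euler system construction to statements like Theorem~12.5. So it is not wrong, but it is a reconstruction of Kato's internal argument rather than something the present paper supplies. If anything, your concern about normalization bookkeeping (the $p^n$ factor in $\exp^*$, the comparison of $\omega_f$ with $\e$, the sign in $\Omega_f^{\pm}$) is well-placed: these are precisely the issues that make a clean independent derivation delicate, and they are part of why the paper simply defers to Kato's stated result rather than reproving it.
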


\begin{proof}
See \cite[Theorem 12.5]{Kato}.
\end{proof}

\begin{remark}
The periods $\Omega_{f,\Kato}^{\pm}$ appearing in the above theorem differ from our cohomological periods $\Omega_f^\pm$ by an algebraic number.  See \cite[section 4.3]{ChanHo} for more details.
\end{remark}

The following proposition shows how one can recover Mazur-Tate elements from Kato's Euler system and the local classes $c_{n,j}$.

\begin{prop}
\label{prop:katoMT}
If \eqref{nopole} holds, then there exists a non-zero constant $D \in L$ (independent of $n$ and $j$) such that
$$
D \cdot \MTnj = \sum_{\sigma \in \G_n} \left\langle  c_{n,j}^\sigma, \res_p(z_{\Kato,n}) 
\right\rangle_{n} \sigma^{-1}.
$$
\end{prop}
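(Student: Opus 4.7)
The plan is to verify the identity in $\Ln$ character by character. Writing $X_n := \sum_{\sigma \in \G_n} \langle c_{n,r-1}^\sigma, \res_p(z_{\Kato,n})\rangle_n \sigma^{-1}$, the claim becomes $X_n = C \cdot \MTnj$; since $\Ln \otimes_{\sO_L}\overline{L}$ decomposes as a product over characters $\chi$ of $\G_n$, it suffices to compare $\chi(X_n)$ with $C \cdot \chi(\MTnj)$ for each $\chi$.

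First I would treat primitive characters $\chi$ of maximum conductor $p^n$ (with $n\geq 1$). Applying Proposition~\ref{prop:recip} with $j = r-1$ and $z'_n = \res_p(z_{\Kato,n})$ yields
\[
\chi(X_n) = (r-1)! \cdot p^{n(r-1)} \cdot \tau(\chi^{-1}) \cdot \bigl\langle t^{1-r}\e,\, \textstyle\sum_a \chi(-a)(\exp^* z'_n)^{\sigma_a}\bigr\rangle'_{\dR, n}.
\]
Kato's reciprocity law (Theorem~\ref{thm:katorecip}) evaluates the inner sum as $\chi(-1) \cdot L_{\{p\}}(f,\chi,r)(-2\pi i)^{1-r}(\Omega_f^\pm)^{-1} \cdot t^{r-1}\omega_f$.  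Since $\chi$ is ramified $L_{\{p\}} = L$, and the factors of $t$ cancel in the pairing.  Setting $C := \langle \e, \omega_f\rangle'_{\dR, 0}$---independent of $n$ and $r$ because $\e$ and $\omega_f$ are both defined at level zero (over $\Q_p$) and the $t$-powers in the paired arguments always cancel---the interpolation formula~(\ref{eqn:interp}) gives $\chi(X_n) = C \cdot \chi(\MTnj)$.

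Next I would handle level $n=0$ via the second case of Proposition~\ref{prop:recip}. A direct computation in the Kirillov model (Casselman--Shalika in the unramified case, analogous geometric-series evaluations in the ramified cases, using~(\ref{nopole})) identifies $\widetilde Z(x^{1-r}\phi_{v_{\new}}, 1)$ with the local Euler factor $L_p(f,r)$.  Combining with Theorem~\ref{thm:katorecip} at level zero, the product $L_p(f,r) \cdot L_{\{p\}}(f,\mathbf{1},r) = L(f,r)$ recovers the full $L$-value and matches $C \cdot \MTvj{0}$.  For characters of $\G_n$ of intermediate conductor I proceed by induction on $n$:  both $X_n$ and $\MTnj$ satisfy matching three-term recursions under $\pi^{n+1}_n:\Lambda_{n+1}\to\Ln$.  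For the Mazur--Tate element, this is~(\ref{eqn:recur}).  For $X_n$, grouping the sum over $\sigma \in \G_{n+1}$ by fibers of $\pi^{n+1}_n$ and combining (i)~Proposition~\ref{prop:three_term} on $\cores^{n+1}_n c_{n+1,r-1}$, (ii)~the Euler system norm relation $\cores^{n+1}_n z_{\Kato,n+1} = z_{\Kato,n}$, and (iii)~the Tate duality adjunction $\langle \res x, y\rangle_{n+1} = \langle x, \cores y\rangle_n$, yields
\[
\pi^{n+1}_n X_{n+1} = a_p \cdot X_n - \delta \cdot \cores^n_{n-1} X_{n-1},
\]
with $\delta$ the central character of $\pi_{\sm}(V_f)$ at $p$ (or $0$ in the ramified case).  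Verifying $\delta = \psi(p)\,p^{k-2}$, the two recursions agree:  knowing $X_m = C\cdot \MTvj{m}$ for all $m \leq n$ forces $\pi^{n+1}_n X_{n+1} = C\cdot \pi^{n+1}_n \MTvj{n+1}$, and Step~1 applied at level $n+1$ settles the primitive characters of conductor $p^{n+1}$.

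The hard part will be verifying the coefficient identification $\delta = \psi(p)p^{k-2}$, which requires carefully unraveling the local Langlands normalizations in use:  the twist $\pi(V) = \pi_{\mathrm{Colmez}}(V(1))$, the direction convention for Artin reciprocity, and the passage from $\det\rho_f = \psi^{-1}\varepsilon^{1-k}$ to the central character of $\pi_{\sm}(V_f)$ at $p$.  A secondary concern is confirming that $C = \langle\e,\omega_f\rangle'_{\dR,0}$ is nonzero and lies in $\sO_L$:  nonvanishing follows from the perfect pairing between $\bD_{\dR}(V_f(1))/\bD^+_{\dR}(V_f(1))$ (where $\e$ is a basis) and the complementary filtered piece of $\bD_{\dR}(V_f^*)$ (containing $\omega_f$), while integrality rests on the $\sO_L$-normalization of $\e$ via $\pi(T_f)$ (Section~\ref{sec:normal}) together with the Eichler--Shimura identification of $\omega_f$ with $\overline{f}$---this integral comparison being precisely the source of the ambiguity that necessitates the constant $C$ rather than a $p$-adic unit.
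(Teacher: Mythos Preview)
Your proposal is correct and follows essentially the same route as the paper: verify the identity at primitive characters via Proposition~\ref{prop:recip} combined with Kato's Theorem~\ref{thm:katorecip} (giving $C = \langle \mathbf e, \omega_f\rangle_{\dR}$), and then propagate to all characters by showing both sides obey the same three-term recursion under $\pi^{n+1}_n$, using Proposition~\ref{prop:three_term} and the norm compatibility of Kato's classes. Your explicit flagging of the normalization check $\delta = \psi(p)p^{k-2}$ and of the nonvanishing and integrality of $C$ is more careful than the paper, which simply asserts these in passing.
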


\begin{proof}
Set 
$$
\tnj = \sum_{\sigma \in \G_n} \left\langle  c_{n,j}^\sigma , \res_p(z_{\Kato,n}) 
\right\rangle_{n} \sigma^{-1}.
$$
To prove this proposition it suffices to check that the $\tnj$
satisfy the same interpolation property (up to a constant) as the Mazur-Tate elements ({\it i.e.}\ (\ref{eqn:interp})) for primitive characters, and that they satisfy the same three-term relation ({\it i.e.}\  (\ref{eqn:recur})).

To this end, for $n>0$, we have
\begin{align*}
\chi(\tnj) &= \sum_{\sigma \in \G_n}
  \chi(\sigma)^{-1} \left\langle  c_{n,j}^\sigma, \res_p(z_{\Kato,n})
\right\rangle_{n}  \\
&=
\chi(-1) \cdot j! \cdot p^{mj} \cdot \tau(\chi^{-1})  \cdot 
\big\langle t^{-j} \e,
\sum_{\sigma \in \G_n} \chi(\sigma)  \exp^*(\res_p(z_{\Kato,n})^\sigma) 
\big\rangle_{\dR,n} &\quad[\text{Proposition~} \ref{prop:recip}] \\
&=
\chi(-1) \cdot j! \cdot p^{mj} \cdot \tau(\chi^{-1})   \cdot \dfrac{L_{\{p\}}(f,\chi,j+1)}{(-2 \pi i)^{j} \Omega_{f,\Kato}^{\pm}}\langle \mathbf e, \omega_f\rangle_{\dR} &\quad[\text{Proposition~} \ref{thm:katorecip}] \\
&= \chi(\MTnj) \cdot \langle \mathbf e, \omega_f\rangle_{\dR} \cdot \frac{\Omega_{f}^{\pm}}{\Omega_{f,\Kato}^{\pm}}
\end{align*}
since the local Euler factor at $p$ is trivial for $L(f,\chi,s)$.
Note that the application of Proposition \ref{prop:recip} requires the hypothesis \eqref{nopole}.

For $n=0$, we have
\begin{align*}
{\bf 1}(\theta_{0,j}) &=\left\langle c_{0,j}, \res_p(z_{\Kato,0}) 
\right\rangle_{0}  \\
&=
j! \cdot \widetilde{Z}(x^{-j} \phi_{v_{\new}},1) \left\langle t^{-j} \mathbf e, \exp^* z_0'\right\rangle_{\dR,0} 
 &\quad[\text{Proposition~} \ref{prop:recip}] \\
&=
j! \cdot \widetilde{Z}(x^{-j} \phi_{v_{\new}},1) 
 \cdot \dfrac{L_{\{p\}}(f,j+1)}{(-2\pi i)^j \Omega_{f,\Kato}^{\pm}}\langle \mathbf e, \omega_f\rangle_{\dR} &\quad[\text{Proposition~} \ref{thm:katorecip}] \\
&= 
j! \cdot \dfrac{L(f,j+1)}{(-2\pi i)^j \Omega_{f,\Kato}^{\pm}}\langle \mathbf e, \omega_f\rangle_{\dR} &\\
&= {\bf 1}(\MTvj{0}) \cdot\left\langle \mathbf e, \omega_f\right\rangle_{\dR} \cdot \frac{\Omega_{f}^{\pm}}{\Omega_{f,\Kato}^{\pm}}.
\end{align*}
Here $\phi_{v_{\new}}$ is the newvector associated to $f$ and $\widetilde{Z}(x^{-j} \phi_{v_{\new}},1) $ is simply the local Euler factor at $p$ for $L(f,j+1)$.  Thus, $\tnj$ and $\MTnj$ agree at primitive characters (up to a constant).

To finish the proof, we compute
\begin{align*}
&\pi^{n+1}_n(\theta_{n+1,j})
= \pi^{n+1}_n\left(\sum_{\tau \in \G_{n+1}} \left\langle  c_{n+1,j}^\tau , \res_p(z_{\Kato,n+1})  \right\rangle_{n+1} \tau^{-1} \right)\\
&= \sum_{\sigma \in \G_{n}} \left( \sum_{\stackrel{\tau \in \G_{n+1}}{\tau \to \sigma}}\left\langle c_{n+1,j}^\tau  ,\res_p(z_{\Kato,n+1})  \right\rangle_{n+1} \right) \sigma^{-1} \\
&= \sum_{\sigma \in \G_{n}}   \left\langle  \cores^{n+1}_n(c_{n+1,j})^{\sigma} , \cores^{n+1}_n(\res_p(z_{\Kato,n+1}))  \right\rangle_n \sigma^{-1} \\
&= \sum_{\sigma \in \G_{n}}   \left\langle  a_p(f)  c_{n,j}^\sigma - \psi(p) p^{k-2} \res^n_{n-1} ( c_{n-1,j}^\sigma) ,\res_p(z_{\Kato,n}) \right\rangle_n \sigma^{-1} \\
&= a_p(f) \cdot \sum_{\sigma \in \G_{n}}  \left\langle  c_{n,j}^\sigma, \res_p(z_{\Kato,n})  \right\rangle_n \sigma^{-1} - \psi(p) p^{k-2} \sum_{\sigma \in \G_{n}}  \left\langle  \res^n_{n-1} ( c_{n-1,j}^\sigma), \res_p(z_{\Kato,n})   \right\rangle_n \sigma^{-1} \\
&= a_p(f) \cdot \sum_{\sigma \in \G_{n}}  \left\langle  c_{n,j}^\sigma, \res_p(z_{\Kato,n})   \right\rangle_n \sigma^{-1} - \psi(p) p^{k-2} \sum_{\sigma \in \G_{n}}  \left\langle  c_{n-1,j}^\sigma, \res_p(z_{\Kato,n-1})    \right\rangle_{n-1} \sigma^{-1} \\
&= a_p(f) \cdot \tnj - \psi(p) p^{k-2} \cdot \nu_{n-1}^n( \theta_{n-1,j})
\end{align*}
as desired.  Here we used Proposition \ref{prop:three_term}.  A similar computation handles the case $n=0$ as well.
\end{proof}

\section{Algebraic $\theta$-elements, take II}

Throughout this section we suppose that \eqref{irred} holds and that $\rhobar_f$ is irreducible.

\subsection{An alternative construction of $\theta$-elements}

In section \ref{sec:algtheta}, we gave a construction of algebraic $\theta$-elements working with the Galois cohomology of the discrete module $\Adualj$ which allowed us to relate these elements to the Fitting ideals of its Selmer group.  We now give a simpler and more direct construction of these elements using the cohomology of $\Tdualj$.  The advantage of this second construction is that it will be easier to relate to  Mazur-Tate elements (via Kato's results) and thus to prove Theorem \ref{thm:divisibility}.  However, showing that these two constructions match then takes additional work, and will be carried out in this subsection.

By Theorem \ref{thm:Katorank}, let $w = (w_n) \in \H^1(\Tdualj)$ denote some generator of 
this free $\Lambda$-module of rank 1.  For $c \in H^1_f(\Knp,\Tj)$, set
$$
\widetilde{\psi}_n(c) := \widetilde{\psi}_{n,w}(c):= \sum_{\sigma \in \G_n} \left\langle  c^\sigma, \res_p(w_n)  \right\rangle_{n} \sigma   \in \Ln
$$
where $\res_p$ denotes restriction to the unique place over $p$.  The pairing $\langle \cdot, \cdot \rangle_{n}$ is given by Tate local duality
$$
H^1(\Knp,\Tj) \times H^1(\Knp,\Tdualj) \to \O_L.
$$

\subsection{Comparing the two constructions}
\label{sec:compare}

Let $\iota$ be the involution of $\Lambda_n$ which sends a group-like element $\sigma$ to $\sigma^{-1}$. We will see that the above simple and direct definition of $\widetilde{\psi}_n$ yields a function which, up to $\iota$, equals $\psi_n$  (which was the key input in defining algebraic $\theta$-elements).  However, to make this claim precise, recall that the definition of $\psi_n = \psi_{n,\alpha}$ depended on an isomorphism $\alpha :R_\Lambda(\Xj) \cong \Lambda$ while the definition of $\widetilde{\psi}_n = \widetilde{\psi}_{n,w}$ depended on a choice of a generator $w$ of $\H^1(\Tdualj)$.  Thus, to relate $\psi_n$ to $\widetilde{\psi}_n$, we must first relate these choices.

We now show how to use our fixed $w = (w_n) \in \H^1(\Tdualj)$ to give  an explicit identification of $R_\Lambda(\Xj)$ with $\Lambda$.
Set 
$\Hj = H^1(\Q_\Sigma/\Kinf,\Adualj)$ and $\Xj^\Sigma := \D{\Hj}$.
Dualizing (\ref{eqn:selmerdef}) yields
$$
W \to \Xj^\Sigma \to \Xj \to 0
$$
where $W$ is some finitely generated torsion $\Lambda$-module.  Thus, applying
$\Hom_\Lambda(\cdot,\Lambda)$ gives a canonical isomorphism
$$
\Hom_\Lambda(\Xj,\Lambda)
\stackrel{\sim}{\lra} \Hom_\Lambda(\Xj^\Sigma,\Lambda)
$$
and hence a canonical isomorphism $R_\Lambda(\Xj) \cong R_\Lambda(\Xj^\Sigma)$.  
It therefore suffices to make an identification of $R_\Lambda(\Xj^\Sigma)$ with $\Lambda$ which we now do.

From \eqref{irred}, we deduce that $H^0(\Q(\mu_p),\rhobar_f)=0$, and thus the control theorem holds for $\Hj$; that is, the restriction map
$$r_n : H^1(\Q_\Sigma/\Kn,\Adualj) \to \Hj^{\Gamma_n}
$$
is an isomorphism for any $n$.
Let
$$\pi_n^m : H^1(\Q_\Sigma/\Kn,\Tdualj) \to H^1(\Q_\Sigma/\Kn,\Tdualj/p^m\Tdualj) \cong
H^1(\Q_\Sigma/\Kn,\Adualj[p^m]) \overset{r_{n}}{\to} \Hj^{\Gamma_n}[p^m]$$
denote the canonical map.
Define a $\Lambda$-homomorphism
$$\xi_w : \Xj^\Sigma \to \Lambda = \underset{n}{\varprojlim} \, \Ln$$
given, for $\eta : \Hj \to \Qp/\Zp$, by
$\xi_w(\eta) = \left( \sum_{\sigma \in \G_n}  a_\sigma \sigma \right)_n$
where
$$a_\sigma = 
\varprojlim_{m \geq n}  p^m \eta\bigr(\pi_n^m(w_n^{\sigma})\bigl) \in \Zp.$$
In the above equation, $\eta(\pi_n^m(w_n^{\sigma}))$ lies in
$\frac{1}{p^m}\Z_p/\Z_p$ and we view $p^m \eta\bigr(\pi_n^m(w_n^{\sigma}))$ in $\Z/p^m\Z$ via the isomorphism $\frac{1}{p^m}\Z_p/\Z_p \cong \Z/p^m\Z$ given by multiplication by $p^m$.

\begin{lemma}
$\Hom_{\Lambda}(\Xj^\Sigma,\Lambda)$ is free of rank one over $\Lambda$ with
generator $\xi_w$.
\end{lemma}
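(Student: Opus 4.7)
The plan is to construct an explicit chain of natural $\Lambda$-module isomorphisms
\[
\Hom_\Lambda(\Xj^\Sigma, \Lambda) \stackrel{\sim}{\longrightarrow} \H^1(\Tdualj),
\]
and verify that, under this chain, $\xi_w$ corresponds precisely to $w$. Since Theorem \ref{thm:Katorank}(1) (using \eqref{irred}) says $\H^1(\Tdualj)$ is free of rank one with generator $w$, both assertions of the lemma then follow at once.

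First I would reduce to finite levels. Since $\Lambda = \varprojlim_n \Ln$ and $\Xj^\Sigma$ is finitely generated,
\[
\Hom_\Lambda(\Xj^\Sigma, \Lambda) \;=\; \varprojlim_n \Hom_{\Ln}\bigl((\Xj^\Sigma)_{\Gamma_n}, \Ln\bigr).
\]
Pontryagin duality identifies $(\Xj^\Sigma)_{\Gamma_n} = (\Hj^\vee)_{\Gamma_n}$ with $(\Hj^{\Gamma_n})^\vee$. Next, exploit that the group algebra $\Ln = \sO_L[\G_n]$ is Frobenius over $\sO_L$: the ``coefficient of~$1$'' pairing $\Ln \times \Ln \to \sO_L$ gives an $\Ln$-linear isomorphism (twisted by the involution $\iota$) $\Ln \cong \Hom_{\sO_L}(\Ln, \sO_L)$, whence $\Hom_{\Ln}(M, \Ln) \cong \Hom_{\sO_L}(M, \sO_L)$ for any $\Ln$-module $M$. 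Combining with the standard identification $\Hom_{\sO_L}(N^\vee, \sO_L) = T_p(N) := \varprojlim_m N[p^m]$ for cofinitely generated $\sO_L$-modules $N$ yields
\[
\Hom_{\Ln}\bigl((\Hj^{\Gamma_n})^\vee, \Ln\bigr) \;\cong\; T_p(\Hj^{\Gamma_n}).
\]

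Second I would identify $T_p(\Hj^{\Gamma_n})$ with $H^1(\Q_\Sigma/\Kn, \Tdualj)$. The identification $\Tdualj/p^m = \Adualj[p^m]$ together with the Kummer sequence (using $H^0(\Q_\Sigma/\Kn, \Adualj) = 0$, a consequence of \eqref{irred}) yields $H^1(\Q_\Sigma/\Kn, \Tdualj/p^m) \cong H^1(\Q_\Sigma/\Kn, \Adualj)[p^m]$, and the control isomorphism $r_n$ noted just before the statement of the lemma converts the target to $\Hj^{\Gamma_n}[p^m]$. Taking inverse limits first in $m$ and then in $n$ assembles these identifications into the desired chain.

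Finally, I would verify that $w = (w_n)$ maps to $\xi_w$. For fixed $\eta \in \Hj^\vee$, applying $\eta$ to $\pi_n^m(w_n^\sigma)$, multiplying by $p^m$, and taking the inverse limit reproduces exactly the pairing of $w_n \in T_p(\Hj^{\Gamma_n})$ against $\eta \in (\Hj^{\Gamma_n})^\vee$, expressed as an element of $\Ln$ via the group-ring trace. The main obstacle---and the step demanding the most care---is bookkeeping the involution $\iota$: the Frobenius self-duality on $\Ln$ naturally pairs $\sigma$ with $\sigma^{-1}$, and one must confirm that this twist is precisely canceled by the twist in the Pontryagin duality $(A^\vee)_{\Gamma_n} \cong (A^{\Gamma_n})^\vee$, so that the $\sigma$-coefficient of $\xi_w(\eta)$ really involves $w_n^\sigma$ (rather than $w_n^{\sigma^{-1}}$), as in the stated definition.
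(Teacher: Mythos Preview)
Your proposal is correct and follows essentially the same approach as the paper: both construct an explicit chain of $\Lambda$-isomorphisms $\Hom_\Lambda(\Xj^\Sigma,\Lambda)\cong \H^1(\Tdualj)$ using Pontryagin duality, the Frobenius self-duality of the finite group rings $\Ln$, and the control isomorphism $r_n$, then check that $\xi_w$ corresponds to $w$. The only organizational difference is that you pass to finite levels first (via $\Lambda=\varprojlim_n\Ln$) and then dualize at each $\Ln$, whereas the paper dualizes at the $\Lambda$-level first (to $\Hom_\Lambda(\Lambda^\vee,\Hj)$) and then decomposes $\Lambda^\vee$ as a direct limit of $(\Lambda_n/p^n)^\vee$ along a single diagonal index; these are reorderings of the same argument, and the paper likewise flags the $\iota$-twist via the pairing $\langle\sum a_\sigma\sigma,\sum b_\sigma\sigma\rangle=\sum a_\sigma b_{\sigma^{-1}}$.
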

\begin{proof}
We have the following $\Lambda$-module homomorphisms:
\begin{align*}
\Hom_{\Lambda}(\Xj^\Sigma,\Lambda)
&\cong \Hom_{\Lambda}(\D{\Lambda},\D{(\Xj^\Sigma)}) 
\cong \Hom_{\Lambda}(\D{\Lambda},\Hj) 
\cong \Hom_{\Lambda}\bigl(\underset{n}{\varinjlim} \,
(\D{\Lambda})^{\Gamma_n}[p^n],\Hj \bigr) \\
&\cong \underset{n}{\varprojlim} \, \Hom_{\Lambda} \bigl(
(\D{\Lambda})^{\Gamma_n}[p^n],\Hj \bigr) 
\cong \underset{n}{\varprojlim} \, \Hom_{\Lambda} \bigl(
\D{(\Lambda_n/p^n)},\Hj \bigr) \\
&\cong \underset{n}{\varprojlim} \, \Hom_{\Lambda} (
\Lambda_n/p^n,\Hj) 
\cong \underset{n}{\varprojlim} \,\, \Hj[p^n]^{\Gamma_n} 
\cong \H^1(\Tdualj).
\end{align*}
Above we used the self-duality
\begin{align*}
\Lambda_n/p^n \times \Lambda_n/p^n &\to \Z/p^n \\
\left< \sum a_\sigma \sigma, \sum b_\sigma \sigma \right> &\mapsto
\sum a_\sigma b_{\sigma^{-1}}
\end{align*}
as this is the duality which induces a $\Lambda$-module isomorphism:\ $\D{(\Lambda_n/p^n)} \cong \Lambda_n/p^n$.
 To prove this lemma, one then chases through the above maps to see that $\xi_w$ in $\Hom_{\Lambda}(\Xj^\Sigma,\Lambda)$ maps to the generator $w$ in $\H^1(\Tdualj)$.
\end{proof}

\begin{cor}
The map
$$\Xi_w : R_\Lambda(\Xj) \stackrel{\sim}{\to} R_\Lambda(\Xj^\Sigma) \to \Lambda$$
given by evaluation at $\xi_w$ 
is an isomorphism.
\end{cor}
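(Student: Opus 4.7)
The plan is to deduce this corollary directly from the preceding lemma together with the canonical identification $\Hom_{\Lambda}(\Xj,\Lambda) \cong \Hom_{\Lambda}(\Xj^\Sigma,\Lambda)$ that was set up just before the definition of $\xi_w$. Recall that this identification arose from the four-term exact sequence obtained by dualizing \eqref{eqn:selmerdef}: since the cokernel of $\Xj^\Sigma \to \Xj$ vanishes and the kernel is a quotient of the torsion $\Lambda$-module $W$, applying $\Hom_{\Lambda}(-,\Lambda)$ (whose values on torsion modules vanish, $\Lambda$ being a regular local ring of dimension two) yields the desired isomorphism of Hom-modules, and hence of their reflexive hulls $R_\Lambda(\Xj) \cong R_\Lambda(\Xj^\Sigma)$.

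First I would note that the preceding lemma shows $\Hom_{\Lambda}(\Xj^\Sigma,\Lambda)$ is free of rank one over $\Lambda$ with generator $\xi_w$. Transporting via the canonical isomorphism above, $\Hom_{\Lambda}(\Xj,\Lambda)$ is likewise free of rank one, with a generator $\xi_w'$ corresponding to $\xi_w$. By the definition of the reflexive hull,
\[
R_\Lambda(\Xj) = \Hom_\Lambda\bigl(\Hom_\Lambda(\Xj,\Lambda),\,\Lambda\bigr),
\]
and since $\Hom_\Lambda(\Xj,\Lambda) \cong \Lambda\cdot \xi_w'$ is free of rank one, evaluation at the generator $\xi_w'$ gives a tautological $\Lambda$-module isomorphism $R_\Lambda(\Xj) \iso \Lambda$. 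Unwinding the identification $R_\Lambda(\Xj) \cong R_\Lambda(\Xj^\Sigma)$, this is precisely the map $\Xi_w$ (evaluation at $\xi_w$), which is therefore an isomorphism.

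There is no real obstacle here: the statement is a formal consequence of the preceding lemma plus the general fact that evaluation at a basis vector of a free rank-one module over a commutative ring identifies its dual with the ring. The only substantive input is the vanishing of $\Hom_\Lambda(-,\Lambda)$ on the torsion modules appearing in the kernel/cokernel of $\Xj^\Sigma \to \Xj$, which is standard and was already used to produce the canonical isomorphism between the two Hom-modules. Thus the corollary follows immediately, and the proof is essentially a one-line deduction from the lemma.
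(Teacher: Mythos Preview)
Your proposal is correct and matches the paper's approach: the paper gives no separate proof for this corollary, treating it as an immediate consequence of the preceding lemma together with the canonical isomorphism $\Hom_\Lambda(\Xj,\Lambda)\cong\Hom_\Lambda(\Xj^\Sigma,\Lambda)$ established just before. You have simply made explicit the formal step (evaluation at a generator of a free rank-one module) that the paper leaves to the reader.
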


We have thus built an explicit identification of $R_\Lambda(\Xj)$ with $\Lambda$ given a generator of $\H^1(\Tdualj)$.  

\begin{thm}
\label{thm:compare}
For $c \in H^1_f(\Knp,\Tj)$,  we have
$$
\psi_{n,\Xi_w}(c) = \widetilde{\psi}_{n,w}(c)^\iota
$$
In particular,
$$
\algnj = \left( \sum_{\sigma \in \G_n} \left\langle  c_{n,j}^\sigma, \res_p(w_n) \right\rangle_{n} \sigma^{-1} \right) \cdot \chr_{\Lambda}((\Xj)_{\Ltor})
$$
\end{thm}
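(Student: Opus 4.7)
The first displayed formula implies the second, so I focus on proving $\psi_{n,\Xi_w}(c) = \widetilde{\psi}_{n,w}(c)^{\iota}$. The strategy is to compute $\psi_n(c)$ explicitly by unwinding the definition of $\Xi_w$ and matching it term-by-term with $\widetilde{\psi}_n(c)^{\iota}$.

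First I would observe that $\xi_w \colon \Xj^\Sigma \to \Lambda$ descends to a $\Lambda$-linear map $\xi_w \colon \Xj \to \Lambda$. Indeed, the kernel of $\Xj^\Sigma \twoheadrightarrow \Xj$ is Pontryagin dual to a subquotient of $\bigoplus_{w \in \Sigma \setminus \{p\}} H^1(\Kinfw, \Adualj)$, which is $\Lambda$-cotorsion (as recalled in the proof of Theorem~\ref{thm:Katorank}); so this kernel is $\Lambda$-torsion, and $\xi_w$ kills it since $\Lambda$ is torsion-free. The descended $\xi_w$ vanishes on $(\Xj)_{\Ltor}$ and therefore factors through $\Zj \hookrightarrow R_\Lambda(\Zj) \overset{\Xi_w}{\cong} \Lambda$; by construction it recovers $\Xi_w$. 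Combined with the definition of $\psi_n$, this gives
\[
\psi_n(c) \;=\; \pi_n\bigl(\xi_w(\hat{\phi}_c)\bigr) \;\in\; \Ln
\]
for any lift $\hat{\phi}_c \in \Xj$ of the class $\phi_c \in (\Xj)_{\Gamma_n}$ produced from $c$ by \eqref{eqn:controldual}.

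Next I identify $\phi_c$ explicitly. The Pontryagin duality $(\Xj)_{\Gamma_n} \cong \D{H^1_g(\Kinf,\Adualj)^{\Gamma_n}}$, the exact sequence \eqref{eqn:control}, and local Tate duality at $p$ identify $\phi_c$ with the functional $x \mapsto \langle c, \res_p(r_n^{-1}(x))\rangle_n$ on $H^1_g(\Kinf,\Adualj)^{\Gamma_n}$. Extending this same formula to a functional $\tilde{\phi}_c$ on $\Hj^{\Gamma_n}$ (which makes sense as $r_n$ is an isomorphism onto $\Hj^{\Gamma_n}$) and then, by injectivity of $\Qp/\Zp$, extending arbitrarily to all of $\Hj$, I obtain a lift $\tilde{\phi}_c \in \Xj^\Sigma$ of $\phi_c$; by the preceding paragraph the resulting $\xi_w(\tilde{\phi}_c) \in \Lambda$ is independent of these choices. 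By the defining formula for $\xi_w$,
\[
\pi_n(\xi_w(\tilde{\phi}_c)) \;=\; \sum_{\sigma \in \G_n} a_\sigma \, \sigma, \qquad a_\sigma \;=\; \varprojlim_m p^m \cdot \tilde{\phi}_c\bigl(\pi_n^m(w_n^\sigma)\bigr).
\]
Since $\pi_n^m(w_n^\sigma) \in \Hj^{\Gamma_n}[p^m]$, the chosen formula for $\tilde\phi_c$ applies and produces $\tilde{\phi}_c(\pi_n^m(w_n^\sigma)) = \langle c, \res_p(w_n^\sigma) \bmod p^m \rangle_n \in \tfrac{1}{p^m}\sO_L/\sO_L$, where $\res_p(w_n^\sigma) \bmod p^m$ denotes the image of $\res_p(w_n^\sigma) \in H^1(\Knp,\Tdualj)$ in $H^1(\Knp,\Adualj)[p^m]$ under $\Tdualj/p^m \Tdualj \cong p^{-m}\Tdualj/\Tdualj = \Adualj[p^m]$.

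The compatibility between the local Tate pairings $H^1(\Knp,\Tj) \times H^1(\Knp,\Adualj) \to L/\sO_L$ and $H^1(\Knp,\Tj) \times H^1(\Knp,\Tdualj) \to \sO_L$ --- reflecting that the inclusion $\Adualj[p^m] \hookrightarrow \Adualj$ amounts to ``division by $p^m$'' --- yields
\[
p^m \langle c, \res_p(w_n^\sigma) \bmod p^m \rangle_n \;\equiv\; \langle c, \res_p(w_n^\sigma) \rangle_n \pmod{p^m \sO_L},
\]
so $a_\sigma = \langle c, \res_p(w_n^\sigma) \rangle_n \in \sO_L$ in the limit (using the integral pairing). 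Finally, $\res_p(w_n^\sigma) = \sigma \res_p(w_n)$ together with the $\G_n$-invariance of Tate duality gives $a_\sigma = \langle \sigma^{-1}c, \res_p(w_n)\rangle_n$, and substituting $\tau = \sigma^{-1}$ yields
\[
\psi_n(c) \;=\; \sum_{\tau \in \G_n} \langle c^\tau, \res_p(w_n) \rangle_n\, \tau^{-1} \;=\; \widetilde{\psi}_n(c)^{\iota}.
\]
The main obstacle is the Tate pairing compatibility just invoked: one must carefully track normalizations to confirm that the $\sO_L$-valued integral pairing with the compact module $\Tdualj$ is the inverse limit of the $L/\sO_L$-valued pairings with $\Adualj[p^m] \subset \Adualj$ scaled by $p^m$, via the identification $\Adualj[p^m] \cong p^{-m}\Tdualj/\Tdualj$. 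A secondary subtlety is checking that the natural formula for $\tilde{\phi}_c$ on $\Hj^{\Gamma_n}$ is indeed a valid extension of $\phi_c$ (which requires $r_n$ to identify $H^1_g(\Kinf,\Adualj)^{\Gamma_n}$ with a subgroup of $\Hj^{\Gamma_n}$ on which the Tate-pairing formula coincides with the one coming from \eqref{eqn:control}).
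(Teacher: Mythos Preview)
Your proof is correct and follows essentially the same approach as the paper's: both identify the element of $(\Xj)_{\Gamma_n}$ attached to $c$ as the functional ``pair against $\res_p(c)$ via Tate duality,'' lift it to $\Xj^\Sigma$, plug into the explicit formula defining $\xi_w$, and recognize the resulting inverse limit $\varprojlim_m p^m\langle c,\res_p(\pi_n^m(w_n^\sigma))\rangle_n$ as the integral Tate pairing $\langle c,\res_p(w_n^\sigma)\rangle_n = \langle c^{\sigma^{-1}},\res_p(w_n)\rangle_n$. The paper is terser (it simply asserts the limit equality ``by definition'' and works directly with $(\Xj^\Sigma)_{\Gamma_n}$ via the isomorphism~\eqref{soi}), whereas you spell out the descent of $\xi_w$ through the torsion kernel and the $p^m$-scaling compatibility of the Tate pairings; but the underlying computation is identical.
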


\begin{proof}
Let
$\Xi_{w,n} : X_{\Gamma_{n}} \to \Lambda_{n}$
denote the $\Gamma_n$-coinvariants of the composition of the
canonical map
$\ev : \Xj \to R_{\Lambda}(\Xj)$ and $\Xi_w$.
Recall the isomorphisms:
\begin{equation} \label{soi}
(\Xj^\Sigma)_{\Gamma_n} = (\D{\Hj})_{\Gamma_n} \overset{\sim}{\longrightarrow}
\D{(\Hj^{\Gamma_n})} \overset{\sim}{\longrightarrow}
 \D{H^1(\Q_\Sigma/\Kn,\Adualj)}.
\end{equation}
Thus, we may define an element in $(\Xj^\Sigma)_{\Gamma_n}$ by defining it as the functional on $  H^1(\Q_\Sigma/\Kn,\Adualj)$.  

For $c \in H^1_f(\Knp,\Tj)$,
consider the element $\varphi^\Sigma_c \in (\Xj^\Sigma)_{\Gamma_n}$ defined by first restricting to $p$ and then pairing (via Tate local duality) against $c$.  That is, for $h \in  H^1(\Q_\Sigma/\Kn,\Adualj)$, 
set
$$\varphi^\Sigma_c(h) =  \left< c,\res_p(h) \right>.$$
Set $\varphi_c \in (\Xj)_{\Gamma_n}$ equal to the image of $\varphi^\Sigma_c$ under the natural surjection $\Xj^\Sigma \to \Xj$.  Then, by definition, $\psi_{n,\Xi_w}(c) = \Xi_{w,n}(\varphi_c)$.

However, we can also compute $\Xi_{w,n}(\varphi_c)$ directly as the image of $\xi_w(\varphi^\Sigma_c)$ in $\Ln$.
By the definition of $\xi_w$, this image equals
$\sum_{\sigma \in G_n} b_\sigma \sigma$
where
$$b_{\sigma} = \varprojlim_{m \geq n} p^{m}
\left< c,\res_p(\pi_n^m( w_n^{\sigma})) \right>_n.$$
But, by definition, this limit equals
$$\left< c, \res_p( w_n^{\sigma}) \right>_{n} =
\left<  c^{\sigma^{-1}}, \res_p(w_n) \right>_{n}$$
as desired.
\end{proof}

\section{The main results}
In light of Proposition \ref{prop:katoMT} and Theorem \ref{thm:compare}, to compare $\MTnj$ and $\algnj$ we need to understand the index of Kato's Euler system in $\H^1(\Tdualj)$ compared to $\chr_\Lambda X_{\Ltor}$.  This relation will follow from Kato's proof of half of the main conjecture (without $p$-adic $L$-functions).

\subsection{Euler system result}
\label{sec:euler}
Let $\H^{i}(T) = \varprojlim_n H^i(\Q_\Sigma/\Kn,T)$ and
$\H^i_{\Iw,w}(T) = \varprojlim_n H^i(\Knvn,T)$ where $w$ is some place of $\Qinf$ and $v_n$ is the place of $\Kn$ below $w$.   Further, set
$$
\H^2_P(T) = \ker \left( \H^{2}(T) \to \prod_{w \in \Sigma_\infty} \H^2_{\Iw,w}(T)
\right)
$$
where $\Sigma_\infty$ is the set of places of $\Kinf$ sitting over $\Sigma - \{p\}$.

\begin{thm}[Kato]
\label{thm:katodiv}
We have
\begin{enumerate}
\item 
$\H^2_P(\Tdualj)$ is a torsion $\Lambda$-module;
\item
if \eqref{Euler} holds,
$$
\chr \H^2_P(\Tdualj) \text{~divides~} \chr_\Lambda( \H^1(\Tdualj) / \langle z_{\Kato} \rangle ).
$$
\end{enumerate}
\end{thm}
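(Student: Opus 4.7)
The statement is attributed directly to Kato, so my plan is to reduce each part to results already in \cite{Kato}, while indicating how the hypotheses of the paper feed in.

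For part~(1), my plan is to compute the $\Lambda$-rank of $\H^2(\Tdualj)$ via the Iwasawa-theoretic global Euler--Poincar\'e characteristic. Since $\rhobar_f$ is irreducible by \eqref{irred}, $\H^0(\Tdualj) = 0$. The global Euler characteristic formula identifies
$$\mathrm{rank}_\Lambda \H^1(\Tdualj) - \mathrm{rank}_\Lambda \H^2(\Tdualj) = d_-(V^*(1-r)),$$
where $d_-$ denotes the dimension of the $-1$-eigenspace of complex conjugation. Since $f$ is a modular form, $V$ is odd, so $d_- = 1$. Combined with Theorem~\ref{thm:Katorank}(1), which says $\H^1(\Tdualj)$ has rank one, this forces $\H^2(\Tdualj)$ to be $\Lambda$-torsion. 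As $\H^2_P(\Tdualj)$ is by definition a submodule of $\H^2(\Tdualj)$, it too is torsion.

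For part~(2), my plan is to invoke directly Kato's Euler system divisibility \cite[Theorem~12.5]{Kato}. The key hypothesis there is a ``large image'' condition on $\rho_f$, which the paper arranges via \eqref{Euler}: once a basis of $T_f^*$ is chosen so that the image of Galois contains $\SL_2(\sO)$, Kato's Rubin-style Euler system argument applies without modification. The output of \cite[Theorem~12.5]{Kato}, combined with the standard Poitou--Tate comparison between the Selmer-type group $\H^2_P$ and the cokernel of Kato's Euler system in $\H^1$, yields exactly the divisibility
$$\chr_\Lambda \H^2_P(\Tdualj) \mid \chr_\Lambda\bigl(\H^1(\Tdualj)/\langle z_{\Kato}\rangle\bigr).$$

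The substantive content is of course entirely in \cite{Kato}; there is no new mathematical obstacle. The only point requiring care in the writeup is to verify that the Selmer group appearing in Kato's statement matches $\H^2_P(\Tdualj)$ as defined here (i.e.\ with the strict local conditions at all primes in $\Sigma_\infty$, including $p$), and that the hypothesis \eqref{Euler} implies the particular form of the irreducibility and image hypotheses Kato requires. Both of these are essentially translations between conventions rather than substantive mathematical steps.
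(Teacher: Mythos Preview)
Your proposal is correct and takes essentially the same approach as the paper: both reduce directly to Kato's results in \cite[Theorems~12.4 and~12.5]{Kato}, with the paper additionally pointing to \cite[pg.~217]{Kurihara} for the translation between Kato's \'etale-cohomological formulation and the Galois-cohomological $\H^2_P$ used here. Your Euler-characteristic argument for part~(1) is a mild unpacking of what the paper handles by pure citation, but the substance is the same.
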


\begin{proof}
This is \cite[Theorems 12.4 and 12.5]{Kato}.  See also \cite[pg. 217]{Kurihara} where the \'etale cohomology groups in \cite{Kato} are written in terms of Galois cohomology with local conditions ({\it e.g.}\ $\H^2_P$).
\end{proof}

We must now relate $\H^2_P(\Tdualj)$ to $(\Xj)_{\Ltor}$.  If $M$ is a $\Lambda$-module, we write $M^{\iota}$ for the $\Lambda$-module whose underlying set is $M$, but the group-like element $\sigma$ acts by $\sigma^{-1}$.

\begin{thm}
\label{thm:nekduality}
Assume \eqref{irred} holds.  If $X_{\Ltor}$ denotes the $\Lambda$-torsion submodule of $X$, we have
$$
\chr_\Lambda (\Xj)_{\Ltor} = \chr_\Lambda(\H^2_P(\Tdualj)^\iota).
$$
\end{thm}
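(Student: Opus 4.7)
The plan is to combine Iwasawa-theoretic global duality with Lemma \ref{lemma:ext1}. The key intermediate step will be to produce a pseudo-isomorphism of $\Lambda$-modules
$$\H^2_P(\Tdualj) \sim \Ext^1_\Lambda(\Xj,\Lambda).$$
Granting this, Lemma \ref{lemma:ext1} immediately yields $\Ext^1_\Lambda(\Xj,\Lambda) \sim (\Xj)^\iota_{\Ltor}$, so that
$$\chr_\Lambda \H^2_P(\Tdualj) = \chr_\Lambda (\Xj)^\iota_{\Ltor} = \iota\!\bigl(\chr_\Lambda (\Xj)_{\Ltor}\bigr),$$
and applying $\iota$ to both sides (using $\chr_\Lambda(M^\iota) = \iota(\chr_\Lambda M)$) yields the desired identity $\chr_\Lambda (\Xj)_{\Ltor} = \chr_\Lambda \H^2_P(\Tdualj)^\iota$.

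To produce the pseudo-isomorphism I would use Jannsen's Iwasawa-theoretic formulation of Poitou--Tate duality: at each finite level $\Kn$ apply the $9$-term Poitou--Tate exact sequence to $\Tdualj/\varpi^m$, then take the double inverse limit in $n$ and $m$, invoking Jannsen's identification of Pontryagin duals over $\Lambda$ with $\Ext^i_\Lambda(-,\Lambda)$. Under hypothesis \eqref{irred} the $H^0$-terms vanish in the limit, and one extracts an exact sequence
$$\H^1(\Tdualj) \xrightarrow{\oplus\,\mathrm{loc}_w} \bigoplus_{w \in \Sigma_\infty} \frac{\H^1_w(\Tdualj)}{L_w} \longrightarrow \Ext^1_\Lambda(\Xj^\Sigma,\Lambda) \longrightarrow \H^2_P(\Tdualj) \longrightarrow 0,$$
where $\Xj^\Sigma := \D{H^1(\Q_\Sigma/\Kinf,\Adualj)}$ and $L_w$ is the annihilator under local Tate duality of the Bloch--Kato subspace $H^1_g(\Kinfw,\Adualj) \subseteq H^1(\Kinfw,\Adualj)$.

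The final step is to check that all obstructions to extracting the pseudo-isomorphism are pseudo-null. At $w \mid p$, Berger's theorem \cite{Berger-univnorm} (already used in Proposition \ref{prop:control}) gives $H^1_g(\Kinfp,\Adualj) = H^1(\Kinfp,\Adualj)$, so $L_p = 0$ and the $w = p$ contribution on the left vanishes. At $w \nmid p$, the quotients $\H^1_w(\Tdualj)/L_w$ are pseudo-null by the same bounded-order argument underlying the finiteness of $B_n$ in Proposition \ref{prop:control}. Similarly, the surjection $\Xj^\Sigma \twoheadrightarrow \Xj$ has pseudo-null kernel, because by \eqref{eqn:selmerdef} its cokernel is dual to $\bigoplus_{w \nmid p} H^1(\Kinfw,\Adualj)$, which is $\Lambda$-cotorsion (as noted in the proof of Theorem \ref{thm:Katorank}) and in fact pseudo-null under \eqref{irred}; hence $\Ext^1_\Lambda(\Xj^\Sigma,\Lambda) \sim \Ext^1_\Lambda(\Xj,\Lambda)$. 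Collapsing these pseudo-null corrections, the above sequence degenerates to the pseudo-isomorphism $\H^2_P(\Tdualj) \sim \Ext^1_\Lambda(\Xj,\Lambda)$.

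The main technical obstacle will be setting up the global Poitou--Tate machinery in an Iwasawa-theoretic form that naturally produces Ext groups, and then matching the Bloch--Kato local conditions on $\Adualj$ with their Tate-dual conditions on $\Tdualj$ place by place. The $\iota$-twist in the final identity is forced by the contravariance of Pontryagin duality under the action of $\Lambda$.
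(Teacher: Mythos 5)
Your overall strategy is the formal dual of the paper's and would, if executed correctly, give the same conclusion. The paper's proof invokes Nekov\'a\v{r}'s Selmer complex duality (the two exact sequences after~(0.13.2) in \cite{Nekovar}) to produce the pseudo-isomorphism $(\Xj)_{\Ltor} \sim \Ext^1_\Lambda(\H^2_P(\Tdualj),\Lambda)$, and then applies Lemma~\ref{lemma:ext1} to the torsion module $\H^2_P(\Tdualj)$. You instead aim for the dual statement $\H^2_P(\Tdualj) \sim \Ext^1_\Lambda(\Xj,\Lambda)$ and then apply Lemma~\ref{lemma:ext1} to $\Xj$. These two intermediate pseudo-isomorphisms are logically equivalent, since $\Ext^1_\Lambda(-,\Lambda)$ is an involution (up to pseudo-isomorphism and $\iota$-twist) on finitely generated torsion $\Lambda$-modules; so your bookkeeping with $\iota$ and characteristic ideals in the first paragraph is correct.

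Where I see genuine gaps is in how you propose to obtain the intermediate pseudo-isomorphism. First, you describe the global input as ``the $9$-term Poitou--Tate exact sequence applied to $\Tdualj/\varpi^m$.'' Poitou--Tate is a duality between a module and its \emph{Cartier} dual. The Cartier dual of $\Tdualj=T^*(1-r)$ is $\Tj=T(r)$, whose divisible hull is $A(r)$, \emph{not} $\Adualj=A^*(1-r)$; the two agree only in the self-dual case $r=k/2$, $f=\overline f$. What you actually need is the Iwasawa-adjoint relation between $\H^\bullet(\Tdualj)$ and $\Ext^\bullet_\Lambda\bigl(H^\bullet(\Kinf,\Tdualj\otimes\Q_p/\Z_p)^\vee,\Lambda\bigr)$, which pairs $\Tdualj$ with the \emph{same} motive $\Adualj=\Tdualj\otimes\Q_p/\Z_p$; that is the content of \cite{Jannsen}, and it is a different duality than Poitou--Tate (though in the end both are needed once one includes the local contributions). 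Your proposed exact sequence blends the two, which is why the term $L_w\subseteq\H^1_w(\Tdualj)$ you define as ``the Tate annihilator of $H^1_g(\Kinfw,\Adualj)$'' does not naturally live inside $\H^1_w(\Tdualj)$; local Tate duality pairs $H^1(\Knvn,\Tdualj)$ against $H^1(\Knvn,A(r))$, not against $H^1(\Knvn,\Adualj)$.

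Second, the two pseudo-nullity claims you invoke to collapse the sequence --- that $\H^1_w(\Tdualj)/L_w$ is pseudo-null for $w\nmid p$, and that the kernel of $\Xj^\Sigma\twoheadrightarrow\Xj$ is pseudo-null --- are not evident and I believe are false in general. The kernel of $\Xj^\Sigma\to\Xj$ is (Pontryagin) dual to a submodule of $\bigoplus_{w\nmid p}H^1(\Kinfw,\Adualj)$, which is cotorsion but typically has nontrivial characteristic ideal (it is controlled by local Euler factors), hence is not pseudo-null; and the local quotients $\H^1_w(\Tdualj)/L_w$ suffer the same issue. The correct phenomenon is that these two error terms are Tate-dual to one another and \emph{cancel} in the comparison, rather than each vanishing; this is precisely what the Selmer-complex local conditions in Nekov\'a\v{r}'s framework keep track of for you. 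So while your route is a legitimate ``by hand'' alternative to citing Nekov\'a\v{r}, as written it requires more care: you need to separate the Poitou--Tate input from the Iwasawa-adjoint input, and replace the pseudo-nullity claims at $w\nmid p$ by an explicit cancellation of dual local terms.
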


\begin{proof}
By the two exact sequences in \cite{Nekovar} appearing right after (0.13.2), we have
a pseudo-isomorphism
$$
(\D{\widetilde{H}^1_{f,\Iw}(\Adualj)})_{\Ltor}
\cong
\Ext^1_{\Lambda}(\widetilde{H}^2_{f,\Iw}(\Qinf/\Q,\Tdualj),\Lambda)
$$
Here $\widetilde{H}^i_{f,\Iw}$ are Selmer complexes where, for $\Adualj$, there is no local condition at $p$ (and thus for $\Tdualj$ we impose the harshest local condition).  Relating these back to classical Selmer groups (as in \cite[(0.10)]{Nekovar}), we have a pseudo-isomorphism
$$
(X_r)_{\Ltor} \cong 
\Ext^1_{\Lambda}(\H^2_P(\Tdualj),\Lambda).
$$
By Theorem \ref{thm:katodiv} and \cite[top of pg.\ 474]{Wingberg}, we then have a pseudo-isomorphism
$$
(X_r)_{\Ltor} \cong \H^2_P(\Tdualj)^\iota$$
as desired.
\end{proof}

\subsection{Divisibility}

We begin with a simple lemma whose proof we leave to the reader.

\begin{lemma}
\label{lemma:linear}
For $\lambda \in \Lambda$, we have
$$
\lambda \cdot \sum_{\sigma \in \G_n} \left\langle  c_{n,j}^\sigma, \res_p(w_n) 
\right\rangle_n \sigma 
=
\sum_{\sigma \in \G_n} \left\langle  c_{n,j}^\sigma, \res_p(\lambda w_n) 
\right\rangle_n \sigma .
$$
\end{lemma}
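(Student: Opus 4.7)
The identity is $\Lambda$-linear in $\lambda$, so by continuity and $\sO_L$-linearity it suffices to verify it for $\lambda$ equal to a group-like element $\tau \in \Gal(\Kinf/\Q)$. Writing $\bar\tau \in \G_n$ for its image, my plan is to reduce the claim to a straightforward reindexing combined with the Galois-equivariance of restriction and of Tate local duality.

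First, I would unwind the left-hand side: since $w_n \in H^1(\Q_\Sigma/\Kn,\Tdualj)$ carries the natural $\G_n$-action and the Iwasawa-theoretic action of $\tau$ on the compatible system $(w_n)_n$ is by the image $\bar\tau$ at each finite level, we have
\[
\lambda \cdot \sum_{\sigma\in\G_n} \left\langle c_{n,r-1}^\sigma,\res_p(w_n)\right\rangle_n \sigma
= \sum_{\sigma\in\G_n} \left\langle c_{n,r-1}^\sigma,\res_p(w_n)\right\rangle_n \bar\tau\sigma.
\]
Reindexing by $\sigma' = \bar\tau\sigma$ (so $\sigma = \bar\tau^{-1}\sigma'$), this becomes
\[
\sum_{\sigma'\in\G_n} \left\langle c_{n,r-1}^{\bar\tau^{-1}\sigma'},\res_p(w_n)\right\rangle_n \sigma'.
\]

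Second, I would move the action of $\bar\tau^{-1}$ across the pairing. Because $p$ is totally ramified in $\Kn/\Q$, the decomposition group at $p$ inside $\G_n$ equals $\G_n$, so $\bar\tau$ acts on $H^1(\Knp, \cdot)$. Tate local duality is $\G_n$-invariant in the sense that $\langle x^{\gamma}, y^{\gamma}\rangle_n = \langle x, y\rangle_n$ for every $\gamma \in \G_n$, which gives
\[
\left\langle c_{n,r-1}^{\bar\tau^{-1}\sigma'},\res_p(w_n)\right\rangle_n
= \left\langle c_{n,r-1}^{\sigma'}, \bar\tau\cdot \res_p(w_n)\right\rangle_n.
\]
Combined with the $\G_n$-equivariance of $\res_p$, namely $\bar\tau\cdot\res_p(w_n) = \res_p(\bar\tau\cdot w_n) = \res_p(\lambda\cdot w_n)$, the expression becomes
\[
\sum_{\sigma'\in\G_n} \left\langle c_{n,r-1}^{\sigma'}, \res_p(\lambda\cdot w_n)\right\rangle_n \sigma',
\]
which is exactly the right-hand side.

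There is no real obstacle here; the only point requiring a small amount of care is to check that the $\Lambda$-action on $w_n$ (via passage to $\Ln$) is compatible with the $\G_n$-action on local cohomology under $\res_p$, which is immediate from functoriality of restriction and from the fact that the $\Lambda$-module structure on Iwasawa cohomology is by construction the limit of the group-ring actions at each finite level.
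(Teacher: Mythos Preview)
Your proof is correct and follows exactly the approach the paper intends: reduce to a group-like element $\tau$, reindex the sum, and use the Galois-invariance of the Tate pairing together with equivariance of $\res_p$. The paper leaves this to the reader (and in fact has precisely this argument commented out in the source).
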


We conclude with our main theorem which immediately implies Theorem \ref{thm:divisibility}.

\begin{thm}
\label{thm:div_elt}
If \eqref{irred}, \eqref{nopole}, and \eqref{Euler} hold, then there exists a non-zero constant $C \in \O_L$ (independent of $n$ and $j$) such that
$$
\algnj ~|~ C \cdot \MTnj
$$
in $\Ln$ for all $n \geq 0$.
\end{thm}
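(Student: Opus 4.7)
The plan is to reduce the theorem to a divisibility in $\Lambda$ that is controlled directly by Kato's Euler system bound. By Theorem \ref{thm:Katorank}(1), the Iwasawa cohomology $\H^1(\Tdualj)$ is free of rank one over $\Lambda$, with generator $w = (w_n)$ fixed in Section \ref{sec:algtheta}, so we may write
$$z_{\Kato} = \lambda \cdot w$$
for a unique nonzero $\lambda \in \Lambda$. Let $\overline{\lambda}_n \in \Ln$ denote its image and set
$$\psi_{n,r}^{\alg}(f) := \sum_{\sigma \in \G_n} \langle c_{n,r-1}^\sigma, \res_p(w_n) \rangle_n \sigma^{-1},$$
so that Theorem \ref{thm:compare} reads $\algnj = \psi_{n,r}^{\alg}(f) \cdot \chr_\Lambda((\Xj)_{\Ltor})$ in $\Ln$ (the characteristic ideal being principal by Greenberg's theorem, whose chosen generator we project from $\Lambda$). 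Since $z_{\Kato,n} = \overline{\lambda}_n \cdot w_n$, applying the $\iota$-involution to Lemma \ref{lemma:linear} and substituting into Proposition \ref{prop:katoMT} gives
$$C \cdot \MTnj \;=\; \overline{\lambda}_n^\iota \cdot \psi_{n,r}^{\alg}(f).$$

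The theorem therefore reduces to the assertion that $\chr_\Lambda((\Xj)_{\Ltor})$ divides $\lambda^\iota$ in $\Lambda$. To prove this, I would observe that the isomorphism $\H^1(\Tdualj) \cong \Lambda$ determined by $w$ identifies $\H^1(\Tdualj)/\langle z_{\Kato}\rangle$ with $\Lambda/(\lambda)$, so $\chr_\Lambda(\H^1(\Tdualj)/\langle z_{\Kato}\rangle) = (\lambda)$. Theorem \ref{thm:katodiv}(2) then gives
$$\chr_\Lambda \H^2_P(\Tdualj) \,\bigm|\, \lambda \quad \text{in } \Lambda.$$
Applying $\iota$ to this principal-ideal divisibility and using Theorem \ref{thm:nekduality} — which identifies $\chr_\Lambda((\Xj)_{\Ltor})$ with the $\iota$-image of $\chr_\Lambda \H^2_P(\Tdualj)$ — yields $\chr_\Lambda((\Xj)_{\Ltor}) \mid \lambda^\iota$ in $\Lambda$, as required.

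Finally, writing $\lambda^\iota = \chr_\Lambda((\Xj)_{\Ltor}) \cdot \mu$ in $\Lambda$ and projecting to $\Ln$, we obtain $\overline{\lambda}_n^\iota = \chr_\Lambda((\Xj)_{\Ltor})_n \cdot \overline{\mu}_n$; multiplying through by $\psi_{n,r}^{\alg}(f)$ then produces $C \cdot \MTnj = \algnj \cdot \overline{\mu}_n$, the desired divisibility. Since $C$ comes from Proposition \ref{prop:katoMT} it is independent of $n$ and $r$, as claimed. There is no deep obstacle left at this stage: the heavy lifting is absorbed into Theorem \ref{thm:katodiv} (Kato), Theorem \ref{thm:nekduality} (Selmer duality), and Theorem \ref{thm:compare} (the comparison of the two definitions of $\algnj$). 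The only subtleties to watch are the bookkeeping of $\iota$ — which is unavoidable given our convention in Remark \ref{rmk:nonstandard} and the fact that $\H^2_P(\Tdualj)$ sits in the $\iota$-dual position relative to $\Xj$ — and the importance of doing the factorization in $\Lambda$ before projecting to $\Ln$, so as to avoid any cancellation in $\Ln$, which is not a domain.
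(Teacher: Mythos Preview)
Your proof is correct and follows essentially the same approach as the paper: both arguments combine Theorem~\ref{thm:compare}, Lemma~\ref{lemma:linear}, Proposition~\ref{prop:katoMT}, Theorem~\ref{thm:katodiv}, and Theorem~\ref{thm:nekduality} in the same way, with the only cosmetic difference being that the paper applies $\iota$ to $\algnj$ and works with $\iota(\algnj)$ and $\iota(\MTnj)$ throughout, while you apply $\iota$ to $\lambda$ and work directly with $\algnj$ and $\MTnj$. Your handling of the $\iota$-bookkeeping and your remark about factoring in $\Lambda$ before projecting to $\Ln$ are exactly right.
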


\begin{proof}
Recall, $w = (w_n)$ is our fixed generator of $\H^1(\Tdualj)$.   By Theorem \ref{thm:compare}, we have
$$
\algnj = \chr_\Lambda \left((\Xj)_{\Ltor}\right) \cdot \sum_{\sigma \in \G_n} \left\langle  c_{n,j}^\sigma, \res_p(w_n) 
\right\rangle_n \sigma^{-1}
$$
Applying the involution $\iota: \Ln \to \Ln$ which sends group-like elements to their inverse, yields 
\begin{align}
\label{eqn:div}
\iota(\algnj) &= \chr_\Lambda \H_P^2(\Tdualj) \cdot \sum_{\sigma \in \G_n} \left\langle  c_{n,j}^\sigma, \res_p(w_n)  
\right\rangle_n \sigma
\end{align}
by Theorem \ref{thm:nekduality}.  
Now write $z_{\Kato} = \alpha \cdot w$ for some $\alpha \in \Lambda$.  Thus, $\alpha = \chr_\Lambda ( \H^1(\Tdualj) / \langle z_K \rangle)$, and $\chr_\Lambda \H^2_P(\Tdualj)$ divides $\alpha$ by Theorem \ref{thm:katodiv}.  

Scaling (\ref{eqn:div}) by $\alpha$, applying Lemma \ref{lemma:linear} and then Proposition \ref{prop:katoMT} gives
\begin{align*}
\alpha \cdot \iota(\algnj) 
&= \chr_\Lambda \H_P^2(\Tdualj) \cdot \sum_{\sigma \in \G_n} \left\langle  c_{n,j}^\sigma,\res_p(z_{\Kato,n})  
\right\rangle_n \sigma \\
&= \chr_\Lambda \H_P^2(\Tdualj) \cdot D \cdot \iota(\MTnj)
\end{align*}
for $D \in L$ independent of $n$ and $j$. Thus
$$
\iota(\algnj) \cdot \frac{\alpha}{\chr_\Lambda \H^2_P(\Tdualj)} = D \cdot \iota(\MTnj)
$$
and our claim is proven with $C=D$ if $D \in \O_L$ and $C=1$ otherwise.  
\end{proof}

\end{document}